\newtheorem{theorem}{Theorem}
\newtheorem{lemma}{Lemma}
\newtheorem{proposition}[theorem]{Proposition}
\newcommand\del{\partial}
\newcommand\ope{\mathrm{e}}
\newcommand\opd{\mathrm{d}}
\newcommand\opi{\mathrm{i}}
\newcommand\oppi{\mathrm{\pi}}
\newcommand\dummy[1]{}
\def\vec#1{\mbox{\boldmath{\protect{\begin{math}#1\end{math}}}}}
\begin{document}
\begin{center}
  \setlength\baselineskip{100pt}
  \LARGE
  Traveling pulses with oscillatory tails,
  figure-eight-like stack of isolas, and dynamics in heterogeneous media
\end{center}

\begin{center}
  Yasumasa Nishiura$^1$ and Takeshi Watanabe$^2$
  
  \bigskip

  {\small
    $^1$
    Research Center of Mathematics for Social Creativity,
    Research Institute for Electronic Science,
    Hokkaido University, Sapporo, 060-0812, Japan
    
    E-mail: {\tt yasumasa@pp.iij4u.or.jp}
    
    \medskip
    
    $^2$
    Organization for Regional Collaborative Research and Development,
    Suwa University of Science,
    Chino, Nagano, 391-0292, Japan
  }
  
\end{center}

\bibliographystyle{unsrt}

\begin{abstract}
  The interplay between 1D traveling pulses with oscillatory tails (TPO) and heterogeneities of bump type is studied for a generalized three-component FitzHugh-Nagumo equation. First, we present that stationary pulses with oscillatory tails (SPO) form a ``snaky'' structure in homogeneous spaces, after which TPO branches form a ``figure-eight-like stack of isolas'' located adjacent to the snaky structure of SPO. Herein, we adopted input resources such as voltage-difference as a bifurcation parameter. A drift bifurcation from SPO to TPO was observed by introducing another parameter at which these two solution sheets merged. In contrast to the monotone tail case, a nonlocal interaction appeared in the heterogeneous problem between the TPO and heterogeneity, which created infinitely many saddle solutions and finitely many stable stationary solutions distributed across the entire line. The response of TPO displayed a variety of dynamics including pinning and depinning processes along with penetration (PEN) and rebound (REB). The stable/unstable manifolds of these saddles interacted with the TPO in a complex manner, which created a subtle dependence on the initial condition, and a difficulty to predict the behavior after collision even in 1D space. Nevertheless, for the 1D case, a systematic global exploration of solution branches induced by heterogeneities (heterogeneity-induced-ordered patterns; HIOP) revealed that HIOP contained all the asymptotic states after collision to predict the solution results without solving the PDEs. The reduction method of finite-dimensional ODE (ordinary differential equation) system allowed us to clarify the detailed mechanism of the transitions from PEN to pinning and pinning to REB based on a dynamical system perspective. Consequently, the basin boundary between two distinct outputs against the heterogeneities yielded an infinitely many successive reconnection of heteroclinic orbits among those saddles, because the strength of heterogeneity increased and caused the aforementioned subtle dependence of initial condition. 
  
\end{abstract}

\paragraph{keywords}
three-component FitzHugh--Nagumo equation,
traveling pulses with oscillatory tails, snaky pattern, center manifold reduction,
multi-time scale analysis, basin boundary

\section{Introduction}

Spatially localized structures are common in various pattern-forming systems, appearing in nerve impulses \cite{HodgkinHuxley1952}, chemical reactions \cite{PhysRevLett.66.3083,PhysRevLett.69.945,MIKHAILOV200679,doi:10.1063/1.2752494,doi:10.1021/jp308813m}, fluid mechanics \cite{PhysRevA.44.6466,batiste_knobloch_alonso_mercader_2006,PhysRevLett.94.184503,Watanabe2012}, granular materials \cite{oscilon_Swinney}, buckling \cite{Hunt2000}, morphogenesis \cite{COTTERELL2015257,Baker2006}, vegetation \cite{HilleRisLambersetal2001,PhysRevE.91.022924}, and numerous self-organized systems \cite{Yochelis_2008,PhysRevE.71.065301,PhysRevE.90.032923}. For more details, refer reviews and books \cite{Meinhardt_1995,Liehr2013,Purwins2005,NishiuraAMS2002,doi:10.1146/annurev-conmatphys-031214-014514}. One of the intriguing dynamics for localized patterns is that they display self-replication and self-destruction such as that of a living cell \cite{Pearson}, \cite{NU_PhysicaD_1999}, including spatiotemporal chaos \cite{NU_PhysicaD_2001} as a combination of replication and destruction. 

In this study, we are specifically interested in the moving localized patterns such as traveling pulses and spots. As a model system, a class of three-component systems can be deemed appropriate, because it supports the ``coexistence'' of stable localized traveling patterns that are independent of spatial dimensions unlike two-component systems to yield a natural setting for consideration of the interacting dynamics among those patterns and heterogeneities. In particular, two representative model systems are popular in this context. First, the one-activator and two-inhibitor system observed in gas-discharge phenomenon \cite{PhysRevLett.78.3781,Bodeetal2002}, and second, an extension of activator-depletion system \cite{Meinhardt_1995} originally proposed for shell patterns. Herein, we employed the model system (\ref{eq2.01}) called the generalized FitzHugh--Nagumo equations that pertains to the first category. Certain extensive studies have reported the existence and stability of 1D pulse with monotone tails \cite{DVK,vHDK,vHDK,NSJDE2021}.
In case the localized patterns are in motion, collisions among them are unavoidable and result in varying dynamics, including annihilation, coalescence, and splitting. Therefore, to understand such dynamics with large deformation, the notion of ``scattor'' is beneficial as illustrated by \cite{NishiuraTeramotoUeda2003,PhysRevE.67.056210,doi:10.1063/1.2087127,IIMA2009449,Watanabe2012}. 

Heterogeneities or defects in the media are extremely common \cite{PhysRevLett.91.178101,doi:10.1063/1.1450565,doi:10.1137/S0036144599364296} and influence the dynamics of moving patterns through pinning (or blocking), rebound, splitting, and annihilation depending on the strength of heterogeneity
\cite{doi:10.1137/0149030,doi:10.1137/S0036139998349298,Nishiuraetal2007,PhysRevE.75.036220,PhysRevE.79.046205, Information_Exchange,NishiuraTeramotoYuan2012,van_Heijster2019,Peter_van_Heijster_and_Teramoto_2020,Xie.defect,doi:10.1142/9789812708687_0011,PhysRevE.83.021916,PhysRevE.76.016205,PhysRevE.95.062209}. Remarkably, even
chaotic motion is observed in periodic media in case the heterogeneity period is comparable
to the size of the traveling pulse\cite{Nishiuraetal2007}. Interestingly, inhomogeneity itself can act as a generator of localized patterns \cite{PRAT2005177,PhysRevE.76.066201,doi:10.1137/13091261X}.


The interaction with heterogeneities starts from the tail portion of the pattern, and there are two types of tails: monotone and oscillatory ones.
For the monotone case, existence, stability, and interaction have been studied extensively so a majority of the studies for heterogeneous problems are concentrated on the monotone tail
case\cite{Ei2002TheMO,Nishiuraetal2007,PhysRevE.75.036220,PhysRevE.79.046205,NishiuraTeramotoYuan2012,NishiuraTeramotoUeda2003,PhysRevE.67.056210,doi:10.1063/1.2087127,PhysRevE.80.046208,PhysRevE.69.056224}. 
On the contrary, the interactive manner among these patterns is much richer and complicated for the oscillatory case owing to the attractive and repulsive forces appearing alternately at the tail, which enables the formation of crystal-like structures and
binary star rotations \cite{Liehr2013,Bodeetal2002}.
However, only a few extensive studies have reported the existence and stability for such a class (e.g., \cite{doi:10.1137/140999177} for FitzHugh--Nagumo equations), which still remains as a fertile ground for further understanding.

Therefore, a considerable variation occurs between the monotone and oscillatory tails in the heterogeneous problems.
The varying behaviors between the two types of traveling spots in case of interacting with the heterogeneity of the slit type are presented in Fig.\ref{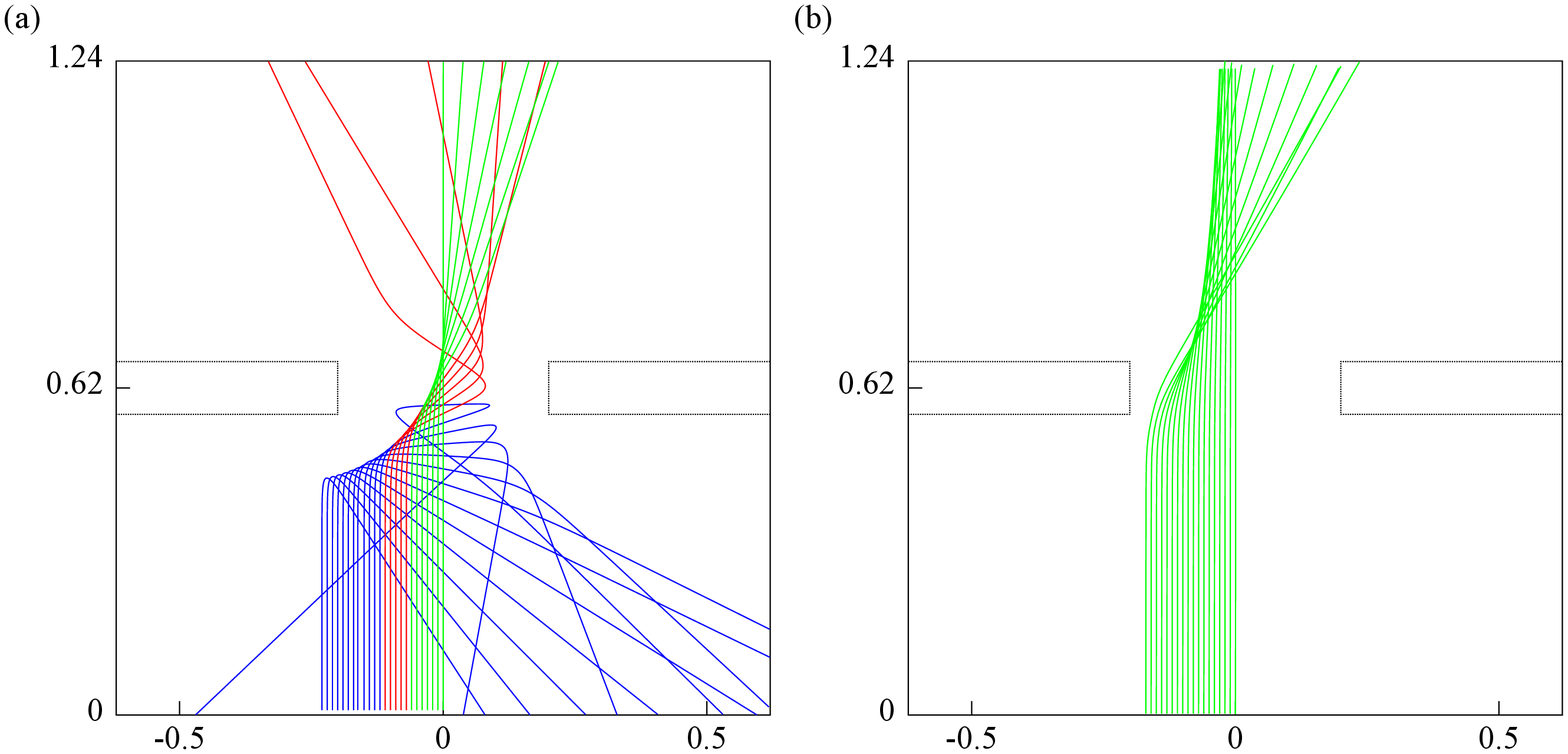}. The trajectory is not straight and appears almost unpredictable for oscillatory case (Fig.\ref{slit.eps}(a)), which creates a stark contrast with monotone one (Fig.\ref{slit.eps}(b)). The dynamics in Fig.\ref{slit.eps} resembles that of droplets bouncing on a liquid surface, as performed by Couder et al. \cite{PhysRevE.88.011001,PhysRevLett.97.154101}. In addition, it manifests a macroscopic quantum-like behavior similar to that observed in the Young's slit experiment. 
This study primarily aims to at least partially answer the following natural question ``what is the origin of such a subtle behavior?''. Relatively, one can imagine that such a sensitive behavior arises from the distribution of several unstable equilibria of saddle type proximate of the slit, and the orbit bounces back multiple times as it approaches the slit. We believe that this is at least qualitatively true, but to pursue the details of the dynamics, we initially need to understand the 1D case. In particular, a remarkable aspect is that several key features arising from the oscillatory tails are inherited to the 1D case for both the homogeneous and heterogeneous problems, and therefore, we focused on the 1D problem in this study. The sensitive dependence of the initial conditions of 1D traveling pulses for the three distinct heights of the bump is illustrated in Fig.\ref{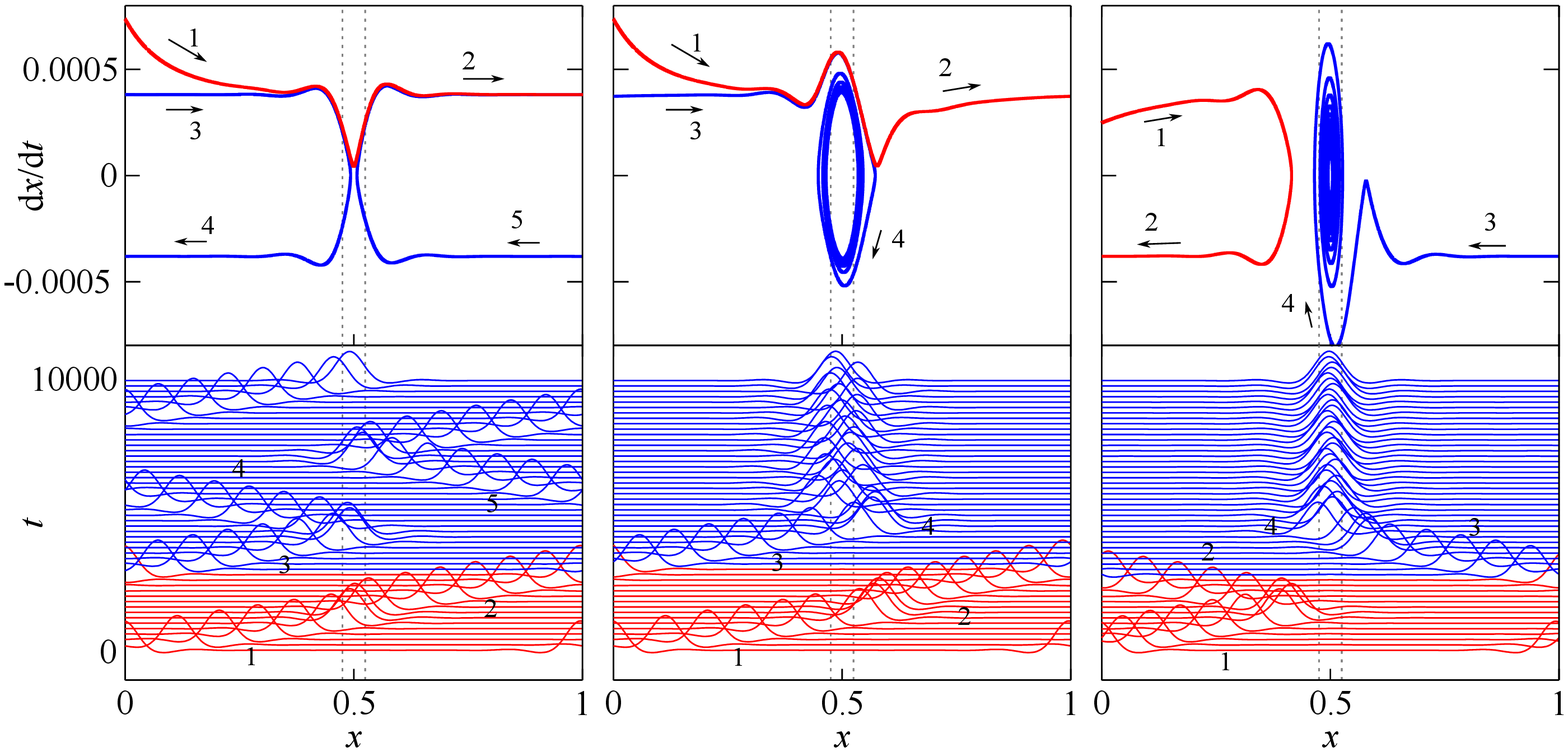}. Moreover, the response varied upon collision with the bump from the first to the second encounter, regardless of the almost identical profiles of the pulses immediately before collisions. Another example is presented in Fig.\ref{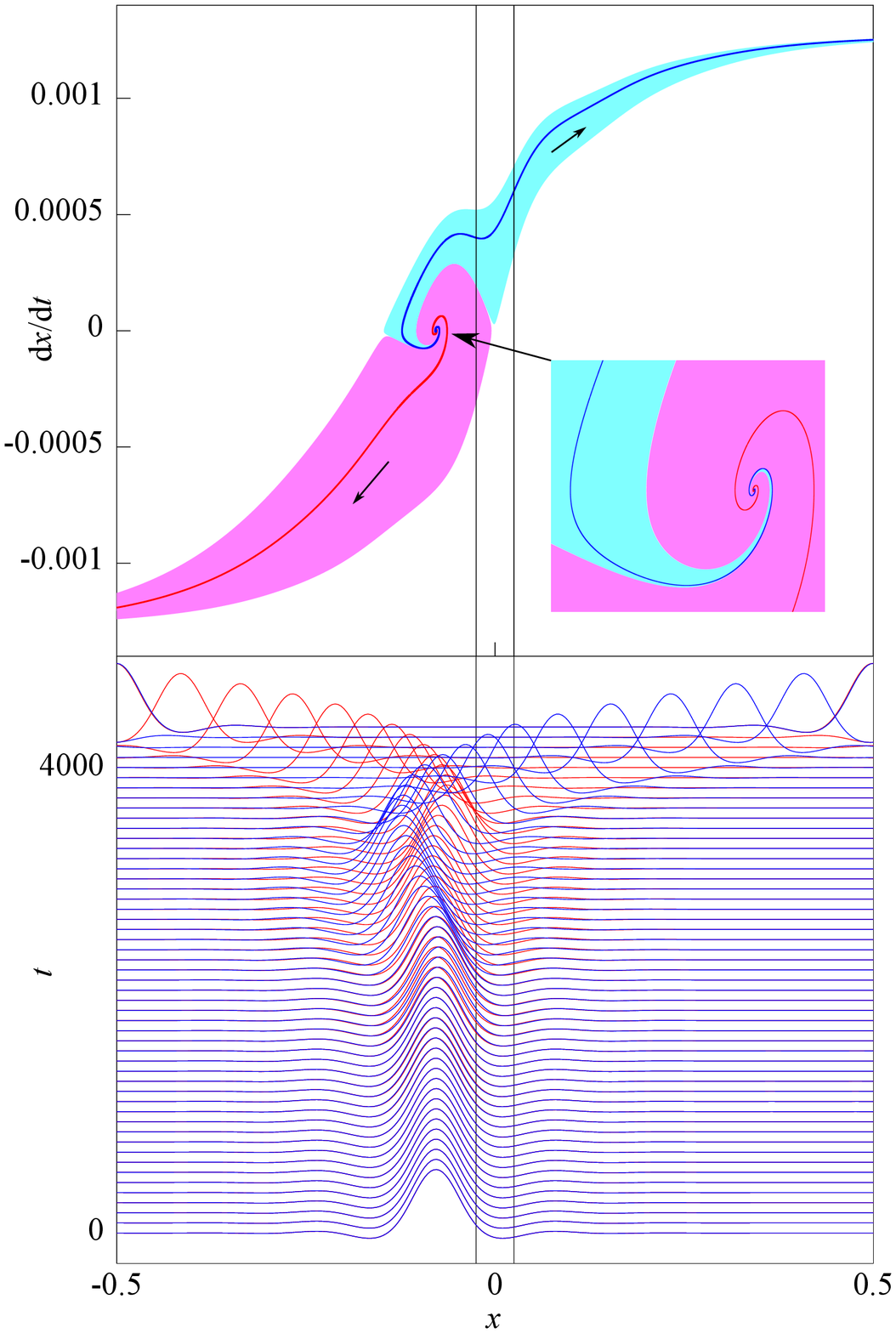}, wherein the initial data are similar to an unstable stationary pulse in the vicinity of the bump. The separator or basin boundary between the penetration and rebound in (position, velocity)--space appears to spiral into the location of the stationary pulse. Therefore, the outcome cannot be conveniently predicted in case the initial data are similar. This specific case is clarified in Section 5 by characterizing the basin boundary for the reduced finite-dimensional system, which focuses on more complex problems in higher dimensional space from the perspective of a dynamical system. Hereinafter, the abbreviated notations PEN, REB, OSC, and STA are used to represent the response of TPO to heterogeneity, namely, penetration, rebound, oscillatory pinning, and stationary pinning, respectively.

\begin{figure*}
  \begin{center}
    \includegraphics[width=.9\hsize]{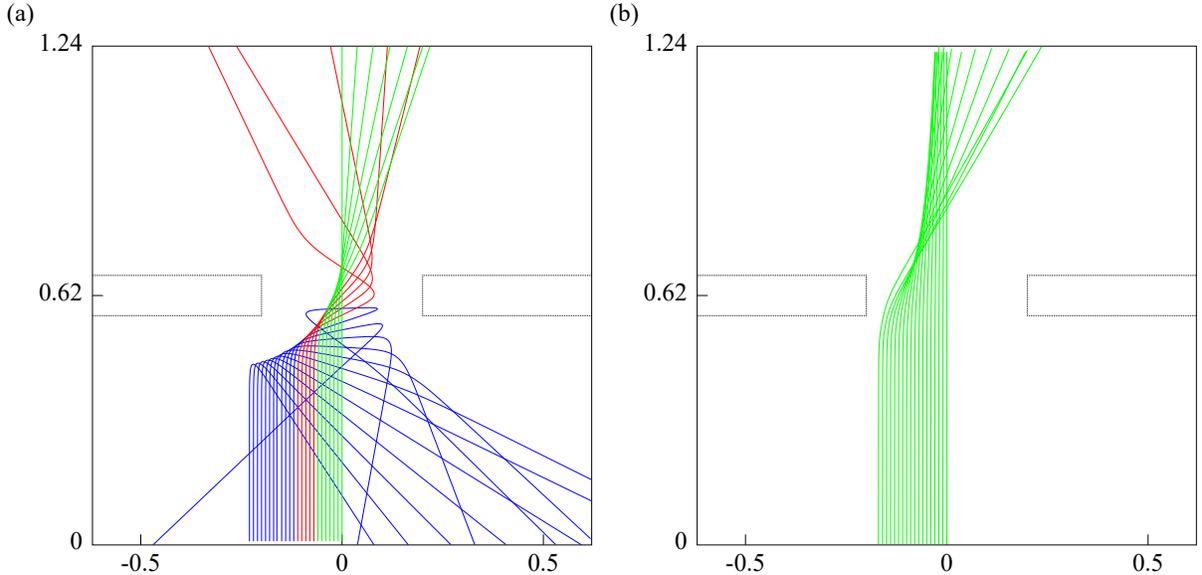}
    \caption{Comparison between monotone and oscillatory tails of traveling spots for model system (\ref{eq2.01}). (a) Oscillatory case: sensitive toward initial condition, specifically, the orbit abruptly alters direction as it approaches slit, regardless of the continuous modifications in initial conditions. Height $\varepsilon$ of slit is equal to 0.1. (b) Monotone case. Smooth orbit behavior with no sudden variations in direction. Height is denoted by $\varepsilon = 0.14$. Parameter values are as follows. 
      (a): $(Du, Dv, Dw)=(2.5\times10^{-4}, 0, 9.64\times10^{-4}),
      \kappa_{1}=-0.1,\kappa_{2}=2.5,\kappa_{3}=0.3,\kappa_{4}=1.0, (\tau, \theta)=(3.35, 0)$. 
      (b): $(Du, Dv, Dw)=(0.9\times10^{-4}, 1.0\times10^{-3}, 1.0\times10^{-2}), 
      \kappa_{1}=-7.3,\kappa_{2}=2.0,\kappa_{3}=1.0,\kappa_{4}=8.5, (\tau, \theta)=(40.0, 0)$.
    }
    \label{slit.eps}
  \end{center}
\end{figure*}

\begin{figure*}
  \begin{center}
    \includegraphics[width=.8\hsize]{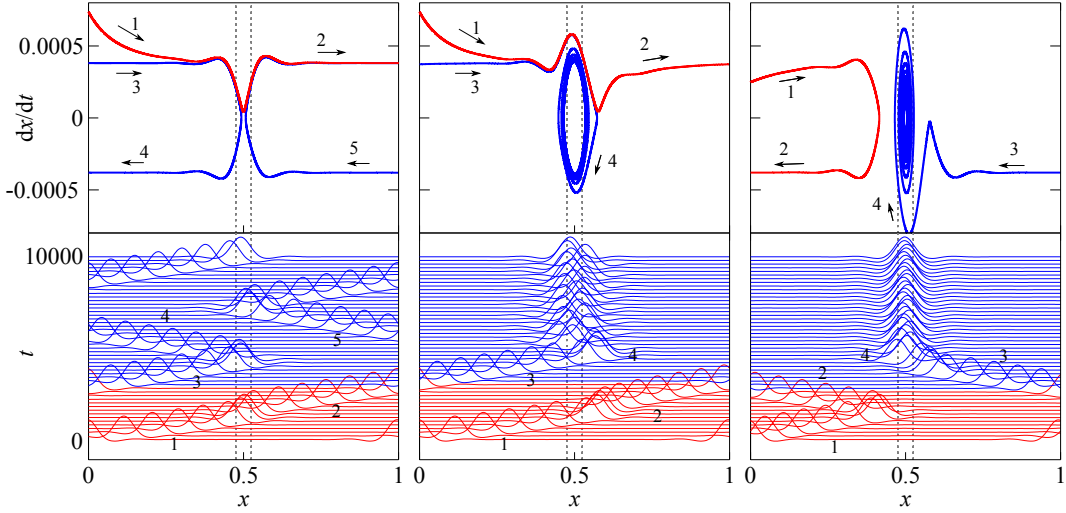}
    \caption{Subtle behaviors of TPO in vicinity to transition points. Three PDE simulations on circle are presented (bump height :
      $\varepsilon=-0.0004875$, $0.00079375$, $0.0024375$,
      from left to right). The above-mentioned values of bump height are almost equal to the values of transition PEN--REB, PEN--OSC, and REB--STA, respectively. The upper panel displays the location and velocity corresponding to the traveling pulse depicted in the lower panel. In addition, the red and blue portions of the orbit display the first and second collisions, respectively. Although the profile and velocity of the red pulse were similar to those of the blue pulse when they were almost colliding with the bump, the outcomes of the first and second collisions varied. As discussed in Section 5, the pulse orbit coincided with the basin boundary at the critical bump height in a reduced finite-dimensional system, which was indicated by the left stable manifold of the saddle point located at the center of bump for the PEN--REB case (left). At this instant, the basin of attraction in the phase space was categorized into two parts by the pulse orbit itself. Therefore, in case the bump height is proximate to the critical bump height and the initial profile is almost identical to that of the TPO, a slight variation in the initial setting causes a drastic variation in the outcome after collision. Moreover, similar basin structures were observed in other cases as well, as detailed in Section 5.
    }\label{transition.eps}
  \end{center}
\end{figure*}

\begin{figure*}
  \begin{center}
    \includegraphics[width=.7\hsize]{spiral.eps}
    \caption{Sensitive dependence on initial condition. Basin boundary between PEN and REB spirals into an unstable stationary pulse located near the bump. The bump height was selected as $\varepsilon = -0.00005$ belonging to the inside of PEN regime in phase diagram presented in Fig.\ref{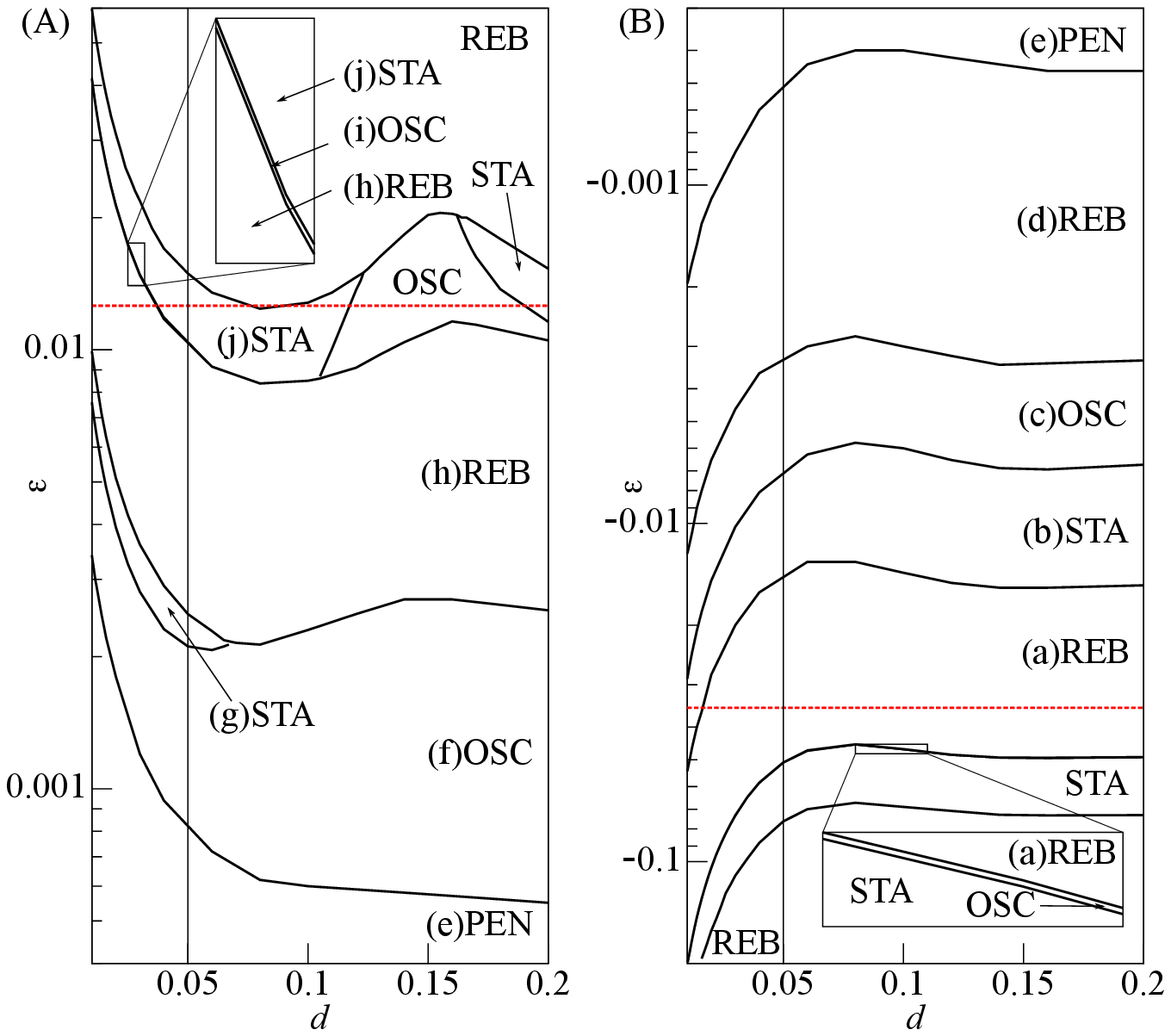}. An unstable stationary pulse was stuck near the bump, which is one of the HIOPs. We selected two initial profiles that were almost identical with a difference of less than $10^{-8}$. Blue (red) portion goes across (rebound) the bump, implying the existence of basin boundary spiraling around stuck pulse. In Section 5, we discuss that such a basin boundary exists in reduced ODE systems.}
    \label{spiral.eps}
  \end{center}
\end{figure*}

As a necessary prerequisite for the 1D heterogeneous problem, the existence and stability properties of TPO should be considered in homogeneous problem (i.e., no heterogeneities). The numerically global parametric dependency of stationary and traveling pulses are detailed in Section 3, wherein a snaky structure of the stationary pulses was observed along the parameter $\kappa_{1}$ that is proportional to the external input such as voltage difference. In addition, a typical snaky feature was observed with an increased number of peaks via saddle-node bifurcations along the snaky branch, and the ladder patterns appeared via symmetry-breaking bifurcations from the primary snaky structure, which appeared in several model equations such as the Swift--Hohenberg equation, Ginzburg--Landau equation, and reaction--diffusion equations
\cite{WOODS1999147,PhysRevFluids.2.064401,Kozyreff,Ponedeletal2016,MA20101867,Burke2007,doi:10.1063/1.2746816,doi:10.1137/06067794X,doi:10.1063/1.4792711,BBKM_Eckhaus_snaking,Beck2009,KY_foliated_snaking_2020,ACV_localized_unifying_framework_2021}

Such a snaky structure signifies several aspects, for instance, the origin of the localized stationary pulse. More specifically, the ``localization'' occurred through ``beat'' bifurcation (described in Section 3). Here the beat bifurcation indicated that two wave trains with nearby
wave-numbers interacted, and the resulting wave exhibited a beat in its
amplitude before its envelope eventually became a localized one. 
As for the traveling pulses, the set of traveling pulses emerged as a figure-eight-like stack of isolas located proximate to the snaky branch of the stationary pulses. As depicted in Fig.\ref{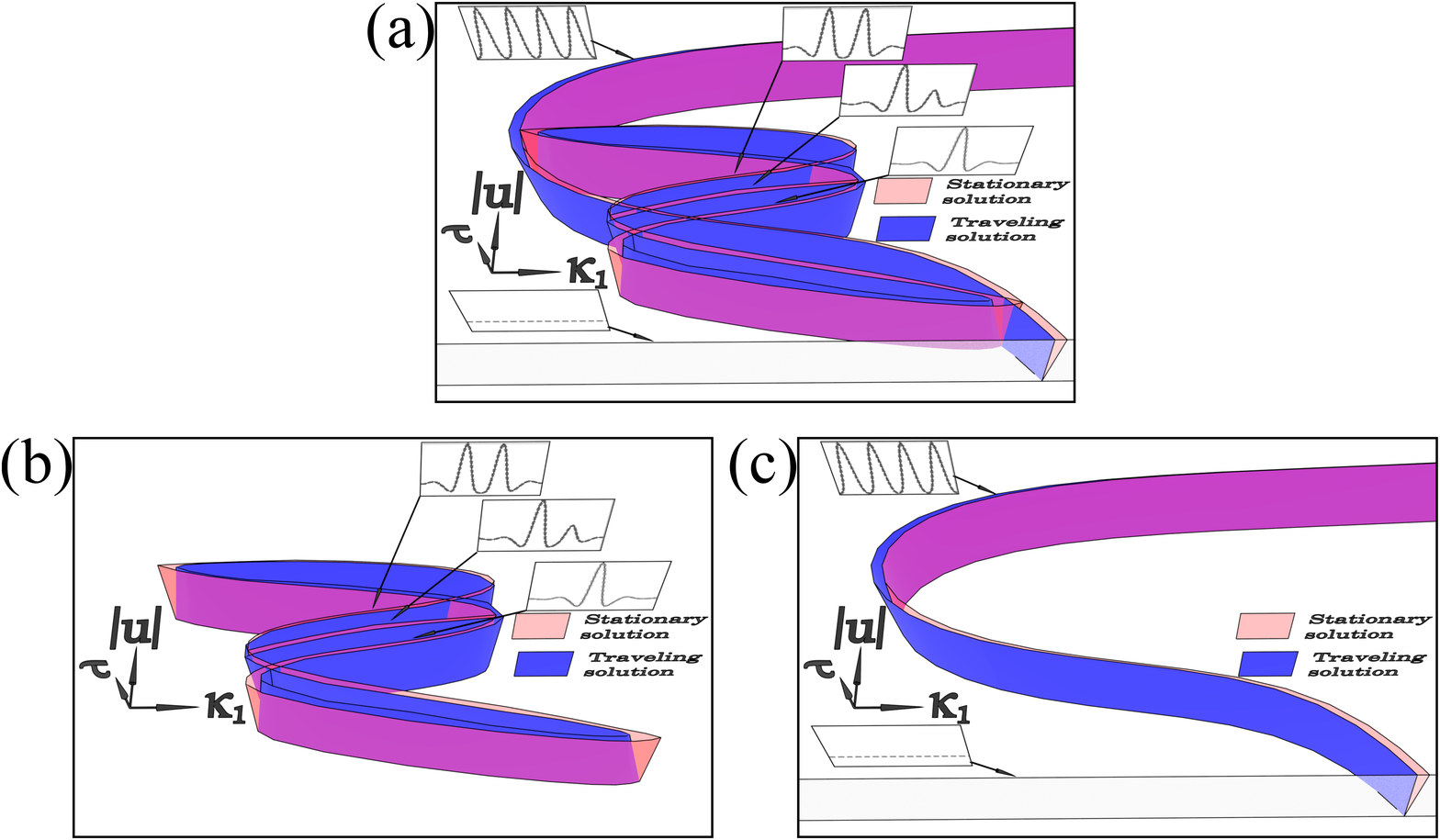}, they appeared as an imperfection of snaky structure of stationary pulses. Interestingly, a stationary pulse solution was obtained in case the isola was traced along another parameter such as $\tau$, i.e., a drift bifurcation occurred as depicted in Fig.\ref{bif3.eps}. Moreover, the traveling pulse sheet was attached to the wall of the stationary pulse in two-parameter space $(\kappa_{1}, \tau)$. Therefore, such a combined set of sheets can be termed as the {\it gutter-like structure} comprising snaky stationary branches and isolas of traveling pulses. Thus, the shape of the TPO resulted from the localization process via beat bifurcation and traveling motion from the drift bifurcation. Note that the stable portion of the branch of traveling pulses was restricted to a subinterval of $\kappa_{1}$ owing to the snaky structure.

In this study, all numerical computations were performed on a circle, i.e., a finite interval with periodic boundary conditions. Nevertheless, most of the qualitative aspects of the original problem were preserved, because the system size was adequately large in comparison to that of the spatially localized patterns. This is partially justified by reducing the PDE dynamics to a finite-dimensional ODE system on the infinite line, and thereafter, comparing its dynamics with the PDE simulations and HIOP structure on a circle (as detailed in Section 5). Therefore, we used the same terminology of TPO for the traveling pulses on both the circle and infinite line, unless there was no confusion.

Upon introducing a heterogeneity into the media, the interaction between the traveling pulses and heterogeneity produced various interesting dynamics such as rebound, pinning, and even chaotic behavior (\cite{PhysRevE.75.036220,Nishiuraetal2007,PhysRevE.79.046205}). 
This study aimed to clarify the dynamic interplay between the TPO and a heterogeneity of bump type in the media, as illustrated in Fig.\ref{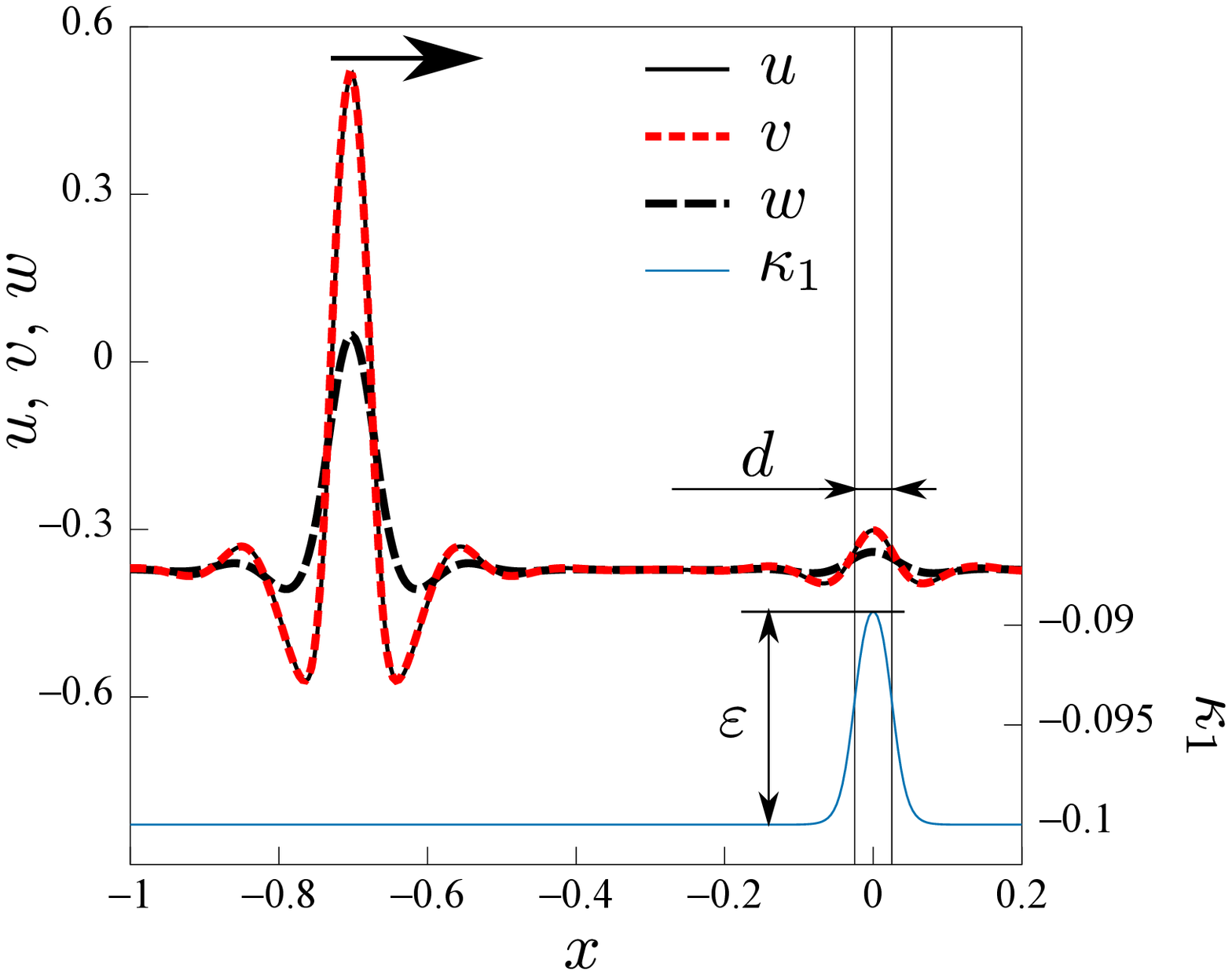}, and explain the influence of the heterogeneity strength $\varepsilon$ on the outputs (Fig.\ref{phase_PDE.eps}). Note that the bump constitutes as the basic unit in the heterogeneity class, and various heterogeneities can be constructed via combinations of them.

In this study, we developed a two-fold strategy: the heterogeneity-induced ordered patterns (HIOP) approach and a reduction method to a finite-dimensional system. Originally, the HIOP is an object across the entire line that is defined by the set of all solution branches of the ordered patterns caused by heterogeneity, such as stationary pulses pinned to the heterogeneity. Herein, we adopted $\varepsilon$ as a bifurcation parameter. However, we considered the HIOP on a circle and attempted to numerically determine all the relevant solution branches in the heterogeneous space. In particular, we focused on the stationary and time-periodic solutions. As such, this solution branch can be determined following two methods. First, search a connected component of bifurcating branches emanating from the trivial background state with $\varepsilon$ as the bifurcating parameter. Second, a continuation method starting from a solution to the homogeneous problem as a seed. Consequently, these two approaches are complementary to each other, and the two resulting solution sets exhibited an intersection, implying that any solution in the component is connected to the trivial constant state. As the translation invariance only holds for the homogeneous case, a selection rule must be followed to select the limiting profile of the heterogeneous problem as $\varepsilon$ tends to zero. In particular, this was caused by the interaction between the oscillatory tails and heterogeneity, and numerous solutions were selected from a continuum of translated results. Note that various solution branches of the heterogeneous problem may exhibit the same limiting profile in the modulo translation as the height approaches zero. Moreover, we identifed the solutions of the HIOP modulo translation in case $\varepsilon$ was equal to zero. Thus, countably many stack of limiting solutions was observed, such as the {\it spoke} depicted in Fig.\ref{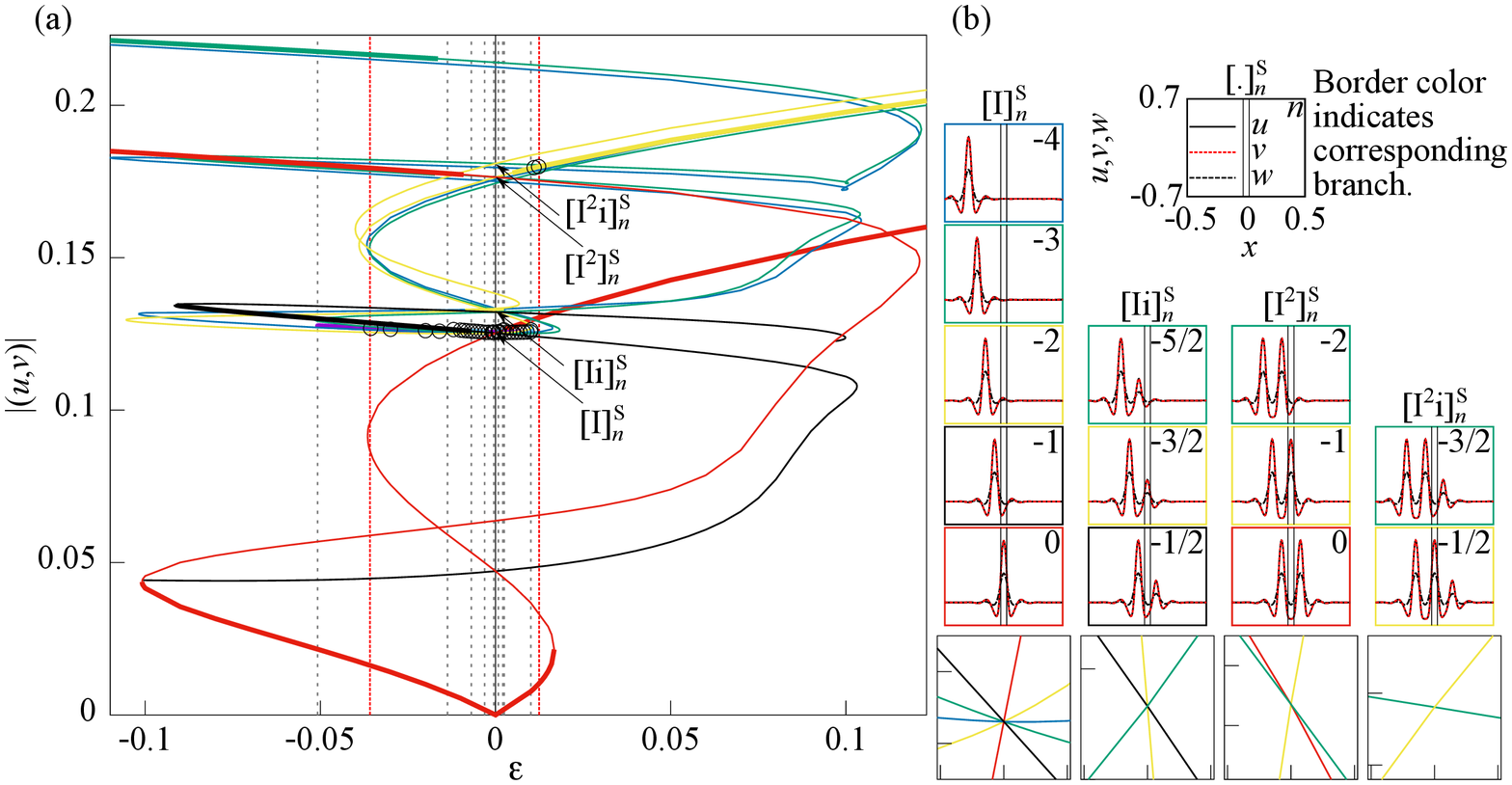} (b).

As for the periodic solutions on a circle, the TPO in homogeneous space constitutes as a basic time-periodic solution that can be extended to a periodic solution to heterogeneous problem for small bump heights. Similarly, rebound solutions can be conveniently captured as a member of periodic solutions on $\mathbb S^1$. Interestingly, the period diverged for specific values of $\varepsilon$ in which homoclinic or heteroclinic bifurcation occurred, and the associated saddle point corresponding to one of the shifted stationary solutions stated above could be detected. Thus, the homoclinic or heteroclinic bifurcation is a {\it junction} point between the periodic solutions and shifted stationary solutions. Based on these observations, all the HIOP solutions presumably originated in the trivial constant solution. The length of the circle was not inadequately short in comparison to the sizes of the pulse and bump, which aided in retaining the property of the HIOP on the circle to qualitatively reflect the global structure of the solution branches on the infinite line. This attribute can be justified at least partially following the reduction method explained hereinafter.


The second approach, i.e., reduction to a finite dimensional system in vicinity to the drift bifurcation point, is a powerful tool for understanding the pulse dynamics in heterogeneous media \cite{Nishiuraetal2007,PhysRevE.79.046205,NishiuraTeramotoYuan2012,Nishiura_Teramoto_2012,Liehr2013}. This reduction is possible for the infinite line, and the initial data associated with the TPO were established at $\pm\infty$.
These two methods are complementary to each other and allow us to understand an entire dynamical structure. A strong benefit of knowing the global structure of HIOP is that we can determine a complete list of candidates for the asymptotic state of the TPO after collision even for large bump heights, as the HIOP approach is unrestricted in terms of the parameter being proximate to the drift bifurcation point. Moreover, the reduction method enables us to comprehend the detailed mechanism of transition, e.g., the transition from PEN to REB occurs in a dynamical system perspective. In addition, the behavior of the basin boundary between the two outputs in the neighborhood of the critical height can be clarified as well. As such, the oscillatory nature of the interaction between the tails and bump becomes visible in the behavior of the basin boundary as infinitely many reconnections appear among the critical points with the variation in height. 
Furthermore, the reduction method can approximately evaluate the dynamics on the entire line in comparison to that on a circle. More importantly, it aids us to identify the objects on a circle (PDE simulations and HIOP) with the corresponding objects on the infinite line, even though the validity of the reduction method is assured only within the neighborhood of the drift bifurcation point. 

Although the behavior of the TPO was in tandem with that of TPO situated far from the bump, it did not exist in the heterogeneous media on a circle. Therefore, in case of using it as an initial condition, it signified an approximated function of the TPO in a homogeneous problem located far from the heterogeneity. Evidently, such a TPO-like initial data cannot be uniquely determined and depends on the approximation method as well as the length of the circle. More precisely, such an ambiguity creates challenges in predicting the behavior and sensitivity to the initial condition, as depicted in Fig.\ref{transition.eps} and Fig.\ref{spiral.eps}. Thus, we ought to resolve all these ambiguities by identifying the basin boundaries of the outputs after collision in the reduced ODE system, as discussed in Section 5. In particular, the reduced system was defined on the infinite line and the associated true TPO was defined as a solution living at $\pm\infty$. The basin boundary emerged as a heteroclinic orbit connecting the two critical points and underwent reconnections among the critical points distributed on the infinite line, as discussed in Section 5. However, we proceeded with this informal definition of the TPO on a circle to avoid any confusion. Overall, the asymptotic state of the PDE dynamics starting from the TPO on a circle with heterogeneity can be characterized as follows.


\begin{proposition}
  The solution set HIOP on a circle contains all the asymptotic states of the TPOs after colliding with the heterogeneity of the bump type.  
\end{proposition}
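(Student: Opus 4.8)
The plan is to read the statement as an assertion about $\omega$-limit sets: launching the semiflow of the heterogeneous PDE on $\mathbb{S}^1$ from TPO initial data, I want to show that the asymptotic state (the $\omega$-limit set of the trajectory) lies in the HIOP set. First I would establish that the reaction--diffusion semiflow on the circle is dissipative, using the a priori bounds coming from the FitzHugh--Nagumo-type nonlinearity (the sign of the cubic at large amplitude yields an absorbing ball in the relevant function space). This produces a compact global attractor $\mathcal{A}$, so that every trajectory is bounded and its $\omega$-limit set is a nonempty, compact, connected, invariant subset of $\mathcal{A}$. At this stage the problem reduces to identifying which invariant sets inside $\mathcal{A}$ can arise as $\omega$-limit sets of TPO data.

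Next I would classify the four responses dynamically. Stationary pinning (STA) is an equilibrium of the heterogeneous problem, while penetration (PEN), rebound (REB), and oscillatory pinning (OSC) are all time-periodic solutions on $\mathbb{S}^1$: a pulse that penetrates or rebounds returns periodically because the domain is a circle, and an oscillatory-pinned pulse is periodic by definition. Hence it suffices to show that the $\omega$-limit set of a TPO trajectory is a single equilibrium or a single periodic orbit, since HIOP is precisely the catalogue of stationary and time-periodic solutions of the heterogeneous problem. The cleanest route to this restriction is the finite-dimensional reduction near the drift bifurcation point: the reduced ODE for the pulse's (position, velocity) coordinates is low-dimensional, and a Poincar\'e--Bendixson-type argument confines its $\omega$-limit sets to equilibria and periodic orbits, with the non-generic heteroclinic cycles appearing as the basin boundaries analyzed in Section 5 rather than as attracting states.

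I would then invoke the construction of HIOP to close the argument. By the two complementary generation methods --- tracing connected components of branches bifurcating from the trivial background with $\varepsilon$ as parameter, and continuing from solutions of the homogeneous problem --- together with the junction structure in which homoclinic or heteroclinic bifurcations of diverging period link the periodic branches to shifted stationary solutions, every stationary and every time-periodic solution of the heterogeneous problem is accounted for by a branch in HIOP and ultimately connects to the trivial constant state. Consequently the $\omega$-limit set, being one such solution, is contained in HIOP.

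The hard part is the step excluding more complicated $\omega$-limit sets --- quasi-periodic or chaotic invariant sets --- for the full PDE, since the Poincar\'e--Bendixson argument is rigorous only within the neighborhood of the drift bifurcation where the reduction is valid, whereas the claim is asserted for all bump heights $\varepsilon$. I would bridge this gap by combining the reduction-based argument near the bifurcation with the global HIOP continuation, which is unrestricted in $\varepsilon$, and by using the demonstrated consistency between the PDE simulations on the circle and the reduced ODE dynamics to argue that no attracting object beyond the stationary and periodic branches emerges for a single bump (in contrast to the comparable-period periodic media that do support chaos). The genuine crux, justifiable through the connectedness of all branches to the trivial state but not fully rigorous, is the completeness of the HIOP exploration itself --- that the traced branches really exhaust all relevant stationary and periodic solutions.
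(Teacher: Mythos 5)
There is a genuine gap, and it sits exactly where you placed your ``bridge.'' The paper offers no analytic proof of this proposition: its justification is the systematic numerical verification of Section 4.3, where every relevant stationary branch (the shifted solutions $\mathrm{[I]}^\mathrm{S}_n$ and their multipeak relatives) and every time-periodic branch (PEN, REB, OSC, viewed as periodic solutions on $\mathbb{S}^1$) is continued in $\varepsilon$, and the observed asymptotic state for each height (the circles in Fig.\ref{m.eps}) is checked to lie on a stable HIOP branch, with transitions accounted for by Hopf, saddle-node, and homoclinic/heteroclinic junctions and cross-validated against the reduced ODE near the drift bifurcation. Your dissipativity/global-attractor scaffolding and the Poincar\'e--Bendixson step are sensible framing, but the P--B argument constrains only the reduced two-dimensional system, which is valid solely in a neighborhood of $\tau_c$, whereas the proposition is asserted across the admissible interval of $\varepsilon$. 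Your proposed remedy --- invoking the ``demonstrated consistency'' between PDE simulations and ODE dynamics --- is itself numerical evidence, so at the one step where the analytic wrapper was supposed to add rigor, your argument collapses back into the paper's own numerical verification. You flag this honestly, but it means the wrapper proves nothing beyond what the paper's Section 4.3 already establishes empirically; the completeness of the HIOP catalogue, which you correctly identify as the crux, is left non-rigorous in the paper as well.

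One of your bridging claims is moreover contradicted by the paper itself: you assert that ``no attracting object beyond the stationary and periodic branches emerges for a single bump,'' in contrast to periodic media. But for $\varepsilon=0.01894$, $d=0.14$ --- just beyond the admissible interval --- the paper exhibits an apparently quasi-periodic asymptotic state on a two-dimensional torus, a rebounding TPO interacting with a standing breather at the bump (Fig.\ref{0.14+0.01894.eps}). So the exclusion of non-periodic $\omega$-limit sets cannot be argued for ``all bump heights''; the proposition must be read, as the paper implicitly does, on the regime where stable TPOs exist and the branch catalogue was verified (and even the two-peak asymptotic state (j) in Fig.\ref{m.eps} is captured only by HIOP, beyond the reach of the reduced ODE). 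A secondary caution: your absorbing-ball argument should note that under the paper's setting $\theta=D_v=0$ the $v$-equation carries no diffusion, so compactness and smoothing for the semiflow require more care than the generic reaction--diffusion argument suggests. Your identification of PEN and REB as periodic orbits on $\mathbb{S}^1$, obtained by unfolding heteroclinic orbits to the generalized critical points $T^{\pm}_{\pm\infty}$, does match the paper's convention and is correct.
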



Thus, the HIOP contains the $\Omega$-limit set of TPO on a circle. As discussed earlier, the HIOP contains a set of solutions in homogeneous space within the limits of $\varepsilon \rightarrow 0$. Hereinafter, we identified two solutions of the HIOP in terms of correspondence via translation. Relatively, the solution structure of the HIOP at $\varepsilon = 0$ was highly degenerated but quantized as such that only the countably many solutions could emanate from the continuum solutions toward the homogeneous problem (as detailed in Section 4). 

Although the HIOP prepares all the options for the asymptotic state for a given height $\varepsilon$, it does not indicate the presence of a unique attractor for a given $\varepsilon$. In contrast, the final destination depends on the initial condition, as multiple candidates are present.




In Sections 4 and 5, we discuss the relationships between the basin boundary and a solution starting from the associated TPO living at $-\infty$ to precisely determine the fate of the orbit. However, the dynamics of the reduced ODE system on the infinite line is consistent with the structure of the HIOP and PDE dynamics on a circle.



As the height increases and approaches the edge of the snaky and isola structures, a new type of patterns are frequently created via the interaction between the tail and bump. For instance, as observed from Fig.\ref{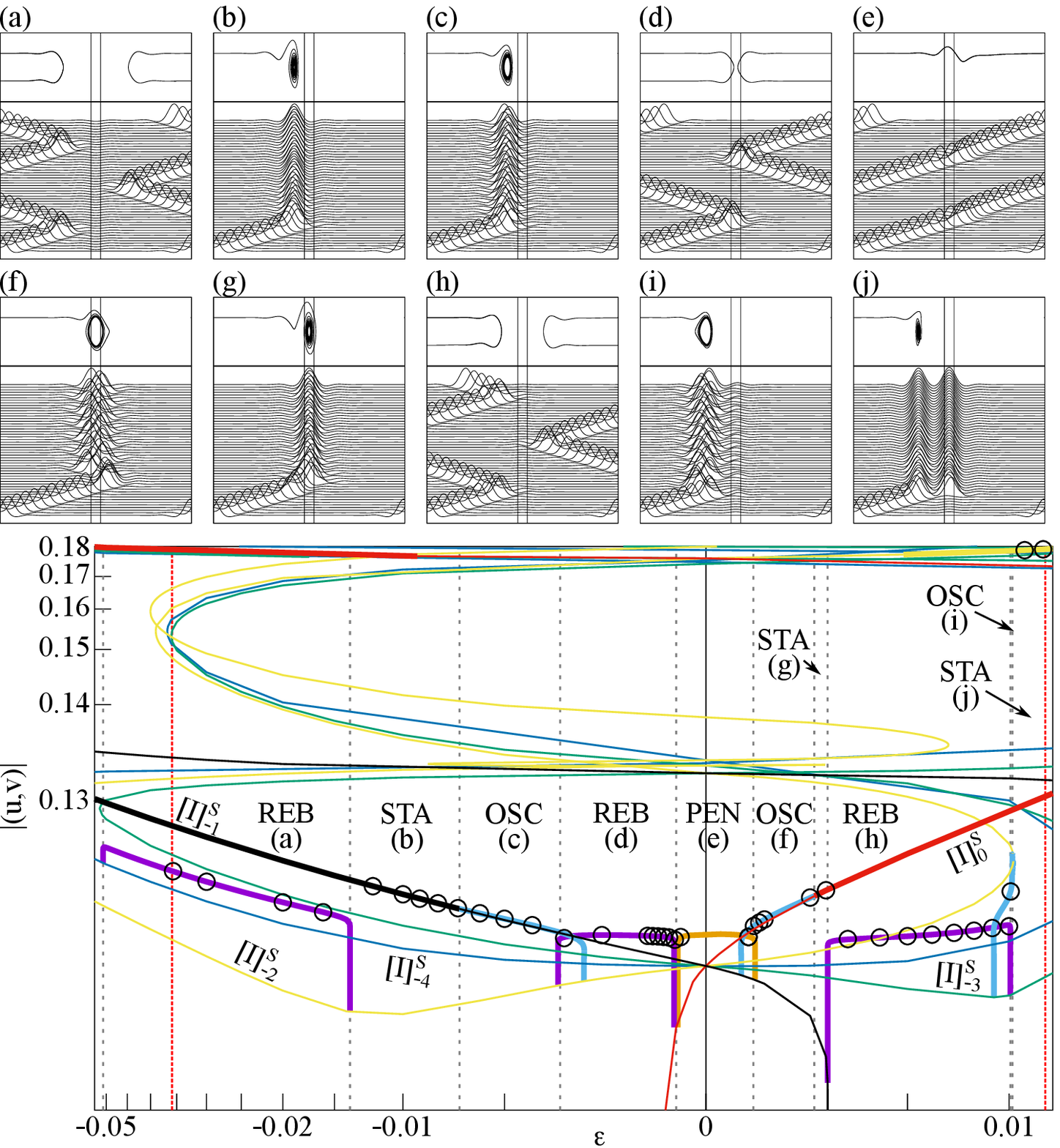}-(j) with $\varepsilon =0.011$, a large stationary pulse is created in the neighborhood of the bump region after the collision, and the resulting new two-peak solution is stable and persists within a small neighborhood of the associated height parameter. Nonetheless, such a process is beyond the capability of the reduced ODE system. In contrast, the HIOP encompasses those solutions and the multipeak solutions depicted in Fig.\ref{m.eps} pertains to the HIOP (as portrayed in Fig.\ref{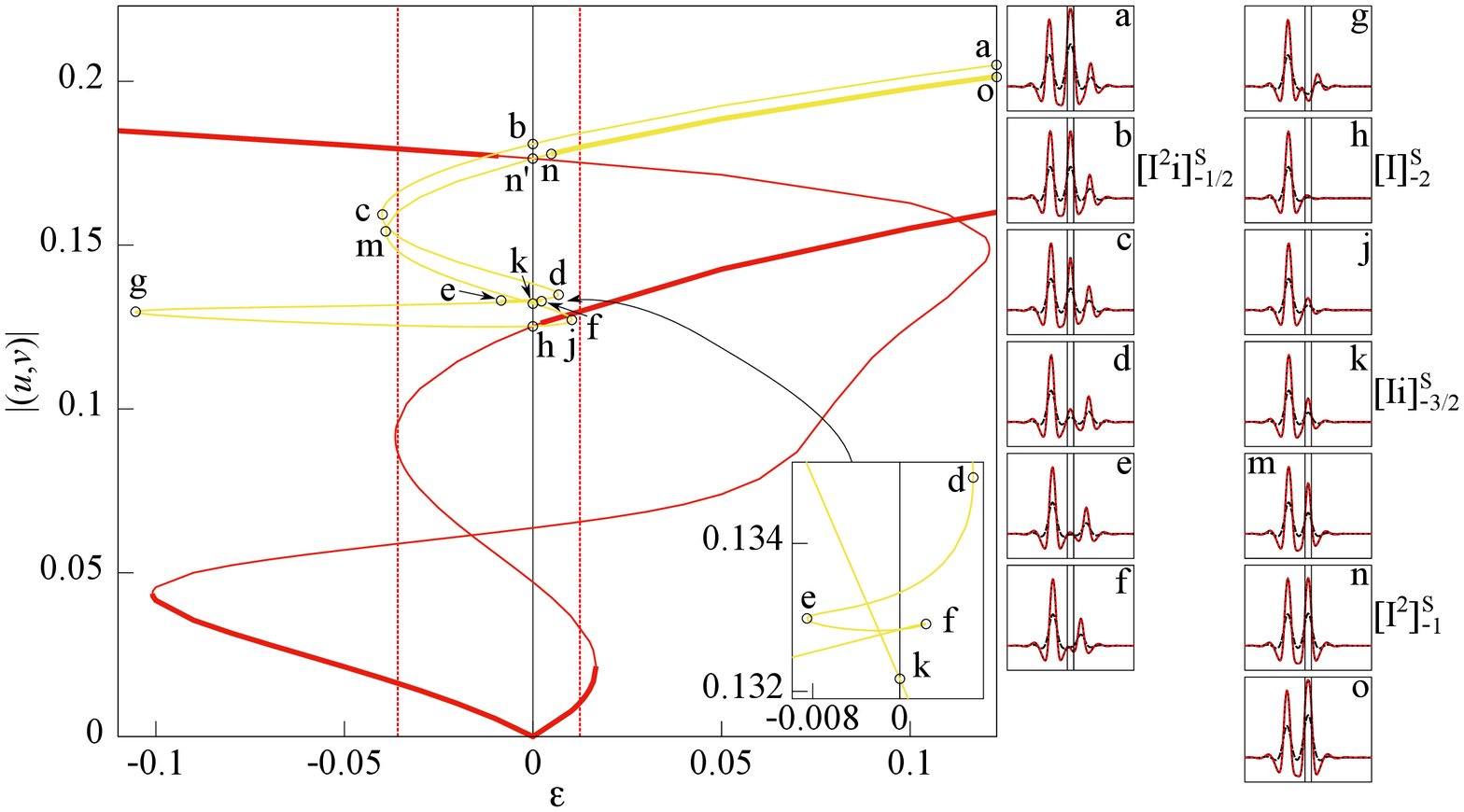}-(n)). Therefore, we can predict the post-collision state by carefully observing the HIOP structure for a particular height. This issue is briefly discussed in Section 4. In this study, we focused on the regime situated away from the edge of the snaky and isola structures, i.e., inside the admissible interval in which the reduction method functions consistently with the PDE dynamics.

One of the most interesting dynamics of the TPO is a cyclic response against the bump, which arises from the oscillatory nature of the tail. In particular, the first two cycles of the REB--OSC--STA are ilustrated in Fig.\ref{phase_PDE.eps}-(B), wherein the negative height is displayed as its modulus increases owing to the limitation of the stable TPO existing as the snaky structure. Additionally, the constraint in the parameter space $\kappa_1$ implied that only the finitely many cyclic responses were observed. Thus, we further reduced the PDE dynamics to finite-dimensional ODEs proximate to a drift bifurcation point to more prominently observe the cyclic response \cite{Liehr2013,Nishiuraetal2007}. In particular, the reduced ODEs presented its cyclic response REB--OSC--STA infinitely many times, as depicted in the phase diagram Fig.\ref{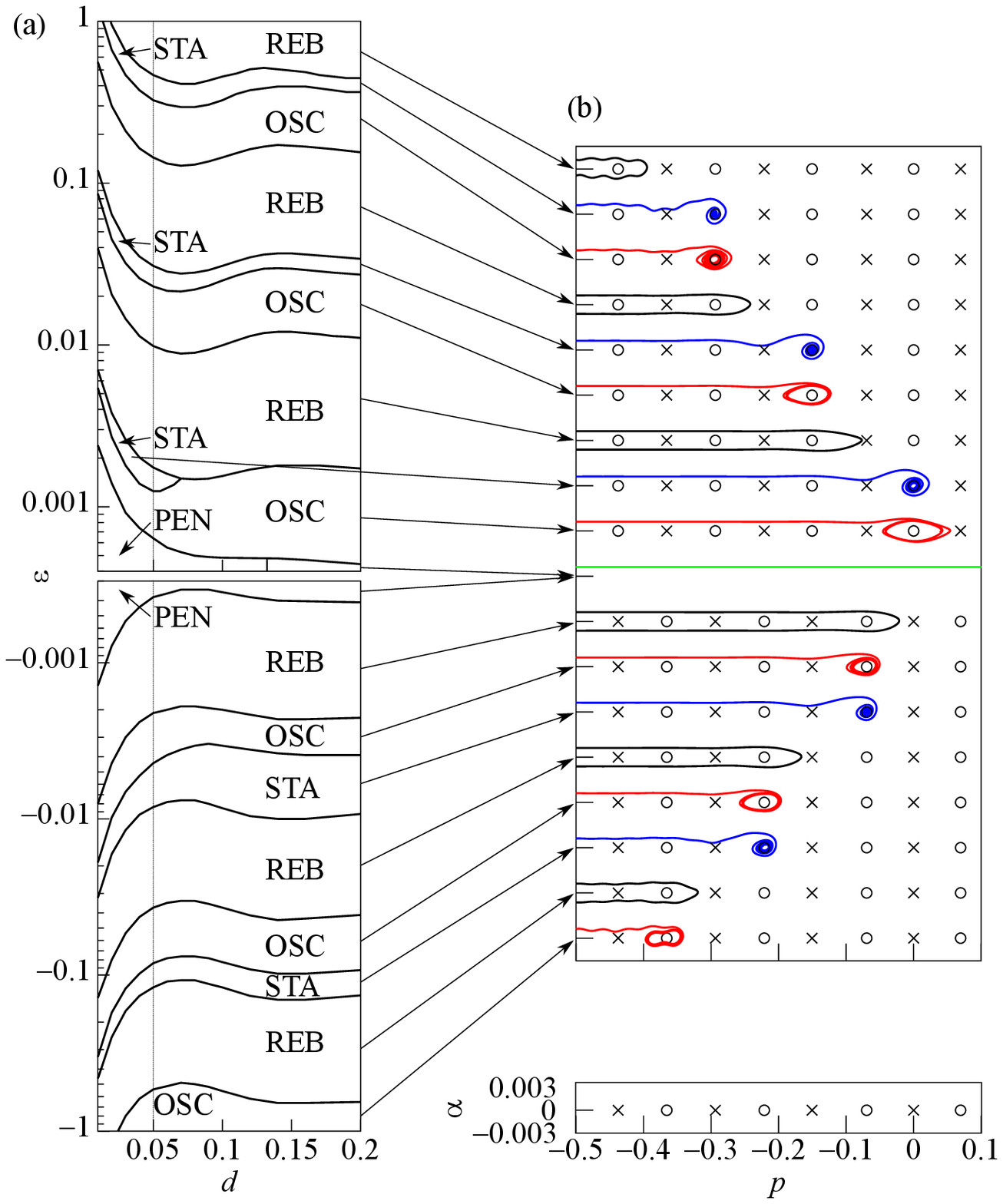}. 

An additional advantage of this reduction is that it enables us to precisely understand the occurrence of the transition. First, recall that the TPO solution is associated with the traveling pulse solution located at $-\infty$, and the entire pulse orbit can be regarded as a heteroclinic or homoclinic orbit connecting the TPO with its asymptotic state, and the transition occurred by switching the destination from one state to another with variations in height.

To comprehend the mechanism of the transition along with the instant of its occurrence, the basin boundary is required to be identified in the initial space based on its variation with the $\varepsilon$. Consequently, for each fixed $\varepsilon$, the {\it basin boundary} separating the two outputs can be characterized based on the stable manifold of the saddle point relevant to the transition, as depicted in Fig.\ref{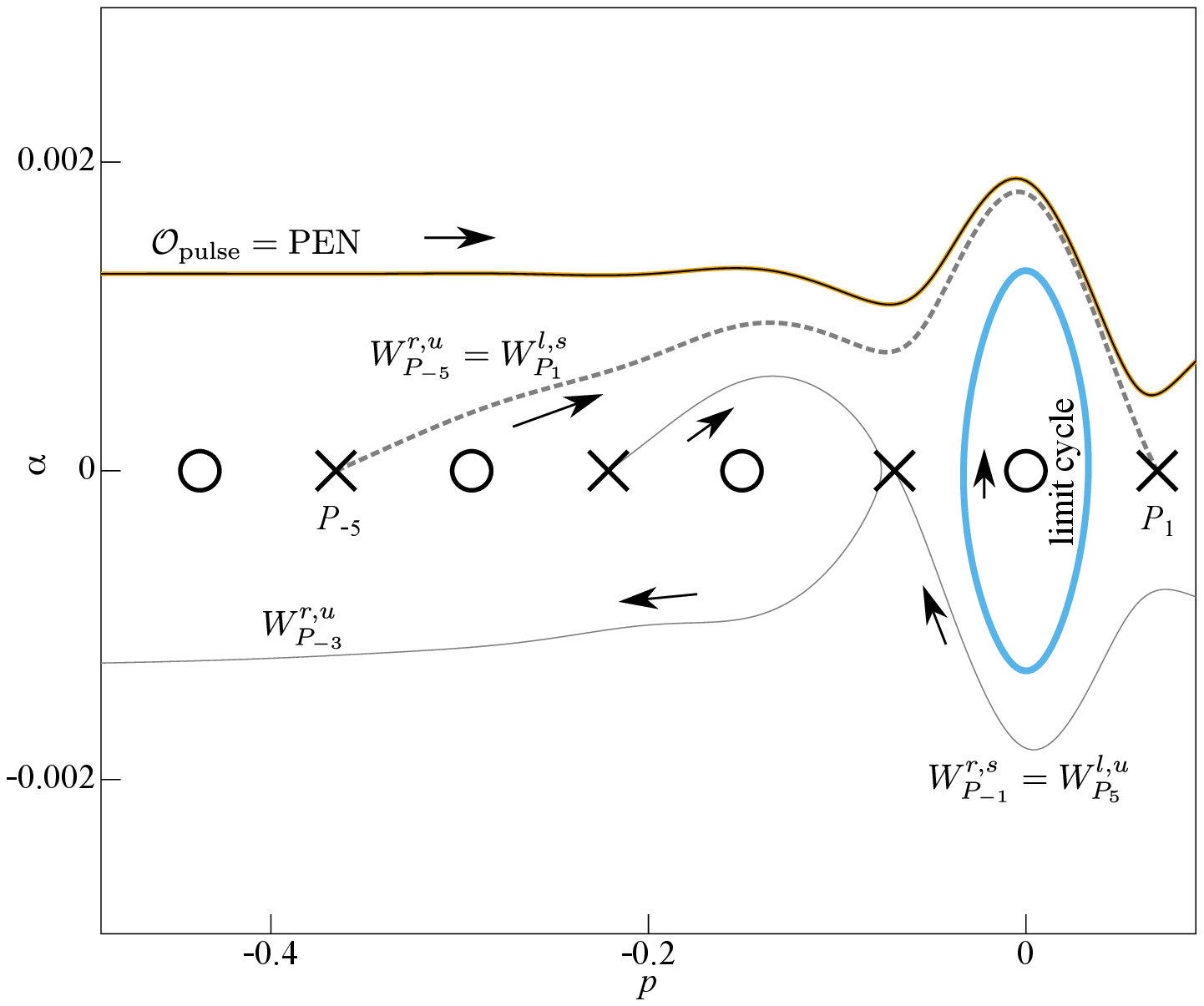}. Moreover, the destination of the time-reversal direction of this stable manifold is one of the critical points, and the location of this critical point progresses leftward as $\varepsilon$ is negatively increased. Therefore, {\it infinitely many times of reconnection} occur prior attaining the transition point at which the pulse orbit coincides with the stable manifold. For further growth of $|\varepsilon|$, the pulse orbit reduces under the basin boundary and converges to another asymptotic state. 

The basin boundary successively undergoing infinitely many reconnections among the relevant critical points immediately prior to the transition (Fig.\ref{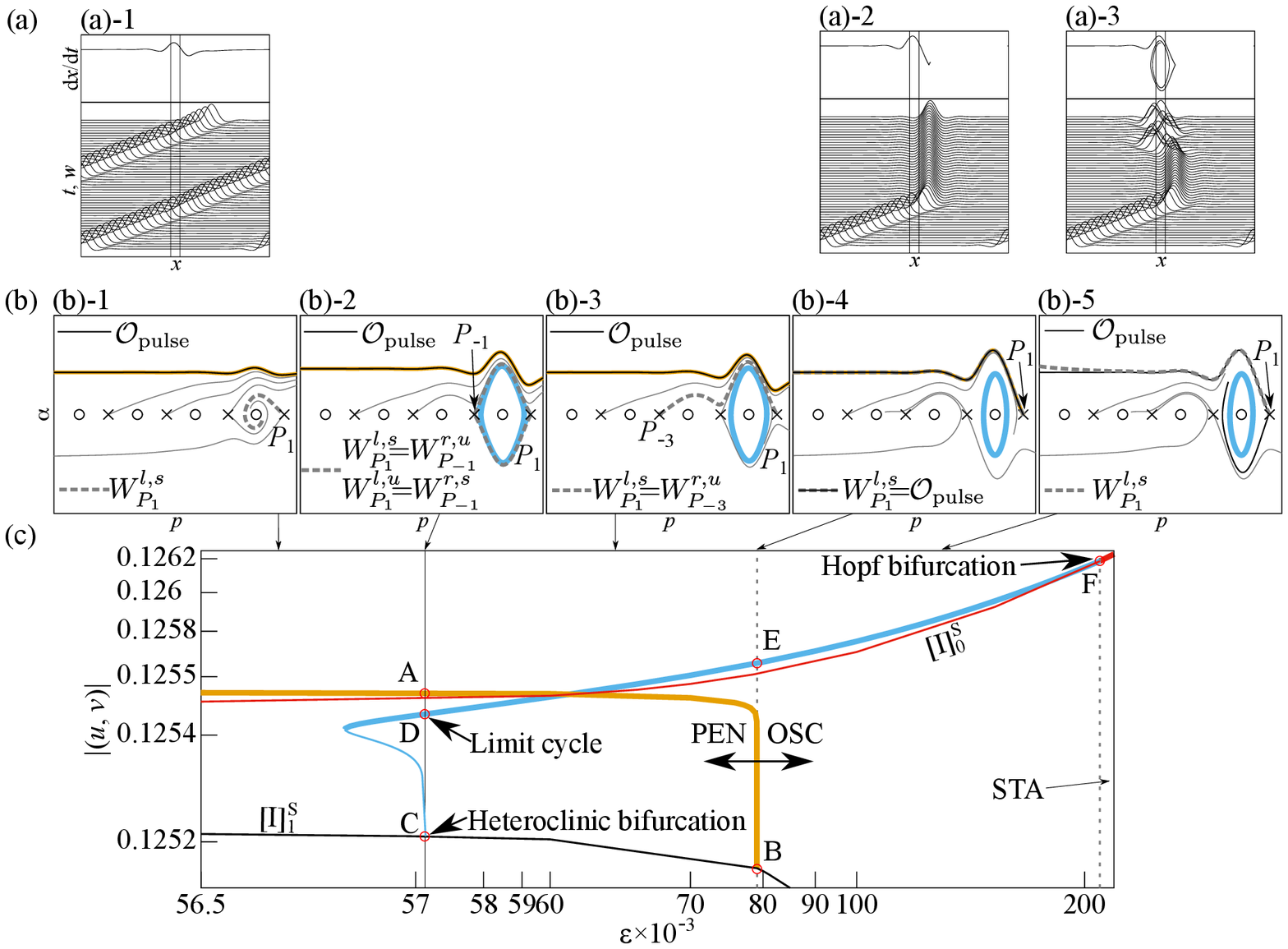} in Section 5) is vital for understanding the sensitive behaviors of TPOs in various situations. In particular, a couple of examples are presented in Fig.\ref{transition.eps}, which revealed that the bump height was in the neighborhood of the transitions such as PEN to REB. Although the profiles of the red and blue orbits became similar as they approached the bump, their outputs varied. A more subtle case is presented in Fig.\ref{spiral.eps}, wherein the basin boundary encompassed the critical point (in time-reversal direction) associated with the unstable stationary pulse. Therefore, the final output was extremely sensitive toward the selection of the initial data surrounding it in any direction of ray (as detailed in Section 6). The above discussion was summarized as follows.






\begin{proposition}
  The basin boundary between the two distinct outputs for the reduced ODE system (\ref{eq3.50}) can be characterized by the stable manifold of the specific saddle point relevant to the transition. The destination in the time-reversal direction of the basin boundary is one of the critical points of Eq.(\ref{eq3.50}) for each fixed $\varepsilon$ up to the concerning transition point. As $\varepsilon$ approached the transition point, the location of the destination progressed leftward and the basin boundary precisely coincided with the pulse orbit at the transition point to experience infinitely many reconnections among the critical points, which causes a sensitive dependence of the initial condition in the neighborhood of a critical point, especially for unstable spiral case (Section 6).
\end{proposition}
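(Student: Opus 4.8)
The plan is to treat the reduced ODE system (\ref{eq3.50}) as an essentially two-dimensional flow on the $(p,v)$ phase plane, where $p$ is the position of the pulse relative to the center of the bump and $v$ its velocity, and to exploit that the pulse--bump interaction force inherited from the oscillatory tail is an exponentially decaying oscillatory function of $p$. First I would locate the critical points: imposing $v=0$ together with the vanishing of the interaction force produces a sequence $(p_n,0)$ at the simple zeros of that oscillatory force, so the oscillatory tail generates infinitely many equilibria distributed along the line. Linearizing at each $(p_n,0)$ and combining the sign of the force's derivative with the velocity nonlinearity of the drift normal form, I would classify them in the alternating saddle / focus pattern that already underlies the REB--OSC--STA cycle, and single out the particular saddle $S$ (located at the center of the bump) whose manifolds govern the transition.

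For the first assertion I would argue that, in this planar flow, the boundary between the basins of two distinct outputs (e.g.\ PEN versus REB) must be an invariant curve no point of which converges to either attractor in forward time. The one-dimensional stable manifold $W^s(S)$ is the natural candidate, and by the linear ordering of the equilibria along the $p$-axis it is in fact the only one; I would confirm it is a genuine separatrix by checking that it locally splits a neighborhood into two halves flowing to the two distinct outputs, which is exactly the picture of Fig.\ref{spiral.eps}. For the second assertion I would follow $W^s(S)$ in backward time: the decay of the interaction force and the structure of the velocity nonlinearity keep the backward orbit bounded, so by Poincar\'e--Bendixson its reverse-time limit set is either an equilibrium or a closed orbit, and in the parameter regime up to the transition I would show that no closed orbit is encircled by the relevant arc of $W^s(S)$, leaving one of the critical points $(p_n,0)$ as the time-reversal destination for each fixed $\varepsilon$.

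The decisive step is tracking this backward-limit equilibrium as $\varepsilon$ is negatively increased toward the transition value. I would set up a continuation/shooting argument for $W^s(S)$ under variation of $\varepsilon$ and show that its backward branch sweeps monotonically in $p$; each time the branch crosses the stable manifold of an intervening saddle, a heteroclinic connection is created and then destroyed, switching the reverse-time source to the next focus one oscillation period to the left. Because the oscillatory tail supplies saddles with nonvanishing (though exponentially small) force amplitude accumulating toward the far-field pulse configuration, infinitely many such reconnections must occur before $W^s(S)$ finally coincides with the incoming pulse orbit emanating from the TPO at $-\infty$; that coincidence is precisely the transition, after which the pulse orbit lies on the opposite side of the separatrix and converges to the other output. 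When the relevant reverse-time destination is an unstable focus, $W^s(S)$ spirals into it in backward time, so every ray through that focus meets the basin boundary infinitely often, yielding the sensitive dependence on the initial data deferred to Section 6.

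The main obstacle is this reconnection step: establishing rigorously the monotone leftward motion of the backward-limit equilibrium and the resulting countable cascade of heteroclinic bifurcations requires global control of $W^s(S)$ under parameter variation in a system carrying infinitely many equilibria. This demands sharp estimates on the oscillatory--decaying interaction force — in particular that its zeros persist, remain simple, and keep a definite sign of the derivative as $\varepsilon$ varies — drawn from the Fourier/Laplace description of the oscillatory tail, together with a Melnikov-type criterion that detects each successive heteroclinic connection. Controlling the accumulation of these connections as $\varepsilon\to\varepsilon_{\mathrm{transition}}$, where the force amplitude is exponentially small, is the delicate analytical point on which the whole statement rests.
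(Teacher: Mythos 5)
Your proposal follows essentially the same geometric scenario the paper develops in Section 5: equilibria of (\ref{eq3.50}) confined to $\alpha=0$ at the zeros of the oscillatory force $f(p,d)$, alternating saddle/non-saddle classification, the basin boundary identified with the stable manifold of the transition-relevant saddle, the backward-time destination marching leftward through a cascade of heteroclinic reconnections, coincidence with $\mathcal{O}_\mathrm{pulse}$ at the critical height, and sensitivity when the backward destination is an unstable spiral. The difference is in what is actually established: the paper proves the local ingredients rigorously (Appendix A.3 gives the location, countability, saddle/non-saddle alternation and the positive saddle quantity $\sigma=b>0$; Appendix A.4 gives PEN for small $\varepsilon$ by an explicit invariant-region estimate using $|f|<2M_0$), and then supports the global manifold behavior --- the reconnection cascade and the order exchanges --- by direct numerics cross-checked against the HIOP branch structure, explicitly stating that ``the complete rigorous scenario is open for future research.'' Your Poincar\'e--Bendixson argument for the backward limit set and the proposed continuation/Melnikov detection of successive connections would, if carried out, upgrade precisely the part the paper leaves numerical; you correctly isolate the exponentially small force amplitude near the transition as the analytical obstruction, so your plan is a rigorous program rather than a proof, on the same footing as (and compatible with) the paper's evidence.

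Two concrete corrections are needed. First, the governing saddle is not generally ``located at the center of the bump'': by Proposition \ref{prop6.00}, $P_0$ is a saddle only for $\varepsilon<0$ (the PEN--REB case), while for $\varepsilon>0$ the PEN--OSC boundary is $W^{l,s}_{P_{1}}$, and for the cyclic transitions REB--OSC and STA--REB the relevant saddle marches outward ($W^{r,s}_{P_{-2n}}$ and $W^{l,s}_{P_{-2n}}$ respectively); your single fixed $S$ must be replaced by a transition-dependent saddle. Second, the reconnection event is a coincidence of the tracked stable manifold with the \emph{unstable} manifold of an intervening critical point (e.g.\ $W^{l,s}_{P_{1}}=W^{r,u}_{P_{-5}}$ in the paper's computations), not a crossing of its stable manifold; connections to genuine saddles occur only at discrete $\varepsilon$, with the generic backward destination a non-saddle spiral. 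Relatedly, your Poincar\'e--Bendixson step must admit one case you wave away: in the REB--OSC window the backward limit of $W^{r,s}_{P_{-2n}}$ is, for an interval of $\varepsilon$, the \emph{unstable periodic orbit} born at the saddle-node of the OSC branch, which persists until the homoclinic bifurcation at $P_{-2n}$ --- an event whose crossing direction is fixed by the positive saddle quantity of Proposition \ref{prop6.00} and which the paper treats as the ``gate-opening'' step in Proposition \ref{prop REB2PIN}. So ``the destination is always a critical point'' holds only in the ranges the paper's statement implicitly addresses, and your argument should either exclude or incorporate this periodic backward limit rather than assert that no closed orbit can occur.
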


\begin{figure*}
  \centering
  \includegraphics[width=10cm]{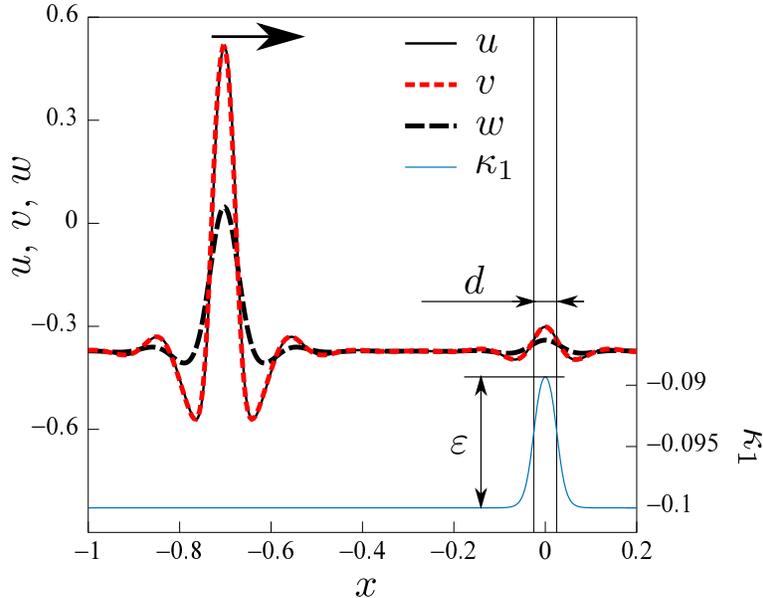}
  \caption{Traveling pulse in heterogeneous media of bump type.
    Heterogeneity $\kappa_{1}(x,\varepsilon,d)$ (blue-solid curve) of bump type with width $d$ and height $\varepsilon\neq0$ is introduced, which induces nonuniform pattern around the bump as a new background state. 
    Traveling pulse solution with oscillatory tails is initially placed
    far leftward from bump heterogeneity.
    Pulse propagates to right-hand direction, collides with the bump, and emits various outputs depending on height $\varepsilon$.
  }
  \label{hetero.eps}
\end{figure*}



The remainder of this paper is organized as follows.
In Section 2, we introduce the model equations and present the appropriate parameter regime for this study. In Section 3, we study the homogeneous problem. In context, a snakes-and-ladders structure of stationary pulses and figure-eight-like stack of isolas structure of traveling pulses are presented with a mechanism of localization of patterns, including the explanation of the occurrence of drift bifurcation based on a global bifurcation perspective. In Section 4, we study the heterogeneous problem of bump type by numerically exploring an entire set of HIOPs. For a given bump height, the asymptotic state can be expressed as one of the HIOPs without solving evolutions of the model system. In Section 5, we reduced the PDE dynamics to a finite dimensional system using multiple time-scale analysis, which revealed a periodic cycle of pulse response against the bump heterogeneity. Based on these findings, we clarified the occurrence of transitions according to variations in height and explained the behaviors of stable and unstable manifolds of saddle points distributed on the line, especially in the vicinity of the transition points. Moreover, the interrelation between the pulse orbit and stable/unstable manifolds are vital for understanding the mechanism of pulse response against the bump heterogeneity. We discuss the sensitive dependence on the initial conditions in Section 6, specifically, for the spiral case. Ultimately, we concluded the article in Section 7 with the findings, scope, open problems, and limitations of this study.  \\

\section{Model equation}

The model system constitutes the following three-component reaction diffusion equations:
\begin{equation}
  \label{eq2.01}
  \left\{ \begin{array}{lcl}
    u_{t}=D_{u} \Delta u+\kappa_{2} u-u^{3}-\kappa_{3} v-\kappa_{4} w+\kappa_{1}\\
    \tau v_{t}=D_{v} \Delta v+u-v\\
    \theta w_{t}=D_{w} \Delta w+u-w
  \end{array}\right.
\end{equation}
where $\Delta$ denotes the Laplacian; $u = u(x,t)$, $v = v(x,t)$, and $w = w(x,t)$ depend on time $t$ and space $x\in\mathbb{R}^{1}$; $\kappa_{1}$, $\kappa_{2}$, $\kappa_{3}$, and $\kappa_{4}$ represent kinetic parameters; $\tau$ and $\theta$ denote time constants; $D_{u}$, $D_{v}$, and $D_{w}$ denote diffusion coefficients. The developed system is a one-activator and two-inhibitor model, and it was derived and studied by Purwin et al.\cite{PhysRevLett.78.3781,Purwins2005} (\cite{Liehr2013} including 
the references), wherein they derived a qualitative model for
the gas discharge phenomenon. In particular, the parameter $\kappa_{1}$ is proportional to the voltage difference between the two plates, so it can be externally controllable and becomes a bifurcation parameter in this study. Eq.(\ref{eq2.01}) can be regarded as a generalized FitzHugh--Nagumo system of three-component type, as it was reduced to the usual FitzHugh--Nagumo equations upon discarding the second inhibitor $w$. 

The above-mentioned system (\ref{eq2.01}) displayed surprisingly rich and
complex behaviors from a dynamical system perspective.
For instance, the system (\ref{eq2.01}) supports stable stationary/traveling
pulses in 1D and spot (bullet) solutions with monotone/oscillatory tails in 2D (3D) depending on the parameters.
In addition, self-replication, self-destruction, or spatiotemporal
chaos\cite{Nishiuraetal2007,PhysRevE.75.036220} were observed in the appropriate parameter regime. The dynamics of the interaction with heterogeneities is the fundamental research topic of this study, and it was investigated for localized patterns, but primarily for monotone tail case \cite{Nishiuraetal2007,PhysRevE.75.036220,Ei2002TheMO,PhysRevE.79.046205,NishiuraTeramotoYuan2012,NishiuraTeramotoUeda2003,PhysRevE.67.056210,doi:10.1063/1.2087127,PhysRevE.80.046208,PhysRevE.79.046205,PhysRevE.69.056224}. Nonetheless, this research topic is still a fertile ground for patterns with oscillatory tails. Thus, the third component $w$ is indispensable for the coexistence of
multiple number of stable localized traveling patterns in higher dimensional spaces,
provided that the diffusivity $D_{w}$ of $w$ is larger than those of $u$ and $v$,
which is an ideal setting for studying collision dynamics in higher
dimensional space.
Therefore, Eq.(\ref{eq2.01}) was designated as a paradigm model for studying spatial localized patterns \cite{Liehr2013,NishiuraTeramotoUeda2003}.

\subsection{Parameter setting and bump heterogeneity}

The first task is to determine an appropriate parameter regime in which stable TPOs can be observed. Generally, a wave instability (Hopf bifurcation with finite wave number) occurs in the class of three-component reaction--diffusion system \cite{Sakamoto_wave_instability,Nakao_wave_instability}. Thus, these patterns can be determined in case certain localization mechanism exerts to the system. Prior to exploring the traveling pulses, we discuss in Section 3 that Turing instability\cite{Turing1952} coupled with beat bifurcation produces stationary localized patterns with snaky structures under the following set of parameters that were employed in the subsequent discussion.

\begin{equation}
  \label{parametervalues}
  \left\{ \begin{array}{lcl}
    \kappa_{2}=1.17,\kappa_{3}=0.3,\kappa_{4}=1.0,\\
    \theta=0.0,\\
    (D_{u},D_{v},D_{w})=(1.1\times10^{-4},0,9.8\times10^{-4}).
  \end{array}\right.
\end{equation}
\noindent
Note that $\theta=D_{v}=0$. Remarkably, the system (\ref{eq2.01}) supported stable traveling pulses under the same setting (\ref{parametervalues}) in the form of figure-eight-like stack of isolas instead of snaky structures, as explained in Section 3.1, which appears as an imperfection of the snaky structure.

Under the setting (\ref{parametervalues}), the system (\ref{eq2.01}) yields
\begin{equation}
  \label{eq_bif_1}
  \left\{
  \begin{array}{rcl}
    u_t&=&D_u\Delta u+\kappa_2u-u^3-\kappa_3v-\kappa_4w+\kappa_1\\
    \tau v_t&=&u-v\\
    0&=&D_w\Delta w+u-w
  \end{array}
  \right.
\end{equation}

\noindent
The homogeneous solution $(\underline{u}(\kappa_{1}),
\underline{v}(\kappa_{1}), \underline{w}(\kappa_{1})$) of Eq.(\ref{eq_bif_1})
must satisfy
$\underline{u}=\underline{v}=\underline{w}$, and it is uniquely determined
from the current parameter setting. More precisely, it is a solution of
\begin{equation}
  \label{eq2.03}
  -u^{3}+(\kappa_{2}-\kappa_{3}-\kappa_{4})u+\kappa_{1}=0,
\end{equation}
under the sign conditions $\kappa_{2}-\kappa_{3}-\kappa_{4}<0$ (refer to Eq.(\ref{parametervalues})).
This is called the background state, as the tail part of the traveling
pulse converges to this constant state on both sides. Note that $\underline{u}(\kappa_{1})$ is a monotonously increasing function of $\kappa_{1}$, and $\underline{u}(\kappa_{1}) < 0$ for $\kappa_{1} < 0$. If the origin is translated to the background state and Eq.(\ref{eq_bif_1}) is rewritten, then the following operation such as $(1-D_w\Delta)^{-1}$ is possible on $L^2(\mathbb R)$, considering the localized patterns. However, this study considered the system (\ref{eq_bif_1}) on a bounded interval with periodic boundary conditions, namely, on $\mathbb S^1$. Moreover, the generality is not lost, unless the length of the circle is adequately large in comparison to  the size of the concerned localized patterns.
Similarly, the third equation can be solved as
\begin{equation}
  \label{eq_bif_4}
  w=\left(1-D_w\Delta \right)^{-1}u,\;\;\;
  u,w\in L^2(\mathbb S^1),
\end{equation}
and therefore, Eq.(\ref{eq_bif_1}) becomes a system of two components $(u,v)$ with nonlocal terms:
\begin{equation}
  \label{eq_bif_2}
  \left\{
  \begin{array}{rcl}
    u_t&=&\left(
    D_u\Delta +\kappa_2-\kappa_4\left(1-D_w\Delta \right)^{-1}\right)u\\
    &&-\kappa_3v-u^3+\kappa_1\\
    \tau v_t&=&u-v
  \end{array}
  \right.
\end{equation}


We are interested in the heterogeneity of bump type introduced in the parameter space $\kappa_{1}$, which is discussed as follows.

\begin{equation}
  \label{eq4.01}
  \kappa_{1}(x,\varepsilon,d)=\underline{\kappa}_{1}
  +\widehat{\kappa}_{1}(x,\varepsilon,d)
\end{equation}
where $\underline{\kappa}_{1}$ denotes the baseline situated far from the bump and will be selected as $\underline{\kappa}_{1}=-0.1$ in Section 4. In context, the baseline can be selected independently, but it should belong to the briefly defined admissible interval of the TPO.
The profile $\widehat{\kappa}_{1}$ can be expressed as
\begin{equation}
  \label{eq4.02}
  \begin{array}{l}
    \widehat{\kappa}_{1}(x,\varepsilon,d)\\
    \displaystyle
    =\varepsilon\left(
    \frac{1}{1+\ope^{-\gamma(x+d/2)}}+\frac{1}{1+\ope^{\gamma(x-d/2)}}-1\right),
  \end{array}
\end{equation}
where
$\varepsilon$ controls the height and $d$ denotes the width of the bump.
In addition, $\gamma$ controls the steepness of the slope and was fixed as 
$\gamma=100$ throughout the study. The heterogeneity $\widehat{\kappa}_{1}$ was altered in appropriate way for computations on circle, if necessary.
Although the height $\varepsilon$ and width $d$ were varied, we added 
a restriction for the height $\varepsilon$ to ensure 
the existence of stable traveling pulse solutions for the associated
homogeneous problem both inside and outside of the bump. Otherwise, much more complicated dynamics emerged near the bump, which was beyond the scope of this study. Therefore, we opted for the parameter regime with respect to $\kappa_{1}$ in which the stable traveling pulse existed, i.e., {\it the admissible interval} for TPO.
In particular, $\varepsilon\in[-0.035866, 0.012]$
(or $\kappa_{1}\in[-0.135866,-0.088]$) lies between the two saddle-node points plotted in Fig.\ref{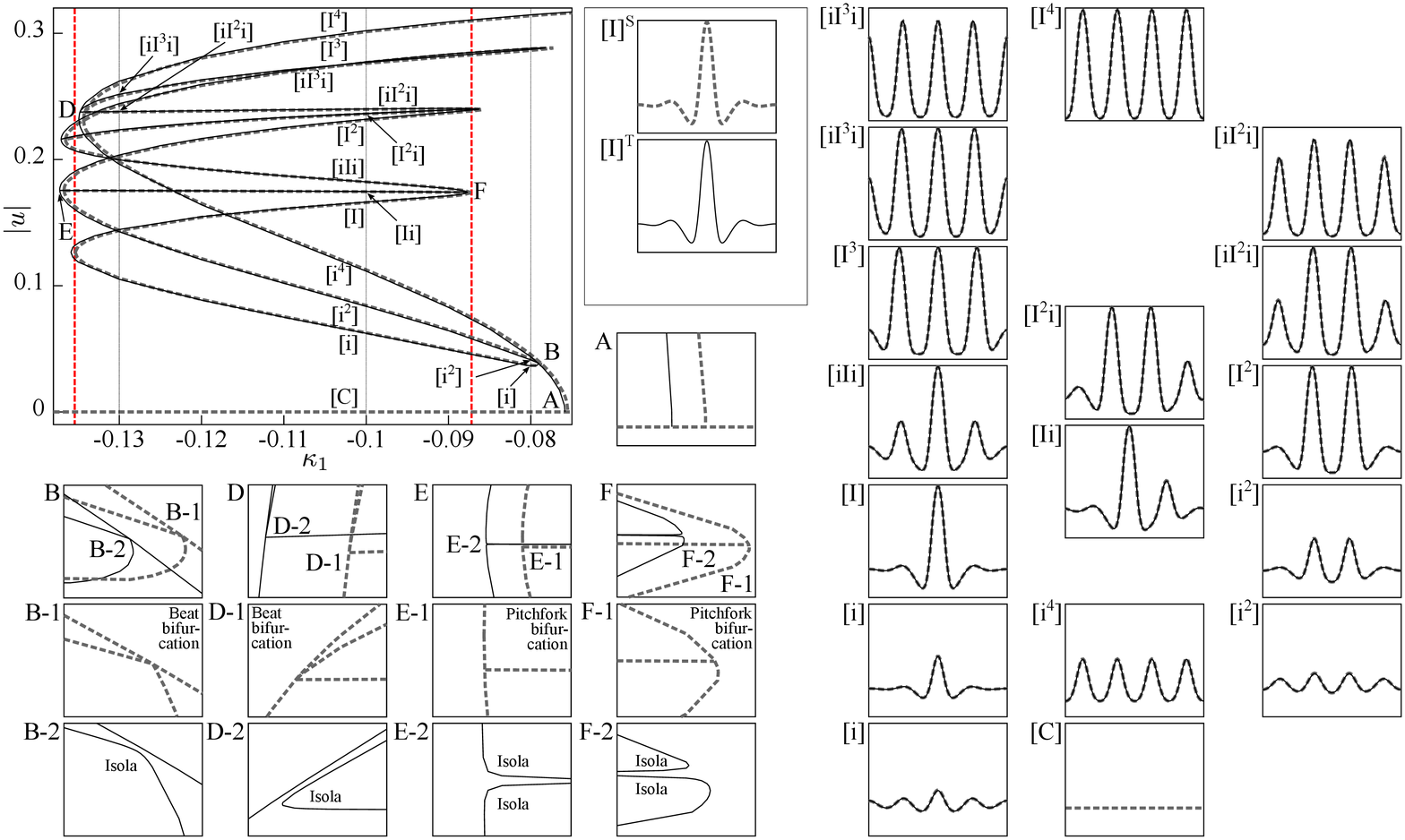}. This topic is elaborated further in the subsequent section.


This study primarily considered a bump heterogeneity with short width, as the
analysis for the wider case can be conducted in a similar approach to that of \cite{Nishiuraetal2007,PhysRevE.79.046205}.
Ultimately, the parameter $\tau$ controlled the velocity of the traveling
pulse and was vital for determining the drift bifurcation.
In particular, the drift bifurcation point was expressed as 
$\tau_{c}=1/\kappa_{3}=3.33\cdots$ (see Appendix A.2), which enabled us to reduce the PDE dynamics to a finite-dimensional ODEs.
In the subsequent sections, we used $\tau=3.35$ for the PDE and ODE simulations.


Upon introducing a heterogeneity, the constant background state ceases to be a solution and should be replaced by an inhomogeneous solution. In Section 4, such a nonconstant
stationary pattern existed in the admissible interval, as depicted in Fig.\ref{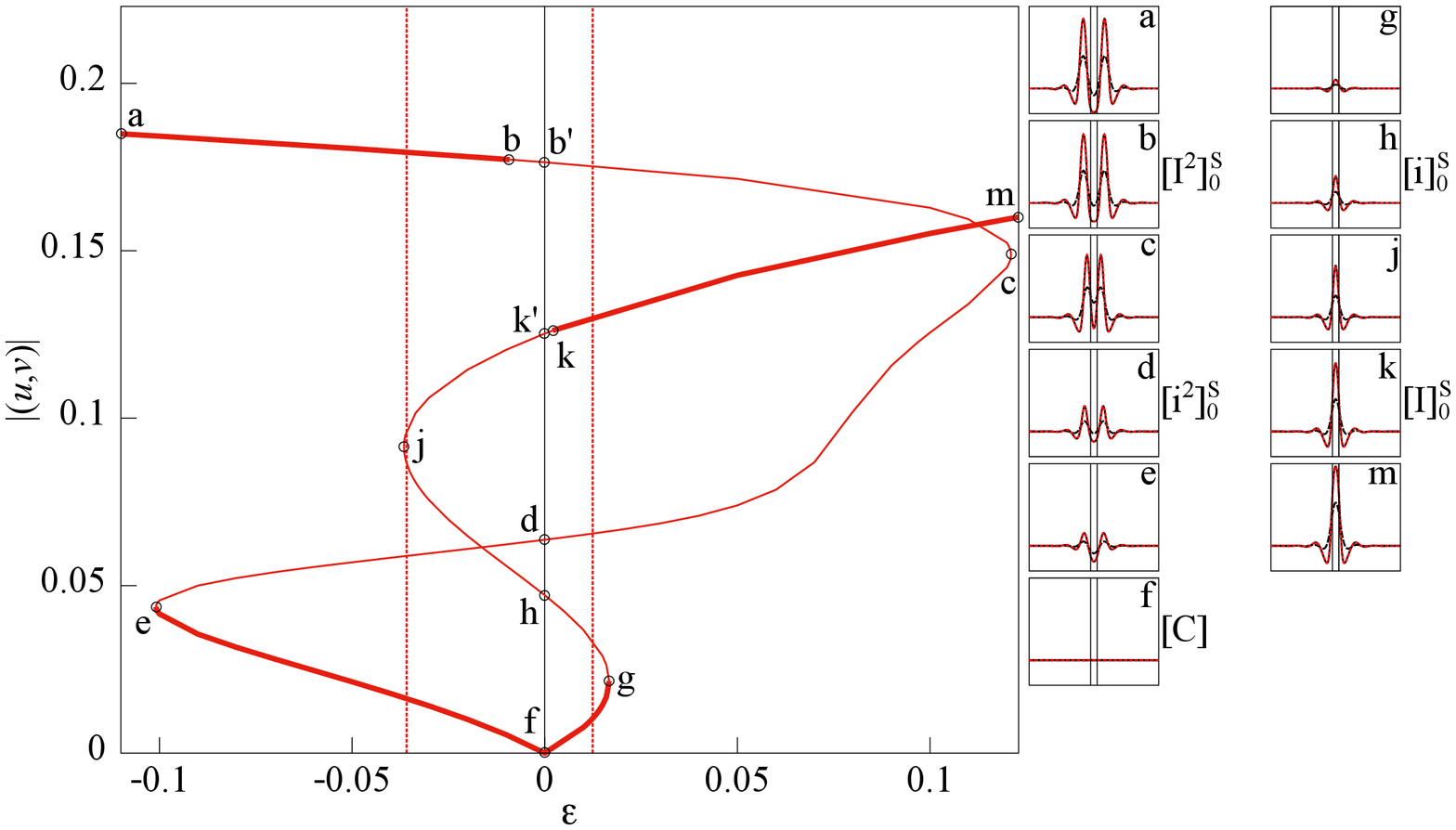}, which is emanated from the homogeneous background state at $\varepsilon=0$. We refer this set of all solutions of (3) as HIOP, which was caused by the heterogeneity.



A schematic representation of the traveling pulse with the heterogeneity of bump type $\kappa_{1}(x,\varepsilon,d)$ (solid-black curve for $u$) is illustrated in Fig.\ref{hetero.eps}, wherein a small localized solution (blue color) represents a new background state.
The traveling pulse solution with the oscillatory tails was initially placed far from the bump heterogeneity, and it started to propagate rightward. Thereafter, it collided with the new
background state and emitted various outputs as discussed in Section 4.

\subsection{Numerical scheme}

The outline of the numerical scheme is stated herein. As the periodic boundary conditions were imposed on Eq.(\ref{eq_bif_2}), the $u$, $v$, and $\kappa_1$ were discretized as
\[
\left(\begin{array}{c}
  u(x,t)\\v(x,t)
\end{array}\right)=\sum_{l=-\left[\frac{L}{2}\right]}^{\left[\frac{L-1}{2}\right]}
\ope^{\opi klx}\left(\begin{array}{c}
  u_l(t)\\v_l(t)
\end{array}\right),\;\;\;
\kappa_1(x;\underline{\kappa}_{1},\varepsilon)
=\sum_{l=-\left[\frac{L}{2}\right]}^{\left[\frac{L-1}{2}\right]}
\ope^{\opi klx}\kappa_{1,l},
\]
and thus, a dynamical system:
\begin{equation}\label{dzdt}
  \frac{\opd\vec z}{\opd t}=\vec f(\vec z),
\end{equation}
could be obtained, where
\[
\vec z=\{z_{lm}\}\;\;(l=1,2,\cdots,L;\;m=1,2),\;\;\;
z_{l1}=u_l,\;z_{l2}=v_l,
\]
and the flow $\vec\phi=\{\phi_{lm}\}$ generated by the
the vector field $\vec f$ was defined as
\[
\vec z(t)=\vec\phi(\vec z^{(0)},t),
\]
where $\vec\phi(\vec z^{(0)},0)=\vec z^{(0)}$.
Various classes of the solutions including steady, steady traveling,
time periodic, and periodic traveling solutions were expressed as the conditions
to the flow $\vec\phi(\vec z^{(0)},t)$ listed in Table\ref{conditions}.
\begin{table}
  \renewcommand{\arraystretch}{1.5}
  \begin{center}
    \caption{
      Conditions for various classes of solutions for flow
      $\protect\vec\phi(\protect\vec z^{(0)},t)$,
      where $U$ denotes a phase/group velocity; $\beta$, a period;
      $t$, an arbitrary positive time;
      $\|\{a_{lmn}\}\|=\sqrt{\sum_{l,m,n}a_{lmn}^2}$;
      $\eta$, a small positive value
      that determines the convergence criterion
      with $\eta=10^{-10}$.
    }\label{conditions}
    \begin{tabular}{lll}
      \hline
      Class of solution&Condition&Convergence criterion\\
      \hline\hline
      Steady
      &
      $\phi_{lmn}\left(\vec z^{(0)},t\right)=z_{lmn}^{(0)}$
      for $^\forall t$
      &
      $\displaystyle
      \int_0^t\left\|\left\{\frac{\del
        \phi_{lmn}\left(\vec z^{(0)},t'\right)
      }{\del t'}\right\}\right\|\opd t' < \eta
      $\\
      \hline
      Steady traveling
      &
      $\phi_{lmn}\left(\vec z^{(0)},t\right)\ope^{\opi klUt}=z_{lmn}^{(0)}$
      for $^\forall t$ and $^\exists U$
      &
      $\displaystyle
      \int_0^t\left\|\left\{\frac{\del
        \phi_{lmn}\left(\vec z^{(0)},t'\right)\ope^{\opi klUt'}
      }{\del t'}\right\}\right\|\opd t'<\eta
      $\\
      \hline
      Time-periodic
      &
      $\phi_{lmn}\left(\vec z^{(0)},\beta\right)=z_{lmn}^{(0)}$
      for $^\exists\beta$
      &
      $\left\|\left\{z_{lmn}^{(0)}
      -\phi_{lmn}\left(\vec z^{(0)},\beta\right)\right\}\right\|
      <\eta$\\
      \hline
      Periodic traveling
      &
      $\phi_{lmn}\left(\vec z^{(0)},\beta\right)\ope^{\opi klU\beta}=z_{lmn}^{(0)}$
      for $^\exists U$ and $^\exists \beta$
      &
      $\left\|\left\{z_{lmn}^{(0)}-\phi_{lmn}\left(\vec z^{(0)},\beta\right)
      \ope^{\opi klU\beta}\right\}\right\|<\eta$\\
      \hline
    \end{tabular}
  \end{center}
\end{table}

Subsequently, we sought the initial value $\vec z^{(0)}$ satisfying the condition
using the shooting method with the Newton--Krylov
method \cite{Watanabe2010,Watanabe2012}.
Any special technique such as the arc-length method is not used for branch
continuation.
Thereafter, the solution at the tip of the branch was used as the
initial estimation of the Newton--Krylov method to extend the branch, and the Arnoldi method was utilized to evaluate the linear stability of
each solution.
In addition, certain additional techniques were utilized when the branch yielded a saddle-node bifurcation point.
Upon continuing steady branch, the eigenmode obtained from the linear stability analysis was added with small weight
(1\% to 10\% with respect to the $L_2$ norm of $\vec z$) to the tip
solution to derive the initial estimation of the new branch.
In contrast, the period $T_0$ remained constant for the time-periodic branches, whereas the bifurcation parameter $\varepsilon$ or $\kappa_1$ was
considered as a variable.
In context, refer to Watanabe et al. \cite{Watanabe2012,Salingeretal2014} for the details of the calculation method.

\section{Homogeneous problem}

\subsection{Linearized analysis}

As a prerequisite condition, we identified the admissible interval of the TPO with respect to $\kappa_{1}$ containing the stable traveling pulse.
Such an admissible interval relies on other parameters as well. Specifically, the drift bifurcation occurs as $\tau$ crosses the following critical value $\tau_c$ (as detailed in Appendix A.2)
\begin{equation}
  \label{eq:drift}
  \tau_c=1/\kappa_3=3.33\cdots.
\end{equation}
The solution branch of stable traveling pulses ought to be explored at least in two-parameter space $(\kappa_{1}, \tau)$. Notably, the above-mentioned drift bifurcation point does not depend on the system size under periodic boundary conditions. Without losing generality, we fixed $\tau = 3.35 > \tau_c$ and investigated the global solution branches of Eq.(\ref{eq_bif_2}) with respect to $\kappa_{1}$ emanating from the constant background solution. In this study, we aim to demonstrate that the stationary solutions form a snaky structure and the traveling solutions form a stack of isolas with a figure-eight shape, as depicted in Fig.\ref{snakes_and_ladders.eps} and Fig.\ref{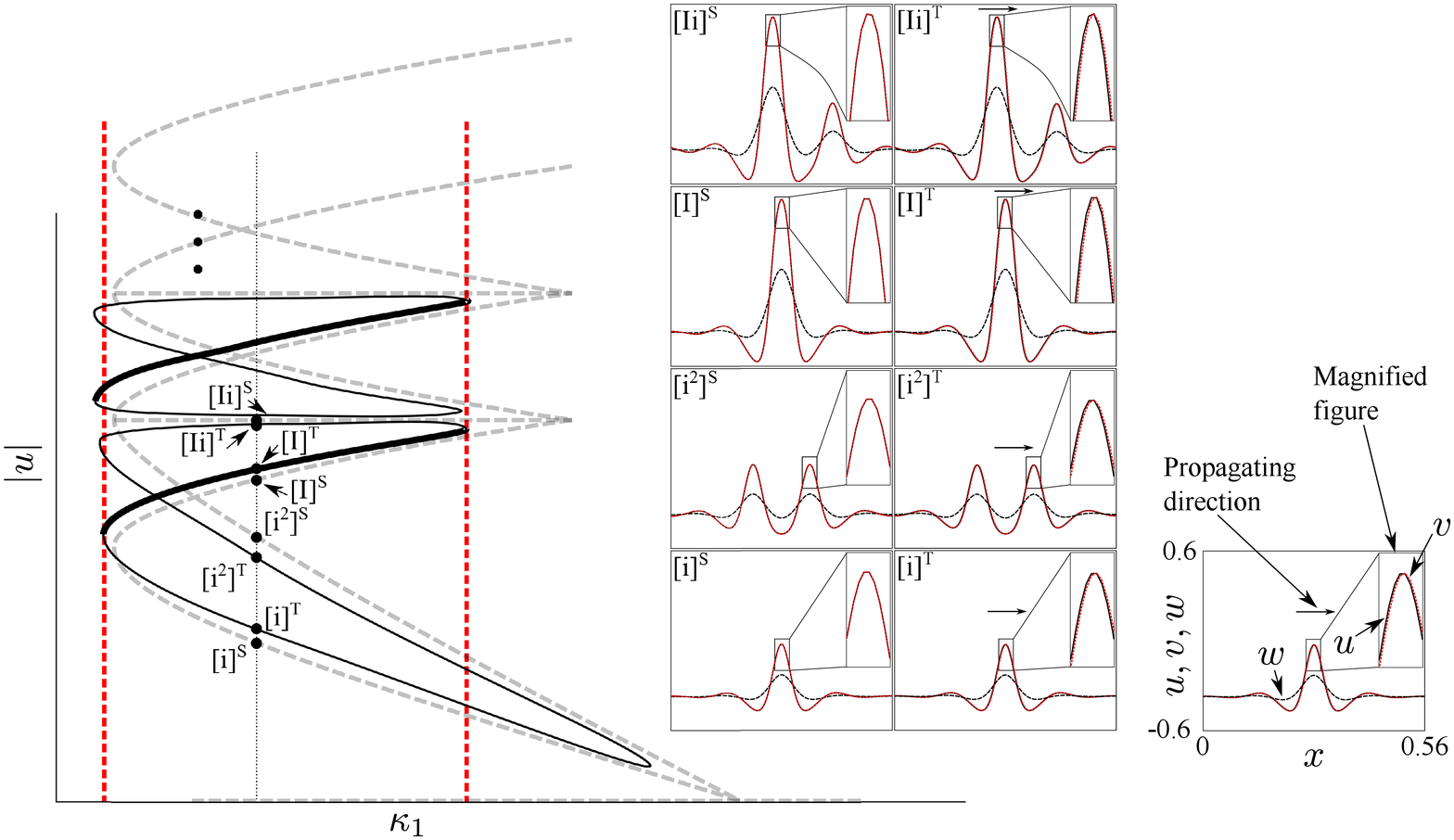}, respectively. This will enable the identification of the admissible interval for observing both the generation and disappearing mechanism of these patterns with varying $\kappa_{1}$. Although  finitely many traveling pulses exist with varying number of peaks, we are primarily interested in the single-peak pulse.



First, the linear stability of the background solution $(\underline u(\kappa_1), \underline v(\kappa_1))$ was investigated, which satisfied Eq.(\ref{eq2.03}) and $\underline u(\kappa_1)=\underline v(\kappa_1)=\underline w(\kappa_1)$. The $\underline u(\kappa_1)$ function is a monotonously increasing function of $\kappa_1$.
On linearizing Eq.(\ref{eq_bif_2}) with respect to $(\underline u(\kappa_1),
\underline v(\kappa_1))$ and substituting
$(u(\kappa_1,x,t),v(\kappa_1,x,t))
=(\underline u(\kappa_1),\underline v(\kappa_1))
+(\hat u,\hat v)\ope^{\lambda t+\opi\omega x}$
into it, we derived
\begin{equation}
  \label{eq_bif_3}
  \lambda\left(
  \begin{array}{c}
    \hat u\\\hat v
  \end{array}
  \right)
  =
  \left(
  \begin{array}{cc}
    a(\kappa_1)&-\kappa_3\\
    1/\tau&
    -1/\tau
  \end{array}
  \right)
  \left(
  \begin{array}{c}
    \hat u\\
    \hat v,
  \end{array}
  \right)
\end{equation}
where
\[
a(\kappa_1)=-D_u\omega^2+\kappa_2-3\{\underline u(\kappa_1)\}^2-
\frac{\kappa_4}{1+D_w\omega^2}
\]
$a(\kappa_1)$ denotes a monotonously increasing function of $\kappa_1$ for $\kappa_1 < 0$, because $\{\underline u(\kappa_1)\}^2$ is monotonously decreasing for the same region. In fact, as we will see soon, the interesting structure appears in $\kappa_1 < 0$. The eigenvalue problem (\ref{eq_bif_3}) can be solved to yield the solution.
\[
\lambda=\frac{
  a(\kappa_1)-\displaystyle\frac1\tau
  \pm\sqrt{\left(a(\kappa_1)-\frac1\tau\right)^2
    -\frac{4(\kappa_3-a(\kappa_1))}{\tau}}
}{2}.
\]
As $\kappa_1$ increased, $a(\kappa_1)$ attains $1/\tau$, and thereafter, $\kappa_3$, because $1/\tau < \kappa_3 = 0.3$, which implies that for any fixed $\tau>\tau_c$, the Hopf bifurcation initially occurs and the pitchfork bifurcation follows the same wavenumber
$\omega_c$ as $\kappa_{1}$ is increased, as depicted in Fig.\ref{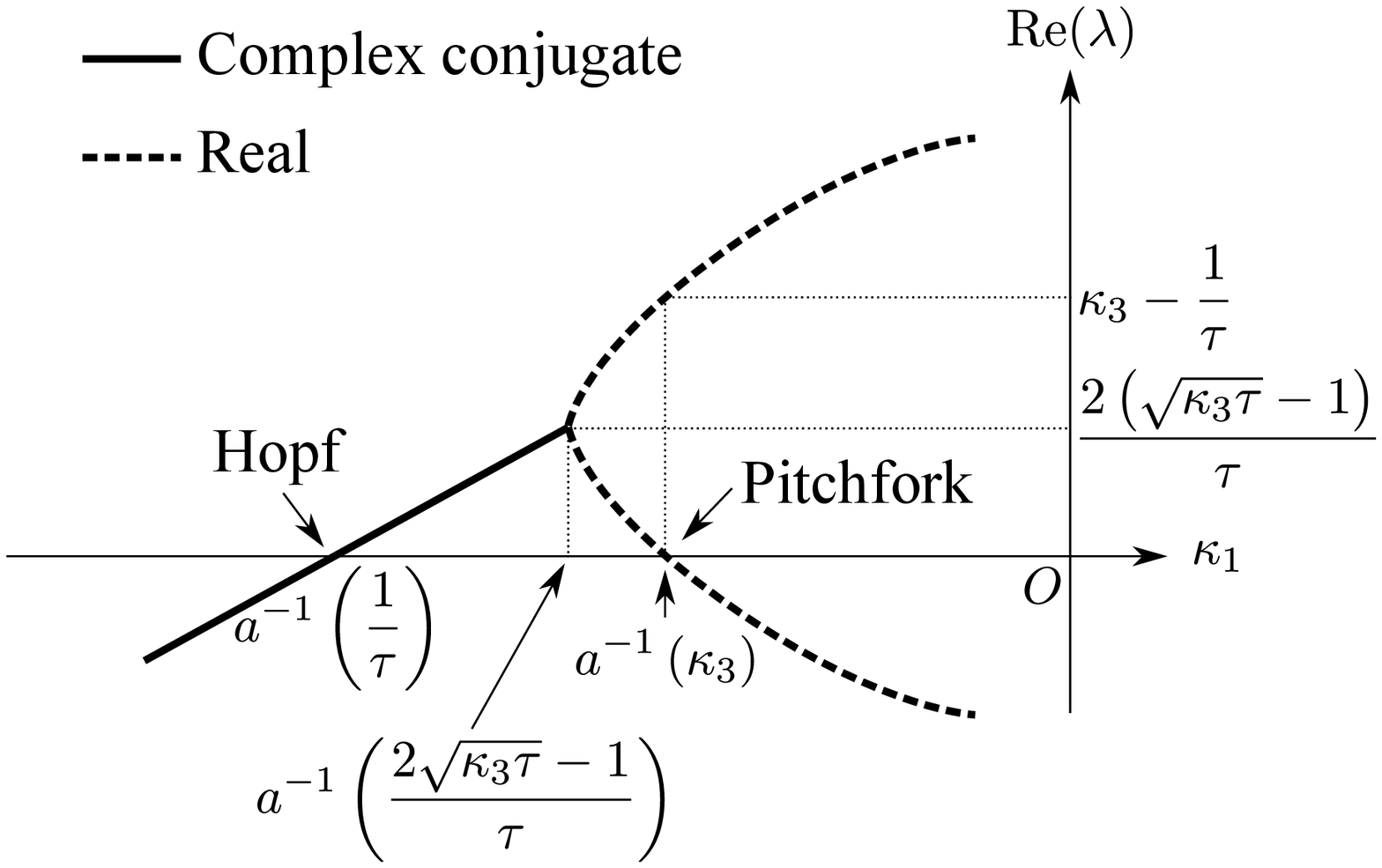}. Herein, the critical wavenumber $\omega_c$ is expressed as 
\[
\omega_c=\sqrt{\frac{\sqrt{\kappa_4D_uD_w}-D_u}{D_uD_w}}.
\]

\begin{figure}
  \begin{center}
    \includegraphics[width=.6\hsize]{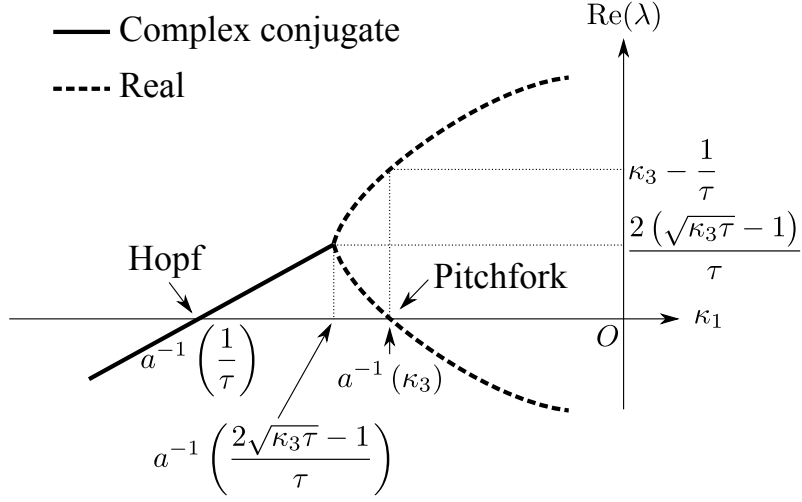}
    \caption{ Linear stability of trivial solution. As $\kappa_1$ increases, Hopf bifurcation initially occurs at $\kappa_1 = a^{-1}(1/\tau)$, and the traveling wave train emerges with the wavenumber $\omega_c$. Subsequently, pitchfork (Turing) bifurcation occurs at $\kappa_1 = a^{-1}(\kappa_3)$, and unstable stationary wave train bifurcates from trivial solution.}
    \label{Stability_Trivial.eps}
  \end{center}
\end{figure}
More precisely, 
$\displaystyle\lambda=\pm\opi\frac{\sqrt{\kappa_3\tau-1}}{\tau}$ is attained for $\displaystyle a(\kappa_1)=\frac{1}{\tau}$ with occurrence of the Hopf bifurcation. In addition, the traveling wave solution with wavenumber $\omega_c$ with phase velocity
$\displaystyle\pm\frac{\sqrt{\kappa_3\tau-1}}{\tau\omega_c}$
bifurcates from the trivial solution.
As $\kappa_1$ is further increased and 
$\displaystyle a(\kappa_1)=\frac{2\sqrt{\kappa_3\tau}-1}{\tau}$,
$\displaystyle\lambda=\frac{2\left(\sqrt{\kappa_3\tau}-1\right)}{\tau}$,
and thus, the conjugate pair degenerates to yield two real eigenvalues.
As $\kappa_1$ is increased further, one of the real eigenvalues becomes larger, whereas the
other becomes diminishes.
For $a(\kappa_1)=\kappa_3$, smaller real eigenvalue crosses the real axis,
and thus, pitchfork bifurcation occurs and stationary wave train with
wavenumber $\omega_c$ appears, which is known as
Turing instability \cite{Turing1952}. Notably, for $\tau = 1/\kappa_3$, the above-mentioned Hopf and pitchfork bifurcation points coincide with each other and becomes equal to the critical value
(\ref{eq:drift}).

\subsection{Snakes-and-ladders and figure-eight-like stack of isolas}

In the present bifurcation analysis, we considered the problem on a finite domain with periodic boundary conditions. Thus, the destabilized wavenumber was selected in a discretized manner. Moreover, we considered $x\in[0,0.56)\dummy{]}$, and the domain contained four waves as $8\oppi/\omega_c\approx0.56$. The system allowed more waves as its size enlarged, and the consequent hierarchical snaky and stack-of-isola structures become more stratified, as illustrated in Fig.\ref{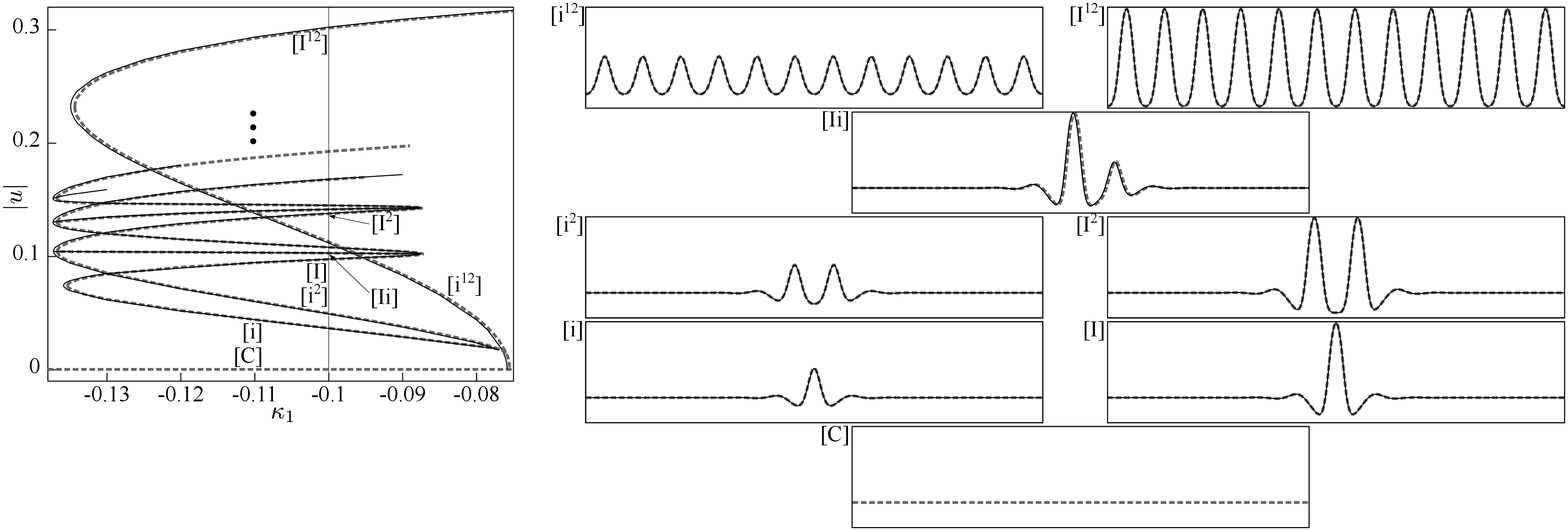}. However, its essence remained the same. The evolution of the global bifurcation diagram can be observed by comparing the two corresponding diagrams.

\begin{figure*}
  \begin{center}
    \includegraphics[width=\hsize]{snakes_and_ladders.eps}
    \caption{Global bifurcation diagram and visualizations of corresponding solutions with respect to $\kappa_{1}$ for a fixed $\tau=3.35$.
      System size: $0.56$.
      Vertical axis denotes the norm of $u$.
      Solid and dotted lines denote traveling and stationary patterns, respectively.
      Each magnified diagram (A, B, and D--F) details the local bifurcation.
      Associated solution profiles are depicted in the right-hand side.
      Barcode type notations such as $\mathrm{[iIi]}$ are used to indicate solution shapes.
      See text and Fig.\ref{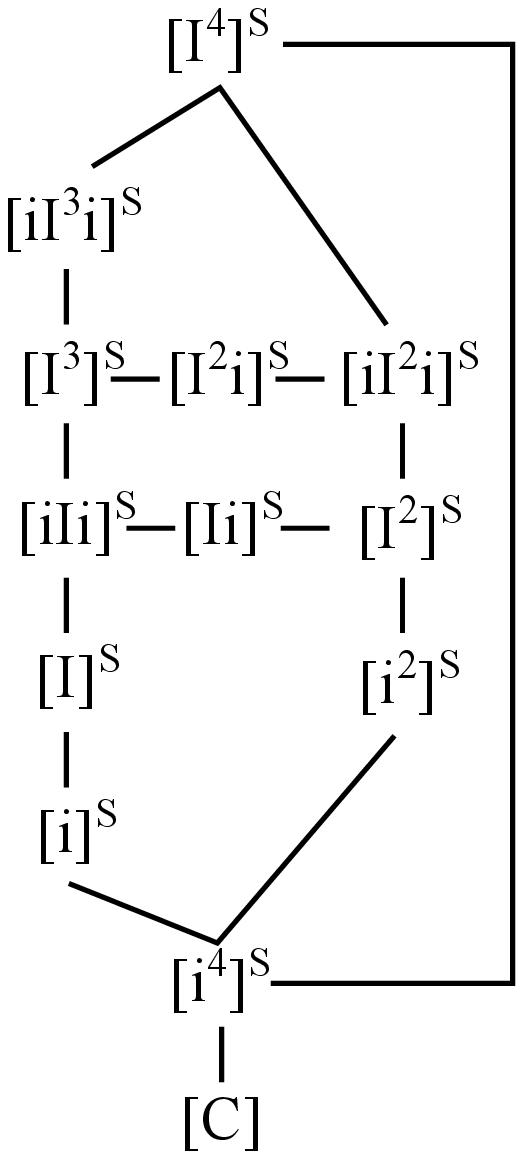} for meaning of barcode.
      These notations without superscript indicate both of them, i.e.,
      $\left\{[\cdot]\right\}=\left\{[\cdot]^\mathrm{S}\right\}\cap\left\{[\cdot]^\mathrm{T}\right\}$.
      An example of this notation for $\mathrm{[I]}$ is depicted in the figure:
      The solution branch and solution profile drawn in thick-gray dotted line is $\mathrm{[I]}^\mathrm{S}$, that of the thin-black solid line is $\mathrm{[I]^\mathrm{T}}$, and both of them are indicated by $\mathrm{[I]}$.
      The diagram comprises three portions: Stationary and traveling wave trains emanating from the trivial branch (see A and Fig.\ref{Stability_Trivial.eps}), snaky structure of stationary pulse patterns as a secondary bifurcation (dotted line in B-1), and figure-eight-like stack of isolas of traveling pulse patterns (solid line in B-2).
      The magnified structure of figure-eight-like stack of isolas is depicted in Fig.\ref{isola.eps}, wherein $\mathrm{[C]}$ denotes the trivial constant state. The vertical-dotted line at $\kappa_1 = -0.1$ denotes the baseline for the heterogeneous problem, i.e., the parameter $\kappa_1$ was maintained at a value of $-0.1$ outside the bump. In the pattern of $u$ depicted in the insets, certain patterns appeared as a sharp edge at both the ends. However, these patterns satisfied the periodic boundary condition and connected smoothly at both the ends.
    }\label{snakes_and_ladders.eps}
  \end{center}
\end{figure*}

\begin{figure*}
  \begin{center}
    \includegraphics[width=.2\hsize]{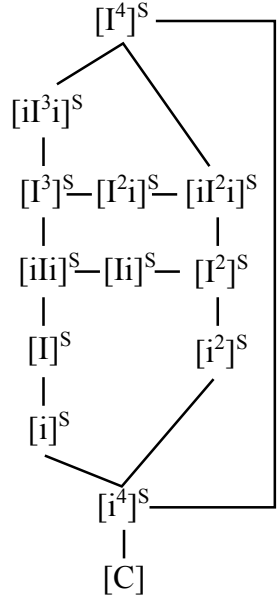}
    \caption{
      Schematic of snakes-and-ladders bifurcation structure.
      Even and odd type of snaky branches connect $\mathrm{[i^4]}^\mathrm{S}$ to $\mathrm{[I^4]}^\mathrm{S}$, and the ladder branches bridge these two parts. Barcode depicts the number of large and small peaks in the pattern; for instance, $\mathrm{[iIi]}$ depicts one large peak in the middle and two small peaks on both sides; superscript ``S'' denotes stationary branch.
    }\label{schematic_snakes_and_ladders.eps}
  \end{center}
\end{figure*}

\begin{figure*}
  \begin{center}
    \includegraphics[width=0.8\hsize]{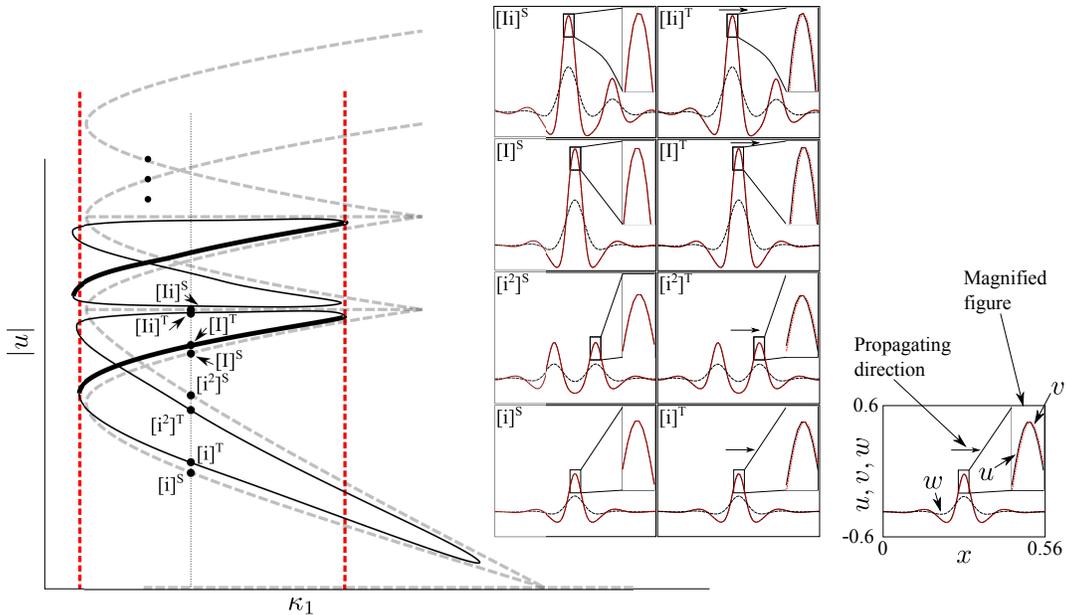}
    \caption{
      Relationship between snakes-and-ladders structure and stack of isolas in Fig.\ref{snakes_and_ladders.eps}. Each figure-eight-like isola represents a traveling pulse branch. The number of peaks increased from lower to upper stack, as illustrated in the right-hand side of the figure; superscripts of $\mathrm{[Ii]}^\mathrm{T}$ and $\mathrm{[Ii]}^\mathrm{S}$ denote ``Traveling'' and ``Stationary'', respectively, and the thick line depicts the stable portion of the branch. The admissible parameter region of $\kappa_1$ is defined as the interval spanned by two saddle-node points of the lowest isola.}\label{isola.eps}
  \end{center}
\end{figure*}

The parameters were set as (\ref{parametervalues}) and an interval of $\kappa_{1}$ was sought for which the stable traveling pulses existed. The four categories of solutions are as follows: stationary-wave-train, traveling-wave-train, stationary pulse, and traveling pulse.
Herein, we used ``pulse'' instead of ``localized pattern'' for the 1D case. Their interrelations were clarified by exploring the global behaviors of all the relevant solution branches emanating from the trivial constant state.
The resulting global bifurcation diagram is illustrated in Fig.\ref{snakes_and_ladders.eps}.
The thick-gray-dashed lines depict the stationary solutions, and the thin-black-solid lines depict the traveling solutions, which were used for denoting the respective pattern profiles.
Additionally, the stationary branches formed a typical snaky structure that has been earlier observed in several systems \cite{WOODS1999147,PhysRevFluids.2.064401,Kozyreff,Ponedeletal2016,MA20101867,Burke2007,doi:10.1063/1.2746816,doi:10.1137/06067794X,doi:10.1063/1.4792711,BBKM_Eckhaus_snaking,Beck2009,KY_foliated_snaking_2020,ACV_localized_unifying_framework_2021}.
The three-component system (\ref{eq_bif_1}) is accompanied by traveling pulse branches, and the profiles of both the solutions resemble each other in the selected parameter setting. Moreover, a barcode notation is used in Fig.\ref{snakes_and_ladders.eps} to denote the number of large and small peaks with their locations such as $\mathrm{[iIi]}$; one large peak in the middle and two small peaks on both sides. The superscript S or T is added to denote the stationary or traveling branch, respectively, and a barcode without superscripts denotes both types of solutions, as illustrated in Fig.\ref{snakes_and_ladders.eps}.

Thus, the traveling pulse branch can be regarded as an imperfection of the stationary pulse branch, because the profile of the traveling pulse does not retain the reflection symmetry of the stationary pulse owing to the drift bifurcation. Thus, the traveling pulse branch splits into several isolas rather than a single connected branch. In particular, the set of traveling pulse branches form a figure-eight-like stack of isolas, as illustrated in Fig.\ref{isola.eps}, which is one of the key features of the system, as described herein. A similar isola structure was observed in Fig. 3 of \cite{0133-0189_2009_Special_109} for symmetric two-pulse states; however, those were not traveling solutions.

Now let us observe the details of these structures. As depicted in Fig.\ref{Stability_Trivial.eps}, the trivial solution loses stability at
$\kappa_1=a^{-1}(\frac{1}{\tau})\approx-0.0758$
and the traveling-wave-train
solution subcritically bifurcates via Hopf bifurcation (wave instability).
Slightly after that, at
$\kappa_1=a^{-1}(\kappa_3)\approx-0.0755$,
the stationary-wave-train
solution subcritically bifurcates via pitchfork bifurcation.

\begin{figure}
  \begin{center}
    \includegraphics[width=.9\hsize]{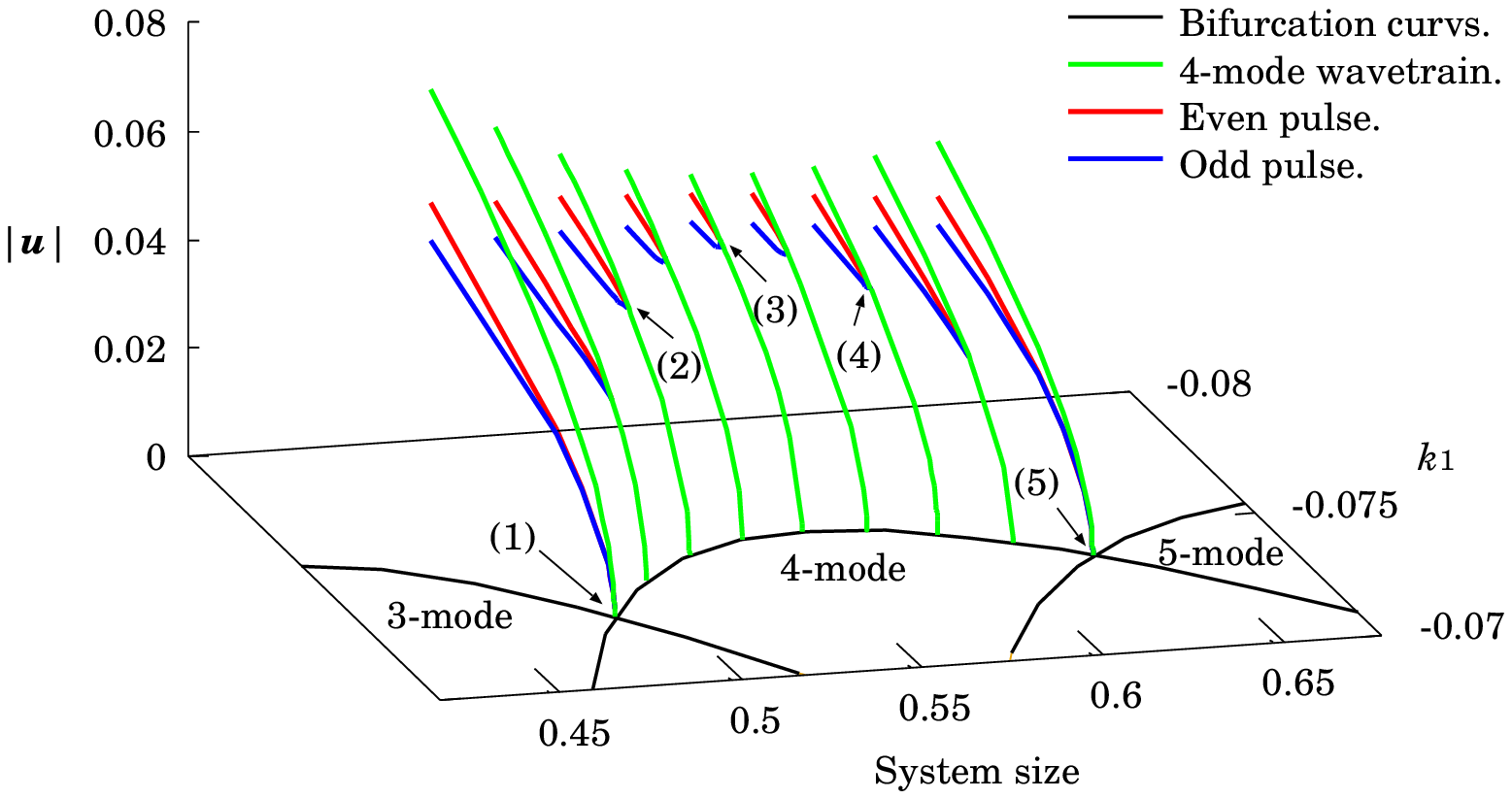}
    \caption{Relationship between beat bifurcation and domain size. Notably, two secondary branches emanated from 4-mode become supercritical when the system size is close to codimension 2 point such as (1) and (5) in the diagram, which is consistent with the local analysis conducted in \cite{Fujii-Mimura-Nishiura1982}. In case the system size is away from codimension 2 point such as (3), they are no more supercritical and the odd branch becomes subcritical, as depicted in inset B of Fig.\ref{snakes_and_ladders.eps}.}
    \label{bif_3d.eps}
  \end{center}
\end{figure}

First, the stationary branch was examined. As we are working on a finite interval, the trivial state loses stability for a particular wavenumber depending on the system size.
However, for certain size values, the trivial state simultaneously loses stability for two adjacent wavenumbers $n\omega$ and $(n+1)\omega$,
where $\omega$ denotes the basic wavenumber (see Fig.\ref{bif_3d.eps} for $n=4$).
Four branches bifurcated from the codimension-two point, namely, two wavetrains of wavenumbers $n\omega$ and $(n+1)\omega$, and two modulated mixed waves comprising wavenumbers $n\omega$ and $(n+1)\omega$ (Fig. 2.14 of \cite{Fujii-Mimura-Nishiura1982} and caption of Fig.\ref{bif_3d.eps}). 
Nevertheless, they unfold under a generic system size, and two modulated waves appear as secondary bifurcations, discussed as follows (for details, refer \cite{Fujii-Mimura-Nishiura1982,Burke_thesis}). 
To distinguish these two secondary branches, we assigned the label ``odd (even)'' to the branch the profile containing a local maximum (minimum) at the center including odd (even) number of peaks, referred to as $\mathrm{[i]}$ and $\mathrm{[i^2]}$ in Fig.\ref{snakes_and_ladders.eps}. The odd branch becomes subcritical for system size = 0.56, as depicted in the magnified diagram B-1 of Fig.\ref{snakes_and_ladders.eps}.

The two secondary branches display the onset of localization, as the two adjacent wavenumbers cause the ``beat'' phenomena, i.e., the amplitude of the envelope of modulated waves increases along each branch, and the odd (even) branch approaches a one (two)-peak stationary pulse such as $\mathrm{[i]}^\mathrm{S}$ ($\mathrm{[i^2]}^\mathrm{S}$) in Fig.\ref{snakes_and_ladders.eps}.
Although the superscript $\mathrm{S}$ does not appear in Fig.\ref{snakes_and_ladders.eps}, the $[\cdot]$ includes $[\cdot]^\mathrm{S}$, and thus, $\mathrm{S}$ was included in this paragraph because ``beat'' bifurcation only appeared for stationary branch as depicted in Fig.\ref{snakes_and_ladders.eps}-B.

As the parameter $\kappa_1$ decreased, they entered into a pinning regime and experienced saddle-node bifurcations. The number of peaks was increased by $2$ in case the branch turned around the saddle-node point on the right-hand side, such as $\mathrm{[I]}^\mathrm{S}\rightarrow\mathrm{[iIi]}^\mathrm{S}$ ($\mathrm{[I^2]}^\mathrm{S}\rightarrow\mathrm{[iI^2i]}^\mathrm{S}$). Once the domain was filled with the localized states, snaking must cease, and both branches terminated on the spatially periodic state $\mathrm{[I^4]}^\mathrm{S}$ (D in Fig.\ref{snakes_and_ladders.eps}). In addition, these branches are connected pairwise by secondary branches of \textit{asymmetric} states ($[\mathrm{Ii}]^\mathrm{S}$ and $[\mathrm{I^2i}]^\mathrm{S}$) forming a characteristic structure of \textit{the snakes-and-ladders structure}. Although the secondary bifurcations further approach the saddle-node bifurcations higher up in the snakes-and-ladders structure as the system size is increased, the branches invariably connect the unstable components of the odd and even branches (refer to E-1 and F-1). The above-mentioned computation implies that the three-component reaction diffusion system (\ref{eq_bif_1}) exemplifies the snakes-and-ladders structure of the stationary pulse solutions.

The traveling-wave-train bifurcated from the trivial
solution via Hopf bifurcation (thin-black-solid line in A of Fig.\ref{snakes_and_ladders.eps}), and it growed in parallel to the stationary-wave-train (thick-gray-dashed line) as $\kappa_1$ is decreased.
On the contrary, pulse branches appeared in vicinity of the stationary branches, such as in B of Fig.\ref{snakes_and_ladders.eps}. However, it was in the form of isolas (see B-2) instead of secondary bifurcations from traveling-wave-train. Additionally, their amplitude increased as $\kappa_1$ was decreased, and they existed close to the snaky structure of the stationary branches.
The global shape of the traveling pulses resembles {\it figure-eight-like stack of isolas}, as depicted in Fig.\ref{isola.eps}. In addition, the profiles of the three components are exhibited in Fig.\ref{isola.eps}: the thick-gray-dashed line depicts the stationary pulse, whereas the thin-black-solid line depicts the traveling pulse. Although these two types of branches exhibited similarities, the traveling pulses lacked reflectional symmetry. Therefore, the associated branch constituted an imperfection of the stationary pulses. Each of the figure-eight-like isolas comprised an odd and even traveling pulse branch connected via the saddle-node bifurcation, because both the traveling pulse branches could not retain the reflectional symmetry owing to propagation.  
Thus, an additional bifurcation parameter $\tau$ was required to identify the origin of the isola. Recall that the drift bifurcation occurred at $\tau=\tau_c$ for the stationary pulse branch, so all the figure-eight-like isolas contacted the stationary branch, as schematically represented in Fig.\ref{bif3.eps}. As we considered two parameters $\kappa_1$ and $\tau$, the solutions presented in the figure formed ``sheet'' instead of ``branch''. Moreover, the traveling sheet ceased to exist below $\tau = \tau_c$ and appeared as a ``gutter'' attached to the stationary sheet in the two-parameter space depicted in Fig.\ref{bif3.eps}.

\begin{figure}
  \begin{center}
    \includegraphics[width=.9\hsize]{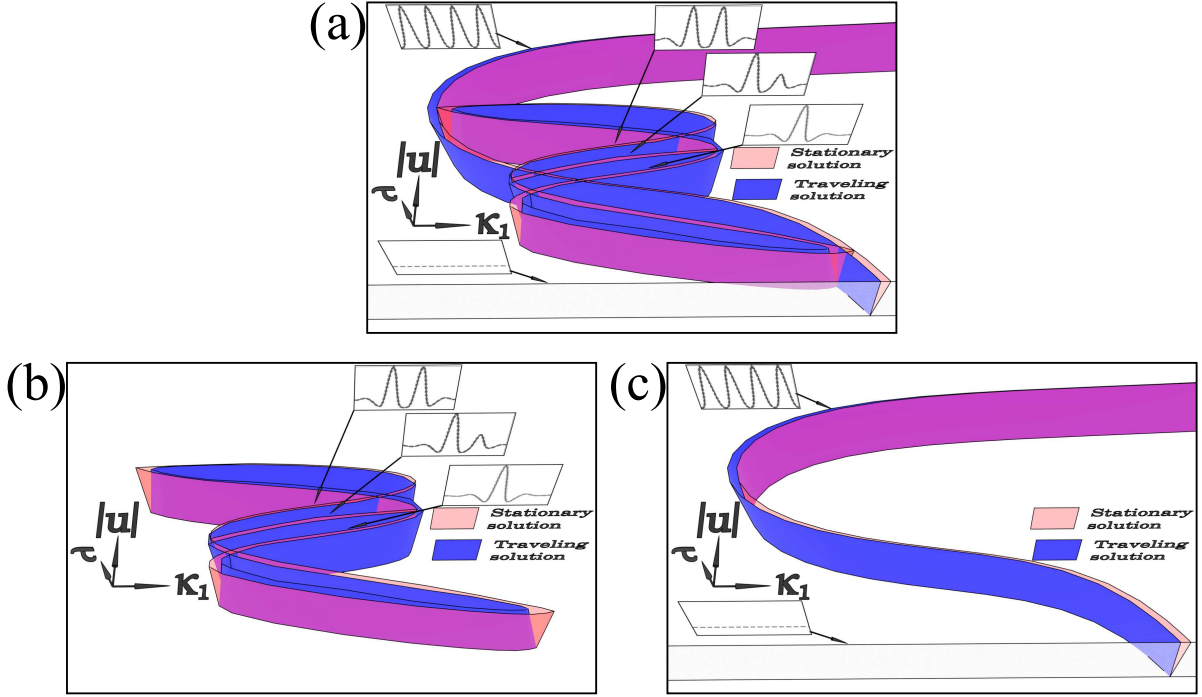}
    \caption{Schematic of stationary and traveling branches in $(\kappa_1, \tau)$-parameter space. The entire diagram (a) can be classified into two parts. The sheet of traveling wave train solutions (red) merges into sheet of stationary wave solutions (blue) as $\tau$ approaches drift bifurcation point $\tau_c$ and forms a gutter-like structure (b). Each figure-eight-like isola sheet contacts the associated snakes-and-ladders sheet as $\tau \downarrow \tau_c$ (c).  }  
    \label{bif3.eps}
  \end{center}
\end{figure}



For the stability of traveling branches, $\mathrm{[I}^n]^\mathrm{T}$ branches, where $n=1,2,3,4$ in Fig.\ref{snakes_and_ladders.eps} are stable at all instances, and their stability varied at each saddle-node bifurcation point.
Notably, both $\mathrm{[\cdot]}^\mathrm{T}$ and $\mathrm{[\cdot]}^\mathrm{S}$ are represented by the same barcode $\mathrm{[\cdot]}$ without superscript in Fig.\ref{snakes_and_ladders.eps}.
Moreover, the remaining branches and ladder branches were unstable at all instances.
For stationary branches, the stability features were almost the same as those
of traveling branches, except for an eigenvalue corresponding to the drift
bifurcation.
Thus, all stationary branches exhibited only one more positive eigenvalue
of drift instability in addition to the corresponding traveling ones.
The boundary of the isola structure determined the admissible interval containing the stable TPO. If one leaves the boundary, the destruction or replication of these pulses occur in a similar manner observed in \cite{NU_PhysicaD_1999,NU_PhysicaD_2001}, and Chapter 5 of \cite{Burke_thesis}.

Prior to concluding this subsection, we returned to the question to identify the admissible interval for TPO, i.e., we aimed to determined the range in which the stable one-peak traveling pulse existed.
Stable traveling one-peak pulse represents the thick portion of the $\mathrm{[I]}^\mathrm{T}$ branch in Fig.\ref{isola.eps},
and thus, this range was from lower saddle-node bifurcation point to upper
saddle-node bifurcation point of the branch.
The range was $-0.135866\le \kappa_{1}\le-0.088$, or equivalently $-0.035866\le \varepsilon\le 0.012$ with the baseline $\kappa_{1}=-0.1$. The stability of $\mathrm{[I]}^\mathrm{T}$ branch was investigated in certain larger regions, and the periodic domain size did not affect the range of stable region. In particular, the bifurcation structure in the three-times larger region in Fig.\ref{1.68.eps} was calculated,
and the relative difference of lower and upper limit were less than $0.3\%$, implying the robustness of the stable regime of the traveling pulses. 

\begin{figure}
  \begin{center}
    \includegraphics[width=.95\hsize]{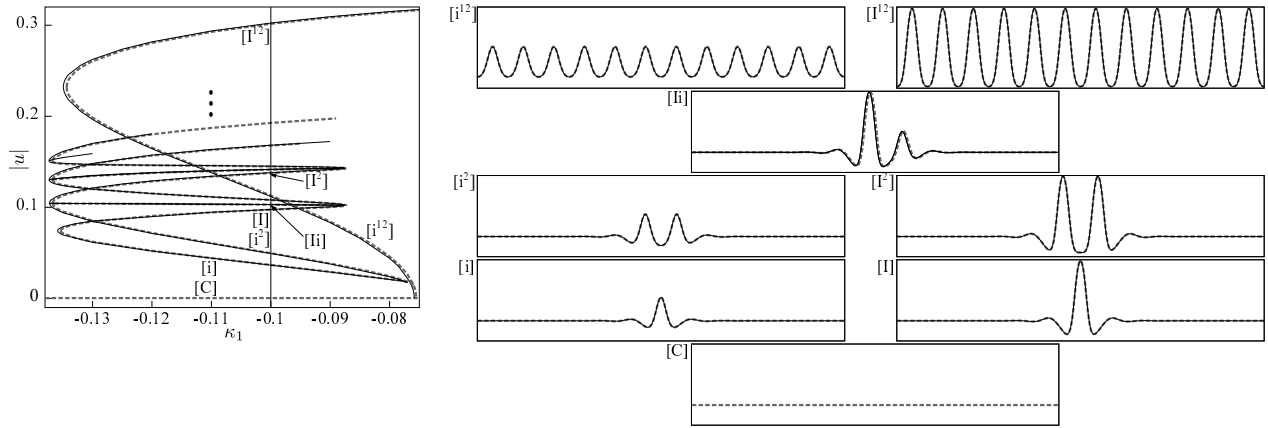}
    \caption{Bifurcation structure in three-times larger region. In case the system size was altered from $0.56$ to $1.68$, the associated bifurcation diagram qualitatively remained the same as Fig.\ref{snakes_and_ladders.eps}, although the hierarchical structure of snakes-and-ladders and stack of isolas becomes deeper proportional to the system size.}
    \label{1.68.eps}
  \end{center}
\end{figure}

Furthermore, we introduced a heterogeneity of the bump type in the parameter $\kappa_{1}$
in the following section with the baseline $\kappa_1=-0.1$ and its height
$\varepsilon$. We varied $\varepsilon$ and observed the behavior of TPO as it collided with bump. Although we could select a varying baseline for $\kappa_{1}$, our conclusion remained the same provided that it strictly belonged inside the admissible interval.

\section{Heterogeneous problem}

The heterogeneity of bump type (\ref{eq4.01}) with $d=0.05$ was introduced to the parameter $\kappa_1$ of Eq.(\ref{eq_bif_2}). First, we characterized the interaction of the TPO with the heterogeneity, including the outputs after collision. Subsequently, we will prove that HIOP contains all the outputs of TPO for the wide range of heights. We set up this problem in the following manner. As discussed in Section 2.1, the baseline outside of the bump was considered as 
$\kappa_1=-0.1$ from which the height $\varepsilon$ was measured. 
In this case, the height $\varepsilon$ of the bump was not arbitrary, but it can be varied in the admissible interval $-0.035866\le \varepsilon\le 0.012$, as discussed in the previous section. Interestingly, the extension of the HIOP branch is possible over the admissible interval of $\varepsilon$, which enabled us to predict the behavior of TPO beyond the admissible interval, although more complex dynamics emerges in such a regime, as discussed at the end of Section 4.3. Recall that the background state ceased to be in a constant state, and instead, inhomogeneous steady states localized around the bump region. The existence of new stable background state was numerically confirmed in the admissible interval, as depicted in Fig.\ref{S0.eps}. In particular, multiple candidates existed for a fixed small $\varepsilon$, so we employed the smallest stable HIOP emanating from the constant state at $\varepsilon=0$ as a new background state, which was uniquely determined in the admissible interval to avoid confusion (solid-red curve emanating from the origin in Fig.\ref{S0.eps}). Thus, we considered the heterogeneous problem of bump type, as discussed below.\\

\subsection{Phase diagram of PDE dynamics}

First, we examined the response of TPO upon collision with the bump and observed its variations with $\varepsilon$. The phase diagram of Fig.\ref{phase_PDE.eps} depicted that 
four qualitatively varying outputs relied on the width $d$ and height $\varepsilon$: PEN, OSC, STA, and REB. The red-dotted horizontal line in Fig.\ref{phase_PDE.eps} indicates the upper and lower boundaries of admissible interval to ensure that the pulse does not survive outside this regime.
Inside the admissible interval, we observed a single cycle of REB--OSC--STA in the negative $\varepsilon$ region, and similarly, in the positive region for a smaller $d$. The transition between the STA and OSC was caused by a local Hopf bifurcation around the stationary pulse, so the fundamental issue was caused by pinning--depinning processes such as PEN--OSC, REB--OSC, and STA--REB. Although only a single cycle is observed in Fig.\ref{phase_PDE.eps} owing to the admissibility, infinitely many cycles appeared in the reduced ODE system, as discussed in Section 5. 

Upon carefully observing the spatiotemporal behaviors of TPO in each regime (upper simulations in Fig \ref{m.eps}), the location of the pinning or depinning point was shifted as $|\varepsilon|$ increased. This location corresponds to the bump center in the first OSC regime immediately after the PEN ($\varepsilon = 0.0008$, refer to (f)), but it shifted leftward in the following OSC regime ($\varepsilon = 0.0101$, see (i)). The same thing happened for the STA and REB regimes as $\varepsilon$ varied, i.e., a higher bump height shifted the occurrence of the event from the bump. This arises from the oscillatory nature of the tail and the repetition of pinning (OSC and STA). Moreover, the depinning (REB) processes occurred till the $\varepsilon$ remained inside the admissible interval. In context,  the mathematical mechanism causing the transition from one state to another should be studied, including the mechanism required for predicting the outcome for a given height $\varepsilon$. In the following two subsections, we explore the global behavior of HIOP with respect to $\varepsilon$ and determine its relevance based on these questions.

\begin{figure*}
  \centering
  \includegraphics[width=16cm]{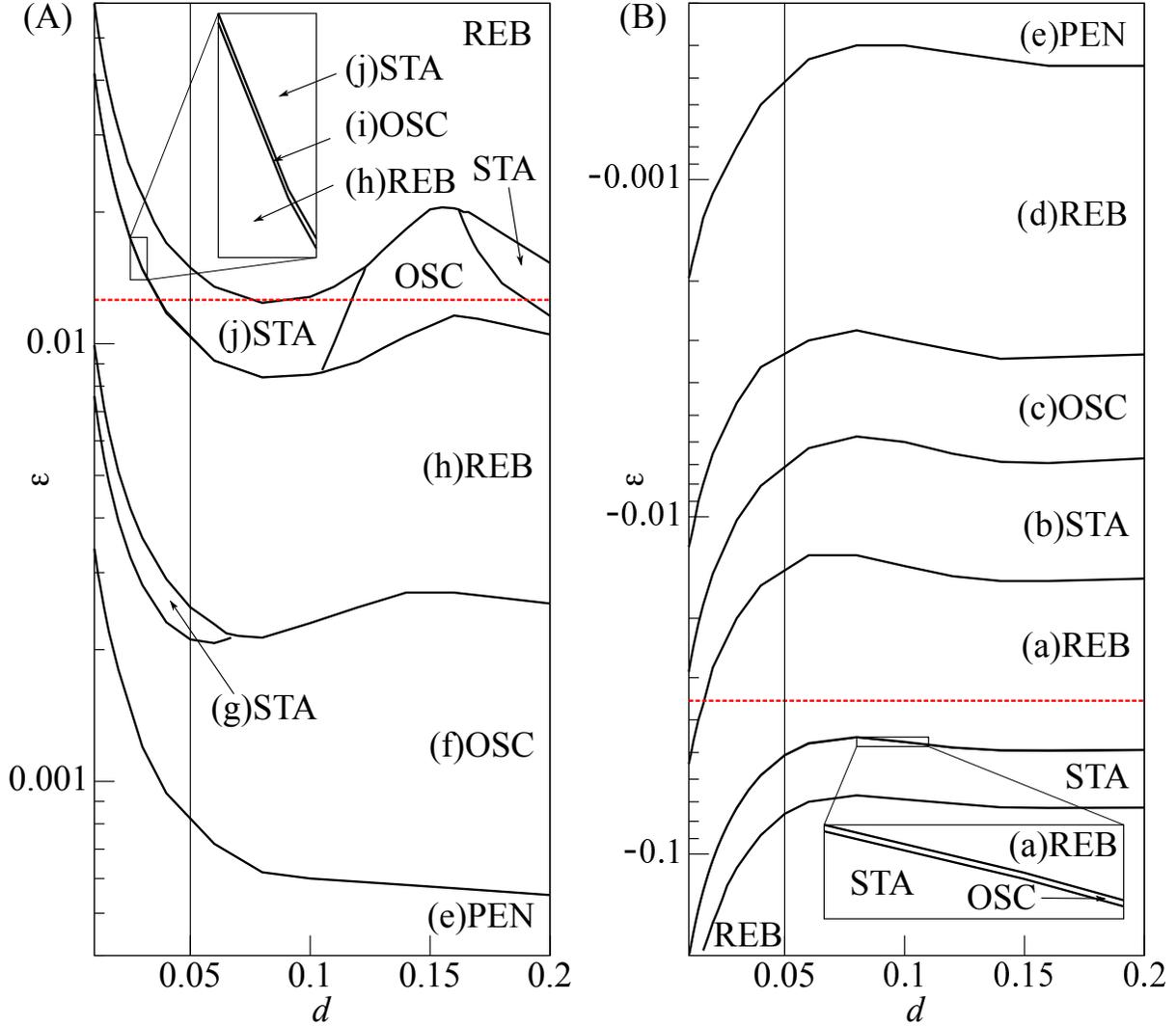}
  \caption{Phase diagram of PDE dynamics in ($d,\varepsilon$)-space.
    Vertical $\varepsilon$ axis is plotted in logarithmic scale.
    (A) $0.0004\leq\varepsilon\leq0.06$, $0.01\leq d\leq0.2$.
    (B) $-0.2\leq\varepsilon\leq-0.0003$, $0.01\leq d\leq0.2$.
    Extremely narrow OSC regions exist between REB and STA regions
    (insets display magnified pictures).
    Stable traveling pulse with oscillatory tails exist within the range
    $-0.035866\le \varepsilon\le 0.012$, as depicted with red-dotted lines.
    Beyond this range, stable traveling pulse did not exist in homogeneous space, and more complex behaviors emerged depending on the bump length. However, for small width $d$ such as $d=0.05$, the pulse persisted and responded against the bump. Refer to Fig.\ref{m.eps} for the spatiotemporal plot in each regime.
    Notably, the uppermost region of (a) is classified as REB, and this region exhibits more complex behavior, because an additional pulse emerged around the bump center, as discussed at the end of this section.
  }
  \label{phase_PDE.eps}
\end{figure*}

\subsection{Global structure of HIOP: peak-creation and peak-destruction}

The HIOP denotes the set of all solutions caused by the bump heterogeneity.
We explored all the relevant solution branches by numerical tracking
with respect to the height $\varepsilon$, starting from appropriate seeds
in homogeneous space.
In addition, we solved Eq.(\ref{dzdt}) by deriving from Eq.(\ref{eq_bif_2}) that depends on the parameters $\underline{\kappa}_{1}$ and $\varepsilon$. Herein, $\underline{\kappa}_{1}$ was maintained at $-0.1$ and $\varepsilon$ was treated as a bifurcation parameter.
The proposed strategy is three-fold: first, we used the stationary solutions of Eq.(\ref{eq_bif_2}) at $\underline{\kappa}_{1}=-0.1$ (vertical line in Fig.\ref{snakes_and_ladders.eps}) as seeds of continuation, which contained the cross-section of snakes-and-ladders structure including the trivial constant state. A schematic of the extension of the snakes-and-ladders solution to the HIOP is illustrated in Fig.\ref{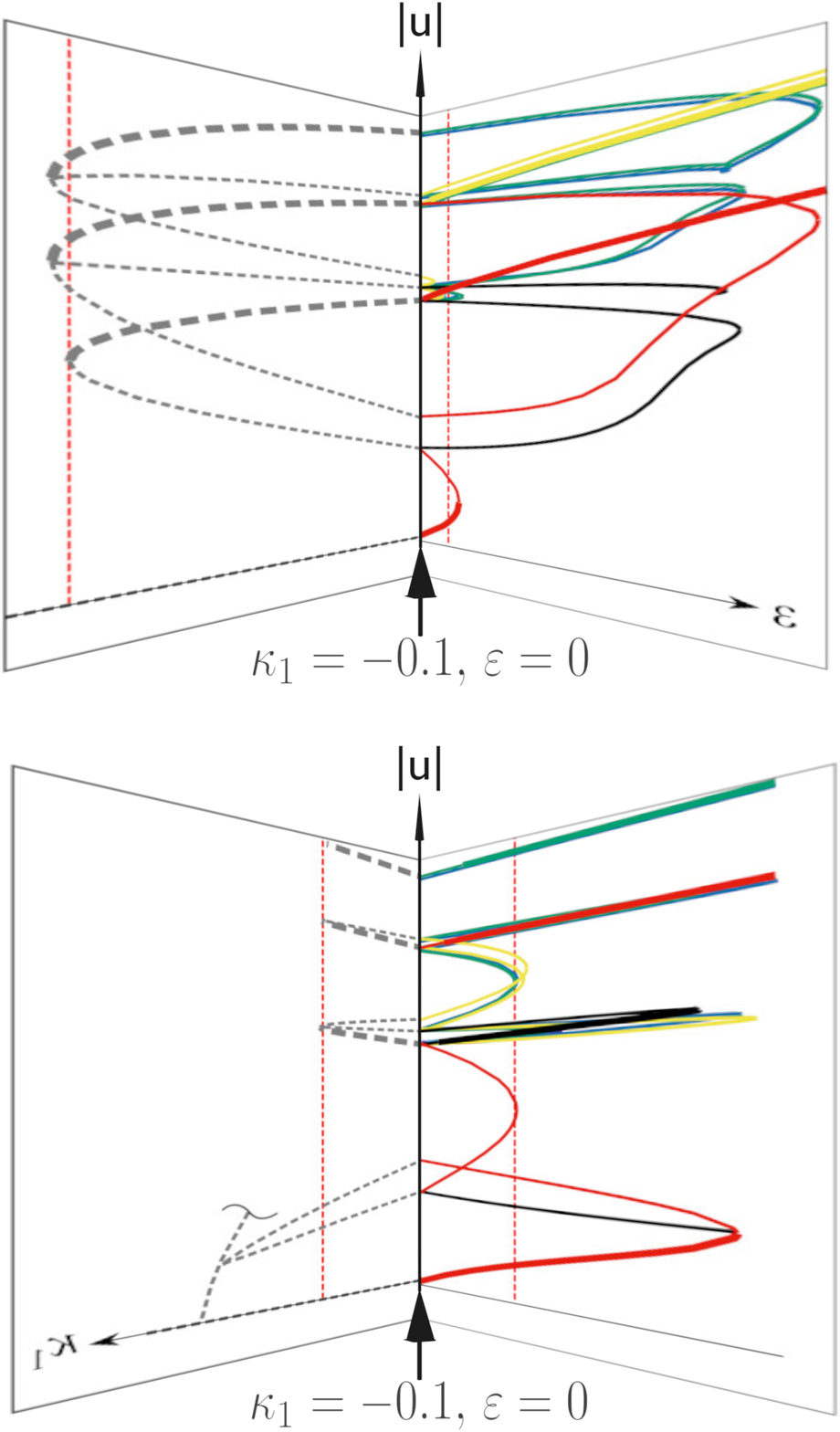}. Second, an additional category of seeds pertained to the class of shifted solutions depending on the peak type, as portrayed in the leftmost column of Fig.\ref{all.eps}-(b), i.e., countably many shifted solutions stagnated at the bump emerged for nonzero $\varepsilon$, which was similar to the spoke pattern depicted in Fig.\ref{all.eps}-(b) (bottom). A complete schematic of the HIOP solutions emanating from those seeds is illustrated in Fig.\ref{all.eps}-(a), wherein various colors indicate the shifted solutions and their continuation. In particular, each center of the spoke structure located at $\varepsilon=0$ is magnified and presented in Fig.\ref{all.eps}-(b), depending on the type of peak structure such as $\mathrm{[I]}^\mathrm{S}_n$. Note that the cross-section of HIOP at $\varepsilon=0$ contained several types of solutions that were distinct from those existing in the snakes-and-ladders structure, and we will discuss it later in this section. Third, we tracked the time-periodic solutions such as PEN and REB on a circle. Interestingly, each time-periodic branch was in contact with one of the stationary HIOPs as its period reached $+\infty$ in form of an homoclinic orbit. Consequently, it became an appropriate part of the connected component of HIOP. Ultimately, the number of the shifted solutions such as $\mathrm{[I]}^\mathrm{S}_n$, $\mathrm{[Ii]}^\mathrm{S}_n$, $\mathrm{[I^2]}^\mathrm{S}_n$, $\mathrm{[I^2i]}^\mathrm{S}_n$ increased proportionally with the system size. In Fig.\ref{snakes_and_ladders.eps}, the system size was $0.56$ and the maximum number of peaks was 4. Regardless, we believe this restriction did not influence in understanding the essence of entire diagram for larger or infinite system sizes. This is partially justified in Fig.\ref{1.68.eps}, wherein the selected system size was three-times larger, $1.68$. In other PDE simulation, the system size was selected as $1$.

\begin{figure}
  \begin{center}
    \includegraphics[width=.7\hsize]{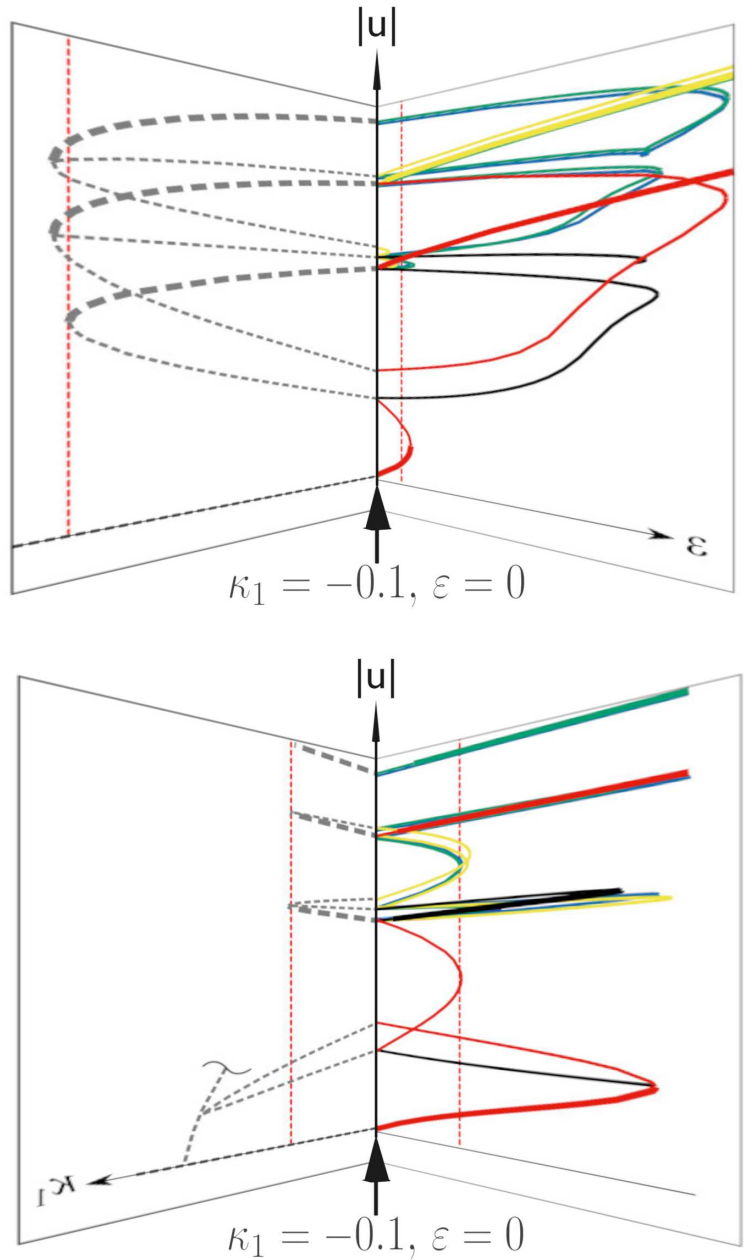}
    \caption{Schematic of relationship between snakes-and-ladders solution branches and HIOP solution branches. These two solution structures converge at $\varepsilon=0$ and $\kappa_1 = -0.1$.
      Each branch of HIOP can be extended to nonzero $\varepsilon$ by using snakes-and-ladders solution as a seed. Interestingly, similar hierarchical structure of HIOP branches was observed, as discussed in the text.
    }\label{k1_epsilon.eps}
  \end{center}
\end{figure}

Although several types of stationary solutions existed with varying number of peaks, we focused on the one-peak-pulse solution, because the research interest pertains to the dynamical behavior of one-peak traveling pulse solutions in the heterogeneous media.
Despite this constraint, countably many stationary solutions existed owing to the loss of translation invariance for an infinitely large system size. In particular, we are interested in a family of solution branches that converge to one of the translated one-peak-pulse solutions
$\mathrm{[I]}^\mathrm{S}_n$ ($n = 0, \pm 1, \pm 2, \dots$) as $\varepsilon \rightarrow 0$ in Fig.\ref{all.eps}-(b). The shift-distance between $\mathrm{[I]}^\mathrm{S}_0$ and
$\mathrm{[I]}^\mathrm{S}_n$ can be determined using a solvability condition for the bump heterogeneity upon applying a perturbative argument to it; however, this was not detailed further.


In this study, we focused on the five shifted solutions $\mathrm{[I]}^\mathrm{S}_0$ -- $\mathrm{[I]}^\mathrm{S}_{-4}$ and explored the global behavior with respect to $\varepsilon$. As a by-product, multi-peak-pulse solutions appeared through the deformation along the branch. These solutions yielded the candidates for the asymptotic behavior of one-peak traveling pulse, as the height $\varepsilon$ was in the vicinity of the boundary of the admissible interval, as discussed at the end of this section.
In context, a perspective of global behaviors of shifted solutions $\mathrm{[I]}^\mathrm{S}_n$ is briefly presented prior to its detailed discussion. As explained below, $\mathrm{[I]}^\mathrm{S}_n$ is a shifted solution of $\mathrm{[I]}^\mathrm{S}_0$ and can be constructed near $\varepsilon = 0$ in a perturbative manner. However, the global behavior of $\mathrm{[I]}^\mathrm{S}_n$ is unclarified, especially, its relation to the class of multi-peak solutions. Recall that the snakes-and-ladders structure in the homogeneous space explains the manner in which the number of peaks increased or decreased via saddle-node points to enable the inherent transmission of such a manner to the behavior of solution branches in the heterogeneous space. In particular, the snakes-and-ladders structure displayed footprints on the axis $\varepsilon = 0$, and the branches emanating from those seeds are supposed to interact with the continuation of $\mathrm{[I]}^\mathrm{S}_n$ branch. As such, the creation and destruction of peaks can be observed in the heterogeneous problem. Although this is true for this case, the situation is slightly more complicated than the snakes-and-ladders structure, because countably-many shifted solutions emerged for each seed solution at $\varepsilon=0$, discussed as follows.

\begin{figure}
  \begin{center}
    \includegraphics[width=\hsize]{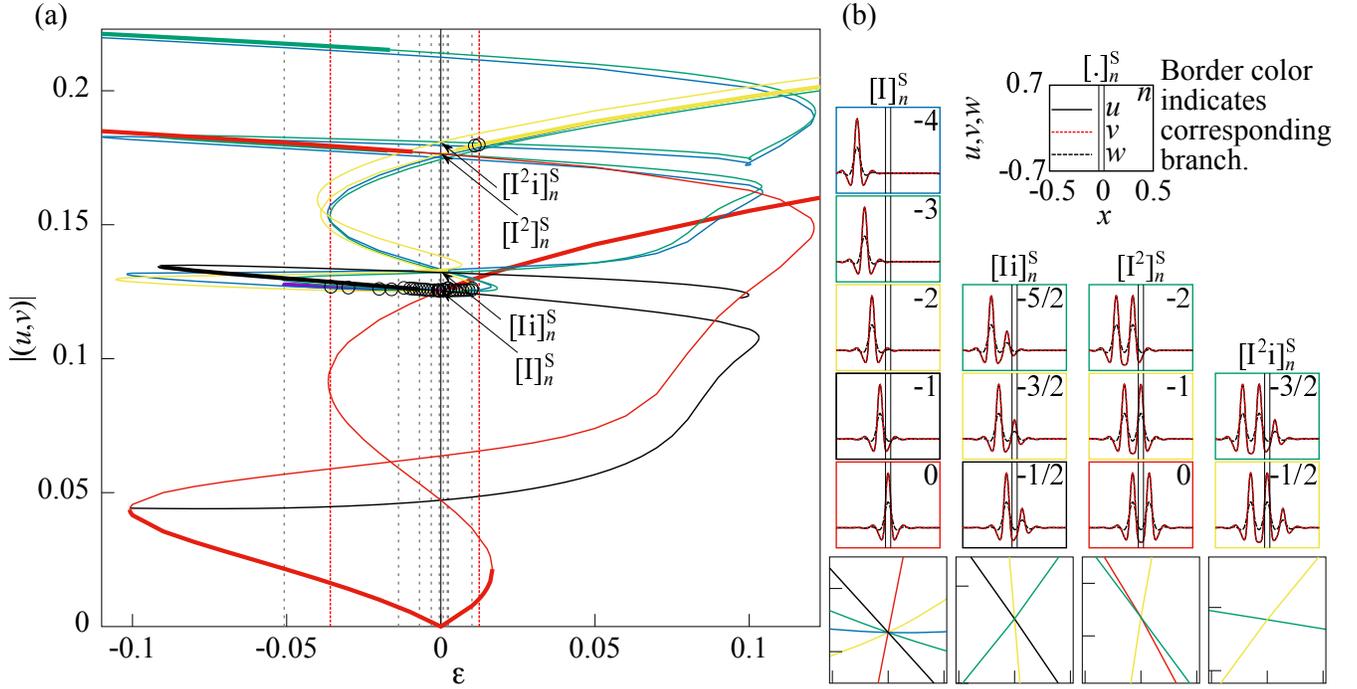}
    \caption{
      (a) Complete structure of HIOP and solution profiles. Red-dotted vertical lines indicate boundaries of admissible interval and thin-black dotted lines indicate boundaries of various asymptotic states (Fig.\ref{m.eps} presents a magnified view).
      Open circles indicate asymptotic states of collision for each $\varepsilon$.
      The branch color indicates the corresponding shifted one-peak solution: red ($\mathrm{[I]}^\mathrm{S}_{0}$), black ($\mathrm{[I ]}^\mathrm{S}_{-1}$), yellow ($\mathrm{[I]}^\mathrm{S}_{-2}$), green ($\mathrm{[I]}^\mathrm{S}_{-3}$), blue ($\mathrm{[I]}^\mathrm{S}_{-4}$). 
      (b) Shifted solutions are classified depending on the peak structure with barcode labeling. Five shifted single-peak solutions existed from $\mathrm{[I]}^\mathrm{S}_{0}$ to $\mathrm{[I]}^\mathrm{S}_{-4}$, and they formed a spoke pattern near $\varepsilon = 0$, as displayed at bottom. Remaining multipeak solutions displayed similar spoke patterns as well. All these solutions $\mathrm{[I]}^\mathrm{S}_{-n}$, $\mathrm{[Ii]}^\mathrm{S}_{n}$, $\mathrm{[I^2]}^\mathrm{S}_{n}$, $\mathrm{[I^2i]}^\mathrm{S}_{n}$ coincided with the associated solutions of snakes-and-ladders structure at $\varepsilon = 0$. Interrelation between varying  peak solutions is discussed in text. Note that $\mathrm{[I]}^\mathrm{S}_n$ in PDE corresponds to the critical point $P_n$ in the reduced ODE system, as detailed in Section 5.
    }\label{all.eps}
  \end{center}
\end{figure}

\begin{figure}
  \begin{center}
    \includegraphics[scale=.75]{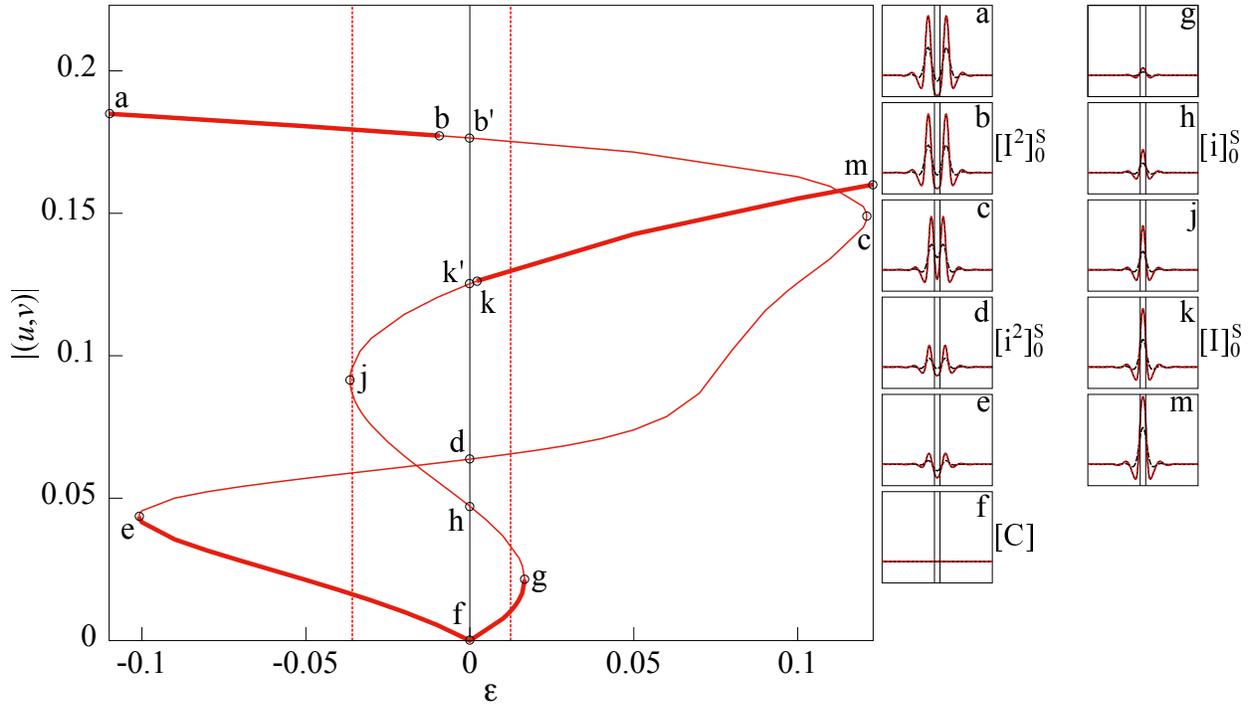}
    \caption{ Bifurcation of HIOP solutions from trivial state (red). Thick portion indicates stable branches. Two types of symmetric solutions bifurcated from trivial solution as $\varepsilon$ was varied. One is a single-peak solution ($\varepsilon > 0$) and the other two-peak pulse ($\varepsilon < 0$), which constitute as stable background states for nonzero $\varepsilon$ up to (g) and (e). Their stabilities were recovered after experiencing two saddle-node bifurcations, and they produced a stable, large one-peak (k) and two-peak pulse (b) solutions, respectively. 
    }
    \label{S0.eps}
  \end{center}
\end{figure}

First, the primary bifurcation from the trivial constant solution was studied for an adequately small absolute value of $\varepsilon$ (Fig.\ref{S0.eps}).
As the bump increases ($\varepsilon>0$), a one-peak HIOP emerges
from the trivial solution (Fig.\ref{S0.eps}-(g)). In contrast, a two-peak HIOP solution emerges for small $\varepsilon <0$ owing to the hollow space in between (Fig.\ref{S0.eps}-(e)). 
These peakes monotonically increased along the branch and experienced the first saddle-node bifurcation ($\varepsilon\approx-0.1$(e) and $\varepsilon\approx 0.02$(g)), and subsequently, the second one ((j) and (c)), and ultimately,
became stable one- and two-peak solutions, respectively ((k) and (b)).
Notably, these one- and two-peak solution branches crossed the line
$\varepsilon=0$ on five instances, and these five points correspond to the 
snakes-and-ladders branches within the homogeneous space (vertical line in Fig.\ref{S0.eps} at $\varepsilon=0$).
Moreover, these five intersections were $\mathrm{[i]}^\mathrm{S}_0$ (h), $\mathrm{[i^{2}]}^\mathrm{S}_0$ (d), $\mathrm{[I]}^\mathrm{S}_0$ (k$'$) and $\mathrm{[I^{2}]}^\mathrm{S}_0$ (b$'$), in addition to the trivial constant state $\mathrm{[C]}$ (f) in ascending order of magnitude $|(u,v)|$. Subsequently, $'$ indicated an intersecting point of a solution branch with $\varepsilon=0$, and we used the same alphabet as the neighboring recovery--stability point without $'$.

The expressions $\mathrm{[I]}^\mathrm{S}_{-1}$ - $\mathrm{[I]}^\mathrm{S}_{-4}$ are shifted solutions of
$\mathrm{[I]}^\mathrm{S}_0$ and coincide with each other at $\varepsilon=0$ modulo translation. In particular, their emergence and variation trend with $\varepsilon$ is explained further. We focused on only the negatively shifted solutions without any loss of generality. The basic mechanism for the emergence of $\mathrm{[I]}^\mathrm{S}_{-n}$ involves peak-destruction and peak-creation processes via the interaction between the tail and heterogeneity. The peak-destruction signifies a norm-descending process from a multipeak solution to a single-peak solution in which each peak of the multipeak solution successively disappeared via turning around the saddle-node bifurcations. The peak-creation indicated a norm-ascending process from a single peak solution to a multi-peak solution, wherein a new peak appeared around the bump region as $\varepsilon$ increased. 
Furthermore, each process is detailed as follows.

\begin{figure}
  \begin{center}
    \includegraphics[scale=.75]{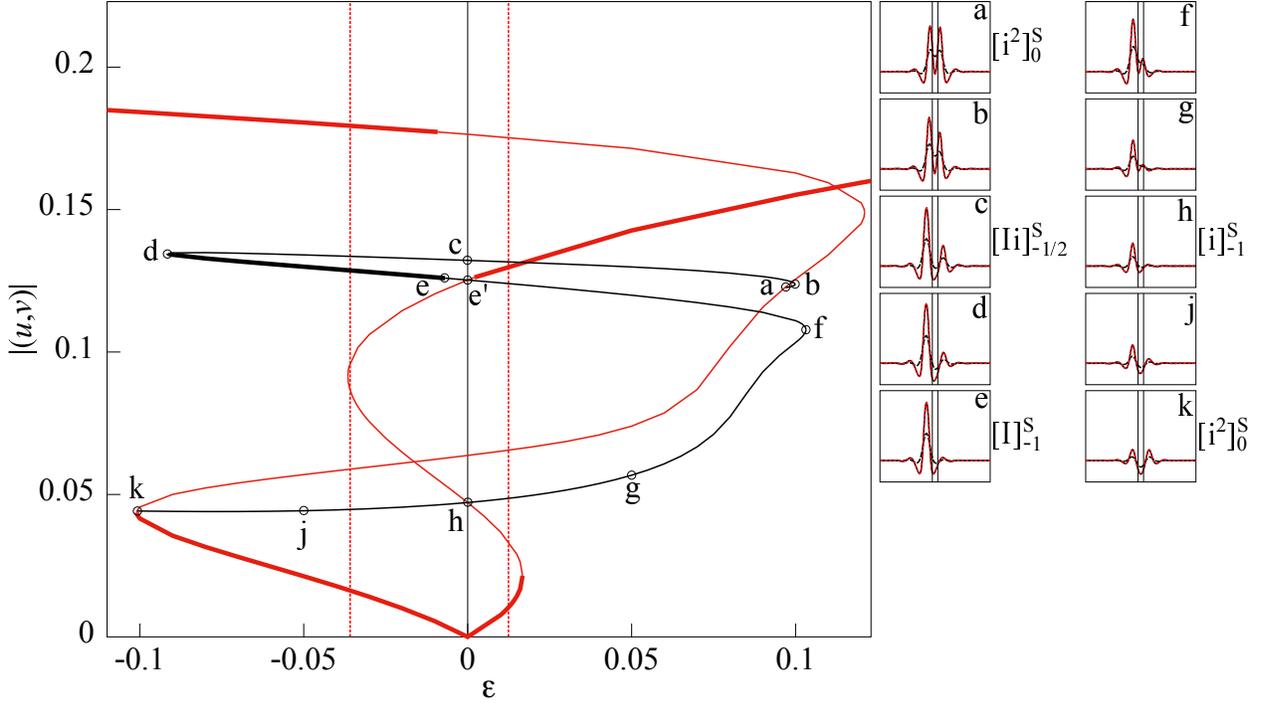}
    \caption{Bifurcation of HIOP solutions connecting $\mathrm{[I]}^\mathrm{S}_{-1}$ branch (black).
      A left-shifted solution $\mathrm{[I]}^\mathrm{S}_{-1}$ (e) is created via symmetry-breaking process from two-peak solution $\mathrm{[i^2]}^\mathrm{S}_{0}$ (k). Similarly, a symmetric two-peak solution $\mathrm{[i^2]}^\mathrm{S}_{0}$ (a) is deformed into $\mathrm{[I]}^\mathrm{S}_{-1}$ through non-symmetric solution $\mathrm{[Ii]}^\mathrm{S}_{-1/2} (c)$ that acts as a footprint of snakes-and-ladders structure.
    }\label{S1.eps}
  \end{center}
\end{figure}

\subsubsection{$\mathrm{[I]}^\mathrm{S}_{-1}$-branch}

As observed in Fig.\ref{S1.eps}, $\mathrm{[I]}^\mathrm{S}_{-1}$ ((e) and (e')) is present on the deformed S-shaped branch connecting the two symmetry-breaking bifurcations located at (a) and (k). Specifically, these were emanated from a symmetric two-peak branch centered at the middle of the bump. In addition, the pitchfork bifurcation (k) was located proximate to the saddle-node point and the two small peaks were identical at (k). Subsequently, one of the two peaks diminished, whereas the other one increased. The smaller peak disappeared and coincided with the translation of the small one-peak-pulse of $\mathrm{[i]}^\mathrm{S}_0$ at $\varepsilon = 0$, i.e., $\mathrm{[i]}^\mathrm{S}_{-1}$ . An additional symmetry-breaking bifurcation occurred at (a) on the small two-peak branch $\mathrm{[i^2]}^\mathrm{S}_{0}$, and the right-hand peak becomes smaller along (c) and (d), and eventually, it disappears at (e'). Nonetheless, the left peak increased and the $\mathrm{[I]}^\mathrm{S}_{-1}$ was formed at that location. The branch $\mathrm{[I]}^\mathrm{S}_{-1}$ was stable between (d) and (e), which corresponded to a stationary pinning state, as discussed later.
The rule for naming the noninteger suffix attached to a mixed type of pulse solution is explained herein. For instance, $\mathrm{[Ii]}^\mathrm{S}_{-1/2}$ in Fig.\ref{S1.eps}, which corresponds to a ladder solution in Fig. \ref{snakes_and_ladders.eps} connecting $\mathrm{[i^2]}^\mathrm{S}_{0}$ and
$\mathrm{[I]}^\mathrm{S}_{-1}$. Therefore, the suffix of $\mathrm{[Ii]}^\mathrm{S}$ yielded as $-1/2=(0+(-1))/2$.
In general, suppose $\mathrm{[I^{2m-1}]}^\mathrm{S}_{n}$ becomes $\mathrm{[I^{2m}]}^\mathrm{S}_{n-1}$ along the branch via $\mathrm{[I^{2m-1}i]}^\mathrm{S}$, then the suffix of the pulse solution $\mathrm{[I^{2m-1}i]}^\mathrm{S}$ can be defined as $\mathrm{[I^{2m-1}i]}^\mathrm{S}_{n-1/2}$ (Fig.\ref{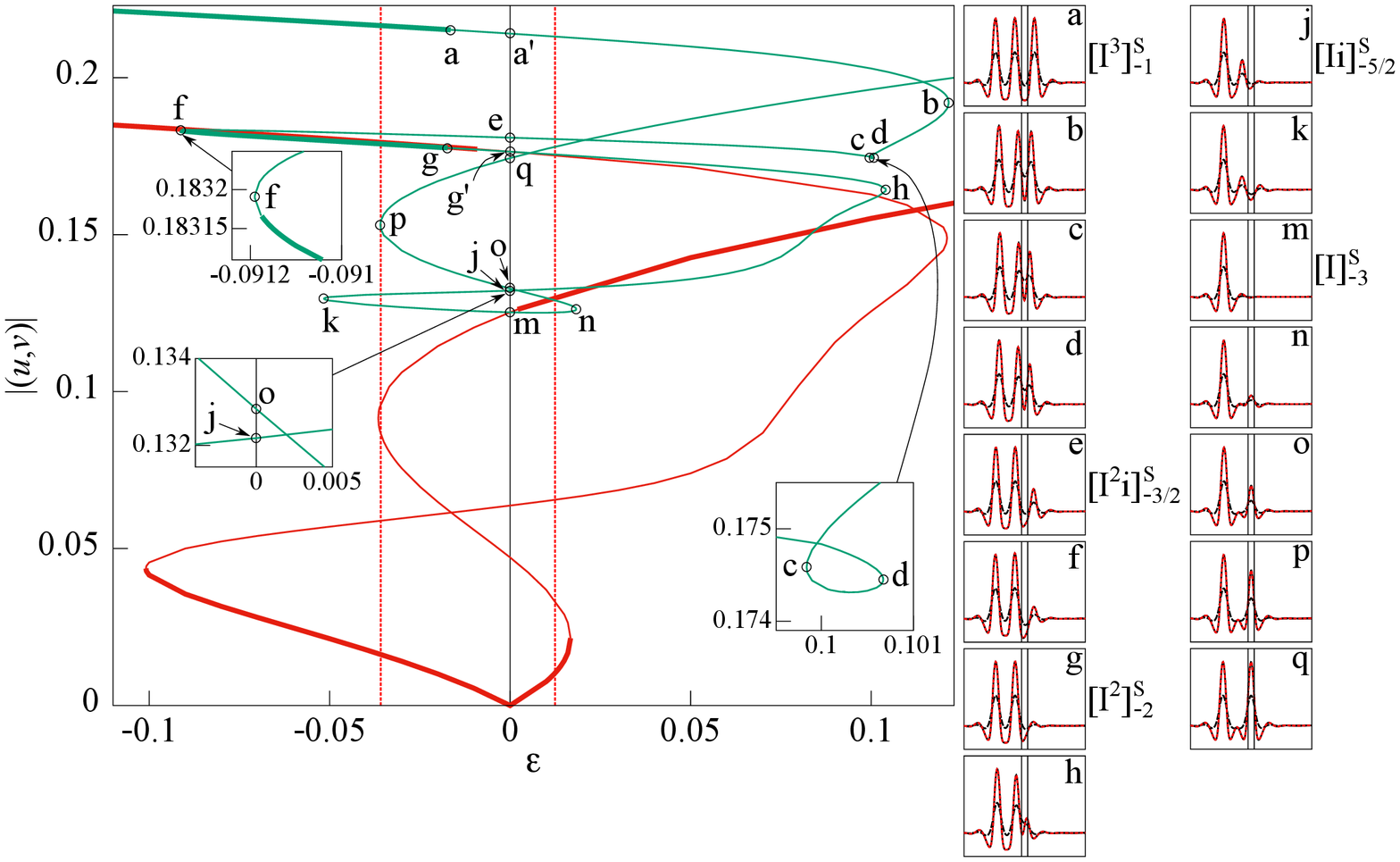} for $m=1, n=-2$).
The latter mechanism of emergence of $\mathrm{[I]}^\mathrm{S}_{n}$ solution is essential for $n=-2, -3, -4$, which is elaborated in the following passage.

\begin{figure}
  \begin{center}
    \includegraphics[scale=.75]{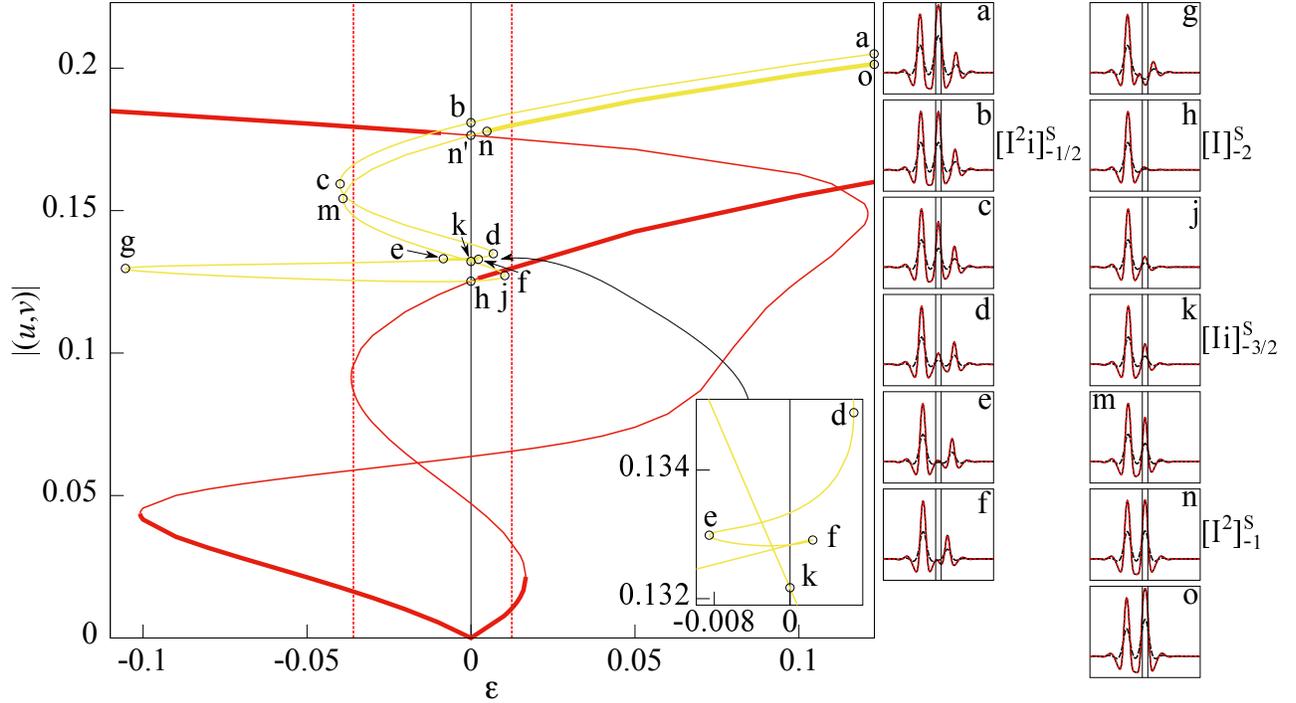}
    \caption{Bifurcation of HIOP solutions connecting $\mathrm{[I]}^\mathrm{S}_{-2}$ branch (yellow). Left-shifted solution $\mathrm{[I]}^\mathrm{S}_{-2}$ (h) was connected to (g), (f), and eventually, continued to a two-large and one-small peak solution $\mathrm{[I^2i]}^\mathrm{S}_{-1/2}$ (b). Similarly, $\mathrm{[I]}^\mathrm{S}_{-2}$ (h) was connected to (j), $\mathrm{[Ii]}^\mathrm{S}_{-3/2}$ (k), and ultimately, $\mathrm{[I^2]}^\mathrm{S}_{-1}$. Route (a)-(b)-$\dots$-(h) denoted a peak-annihilation process and (h)-(j)-$\dots$-(n)-(o) as a peak-creation process.
    }\label{S2.eps}
  \end{center}
\end{figure}

\subsubsection{$\mathrm{[I]}^\mathrm{S}_{-2}$-branch}

Furthermore, let us consider the connected component of the two-shifted solution $\mathrm{[I]}^\mathrm{S}_{-2}$ (h) (yellow) presented in Fig.\ref{S2.eps}.
This branch intersected with the snakes-and-ladders structure on
four hubs of translation, specifically, one-peak (h), ladders (k), (b), and two-peak (n$'$) solutions.
Although the branch crossed $\varepsilon=0$ at other points,
between (c) and (d), (d) and (e), (e) and (f), and (f) and (g),
these intersections were not located on the snakes-and-ladders branch. The distinction between these two classes emerged from the arrangement among peaks. In particular, the ladder solutions comprised several large peaks and one small peak arranged in a uniform and nearest distance, as displayed in
Figs. \ref{S1.eps}-(c), \ref{S2.eps}-(b),  \ref{S2.eps}-(k), \ref{S3.eps}-(e),
and \ref{S3.eps}-(j), and thus, they are expressed as $\mathrm{[I^{n}i]}^\mathrm{S}_{*}$.
In contrast, solution (o) in Fig.\ref{S3.eps} does not belong to this category, because the distance between the small peak and neighboring large peak is not the same as that of the ladder solutions $\mathrm{[I^{n}i]}^\mathrm{S}_0$. 
Let us follow the solutions along the branch from (a):three-peak-type to (o):two-peak-type. The branch (a)--(c) emerges from the hub of $\mathrm{[I^{2}i]}^\mathrm{S}_{-1/2}$, and it turns around the saddle-node bifurcation points (c) and (d). Moreover, the peak at
the center diminishes and eventually disappears at (e).
The solutions (e), (f), and (g) comprise a large peak and a small peak;
however, the interval between them does not correspond to that of the ladder solution (k).
Turning around the saddle-node bifurcation point (g), the small peak on the right-hand side diminished
and disappeared, before emerging as a two-shifted one-peak solution $\mathrm{[I]}^\mathrm{S}_{-2}$ (h).
Thereafter, the branch started to ascend in norm and experienced a peak-creation. 
This new peak emerged around the bump, grows significantly, and became a ladder solution at (k).
This peak continued growing (m) and the solution became one-shifted two-peak solution $\mathrm{[I^{2}]}^\mathrm{S}_{-2}$ at (n).
The suffix of the ladder solution (k) should be -3/2, as depicted from (h) to (n), because
$\left(\mathrm{[I]}^\mathrm{S}_{-2}+\mathrm{[I^2]}^\mathrm{S}_{-1}\right)/2=\mathrm{[Ii]}^\mathrm{S}_{-3/2}$.\\

\subsubsection{$\mathrm{[I]}^\mathrm{S}_{-3}$-branch}

A continuous deformation of $\mathrm{[I]}^\mathrm{S}_{-3}$ branch (green) from (a) to (q) is presented in Fig.\ref{S3.eps}.
The $\mathrm{[I]}^\mathrm{S}_{-3}$ branch resembled the $\mathrm{[I]}^\mathrm{S}_{-2}$ branch, and it intersected with snakes-and-ladders structure on the five hubs of translation: One-peak (m),two-peak (g$'$),three-peak (a$'$), and ladders (e), (j) solutions.
Although (o) and (q) crossed $\varepsilon=0$, they were not on
the snakes-and-ladders structure owing to the distinction of width of two peaks.
The variations in the number of peaks of the pulse can be easily understood by focusing on
saddle-node bifurcation points along the branch from (a) to (q).
The rightmost peak of the three-peak pulse (a) became smaller, coincided with the shifted ladder solution (e), and completely disappeared at the two-peak pulse $\mathrm{[I^2]}^\mathrm{S}_{-2}$ (g) via the saddle-node points (b) and (f). Upon continuing the branch downward, the right-hand peak of (g) became smaller and coincided with the shifted ladder solution (j). Thereafter, it completely disappeared at the shifted one-peak solution $\mathrm{[I]}^\mathrm{S}_{-3}$ (m) via two saddle-nodes points (h) and (k). Passing the point (m), the branch started to grow in amplitude owing to the emergence of new peak around the bump (n). This peak became larger, attained the same height as the peak on the left-hand side, and became a two-peak pulse at (q), which was not the same as $\mathrm{[I^2]}^\mathrm{S}_{-2}$ (g), because the middle vacancy was not fulfilled after the immediately preceding disappearing process, so the width between the two peaks of (q) was wider than that of (g). Therefore, (q) did not belong to the cross-section of the snakes-and-ladders at $\varepsilon=0$. 

\begin{figure}
  \begin{center}
    \includegraphics[scale=.75]{S3.eps}
    \caption{Bifurcation of HIOP solutions connecting to $\mathrm{[I]}^\mathrm{S}_{-3}$ branch (green). Starting from three-peak solution $\mathrm{[I^3]}^\mathrm{S}_{-1}$ (a), branch can be continued to $\mathrm{[I^2i]}^\mathrm{S}_{-3/2}$ (e), $\mathrm{[I^2]}^\mathrm{S}_{-2}$ (g), $\mathrm{[Ii]}^\mathrm{S}_{-5/2}$ (j), and ultimately, $\mathrm{[I]}^\mathrm{S}_{-3}$ (m), which represents a successive annihilation process of peaks. The solution $\mathrm{[I]}^\mathrm{S}_{-3}$ can be extended to the right-hand side and attained a two-peak solution (q), which is a peak-creation process around the bump region. Note that the distance between the two peaks of (q) was larger than that of $\mathrm{[I^2]}^\mathrm{S}_{-2}$ (g), and therefore, it did not account for a footprint of snakes-and-ladders structure, even though it was located on $\varepsilon=0$. Three magnified diagrams display the details of local structure.}
    \label{S3.eps}
  \end{center}
\end{figure}

\subsubsection{$\mathrm{[I]}^\mathrm{S}_{-4}$-branch}

Finally, the deformation of the $\mathrm{[I]}^\mathrm{S}_{-4}$ branch (blue)  via peak-destruction and peak-creation process from (a) to (q) is depicted in Fig.\ref{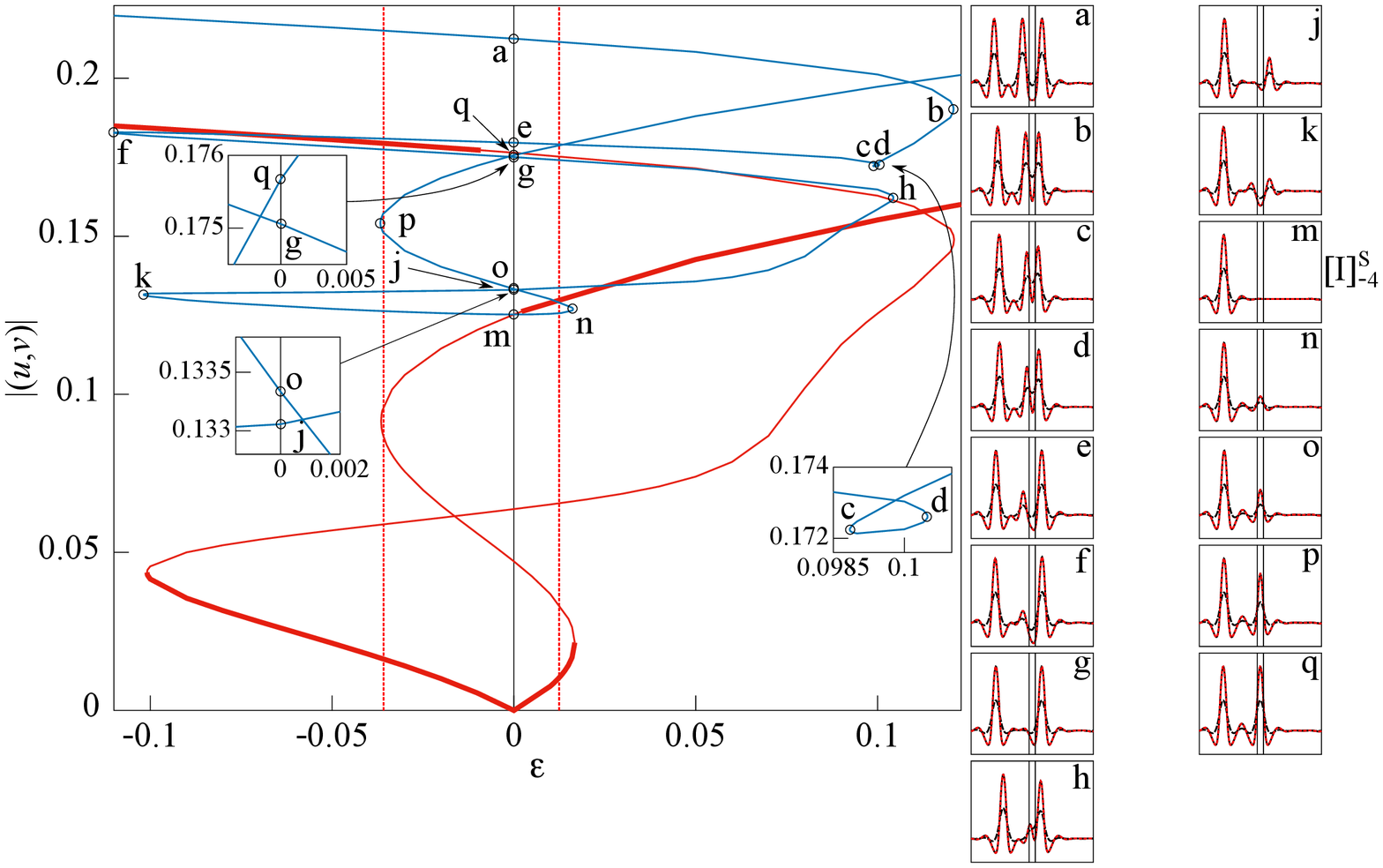}.
This branch intersected with the snakes-and-ladders on only one hub of the one-peak
solution $\mathrm{[I]}^\mathrm{S}_{-4}$.
Although the shape of the pulse on each branch appeared complicated, the route to $\mathrm{[I]}^\mathrm{S}_{-4}$ was qualitatively the same as that of the $\mathrm{[I]}^\mathrm{S}_{-3}$, i.e., a successive peak-destruction process. Starting from a three-peak solution (a) (note that the leftmost peak was located at the same position of $\mathrm{[I]}^\mathrm{S}_{-4}$), the middle-peak disappeared at (g) via the four saddle-node points (b), (c), (d), and (f). Subsequently, the rightmost peak diminished through the two saddle-node points (h) and (k), and eventually, it disappeared at (m), i.e., $\mathrm{[I]}^\mathrm{S}_{-4}$. Thereafter, the branch moved upward and the norm was increased, because a new peak emerged at the bump region, increased, and became a 2-peak solution (q) via two saddle-node points (n) and (p). In conclusion, the $\mathrm{[I]}^\mathrm{S}_{-4}$ was obtained via the destruction process of a multipeak solution with the leftmost peak located at the same position of $\mathrm{[I]}^\mathrm{S}_{-4}$. All these shifted solutions were unstable in the neighborhood of $\varepsilon=0$, but a few of them recovered their stabilities for a finite nonzero $\varepsilon$. In addition, they may serve as stationary pinning states.

\begin{figure}
  \begin{center}
    \includegraphics[scale=.75]{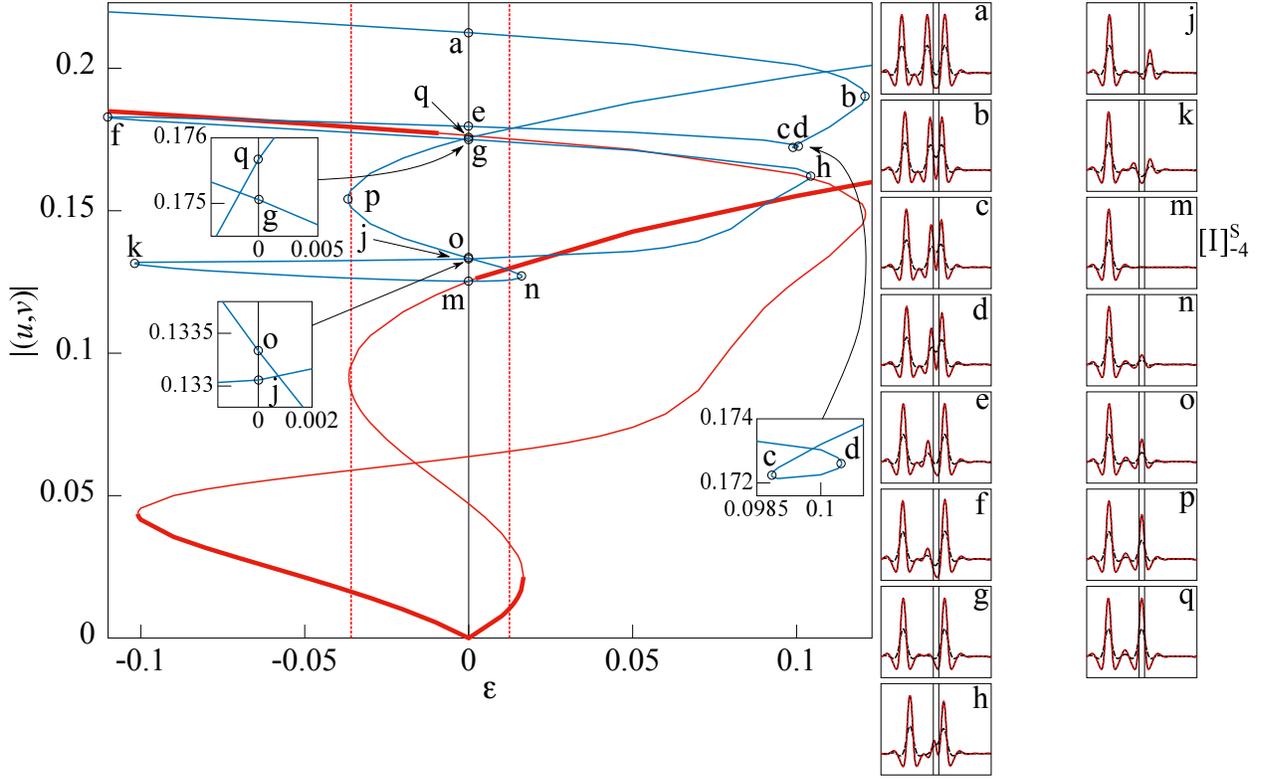}
    \caption{Bifurcation of HIOP solutions connecting to $\mathrm{[I]}^\mathrm{S}_{-4}$ branch (blue). Left-shifted one-peak solution $\mathrm{[I]}^\mathrm{S}_{-4}$ (m) can be obtained via peak-annihilation process from (a) to (m). It continued rightward and reached a two-peak solution (q) via two saddle-node points (n) and (p). Note that the three-peak solution (a) was not the same as $\mathrm{[I^3]}^\mathrm{S}_{-1}$ in Fig.\ref{S3.eps}. Similarly the two-peak solution (q) was not the same as $\mathrm{[I^2]}^\mathrm{S}_{-1}$ in Fig.\ref{S2.eps} because of the difference of width between two peaks.
    } 
    \label{S4.eps} 
    \end{center}
\end{figure}

In summary, each connected component containing $\mathrm{[I]}^\mathrm{S}_{n} (n=-1,-2,-3,-4)$ shared a similar mechanism of creating these shifted solutions. Thus, it is a peak-destruction process from a multipeak solution in normal descending order, especially the leftmost peak of the multipeak solution maintains the same location as that of the relevant $\mathrm{[I]}^\mathrm{S}_{n}$, and all other peaks disappeared through peak-destruction process, thereby resulting in $\mathrm{[I]}^\mathrm{S}_{n}$.
fter the destruction process, a peak-creation process followed at the bump region and formed a two-peak pulse solution; however, the peak-width in this case was wider than that of $\mathrm{[I^2]}^\mathrm{S}_{0}$. Therefore, this did not belong to the footprints of snakes-and-ladders at $\varepsilon = 0$. Furthermore, a complete characterization of the destruction--creation process for general $n$ remained an open question, and we believed that the observations present an essential mechanism of creating the shifted pulse solution.

\subsection{Characterization of asymptotic states in terms of HIOP}

\begin{figure}
  \begin{center}
    \includegraphics[width=.9\hsize]{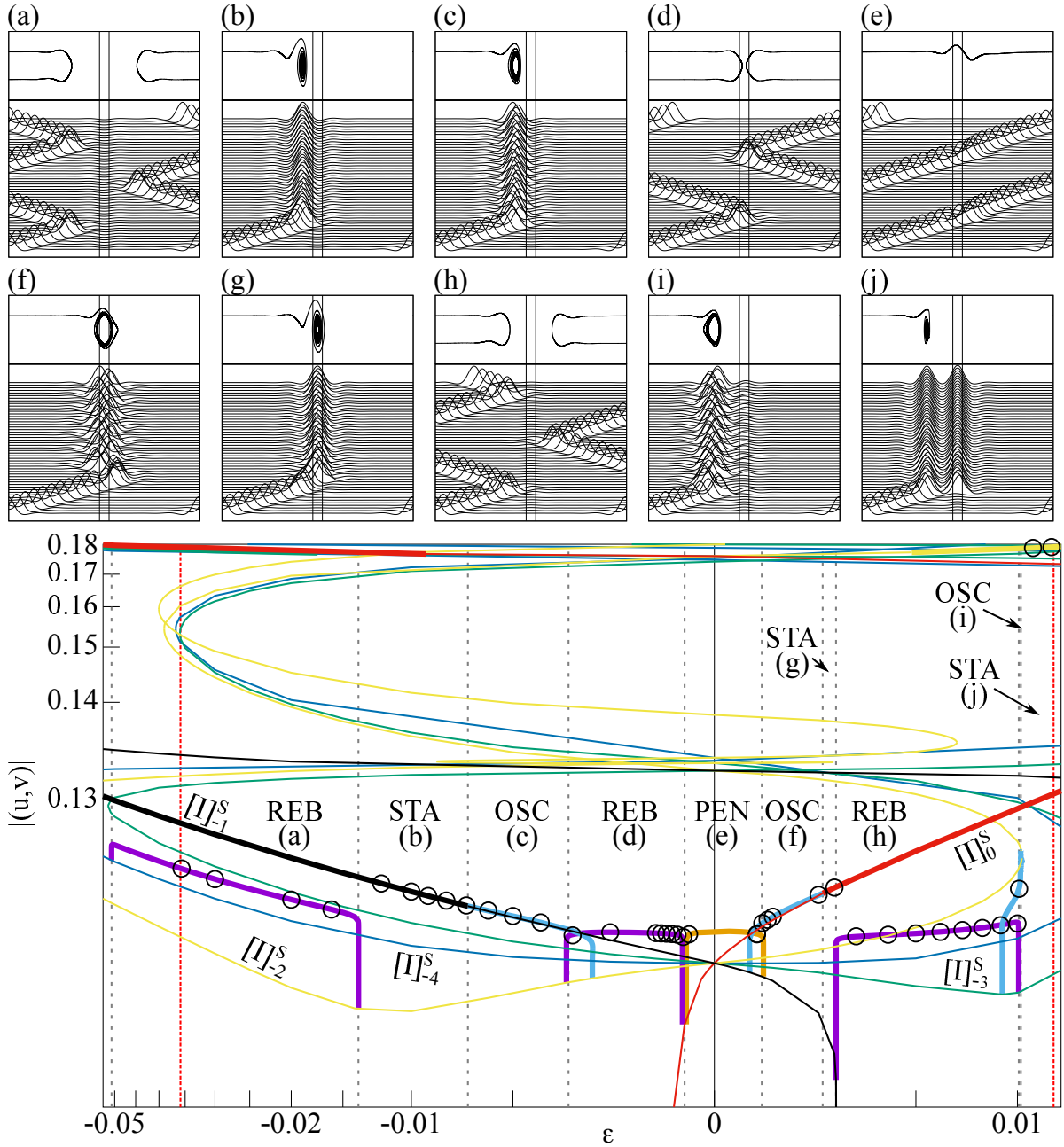}
    \caption{Global bifurcation diagram of HIOP in admissible interval.
      Solution set of all HIOP emanated from the trivial solution
      contained all asymptotic states after collision against bump. We regarded all the shifted one-peak solutions at $\varepsilon = 0$ as identical, because they coincided with each other by translation (spork structure). Time-periodic solutions PEN, REB, and OSC are indicated by ocher, purple, and cyan, respectively. All these branches exhibited similar ``staple''-like shape at both ends except the Hopf bifurcation of OSC branch at the transition from STA to OSC. Homoclinic/heteroclinic asymptote was observed at the end of staple structure, as detailed in text. Top panels depict PDE simulations, wherein the upper half displays the projection to (location, velocity)--space. Note that both vertical and horizontal axes of the diagram are not uniform.}\label{m.eps}
  \end{center}
\end{figure}

The relationship between asymptotic states of TPO after collision with
bump heterogeneity and all the relevant HIOP solutions
including time-periodic solutions is outlined in Fig.\ref{m.eps},
wherein the relevant HIOP branches are depicted using solid-colored
lines---thick lines indicate stable solutions, whereas thin lines indicate
unstable ones.
The asymptotic state for each $\varepsilon$ after collision is presented
as a circle in the bifurcation diagram.
Note that all the circles exist on one of the thick HIOP branches,
i.e., HIOP captured all the asymptotic states.

The time-periodic solutions PEN, REB, and OSC are indicated by
ocher, purple, and cyan, respectively.
Each end of PEN and REB branches contacted one of the shifted solutions
$\mathrm{[I]}^\mathrm{S}_n$, which corresponded to a homoclinic/heteroclinic bifurcation
to $\mathrm{[I]}^\mathrm{S}_n$ (see Section 5 for precise meaning).
On the other hand, an end of OSC branch is connected to one of
$\mathrm{[I]}^\mathrm{S}_n$ via Hopf bifurcation and the other end, a homoclinic bifurcation, and only for the case of PEN to OSC
transition, heteroclinic bifurcation from $\mathrm{[I]}^\mathrm{S}_{-1}$ to $\mathrm{[I]}^\mathrm{S}_{1}$.
The red vertical lines at $\varepsilon=-0.035866$ and $\varepsilon=0.012$
denote the lower and upper limits of the admissible interval,
and the response of TPO was classified as (a)--(j).
The associated spatiotemporal plots of TPO are displayed in the upper half
of Fig.\ref{m.eps}.
Although we studied in the admissible interval, the HIOP branches can be
tracked over it as already displayed in Figs.\ref{all.eps}, \ref{S0.eps},
\ref{S1.eps}, \ref{S2.eps}, \ref{S3.eps}, and \ref{S4.eps}.
The color legend of the steady solutions were the same as these figures.

Transitions for positive $\varepsilon$ is explained first,
and for negative $\varepsilon$ treated later.

\textbf{PEN$\rightarrow$OSC; (e)$\rightarrow$(f)}\\
As $\varepsilon$ was extremely small, the asymptotic state was the PEN
depicted in the thick-solid ocher line in domain (e).
The right extremity of the PEN branch bifurcated from the
$\mathrm{[I]}^\mathrm{S}_{-1}$ branch (black) via the saddle asymptote.
Slightly before this bifurcation point, the OSC solution depicted using 
cyan line bifurcated from the same branch.
The magnified figure of these bifurcations are shown in
Fig.\ref{PEN-OSC.eps} (c).
The OSC branch bifurcated via unfolding the heteroclinic orbit connecting two
saddle points $\mathrm{[I]}^\mathrm{S}_{-1}$ and $\mathrm{[I]}^\mathrm{S}_{1}$.
Although the emerged OSC solution was unstable, it recovered the stability
via the saddle-node bifurcation.
Therefore, both the OSC and PEN were stable in the range between the
saddle-node bifurcation of the OSC solution and saddle-asymptote;
however, the PEN was selected as the asymptotic state of collision.
The reason for this selection is explained based on the analysis of
reduced ODEs in Section \ref{S_Dynamics_of_reduced_ODEs_cyclic}.
The other extremity of this OSC branch denotes the Hopf bifurcation on the
$\mathrm{[I]}^\mathrm{S}_0$ branch.
Thereafter, both the PEN and OSC solution branches connected
$\mathrm{[I]}^\mathrm{S}_0$ and $\mathrm{[I]}^\mathrm{S}_{-1}$ but
in a different manner.

\textbf{OSC$\rightarrow$STA; (f)$\rightarrow$(g)}\\
The OSC branch disappears via Hopf bifurcation. After vanishing the periodic solution, the asymptotic state became STA
on $\mathrm{[I]}^\mathrm{S}_0$.

\textbf{STA$\rightarrow$REB; (g)$\rightarrow$(h)}\\
As $\varepsilon$ increased, the asymptotic state varied to REB because
an additional REB branch appeared, despite the $\mathrm{[I]}^\mathrm{S}_0$
branch was stable.
The left- and right-hand side of this REB branch terminated at
$\mathrm{[I]}^\mathrm{S}_{-1}$ and $\mathrm{[I]}^\mathrm{S}_{-3}$, respectively.
The reason for selecting the REB is explained later in terms of ODEs in
Section \ref{S_Dynamics_of_reduced_ODEs_cyclic}.

\textbf{REB$\rightarrow$OSC; (h)$\rightarrow$(i)}\\
A periodic orbit appeared via homoclinic bifurcation between
two ends of this REB solution branch.
This orbit was unstable and recovered stability after passing a
saddle-node bifurcation, which structure resembles aforementioned OSC branch.
This stable limit cycle remained after vanishing the REB branch and the
asymptotic state became OSC after REB.

\textbf{OSC$\rightarrow$STA; (i)$\rightarrow$(j)}\\
The stable OSC branch cease to exist and the asymptotic state becomes STA,
however corresponding HIOP solution is not one-peak but two-peak solution.
Further the OSC branch loses stability not via Hopf bifurcation but via
saddle-node bifurcation.
In fact the bifurcation structure of this OSC branch is very complicated
near the Hopf bifurcation point; it repeats many times of saddle-node
bifurcations before connecting to $\mathrm{[I]}^\mathrm{S}_{-2}$ branch
via Hopf bifurcation.
Although the bifurcation structure is complicated, transition process is
simple because the asymptotic state changes to STA just after the first
saddle-node bifurcation point of OSC branch where the OSC solution loses
stability.

Next, the transition process in the region of negative $\varepsilon$
is explained.

\textbf{PEN$\rightarrow$REB; (e)$\rightarrow$(d)}\\
The left end of the PEN branch contacted the $\mathrm{[I]}^\mathrm{S}_0$
branch via the saddle asymptote and the REB branch (purple) contacted
exactly at the same solution.
At this transition point, the unstable $\mathrm{[I]}^\mathrm{S}_0$ solutions
acted as the separator dividing the PEN and REB.

\textbf{REB$\rightarrow$OSC; (d)$\rightarrow$(c)}\\
The transition process is the same as that of (h) to (i) except for the
connecting branch; in this case both branch connect to
$\mathrm{[I]}^\mathrm{S}_{-2}$.
Magnified bifurcation structure is shown in Fig.\ref{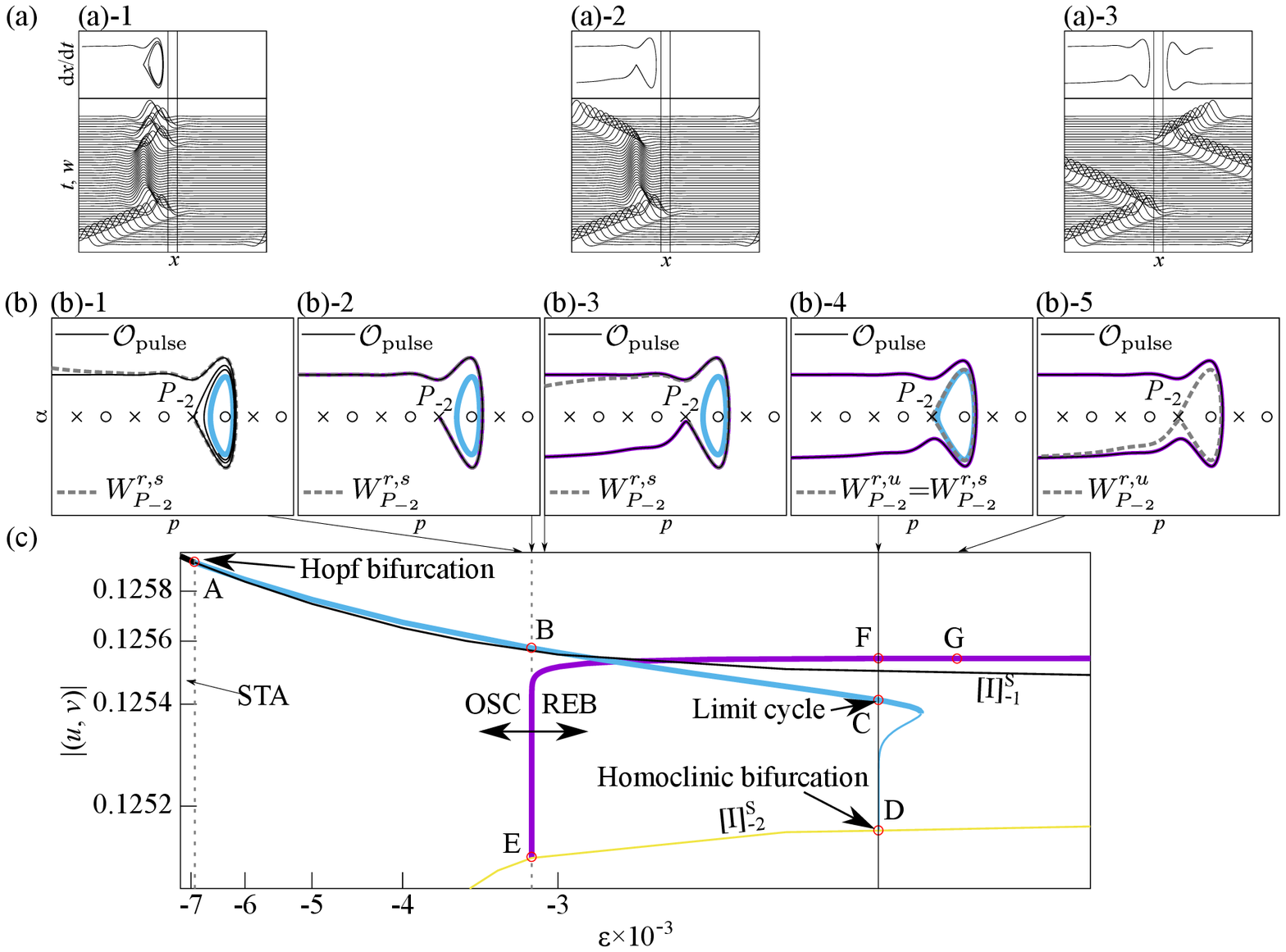} (c).

\textbf{OSC$\rightarrow$STA; (c)$\rightarrow$(b)}\\
The transition process is the same as that of (f) to (g) except for the
connecting branch; in this case Hopf bifurcation occurs on
$\mathrm{[I]}^\mathrm{S}_{-1}$.

\textbf{STA$\rightarrow$REB; (b)$\rightarrow$(a)}\\
The transition process is the same as that of (g) to (h) except for the
connecting branch; in this case REB branch bifurcates from
$\mathrm{[I]}^\mathrm{S}_{-2}$.
Magnified bifurcation structure is shown in Fig.\ref{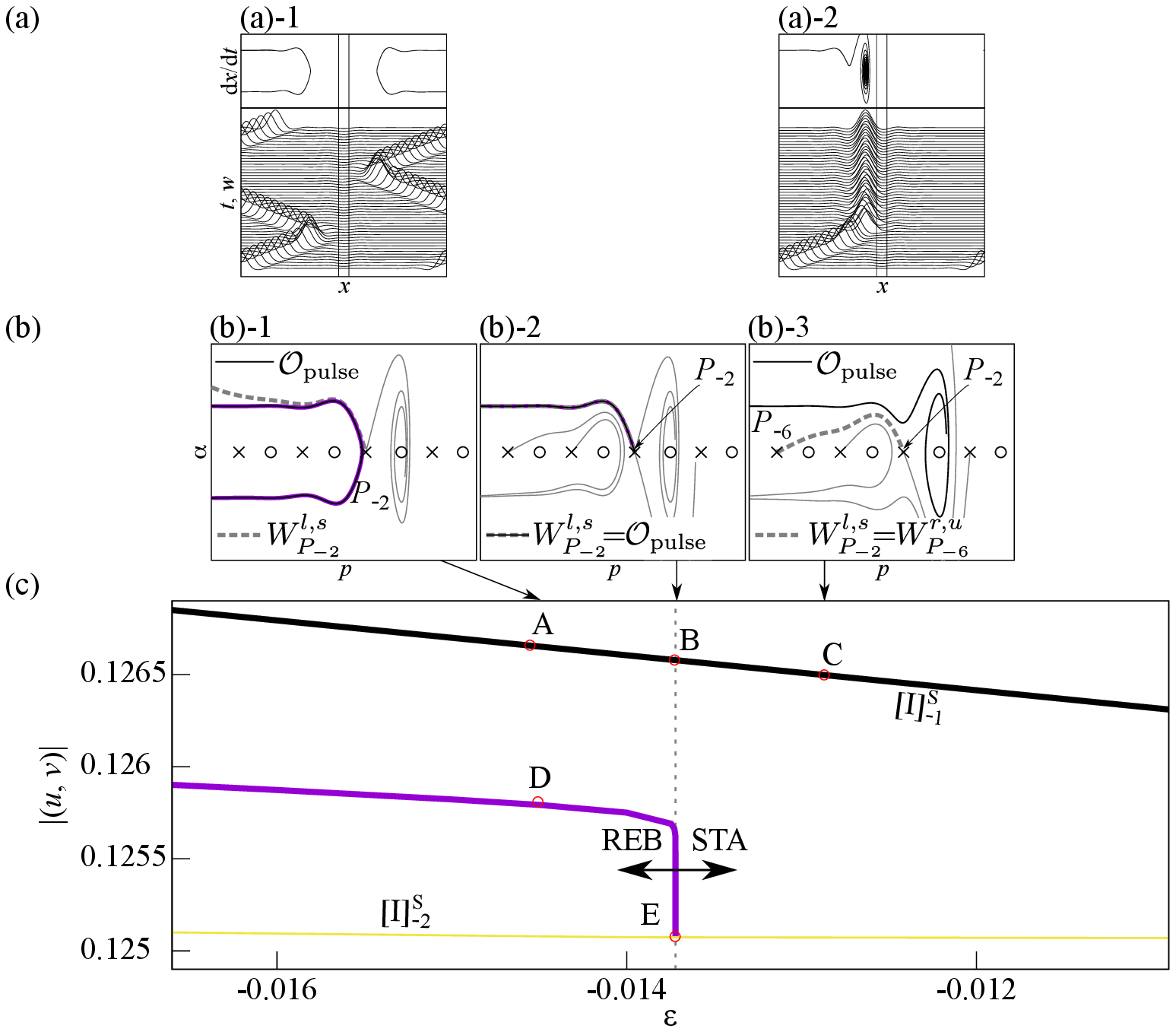} (c).

In both cases, for positive and negative $\varepsilon$, the transition of
asymptotic state is almost the same except for that after PEN,
because the unstable solution located in the middle of the bump altered
the type of instability depending on the sign of $\varepsilon$;
non-saddle-type solution for positive $\varepsilon$ whereas saddle-type
for negative $\varepsilon$.
Further detailed discussion based on a dynamical system perspective is
presented in Section \ref{S_Dynamics_of_reduced_ODEs_cyclic}.

Here a difference between the analysis of the asymptotic state of collision
in terms of HIOP solution and reduced ODE dynamics in Section 5, is mentioned briefly.
The reduced dynamics is derived near the
pitchfork bifurcation point of one-peak pulse.
Therefore the analysis of reduced dynamics is valid only for the collision
of one-peak pulse to the bump.
From this point of view, asymptotic states (a)--(i) in Fig.\ref{m.eps}
can be compared with the result of the analysis in
Section \ref{S_Dynamics_of_reduced_ODEs_cyclic}, however (j) is　not
comparable because the asymptotic state has two peaks.
In this case, the TPO rebounded and invoked a transition from the background
state to another larger stationary pulse upon collision and forms a two-peak
bound state, as depicted in Fig.\ref{S2.eps} with label
$\mathrm{[I^2]}^\mathrm{S}_{-1}$ (yellow branch).
This state is exactly the same solution as Fig.\ref{S2.eps}-(n).
Although multi-pulse solutions are beyond the scope of the reduced approach, HIOP analysis is still valid for such a regime.

Lastly, we briefly remark on the behavior of TPO beyond the admissible
interval.
The behavior of TPO beyond the admissible interval is complicated.
An example is shown in Fig.\ref{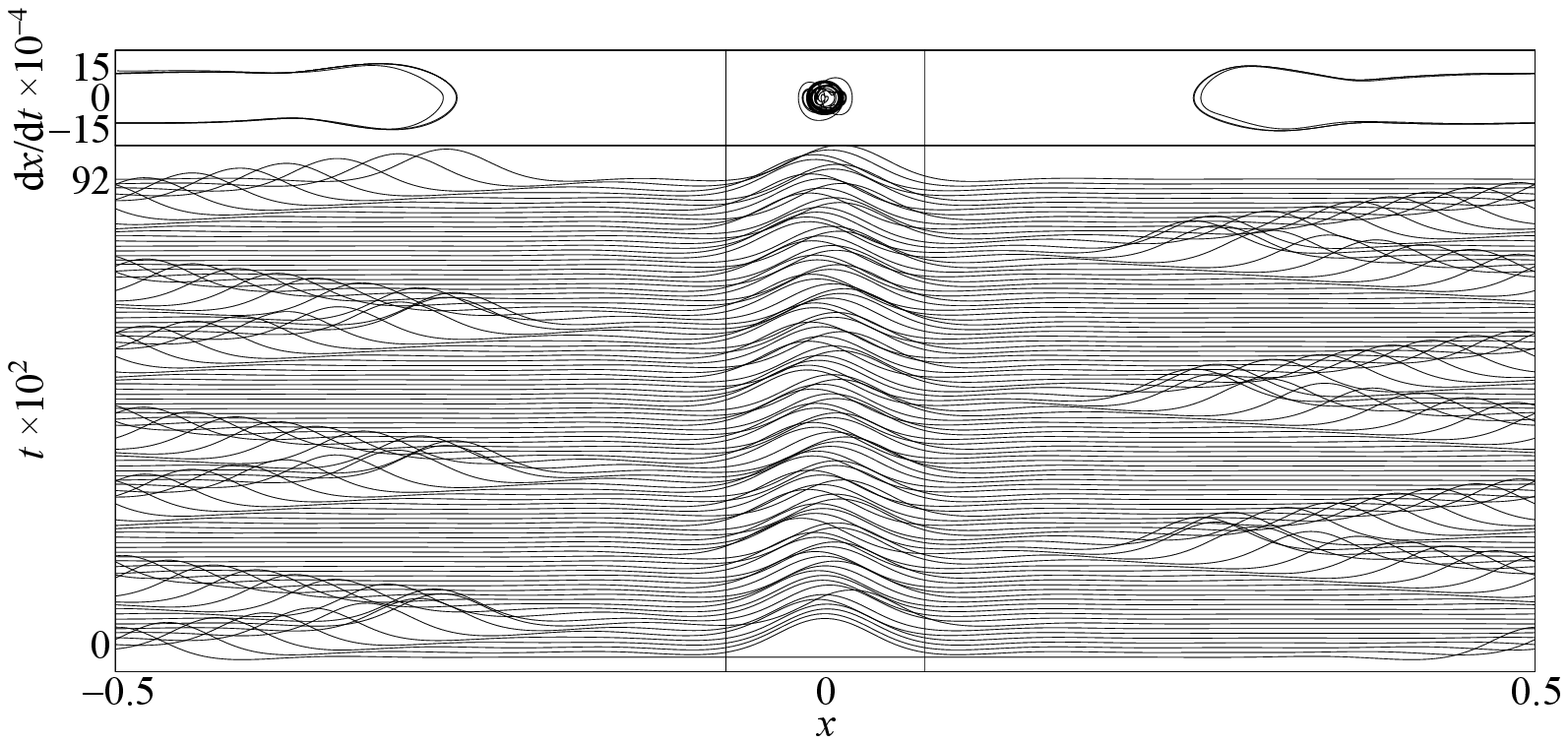} for $\varepsilon=0.01894$,
which is larger then admissible interval.
The TPO collided with the bump and triggered a large-amplitude pulse,
but it was not stationary pulse and exhibited oscillations.
The behavior of TPO was almost as a REB and interacted with the breathing
motion of the pulse at the bump.
After a considerable period, the TPO and the breathing motion settled,
but they did not appear as periodic motions.
In particular, the breathing motion at the center appeared as a torus,
and we speculated that the ratio of the period of REB motion to that
of the pulse motion at the bump was irrational.
The dynamics was considerably influenced by the system size and may
vary with the length of the interval, which constitutes our future scope
of research.

\begin{figure}
  \begin{center}
    \includegraphics[width=.9\hsize]{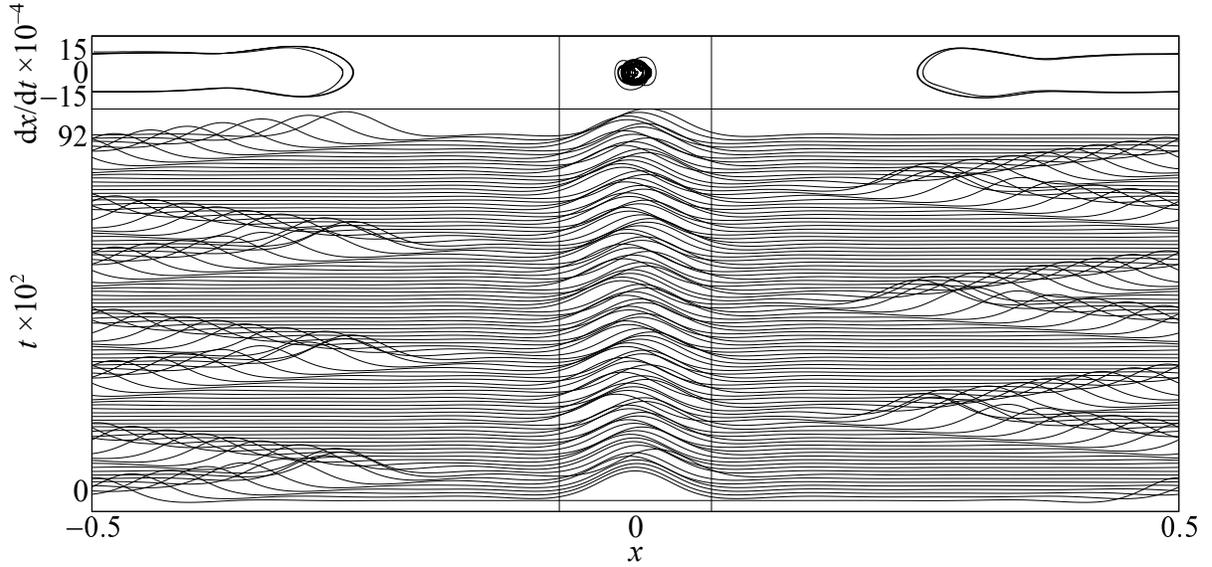}
    \caption{Example of asymptotic state beyond the admissible interval:
      $\varepsilon=0.01894$ and $d=0.14$.
      A new type of large amplitude pulse emerged at the bump center, and it oscillated around the bump for the given parameter set. The traveling pulse rebounded in the vicinity of the bump region and returned again, because it progressed around the circle. On the infinite line, the pulse never returned and standing breather recovered a periodic motion thereafter. However, in case the ratio of the period of the REB solution to the oscillation of the standing breather at the bump center is irrational for the circle, the orbit of the flow apparently formed a two dimensional torus, as depicted in top panel. }
    \label{0.14+0.01894.eps}
  \end{center}
\end{figure}

\newpage

\section{Dynamics of reduced ODEs
}\label{S_Dynamics_of_reduced_ODEs}
In previous sections, we discussed the behavior of TPO in heterogeneous media and produced the outcomes after the bump collision was completely captured by the HIOP, especially, the asymptotic behavior of TPO that converged into one of the HIOP solutions. Moreover, the transition mechanism for obtaining outputs was suggested by the global structure of the HIOP with parameter variation. In this section, the PDE dynamics was reduced to a finite dimensional system near the drift bifurcation point to conduct a more detailed analysis of the interaction between the TPO and heterogeneities. In particular, the variation in bump height caused various types of transitions for each pulse orbit, and we aim to explain mechanism and instant of such transitions based on a dynamical system perspective, which is consistent with the global structure of HIOP.

\subsection{Reduction method to finite dimensional ODEs
}\label{S_Dynamics_of_reduced_ODEs_reduction}
The reduction method to finite dimensional system near the drift bifurcation point is explained in this section, and its detailed derivation is presented in Appendix A.1. The $\varepsilon$ and $\tau$ were employed as bifurcation parameters, corresponding to the bump height and the relaxation parameter for $v$. The resulting coupled Eqs.(\ref{eq3.50}) describe the basic interactive dynamics between the pulse and heterogeneity for defining the transition mechanism based on varying strength of heterogeneity. In particular, the system of Eq.(\ref{eq3.50}) can be regarded as a normal form, because it comprises cubic nonlinearity of drift bifurcation along with oscillatory heterogeneity terms. 
Subsequently, the system was linearized around a stationary pulse solution at the drift bifurcation point $\tau=\tau_{c}= 1/\kappa_3$ (refer to Appendix A.2 for details). First, we translate the background state to zero.
\begin{equation}
  \label{eq_ode_1}
  \begin{array}{rcl}
    u(x,t)&=&\underline u+\tilde u(x,t)\\
    v(x,t)&=&\underline u+\tilde v(x,t)\\
    \kappa_1(x)&=&\underline\kappa_1+\widehat\kappa_1(x)
  \end{array}
\end{equation}
Substituting Eq.(\ref{eq_ode_1}) into Eq.(\ref{eq_bif_2}) and considering that $\underline u$ and $\underline\kappa_1$ satisfies
Eq.(\ref{eq2.03}), we obtain
\begin{equation}
  \label{eq3.06}
  \left\{ \begin{array}{lcl}
    \tilde u_t
    =\mathcal{L}\tilde u-\kappa_{3}\tilde v+N(\tilde u)
    +\widehat{\kappa}_{1}\\
    \tau\tilde v_t/\tau_c=(\tilde u-\tilde v)/\tau_c
  \end{array}\right.
\end{equation}
where
\[
\mathcal{L}=D_u\Delta+\kappa_2-\kappa_4\left(1-D_w\Delta\right)^{-1}
-3\underline u^2
\]
and
\[
N(\tilde u)=-3\underline u\tilde u^2-\tilde u^3
\]
Therefore, the background state of pulse
$\tilde U=(\tilde u,\tilde v)$ equals zero.
Let us linearize Eq.(\ref{eq3.06}) around a stationary pulse solution
$\overline{U}=(\overline{u},\overline{v})$ of (6) at $\tau_{c}$, and 
substitute
$\tilde U
=\overline U
+(\hat u,\hat v)\ope^{\lambda t}$
into the linearized equations. Thus, 
\[
\lambda\hat U=\mathcal D(\overline U;\tau_c)\hat U,
\]
where $\hat U=(\hat u,\hat v)$, and $\mathcal D(\overline U;\tau_c)$ is defined by
\begin{equation}
  \label{eq3.07}
  \mathcal{D}(\overline{U}; \tau_{c})=\mathcal{M}(\tau_{c})\left(
  \begin{array}{cc} \mathcal{L}+N'(\overline{u}) & -\kappa_{3} \\
    -\kappa_{3} & \kappa_{3} \end{array} \right),
\end{equation}
where
\begin{equation}
  \label{eq3.08}
  \mathcal{M}(\tau_{c})=\left( \begin{array}{cc} 1 & 0 \\
    0 & -1/\tau_{c}\kappa_{3} \end{array} \right).
\end{equation}
At the drift bifurcation point $\tau=\tau_{c}$, the operator $\mathcal{D}$ possesses
a double-degenerate zero eigenvalue corresponding to the
Goldstone and propagator modes. Specifically, the Goldstone mode can be expressed as
\begin{equation}
  \label{eq3.09}
  \mathcal{G}_{x}(x)=\left( \begin{array}{c}
    \frac{\partial}{\partial x} \overline{u}(x) \\
    \frac{\partial}{\partial x} \overline{u}(x) \end{array} \right),
\end{equation}
and the accompanied propagator mode of $\mathcal{D}(\overline{U}; \tau_{c})$ can be computed as
\begin{equation}
  \label{eq3.10}
  \mathcal{P}_{x}(x)=\left( \begin{array}{c} 0 \\
    -\frac{1}{\kappa_{3}}\frac{\partial}{\partial x} \overline{u}(x)
  \end{array} \right),
\end{equation}
where $\mathcal{D}\mathcal{G}_{x}=0$ and
$\mathcal{D}\mathcal{P}_{x}=\mathcal{G}_{x}$.
Based on the adjoint operator $\mathcal{D}^{*}(\overline{U}; \tau_{c})$,
the complementary Goldstone mode and the adjoint propagator mode can be explicitly evaluated using the solvability condition
$<\mathcal{G}_{x}^{*},\mathcal{G}_{x}>=0$,
$\mathcal{G}_{x}^{*}=\mathcal{M}^{-1}(\tau_{c})\mathcal{G}_{x}$:
\begin{equation}
  \label{eq3.11}
  \mathcal{G}_{x}^{*}(x)=\left( \begin{array}{c}
    \frac{\partial}{\partial x} \overline{u}(x) \\
    -\frac{\partial}{\partial x} \overline{u}(x) \end{array} \right)
\end{equation}
and
\begin{equation}
  \label{eq3.13}
  \mathcal{P}_{x}^{*}(x)=\left(\begin{array}{c}
    \frac{1}{\kappa_{3}}\frac{\partial}{\partial x}\overline{u}(x)\\
    0,
  \end{array}\right)
\end{equation}
where $\mathcal{D}^{*}\mathcal{G}_{x}^{*}=0$,
$\mathcal{D}^{*}\mathcal{P}_{x}^{*}=\mathcal{G}_{x}^{*}$.
Herein, the adjoint operator $\mathcal{D}^{*}$ is defined by
\begin{equation}
  \label{eq3.12}
  \mathcal{D}^{*}(\overline{U}; \tau_{c})=\left( \begin{array}{cc}
    \mathcal{L}+N'(\overline{U}) & 1/\tau_{c}\\
    -\kappa_{3} & -1/\tau_{c} \end{array} \right).
\end{equation}

\noindent
The dynamics of traveling pulse solution near the drift bifurcation point can be approximated using multiple time-scale $T_{1}$, $T_{2}$, and $T_{3}$ with $T_{i}=\delta^{i}t$.

\begin{equation}
  \label{eq3.19}
  \begin{array}{rcl}
    \tilde U(x,t)&=&\overline{U}(x-p(T_{1}, T_{2}, T_{3}))\\
    &&-\delta\kappa_{3}\alpha(T_{1}, T_{2})
    \mathcal{P}_{x}(x-p(T_{1}, T_{2}, T_{3}))\\
    &&+\delta^{2}\vec r(x-p(T_{1},T_{2},T_{3}))+\delta^{3}\vec R(x),
  \end{array}
\end{equation}
\noindent
Here $p(T_{1}, T_{2}, T_{3})$ represents the position of pulse, $\alpha(T_{1}, T_{2})$ is the amplitude of propagator mode, and the higher order corrections $\vec r=(r_u,r_v)$ and $\vec R=(R_u,R_v)$ belong to the orthogonal
subspace of the principal part spanned by $\{\mathcal G_x,\mathcal P_x\}$.
According to projection technique, rescaling $\alpha\delta$ by $\alpha$, and recovering original variables, 
$\widetilde{\tau}=\frac{\tau-1/\kappa_{3}}{\delta^{2}}$,
$\widehat{\kappa}_{1}(x)=\delta^{2}\widetilde{\kappa}_{1}(x)$ (refer to Appendix A.1 for details), we can obtain the following reduced two-dimensional system:

\begin{equation}
  \label{eq3.47}
  \left\{ \begin{array}{lcl}
    \dot{p}=\kappa_{3}\alpha-\frac{\left<\frac{\partial}{\partial x}
      \overline{u}, \widehat{\kappa}_{1}\right>}{\left
      <(\frac{\partial}{\partial x} \overline{u})^{2}\right>}\\
    \dot{\alpha}=\kappa_{3}^{2}(\tau-1/\kappa_{3})\alpha
    -\kappa_{3}\alpha^{3}\frac{\left<(\frac{\partial^{2}}{\partial x^{2}}
      \overline{u})^{2}\right>}{\left<(\frac{\partial}{\partial x}
      \overline{u})^{2}\right>}-\frac{\left<\frac{\partial}{\partial x}
      \overline{u}, \widehat{\kappa}_{1}\right>}{\left
      <(\frac{\partial}{\partial x} \overline{u})^{2}\right>}\\
  \end{array}\right.
\end{equation}
where $\overline{u}=\overline{u}(x-p)$ denotes the stationary pulse solution of (6) at the drift bifurcation point $\tau_{c}=1/\kappa_{3}$,
$\widehat{\kappa}_{1}=\widehat{\kappa}_{1}(x,\varepsilon,d)$ expresses the bump heterogeneity of (8), $p=p(t)$ represents the position
of pulse, and $\alpha=\alpha(t)$ indicates the velocity of pulse.
Recalling that $\widehat{\kappa}_{1}(x)$ has steep boundary, it can be approximated using a step function.
Therefore, 
$\left<\frac{\partial}{\partial x}\overline{u},\widehat{\kappa}_{1}\right>$
can be rewritten using the Dirac $\delta$ function.

\begin{equation*}
  \left<
  \frac{\partial}{\partial x} \overline{u}(x-p), \widehat{\kappa}_{1}(x)
  \right>
  =\int_{-\infty}^{\infty}\frac{\partial}{\partial x} \overline{u}(x-p)
  \widehat{\kappa}_{1}(x)dx
\end{equation*}
\begin{equation}
  \label{eq3.48}
  =\varepsilon\left[\overline{u}\left(\frac{d}{2}-p\right)
    -\overline{u}\left(-\frac{d}{2}-p\right)
    \right]
\end{equation}
\noindent
Subsequently, the heterogeneity term $f(p,d)$ can be defined by
\begin{equation}
  \label{eq3.49}
  f(p,d)=\overline{u}\left(\frac{d}{2}-p\right)
  -\overline{u}\left(-\frac{d}{2}-p\right).
\end{equation}
The function $f(p,d)$ is an odd function of $p$ and
decays exponentially in a sinusoidal way, as depicted in Fig.\ref{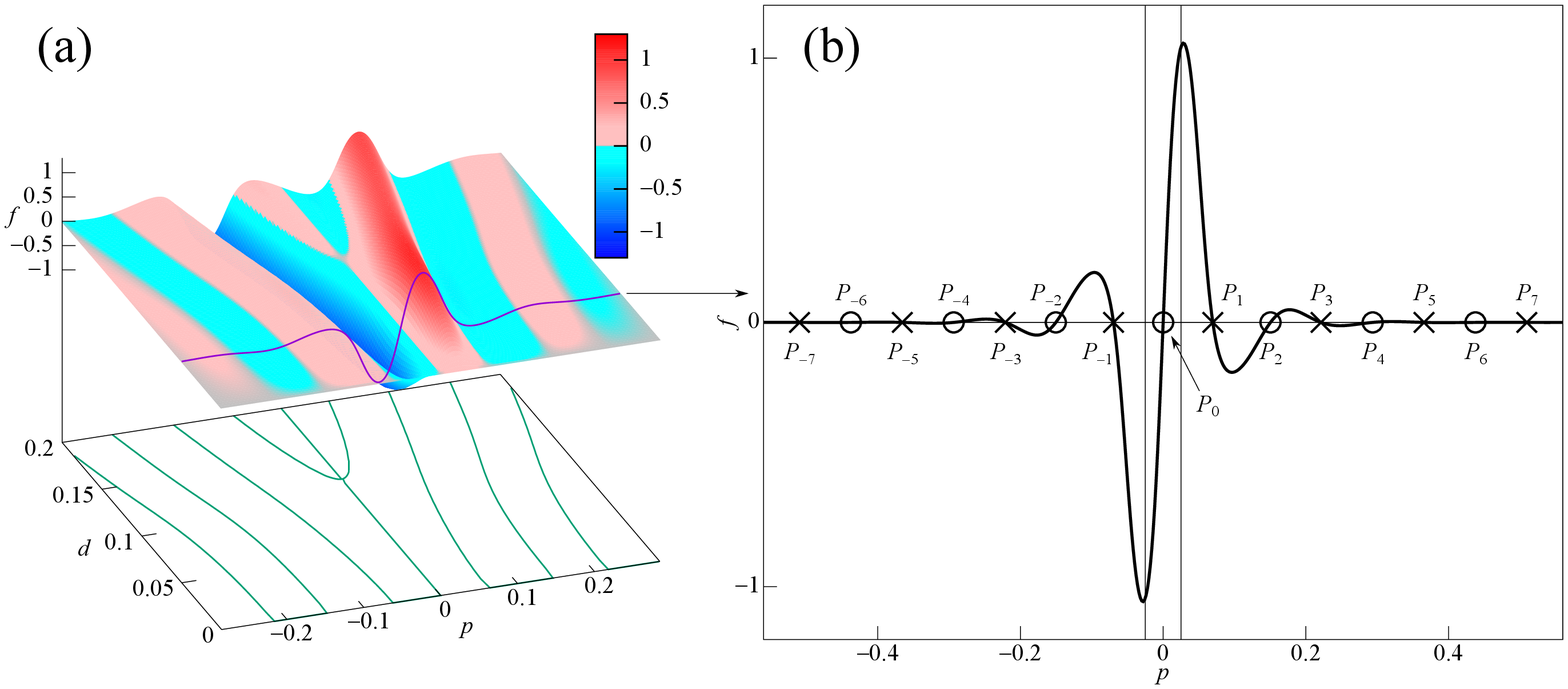}.
Let $C_{1}=\left<(\frac{\partial}{\partial x} \overline{u})^{2}\right>$,
$C_{2}=\left<(\frac{\partial^{2}}{\partial x^{2}} \overline{u})^{2}\right>$, then Eq.(\ref{eq3.47}) deduces into
\begin{equation}
  \label{eq3.50}
  \left\{ \begin{array}{lcl}
    \dot{p}=\kappa_{3}\alpha-\frac{\varepsilon}{C_{1}}f(p,d)\\
    \dot{\alpha}=\kappa_{3}^{2}(\tau-1/\kappa_{3})\alpha-\kappa_{3}
    \alpha^{3}\frac{C_{2}}{C_{1}}-\frac{\varepsilon}{C_{1}}f(p,d)\\
  \end{array}\right.
\end{equation}
where $d$ denotes the bump width, $\varepsilon$ denotes its height, 
$C_{1}$ and $C_{2}$ represent positive constants, and
$\tau$ and $\kappa_{3}$ are known PDE parameters, stated in (\ref{parametervalues}).
Note that the reduced ODE system (\ref{eq3.50}) without heterogeneity can be defined as 
\begin{equation}
  \label{eq6.01}
  \left\{ \begin{array}{lcl}
    \dot{p}=\kappa_{3}\alpha\\
    \dot{\alpha}=\kappa_{3}^{2}(\tau-1/\kappa_{3})\alpha
    -\kappa_{3}\alpha^{3}C_{2}/C_{1} \\
  \end{array}\right.
\end{equation}
Thus, Eq.(\ref{eq6.01}) evidently undergoes supercritical pitchfork 
bifurcation at $\tau=\tau_{c}$ and inherits the same bifurcation of the original PDE (refer to Appendix A.2).
For $\tau<\tau_{c}$, the equilibrium $\alpha^{*}=0$ is stable and corresponds to a stationary pulse solution.
For $\tau>\tau_{c}$, the equilibria of the second equation
$\overline{\alpha}_{\pm}=\pm\sqrt{\frac{(\kappa_{3}\tau-1)C_{1}}{C_{2}}}$
are stable and correspond to a traveling pulse solution with speed $\overline{\alpha}_{\pm}$.

Ultimately, we conclude that the PDE system (\ref{eq2.03}) with bump heterogeneity can be formally reduced to the two-dimensional ODE system (\ref{eq3.50}). Notably, the system (\ref{eq3.50}) can be regarded as a normal form of TPO near the drift bifurcation, because it contains a cubic pitchfork bifurcation and an oscillatory external function.

\subsection{Reduced Dynamics}
\subsubsection{Elementary properties of critical points}
Now, we can study the reduced ODE system (\ref{eq3.50}) with bump heterogeneity. 
The profile of the bump heterogeneity term $f(p,d)$, as defined in Eq.(\ref{eq3.49}), is illustrated in Fig.\ref{properties_ode.eps}-(a).
To assure the definiteness, all numerical computations were conducted for the case $d=0.05$ (refer to Fig.\ref{properties_ode.eps}-(b)).
In addition, the following discussion can be extended to a wider case with suitable modifications.  

\begin{figure}
  \centering
  \includegraphics[width=.8\hsize]{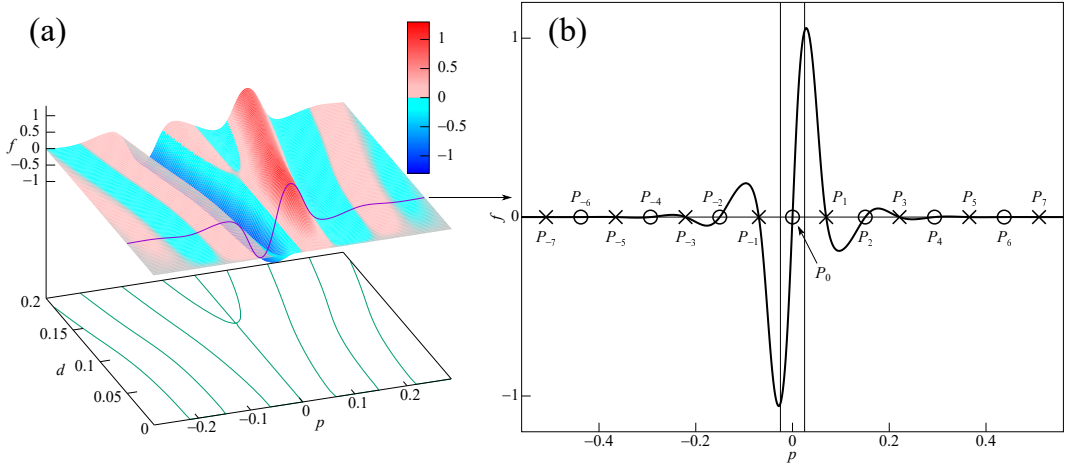}
  \caption{%
    (a) Profile of heterogeneity term $f(p,d)$ defined
    using Eq.(\ref{eq3.49}).
    (b) Cross-section of $f(p,d)$ at $d=0.05$; countably many zeros of $f(p,d)$ are present.  }\label{properties_ode.eps}
\end{figure}

The critical points of Eq.(\ref{eq3.50}) are defined using the following equations, and their properties are summarized in the following proposition.

\begin{equation}
  \label{eq6.03}
  \left\{ \begin{array}{lcl}
    \kappa_{3}\alpha-\frac{\varepsilon}{C_{1}}f(p,d)=0\\
    \kappa_{3}^{2}(\tau-1/\kappa_{3})\alpha
    -\kappa_{3}\alpha^{3}\frac{C_{2}}{C_{1}}-\frac{\varepsilon}{C_{1}}f(p,d)=0\\
  \end{array}\right.
\end{equation}

\begin{proposition}
  \label{prop6.00}
  We assumed $\tau_{c}<\tau<2\tau_{c}$, the following relationships hold.
  \begin{itemize}
  \item All the critical points of Eq.(\ref{eq3.50}), i.e., the zero points of Eq.(\ref{eq6.03}) were on the axis $\alpha=0$, coinciding with the zero points of $f(p,d)$.
  \item Countably many critical points exist for Eq.(\ref{eq3.50})
    with labels $(P_{i},0)$, $i\in \mathbb{Z}$.
  \item $(P_{i}, 0)$ is a saddle point, if $\varepsilon D(P_{i},d)<0$ where $D(P_{i},d)=\frac{\partial f(p,d)}{\partial p}|_{P_{i}}$.
    The saddle quantity (i.e., sum of real eigenvalues) is always positive at the saddle point:   
    $\sigma=\lambda_{1}+\lambda_{2}=b>0$.
    Refer to Eq.(\ref{eq6.12}) for the definition of $b$.
  \item $(P_{i}, 0)$ denotes a nonsaddle point if $\varepsilon D(P_{i},d)>0$.
    As the modulus of $\varepsilon$ increases, the nonsaddle point undergoes a  sequential variation: unstable node $\rightarrow$ unstable spiral
    $\rightarrow$ stable spiral $\rightarrow$ stable node. Refer to Fig.\ref{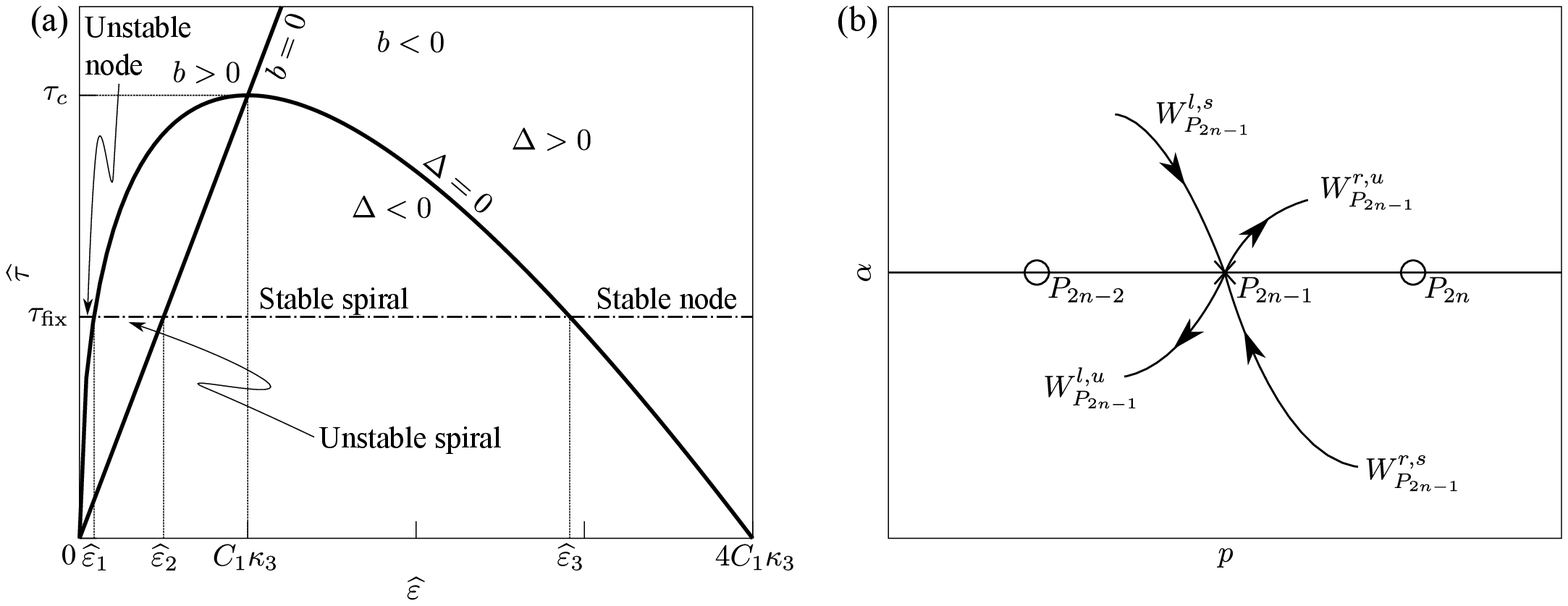}.  
  \item For each fixed $d$ and
    $\varepsilon$, only the finitely many critical points $(P_{i}, 0)$ ($i\in \mathbb{Z}$) are stable.
  \end{itemize}
\end{proposition}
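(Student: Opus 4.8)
The plan is to derive everything from the linearization of (\ref{eq3.50}) at a point $(P_i,0)$, reading off the saddle/node/spiral classification from the trace and determinant of the Jacobian, with the standing hypothesis $\tau_c<\tau<2\tau_c$ doing the decisive work at the start and at the end. First I would locate the critical points: setting the right-hand sides of (\ref{eq6.03}) to zero and subtracting the first equation from the second cancels the common term $\frac{\varepsilon}{C_1}f(p,d)$ and leaves $\kappa_3\alpha\bigl[\kappa_3\tau-2-\frac{C_2}{C_1}\alpha^2\bigr]=0$. Since $\tau<2\tau_c=2/\kappa_3$ forces $\kappa_3\tau-2<0$, the bracket is strictly negative for every real $\alpha$, so the only possibility is $\alpha=0$; this is precisely where the upper bound $\tau<2\tau_c$ is used. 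Returning $\alpha=0$ to the first equation gives $f(p,d)=0$ (for $\varepsilon\neq0$), so the critical points lie on $\{\alpha=0\}$ at the zeros of $f$. As $f(\cdot,d)$ is odd and decays in an exponentially damped sinusoidal manner (Eq.(\ref{eq3.49}), Fig.\ref{properties_ode.eps}), it has a countable set of (generically simple) zeros that is unbounded in both directions, yielding the labeled family $(P_i,0)$, $i\in\mathbb{Z}$. This settles the first two assertions.

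Next I would compute the Jacobian $J$ of (\ref{eq3.50}) at $(P_i,0)$. Writing $D=D(P_i,d)=f_p(P_i)$ and $A=\kappa_3^2(\tau-1/\kappa_3)>0$ (positive because $\tau>\tau_c$), one obtains
\begin{equation*}
J=\begin{pmatrix}-\tfrac{\varepsilon}{C_1}D & \kappa_3\\ -\tfrac{\varepsilon}{C_1}D & A\end{pmatrix},\qquad \operatorname{tr}J=A-\tfrac{\varepsilon D}{C_1},\qquad \det J=\tfrac{\varepsilon D}{C_1}\,\kappa_3\,(2-\kappa_3\tau).
\end{equation*}
Because $2-\kappa_3\tau>0$ under the hypothesis and $\kappa_3,C_1>0$, the sign of $\det J$ equals that of $\varepsilon D$. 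Hence $(P_i,0)$ is a saddle exactly when $\varepsilon D<0$; in that regime $-\tfrac{\varepsilon D}{C_1}>0$, so $\operatorname{tr}J=A-\tfrac{\varepsilon D}{C_1}>0$, and since a saddle has real eigenvalues of opposite sign whose sum is the trace, the saddle quantity is $\sigma=\lambda_1+\lambda_2=b:=\operatorname{tr}J>0$ (cf.\ Eq.(\ref{eq6.12})). This gives the third assertion.

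For the fourth assertion I would restrict to $\varepsilon D>0$ (a non-saddle, since $\det J>0$) and track the type as $|\varepsilon|$ grows with $P_i$ fixed. Setting $c=|D|/C_1>0$ and $B=\kappa_3(2-\kappa_3\tau)>0$, one has $\operatorname{tr}J=A-c|\varepsilon|$ and $\det J=cB|\varepsilon|$, so the discriminant $\Delta=(\operatorname{tr}J)^2-4\det J=c^2|\varepsilon|^2-(2Ac+4cB)|\varepsilon|+A^2$ is a convex quadratic in $|\varepsilon|$ with $\Delta(0)=A^2>0$. Its two positive roots $\varepsilon_\pm=\bigl[(A+2B)\pm2\sqrt{B(A+B)}\bigr]/c$ straddle the trace-sign change at $|\varepsilon|=A/c$: the inequality $\varepsilon_-<A/c$ reduces to $B<\sqrt{B(A+B)}$ and $A/c<\varepsilon_+$ is immediate, both valid since $A,B>0$. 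Reading off the four consecutive intervals then produces the claimed chain, namely unstable node $(\Delta>0,\ \operatorname{tr}J>0)\to$ unstable spiral $(\Delta<0,\ \operatorname{tr}J>0)\to$ stable spiral $(\Delta<0,\ \operatorname{tr}J<0)\to$ stable node $(\Delta>0,\ \operatorname{tr}J<0)$, matching Fig.\ref{stability_zeros.eps}.

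Finally, the fifth assertion follows from the decay of $D$. Stability of a non-saddle demands $\operatorname{tr}J<0$, i.e.\ $\varepsilon D(P_i,d)>AC_1$, which already subsumes $\det J>0$. Since $f$ and hence $f_p$ decay exponentially along the tail, $D(P_i,d)\to0$ as $|i|\to\infty$, so $|\varepsilon D(P_i,d)|<AC_1$ for all sufficiently large $|i|$; thus only finitely many $(P_i,0)$ can be stable. The main obstacle I anticipate is the fourth assertion: pinning down the exact ordering node $\to$ spiral $\to$ spiral $\to$ node requires the explicit root computation for $\Delta$ together with the verification $\varepsilon_-<A/c<\varepsilon_+$, whereas the other four claims collapse to one-line sign arguments once the Jacobian and the hypothesis $\tau_c<\tau<2\tau_c$ are in place.
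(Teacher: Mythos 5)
Your proof is correct, and its overall skeleton coincides with the paper's (Appendix A.3, Lemmas 1--3): elimination of $\alpha$ using $\kappa_3\tau-2<0$ to pin the critical points to $\{\alpha=0,\ f(p,d)=0\}$; countability from the exponentially damped oscillatory form of $f$; classification via trace and determinant of the same Jacobian, with $\det$ carrying the sign of $\varepsilon D$ because $2-\kappa_3\tau>0$; and finiteness of stable points from $D(P_i,d)\to 0$, exactly as the paper argues that $\widehat{\varepsilon}=\varepsilon D(P_i,d)$ eventually falls below the $b=0$ threshold. Two detail-level divergences are worth flagging. First, for the location of critical points you subtract the two equations of (\ref{eq6.03}) to get $\kappa_3\alpha\bigl[\kappa_3\tau-2-\tfrac{C_2}{C_1}\alpha^2\bigr]=0$ directly, whereas the paper substitutes $\alpha=\tfrac{\varepsilon}{C_1\kappa_3}f$ into the second equation and derives the contradiction $f^2=(\kappa_3\tau-2)\cdot(\text{positive})$; these are trivially equivalent, though your cancellation is slightly cleaner. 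Second, and more substantively, for the node $\rightarrow$ spiral $\rightarrow$ spiral $\rightarrow$ node sequence the paper works in the two-parameter $(\widehat{\varepsilon},\widehat{\tau})$-plane, describing the parabola-like curve $\Delta=0$ (maximum at $\widehat{\varepsilon}=C_1\kappa_3$, zero at $\widehat{\varepsilon}=4C_1\kappa_3$) and the line $b=0$ through its maximum, then reads the ordering off Fig.\ref{stability_zeros.eps} (``it is easy to see from Fig.\dots''); you instead fix $\tau$, compute the discriminant roots $\varepsilon_{\pm}=\bigl[(A+2B)\pm 2\sqrt{B(A+B)}\bigr]/c$ explicitly, and verify $\varepsilon_-<A/c<\varepsilon_+$ algebraically (via $B<\sqrt{B(A+B)}\Leftrightarrow A>0$). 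Your version is the more self-contained one here: it replaces the paper's figure-based step with a checkable inequality, at the cost of being a one-dimensional slice of the paper's two-parameter picture. One small caveat: for the second bullet you cite the odd, exponentially damped sinusoidal shape of $f$ from Eq.(\ref{eq3.49}) and Fig.\ref{properties_ode.eps} rather than deriving it; the paper's Lemma 2 actually derives $f(p,d)\approx -\ope^{a|p|}\sqrt{A^2+B^2}\cos(b|p|-\phi)$ from the tail asymptotics $\overline{u}(x)\to \ope^{a|x-x_0|}\cos(b|x-x_0|)+\underline{u}$, $a<0$, so if you want the countability claim to stand on its own you should include that short computation.
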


\begin{proof}
  Refer to Appendix A.3.
\end{proof}

\textbf{REMARK}
The restriction of $\tau_{c}<\tau<2\tau_{c}$ is vital for our analysis. The first inequality demonstrates the post-drift bifurcation ensuring the existence of the critical point associated with TPO. The second inequality ensures that all the critical points are situated on the axis $\alpha=0$, and two types of pinning exist: stable STA and OSC. Refer to Fig.\ref{stability_zeros.eps}. The positivity of the saddle quantity $\sigma$ is essential for discussing homoclinic bifurcation, as presented in Proposition 6.\\

We denote the orbit starting from a special initial value $(p,\alpha) = (-\infty,\overline{\alpha}_{+})$ by $\mathcal{O}_\mathrm{pulse}$. The classification of outputs in the phase diagram Fig.\ref{ODE2.eps} is based on the behaviors of $\mathcal{O}_\mathrm{pulse}$. Generally, the outcome significantly depends on the initial data, and the basin boundary location of each attractor must be identified for a complete classification of the initial space. In particular, we are interested in the basin boundary relevant to the emerging transition, as illustrated in Fig.\ref{ODE2.eps}. The basin boundary can be evidently characterized by a stable manifold of a saddle point that is relevant to the transition. Moreover, as the parametric dependency on $\varepsilon$ is complicated, the prediction of outcome for a given initial data becomes subtle even in a two-dimensional ODE system. Thus, we explain the relevant details in the following subsections and introduce a couple of conventions for later convenience: The initial point $(p,\alpha) = (-\infty,\overline{\alpha}_{+})$ of $\mathcal{O}_\mathrm{pulse}$ denoted by $T^{+}_{-\infty}$ was considered as a generalized critical point located at $p = -\infty$, and similarly, for other points $T^{\pm}_{\pm \infty} \coloneqq (\pm\infty,\overline{\alpha}_{pm})$, PEN (REB) was considered as a heteroclinic orbit connecting $T^{+}_{-\infty}$ to $T^{+}_{\infty}$ ($T^{-}_{-\infty}$) based on the convention. In PDE setting and HIOP branches, the PEN and REB are time-periodic solutions on $\mathbb S^1$ and these solutions can be obtained by unfolding the corresponding heteroclinic orbits from $\mathbb R^1$ to $\mathbb S^1$.

Although the initial point $T^{+}_{-\infty}$ of $\mathcal{O}_\mathrm{pulse}$ is theoretically clear and well-defined, we approximated it on a circle to compute the associated PDE solution. Nonetheless, a small difference in the approximation process may cause a considerable difference in output as illustrated in Fig.\ref{spiral.eps}. We discuss this issue in Sections 5.2.3 and 6. 

We assign a label for each critical point as $P_{n}$,
($n=0,\pm1,\pm2,\cdots$), where $P_{0}$ is located at the center of bump
heterogeneity. For positive $\varepsilon$, the odd sequence $P_{2n-1}$ becomes saddle, and $W^{l,s}_{P_{2n-1}}$ denotes the stable manifold of $P_{2n-1}$ approaching
$P_{2n-1}$ from the left, and similarly, for other three manifolds, e.g.,
$W^{r,s}_{P_{2n-1}}$, $W^{l,u}_{P_{2n-1}}$, $W^{r,u}_{P_{2n-1}}$ as in Fig.\ref{stability_zeros.eps} (b). For negative $\varepsilon$, the even sequence $P_{2n} $ becomes saddle instead and a similar label can be assigned to each manifold. Subsequently, we assume a small bump width $d$ such that only one critical point $P_{0}$ existed inside the bump heterogeneity (such as for $d=0.05$ portrayed in Fig.\ref{properties_ode.eps}-(b)). Moreover, we could conveniently extend the following arguments to a wider case similar to \cite{Nishiuraetal2007}.

\begin{figure}
  \centering
  \includegraphics[width=.8\hsize]{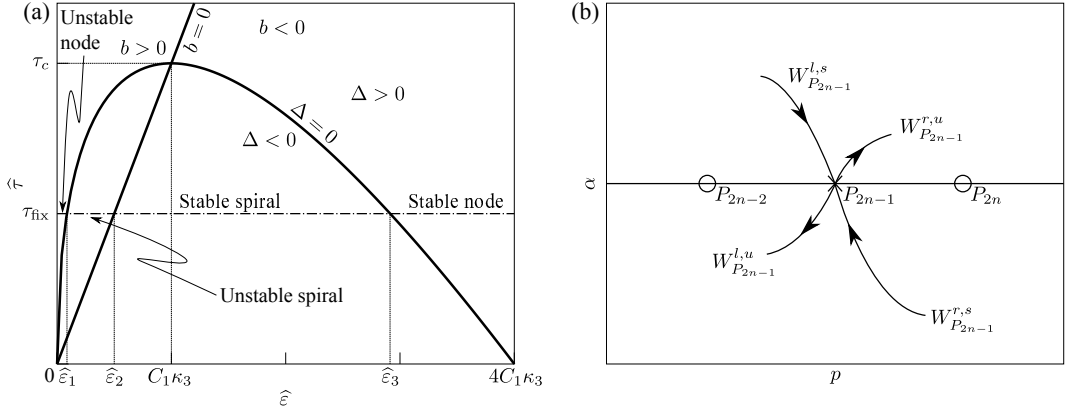}
  \caption
      { (a) Dependency of critical point $(P_{i}, 0)$ (nonsaddle type), $i\in \mathbb{Z}$ on 
        $\widehat{\varepsilon}$. The horizontal axis denotes $\widehat{\varepsilon} := \varepsilon D(P_{i},d)$ in which $D(P_{i},d)$ is defined in Proposition $\ref{prop6.00}$ and the vertical axis is $\widehat{\tau}=\tau-1/\kappa_{3}$. For a fixed $\tau=\tau_\mathrm{fix}$ ranging within $(\tau_{c}, 2\tau_{c})$, nonsaddle point underwent sequential variation as $\widehat{\varepsilon} > 0$ increased: Unstable node $\rightarrow$ unstable spiral $\rightarrow$ stable spiral $\rightarrow$ stable node. Refer to Appendix A.3 for further details. (b) Labeling stable and unstable manifolds of the saddle point. For a given saddle point $P_{2n-1}$, each stable and unstable manifold is labeled as depicted. 
      }
      \label{stability_zeros.eps}
\end{figure}



\subsubsection{Phase diagram of reduced ODEs}

The phase diagram of the ODE system (\ref{eq3.50}) in the parameter space $(d, \varepsilon)$ is presented in Fig.\ref{ODE2.eps}-(a), wherein the parameter $\tau=3.35$ is the same as that in the PDE case.
There were four distinct outputs PEN, REB, OSC and STA, and the cyclic behavior REB-OSC-STA was observed, except for the PEN regime in the vicinity of $\varepsilon =0$. The two states OSC and STA pertained to the pinning category, i.e., the orbit settled to either a limit cycle or a stable critical point. The stability of each critical point relied on $(d,\varepsilon)$ so the region of STA may contract as $d$ expands, such as that in Fig.\ref{ODE2.eps}-(a). As the transition OSC--STA is a local Hopf bifurcation, the entire phase diagram can be regarded as a repetition of pinning--depinning process, as depicted in Fig.\ref{ODE2.eps}-(b). Moreover, we initially deduce that PEN is observed for small $\varepsilon$ as expected. The proof is detailed in in Appendix A.4.

\begin{proposition}
  \label{prop6.05}
  Suppose the bump height satisfies
  $|\varepsilon|<\frac{\kappa_{3}C_{1}(\overline{\alpha}_{+}-\delta)}{2M_{0}}$,
  then the dynamics of the traveling pulse of reduced ODEs (\ref{eq3.50}) is PEN, where $\delta$ and $M_0$ are appropriately selected positive constants that are independent of small $\varepsilon$ with ($\overline{\alpha}_{+}-\delta) > 0$. 
\end{proposition}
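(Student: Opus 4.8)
The plan is to prove that, under the stated bound, the position coordinate $p(t)$ along the distinguished orbit $\mathcal{O}_\mathrm{pulse}$ increases monotonically to $+\infty$, so that $\mathcal{O}_\mathrm{pulse}$ connects $T^{+}_{-\infty}=(-\infty,\overline{\alpha}_{+})$ to $T^{+}_{\infty}=(+\infty,\overline{\alpha}_{+})$; by the convention fixed right after Proposition~\ref{prop6.00} this heteroclinic orbit is precisely PEN. Since $f(p,d)$ decays exponentially as $|p|\to\infty$ (Fig.~\ref{properties_ode.eps}) it is bounded, and I fix a constant $M_{0}$ with $|f(p,d)|\le M_{0}$ for all $p$, together with a small $\delta\in(0,\overline{\alpha}_{+})$; both are independent of $\varepsilon$. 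The entire argument reduces to producing a uniform positive lower bound for $\dot p$, and for that I must control how far the velocity $\alpha(t)$ can fall below its unperturbed value $\overline{\alpha}_{+}$.

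First I would isolate the unperturbed velocity field $g(\alpha):=\kappa_{3}^{2}(\tau-1/\kappa_{3})\alpha-\kappa_{3}\tfrac{C_{2}}{C_{1}}\alpha^{3}$. Using the defining relation $\tfrac{C_{2}}{C_{1}}\overline{\alpha}_{+}^{2}=\kappa_{3}\tau-1$ it factorizes as $g(\alpha)=\kappa_{3}\tfrac{C_{2}}{C_{1}}\,\alpha(\overline{\alpha}_{+}^{2}-\alpha^{2})$, so $g>0$ on $(0,\overline{\alpha}_{+})$, and in particular $g(\overline{\alpha}_{+}-\delta)>0$. Along the line $\{\alpha=\overline{\alpha}_{+}-\delta\}$ the second equation of (\ref{eq3.50}) gives
\begin{equation*}
  \dot\alpha=g(\overline{\alpha}_{+}-\delta)-\frac{\varepsilon}{C_{1}}f(p,d)\ \ge\ g(\overline{\alpha}_{+}-\delta)-\frac{|\varepsilon|M_{0}}{C_{1}},
\end{equation*}
which is strictly positive once $|\varepsilon|$ is small. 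Hence the half-plane $\{\alpha\ge\overline{\alpha}_{+}-\delta\}$ is forward invariant. Because $f(p,d)\to0$ as $p\to-\infty$, the orbit $\mathcal{O}_\mathrm{pulse}$ starts at the equilibrium value $\alpha=\overline{\alpha}_{+}$, which lies in the interior of this half-plane, and by forward invariance it never leaves, giving $\alpha(t)\ge\overline{\alpha}_{+}-\delta$ for all $t$.

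With this velocity bound the conclusion follows from the first equation of (\ref{eq3.50}):
\begin{equation*}
  \dot p=\kappa_{3}\alpha-\frac{\varepsilon}{C_{1}}f(p,d)\ \ge\ \kappa_{3}(\overline{\alpha}_{+}-\delta)-\frac{|\varepsilon|M_{0}}{C_{1}}.
\end{equation*}
Under the hypothesis $|\varepsilon|<\tfrac{\kappa_{3}C_{1}(\overline{\alpha}_{+}-\delta)}{2M_{0}}$ the subtracted term is below $\tfrac12\kappa_{3}(\overline{\alpha}_{+}-\delta)$, so $\dot p>\tfrac12\kappa_{3}(\overline{\alpha}_{+}-\delta)>0$ uniformly in $t$, whence $p(t)\to+\infty$. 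Finally, as $p\to+\infty$ the heterogeneity term vanishes again, the scalar $\alpha$-dynamics reduces to $\dot\alpha=g(\alpha)$ with $\overline{\alpha}_{+}$ globally attracting on $\alpha>0$, so $\alpha(t)\to\overline{\alpha}_{+}$ and $\mathcal{O}_\mathrm{pulse}\to T^{+}_{\infty}$, i.e.\ PEN.

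The delicate point is the mutual compatibility of the two smallness requirements: forward invariance of the trap needs $|\varepsilon|\le C_{1}g(\overline{\alpha}_{+}-\delta)/M_{0}$, whereas the positive-speed estimate needs exactly the threshold in the statement. Near the drift bifurcation $\overline{\alpha}_{+}$ is only weakly stable---indeed $g'(\overline{\alpha}_{+})=-2\kappa_{3}(\kappa_{3}\tau-1)$ is small when $\tau$ is close to $\tau_{c}$---so $g(\overline{\alpha}_{+}-\delta)$ is small and the trapping requirement is the more restrictive of the two. This is what the phrase ``appropriately selected $\delta$ and $M_{0}$'' must absorb: one fixes $\delta$ and then enlarges $M_{0}$ beyond $\sup_{p}|f(p,d)|$ by the factor $\kappa_{3}(\overline{\alpha}_{+}-\delta)/\bigl(2g(\overline{\alpha}_{+}-\delta)\bigr)$ needed so that the displayed bound forces $|\varepsilon|\le C_{1}g(\overline{\alpha}_{+}-\delta)/\sup_{p}|f|$ as well, while the speed estimate is unaffected because it only uses $|f|\le M_{0}$. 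A secondary technicality is assigning rigorous meaning to the datum at $p=-\infty$: one treats $T^{+}_{-\infty}$ as the $\alpha=\overline{\alpha}_{+}$ state of the heterogeneity-free limit $p\to-\infty$ and $\mathcal{O}_\mathrm{pulse}$ as the orbit with $\alpha\to\overline{\alpha}_{+}$ as $t\to-\infty$, so that the velocity bound applies from the outset.
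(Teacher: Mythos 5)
Your proof is correct and follows essentially the same route as the paper's Appendix A.4: bound the heterogeneity term uniformly ($|f(p,d)|\le M_0$; the paper writes $|f|<2M_0$ with $M_0$ bounding $|\overline{u}|$, accounting for the $2M_0$ in the statement), trap the orbit in an invariant region near $\alpha=\overline{\alpha}_{+}$ (you use the half-plane $\{\alpha\ge\overline{\alpha}_{+}-\delta\}$, the paper a band $[\overline{\alpha}_{+}-z_0,\overline{\alpha}_{+}+z_0]$), and deduce a uniform positive lower bound on $\dot p$ forcing $p(t)\to+\infty$, i.e.\ PEN. Your explicit discussion of the compatibility between the trapping condition $|\varepsilon|\le C_1 g(\overline{\alpha}_{+}-\delta)/M_0$ and the stated threshold is a welcome sharpening of a point the paper leaves implicit in the phrase ``appropriately selected constants.''
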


\begin{figure}
  \centering
  \includegraphics[width=12cm]{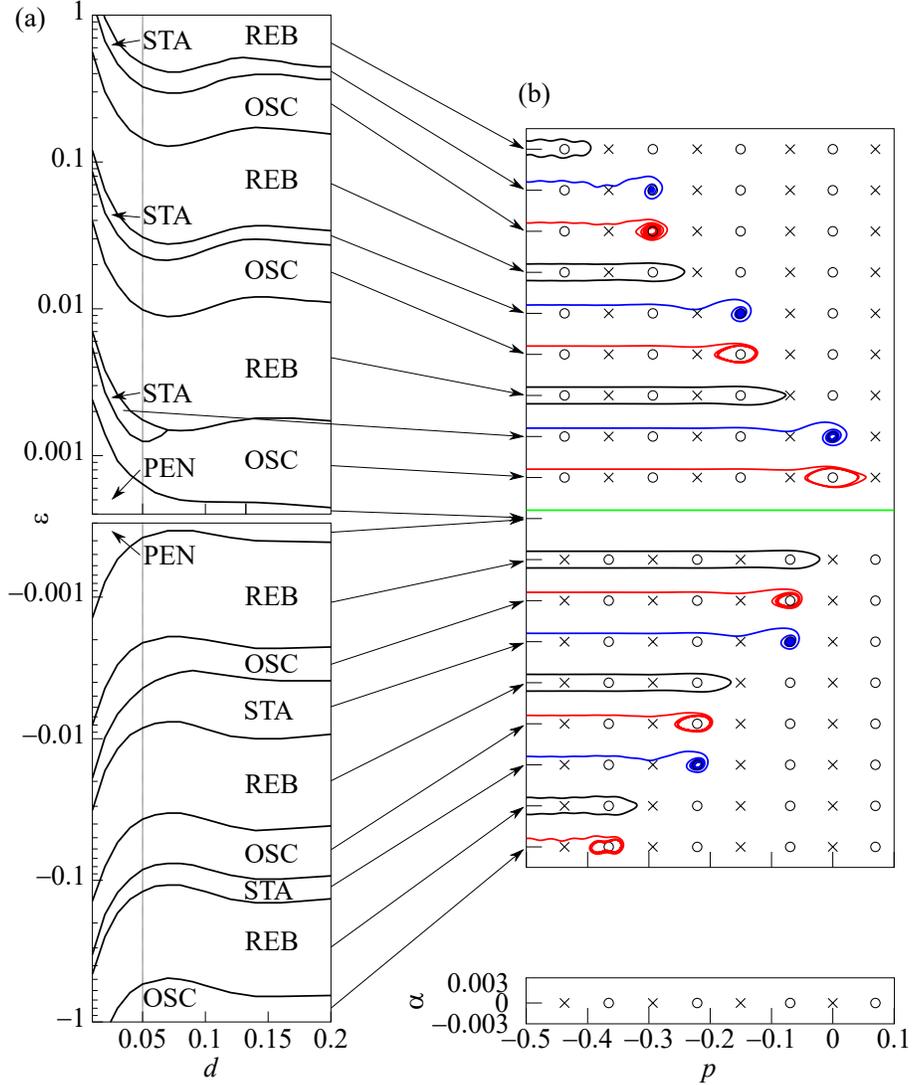} 
  \caption{(a):
    Phase diagram of reduced ODE
    system (\ref{eq3.50}) in $(d, \varepsilon)$-space wherein $\varepsilon$ axis is plotted in
    logarithmic coordinate. Parameters are adopted from the PDE case as in Eq.(\ref{parametervalues}) and $\tau=3.35$. (a) Two complete cycles of repeated dynamical response
    (REB--OSC--STA) in each figure: $\varepsilon>0$ (top) and $\varepsilon<0$ (bottom).
    Qualitative features of PDE phase diagram (Fig.22) are inherited to reduced case.
    (b) Phase diagram of ODEs with flow patterns. The symbol $\times$ denotes a saddle point and circle for nonsaddle point.}
  \label{ODE2.eps}
\end{figure}

\textbf{REMARK}
For the reduced ODE system, we observed {\it infinitely} many cyclic response REB--OSC--STA as the height varied. However, this was not the same for the
original PDE system, simply because stable traveling pulses were not
sustained when the modulus of $\varepsilon$ (or equivalently $\kappa_1$) was beyond the admissible interval (red-dotted vertical line) as depicted in
Fig.\ref{isola.eps}.
This implied that the PDE--ODE correspondence is feasible only within the admissible interval of $\varepsilon$.\\

The context presented in this section aims to clarify the pinning--depinning transition from a dynamical system perspective, assuming that the phase transition proceeds according to Fig.\ref{ODE2.eps}. Recall that we investigated the global behaviors of solution branches associated with PEN, OSC, STA, and REB in PDE sense, as depicted at the bottom of Fig.\ref{m.eps}. Subsequently, we elucidated the transition mechanism in the ODE setting and review its consistency with the global structure of HIOP, which signified that the basic mechanism of PDE was inherited to that of the ODE dynamics.\\

\subsubsection{Cyclic pinning--depinning transitions and consistency with HIOP structure}\label{S_Dynamics_of_reduced_ODEs_cyclic}

As the bump height increased (decreased), the first transition occurred from PEN to OSC (REB). Thereafter, a cyclic behavior was observed with REB--OSC--STA, which was followed by the pinning--depinning process. Thereafter, we explain that all these transitions can be understood by the interaction between the pulse orbit and stable/unstable manifolds of critical points distributed on the horizontal axis. In particular, the pinning state comprises stationary and oscillatory states depending on before/after the Hopf bifurcation, so the transition between OSC and STA stems from a local bifurcation. Subsequently, we focused on the pinning transition such as PEN to OSC and REB to OSC and the depinning process such as the STA to REB for clarifying these transition mechanisms based on dynamical system perspective. We compared between the PDE dynamics and reduced ODE dynamics, especially, regarding the behavior of the basin boundary in vicinity of the transition. Moreover, we aimed to consistently integrate the following three aspects: PDE dynamics, global structure of HIOP of Eq.(\ref{eq_bif_2}) with heterogeneity (\ref{eq4.01}), and the reduced ODE dynamics (\ref{eq3.50}). 
The following arguments were partially based on the numerical results, and the complete rigorous scenario is open for future research.




\textbf{PEN$\rightarrow$OSC}\\
First, we discuss the transition from PEN to OSC with the increasing $\varepsilon$. For extremely small and positive $\varepsilon$, all the stable/unstable manifolds of saddles $P_{2n-1}$ ($n=0,\pm1,\pm2,\cdots$) were below $\mathcal{O}_\mathrm{pulse}$ in the upper-half plane, and therefore, PEN was observed as established in Proposition.4. Recall that $\mathcal{O}_\mathrm{pulse}$ denotes the orbit with initial data $T^{+}_{-\infty}$. In addition, the pulse orbit was gradually deformed with the increasing $\varepsilon$ and started to surround the limit cycle neighboring $P_{0}$ in case $\varepsilon$ exceeded the critical height, as illustrated in Fig.\ref{PEN-OSC.eps} (black-solid line in middle panels (b) from left to right). Upon closely observing the behavior in vicinity of the transition, we noted that $\mathcal{O}_\mathrm{pulse}$ coincided with $W^{l,s}_{P_{1}}$ at the critical height and reached under it as the height is further increased ((b)-4 and (b)-5). In particular, the exchange of the order between $\mathcal{O}_\mathrm{pulse}$ and $W^{l,s}_{P_{1}}$ causes the transition from PEN to the pinning state OSC. Note that the flow pattern was odd-symmetric with respect to the origin, so the behavior of $W^{r,s}_{P_{-1}}$ is exactly the same as the mirror image of $W^{l,s}_{P_{1}}$. Therefore, it can block $\mathcal{O}_\mathrm{pulse}$ from the leftward of $P_{-1}$, and more importantly, $\mathcal{O}_\mathrm{pulse}$ belonged to the basin of the limit cycle around $P_0$.
In PDE simulation ((a)-2 in Fig.\ref{PEN-OSC.eps}), PDE orbit was situated proximate to the $\mathrm{[I]}^\mathrm{S}_{1}$ for a considerable period of time prior to leaving the bump region, which suggested that the orbit was in close vicinity to the stable manifold of $\mathrm{[I]}^\mathrm{S}_{1}$. Moreover, in case the height attained the critical value, it was located on the stable manifold of $\mathrm{[I]}^\mathrm{S}_{1}$ according to the behavior of the ODE dynamics, wherein the $\mathcal{O}_\mathrm{pulse}$ coincided with the $W^{l,s}_{P_{1}}$ at the transition point.

\begin{figure}
  \begin{center}
    \includegraphics[width=.95\hsize]{PEN-OSC.eps}
    \caption{Dynamics near the transition point PEN--OSC.
      (a) PDE simulations, (b) ODE flows, and (c) HIOP structure, where ocher branch indicates PEN and cyan branch indicates OSC.
      In ODE setting (b) transition PEN--OSC is characterized by exchange of order between $\mathcal{O}_\mathrm{pulse}$ and $W^{l,s}_{P_{1}}$. In particular, the transition point is expressed as the height in which $\mathcal{O}_\mathrm{pulse}$ coalesces into $W^{l,s}_{P_{1}}$ ((b)-4).
      Thus, $\mathcal{O}_\mathrm{pulse}$ becomes a double-heteroclinic orbit from $T^{+}_{-\infty}$ to $P_{1}$, and from $P_{1}$ to $T^{+}_{\infty}$ at the critical height. 
      The basin boundary between PEN and OSC is expressed as $W^{l,s}_{P_{1}}$ for general initial data, and it experienced countably many reconnections among the critical points located at the left of $P_{0}$, as $\varepsilon$ approached the critical height.
      The ODE flows presented in (b)-3 display that $P_{-3}$ is connected to $P_{1}$.
           Moreover, the origin of the OSC branch could be traced to a Hopf bifurcation at F (supercritical), and the OSC region ceased to exist and switched to the STA region for larger $\varepsilon$.
As $\varepsilon$ decreased, the OSC branch experienced a saddle-node point and resulted in a heteroclinic orbit connecting $P_{1}$ to $P_{-1}$ at C.
      This heteroclinic bifurcation exchanged the order between $W^{l,s}_{P_{1}}$ and $W^{r,u}_{P_{-1}}$ to enable the interaction of $W^{l,s}_{P_{1}}$ with $\mathcal{O}_\mathrm{pulse}$ (see the text for details). 
      As for the HIOP diagram (c), the period of PEN diverged as $\varepsilon$ approached B, indicating the occurrence of a homoclinic bifurcation, which corresponds to the above double-heteroclinic bifurcation in ODE setting. Recall that HIOP is considered on a circle and ODE on $\mathbb R$. The PDE solution remains in vicinity of stationary pulse $\mathrm{[I]}^{\mathrm{S}_{1}}$ for a long period immediately before the transition and jumps to an oscillatory solution after that ((a)-2). Note that both the vertical and horizontal axes were not uniform.   
 } \label{PEN-OSC.eps}
  \end{center}
\end{figure}

\begin{figure}
  \centering
  \includegraphics[width=8cm]{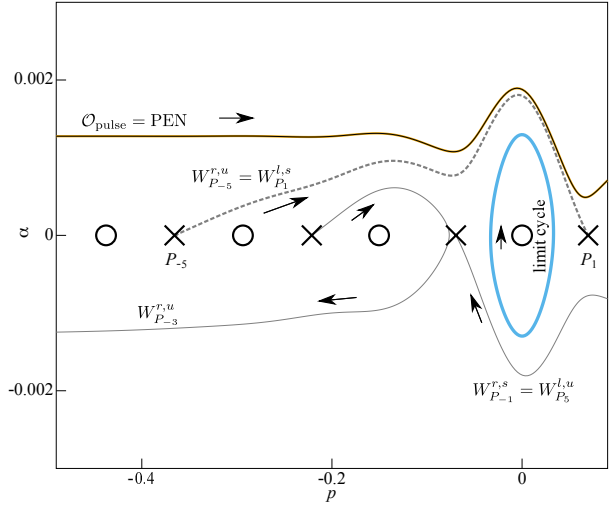}
  \caption{Reconnection process during transition from PEN to OSC at 
    $\varepsilon=0.000557127$. Basin boundary between PEN and OSC is expressed as stable manifold $W^{l,s}_{P_{1}}$ (thick-gray dotted line) that is connected to $P_{-5}$ at $\varepsilon=0.000557127$, namely $W^{l,s}_{P_{1}}$ = $W^{r,u}_{P_{-5}}$. Destination of $W^{l,s}_{P_{1}}$ in time-reversal direction moves leftward as $\varepsilon$ approaches the critical height. }
  \label{e+0.000557127_PEN.eps}
\end{figure}

\begin{figure}
  \begin{center}
    \includegraphics[width=.8\hsize]{OSC-REB.eps}
    \caption{Oscillatory pinning point of REB-OSC.
      (a) PDE simulations, (b) ODE flows, and (c) HIOP structure.
      Purple line indicates REB, and cyan indicates OSC.
      In ODE setting (b), the transition from REB to OSC is characterized by the exchange in order from $\mathcal{O}_\mathrm{pulse}>W^{r,u}_{P_{-2}}>W^{r,s}_{P_{-2}}$ (REB) to $W^{r,s}_{P_{-2}}>\mathcal{O}_\mathrm{pulse}>W^{r,u}_{P_{-2}}$ (OSC) as $\varepsilon$ decreased.
      Two critical situations exist during the process: $W^{r,u}_{P_{-2}}=W^{r,s}_{P_{-2}}$ (homoclinic bifurcation at D, refer to Fig.\ref{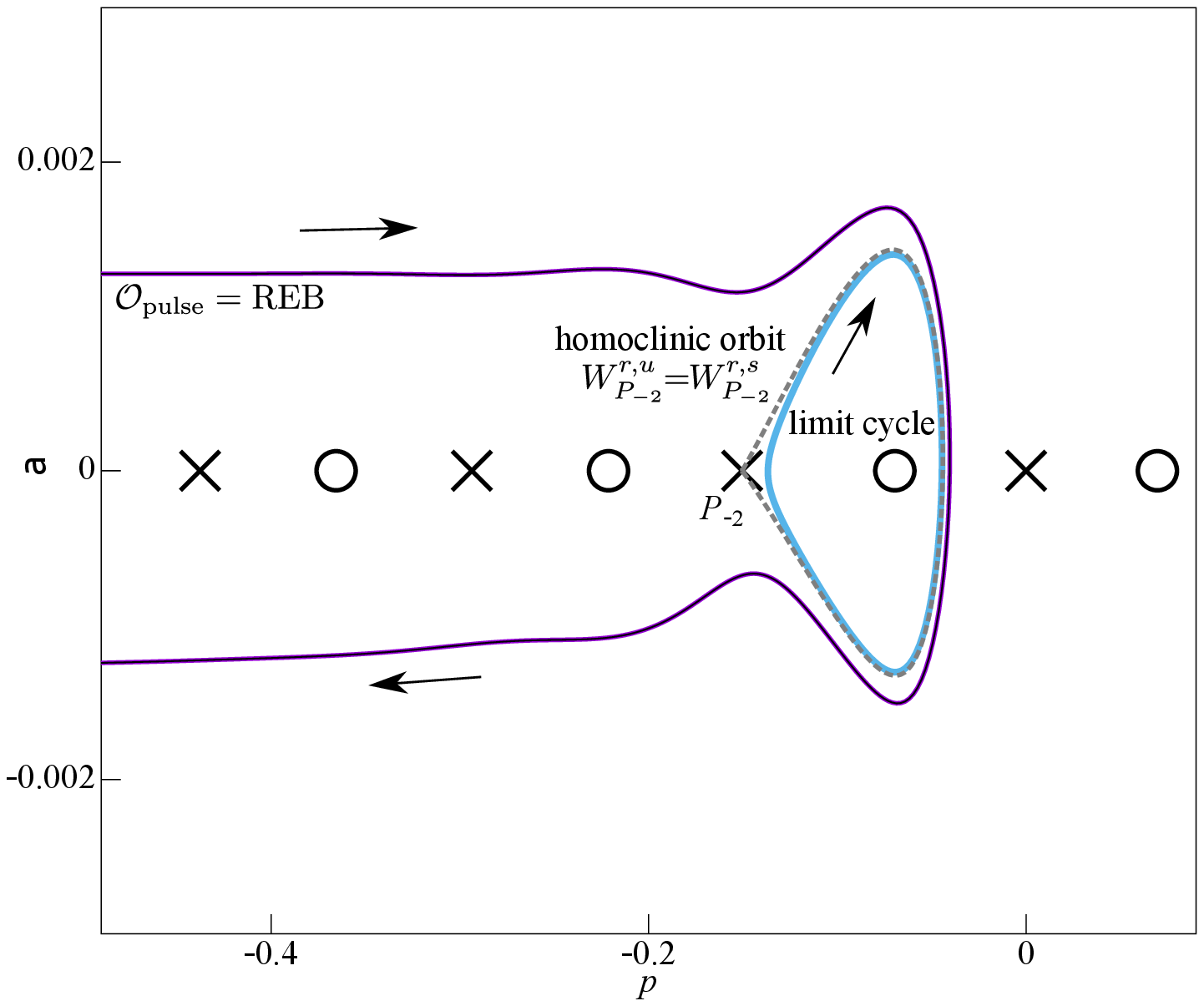}), and $\mathcal{O}_\mathrm{pulse} = W^{r,s}_{P_{-2}}$ (transition state at (E), refer to Fig.\ref{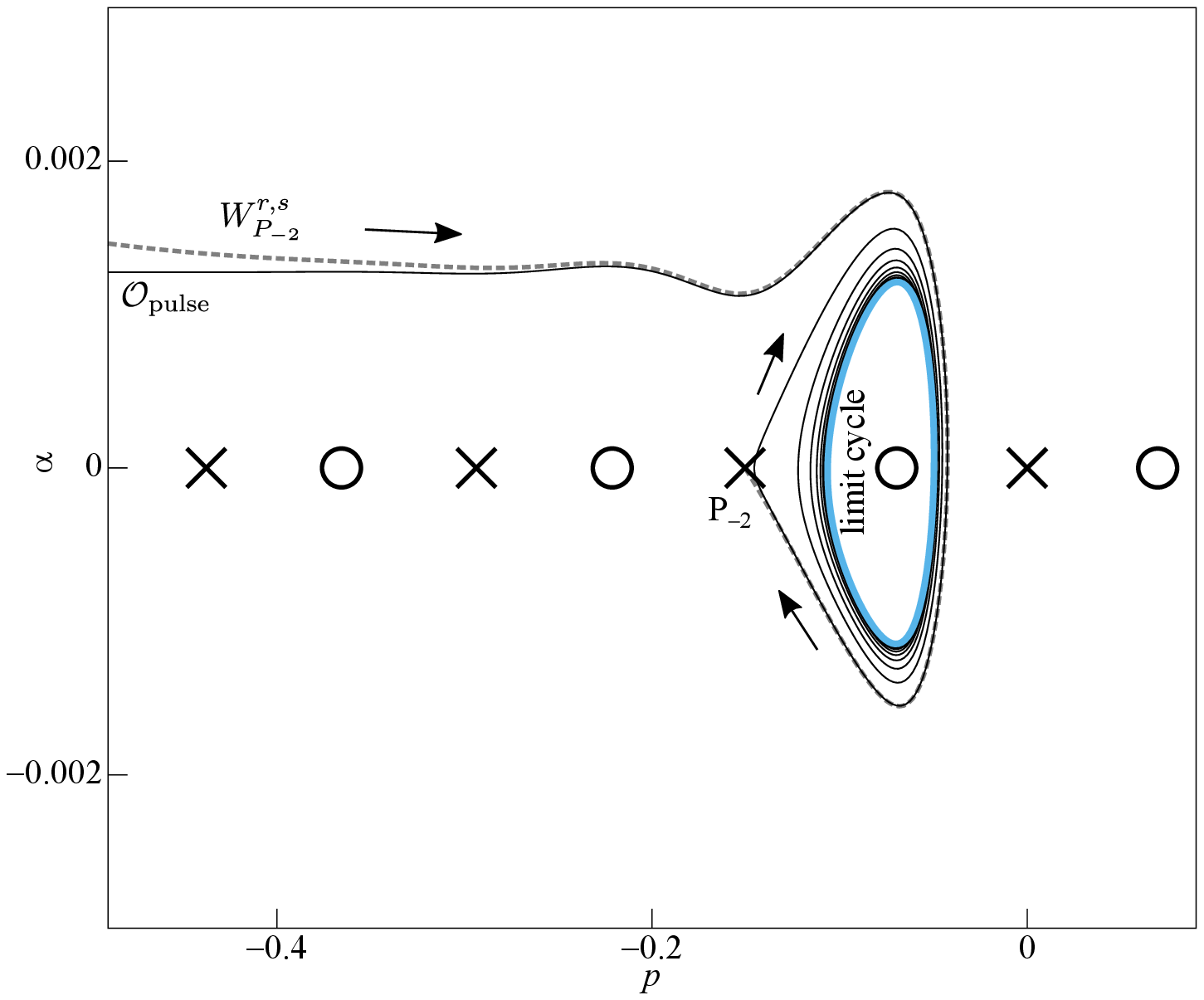}).
      After homoclinic bifurcation, $W^{r,s}_{P_{-2}}$ underwent countably many reconnections among the critical points (Fig.\ref{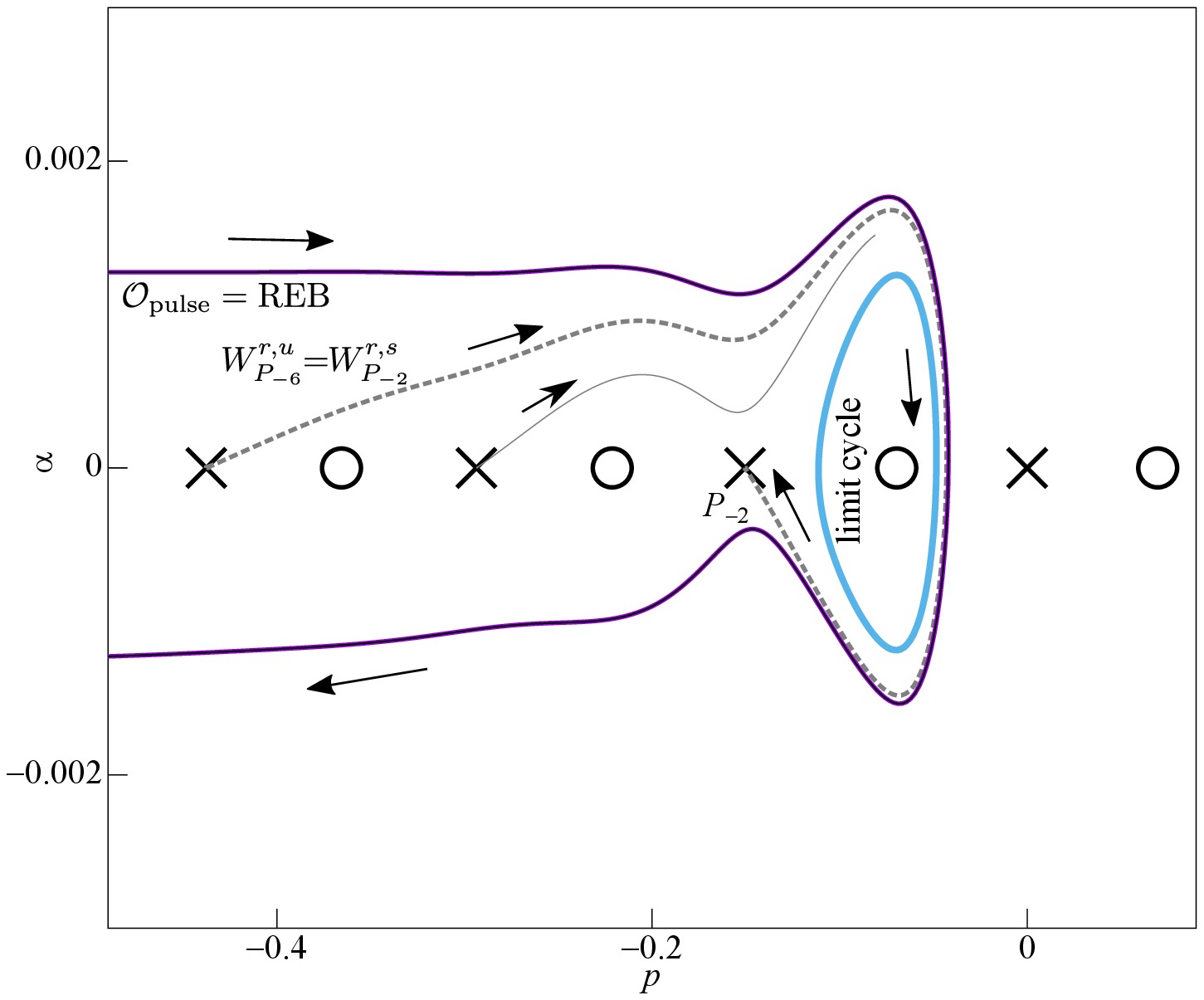}), after which the $\mathcal{O}_\mathrm{pulse}$ was approached and merged into $\mathcal{O}_\mathrm{pulse}$ in case the $\varepsilon$ attained the critical height.
      Refer to the text for further details.
      Note that both vertical and horizontal axes were not uniform.
    }
    \label{OSC-REB.eps}
  \end{center}
\end{figure}

\begin{figure}
  \centering
  \includegraphics[width=8cm]{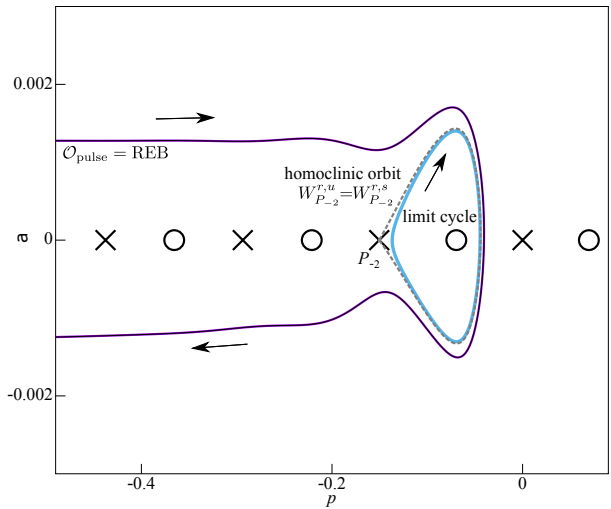}
  \caption{REB-OSC process at $\varepsilon=-0.00166147754$: Homoclinic bifurcation $W^{r,s}_{P_{-2}} = W^{r,u}_{P_{-2}}$.}
  \label{e-0.00166147754_HOMO_REB.eps}
\end{figure}

\begin{figure}
  \centering
  \includegraphics[width=8cm]{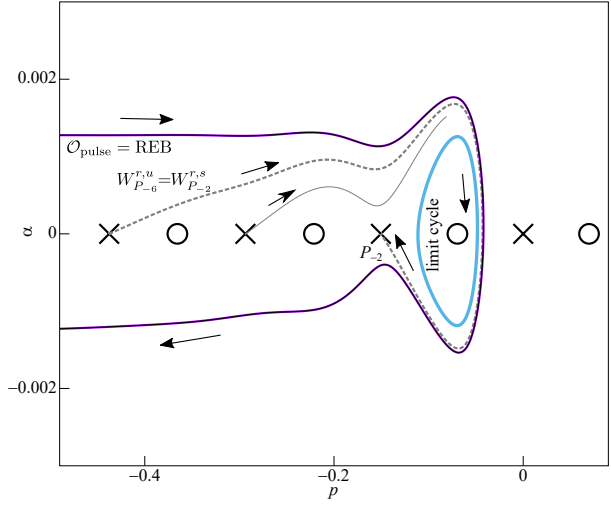}
  \caption{REB--OSC process at
    $\varepsilon=-0.001962$:
    Reconnection occurs between $P_{-2}$ and $P_{-n} (n=3, \dots)$. In this case, we have a heteroclinic connection $W^{r,s}_{P_{-2}} = W^{r,u}_{P_{-6}}$ at $\varepsilon=-0.001962$.
  }
  \label{e-0.00196208903100908_REB.eps}
\end{figure}

\begin{figure}
  \centering
  \includegraphics[width=8cm]{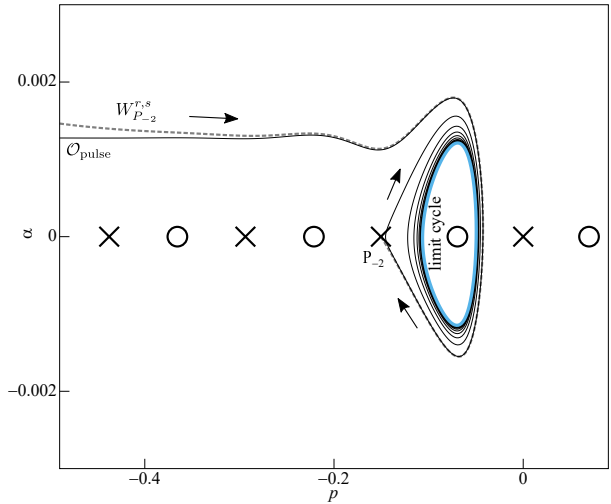}
  \caption{REB--OSC process at $\varepsilon=0.0021$: Immediately after the transition REB--OSC. $W^{r,s}_{P_{-2}}$ (gray-dotted line) was located above $\mathcal{O}_\mathrm{pulse}$ at $\varepsilon=0.0021$.  }
  \label{e-0.0021_OSC.eps}
\end{figure}

\begin{figure}
  \begin{center}
    \includegraphics[width=.95\hsize]{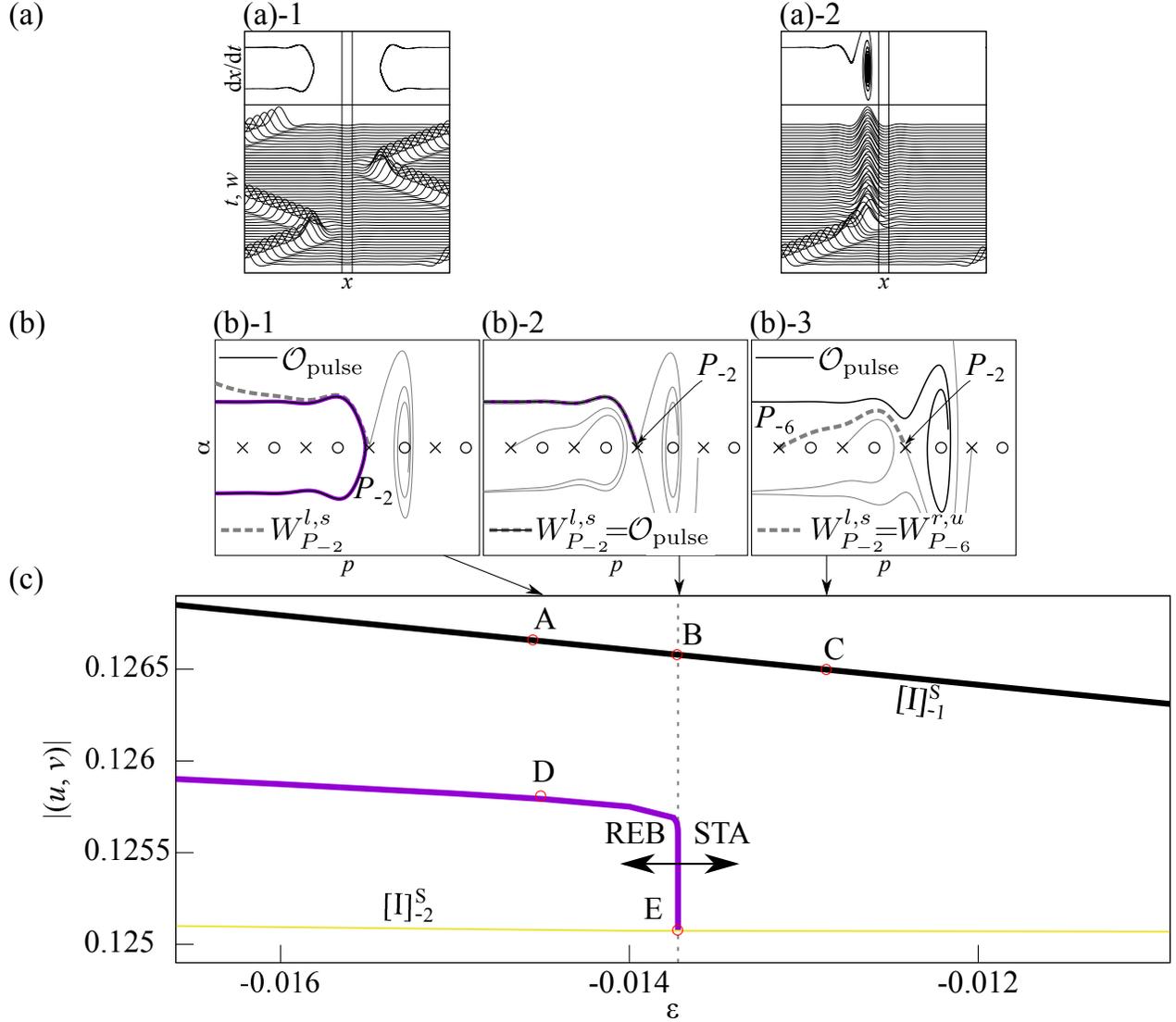}
    \caption{Depinning point of STA--REB.
      (a)	 PDE simulations, (b) ODE flows, and (c) HIOP structure.
      Purple line indicates REB, and black line indicates STA.
      The exchange of order between $\mathcal{O}_\mathrm{pulse}$ and the stable manifold $W^{l,s}_{P_{-2}}$ caused the transition.
      In fact, as $\varepsilon$ is decreased, $\mathcal{O}_\mathrm{pulse} > W^{l,s}_{P_{-2}}$ ((b)-3), $\mathcal{O}_\mathrm{pulse} = W^{l,s}_{P_{-2}}$ of $P_{-2}$ ((b)-2), and $\mathcal{O}_\mathrm{pulse} < W^{l,s}_{P_{-2}}$ of $P_{-2}$ ((b)-1).
      The infinitely many reconnections between the left stable manifold of $P_{-2}$ and other unstable manifolds of critical points occurred immediately prior to the transition, in particular, the ODE flow (b)-3 displayed a heteroclinic connection from the $P_{-6}$ to $P_{-2}$. (c) The HIOP structure supported the above mechanism, for instance, the REB solution (purple) converged on the $\mathrm{[I]}^\mathrm{S}_{-2}$ branch (yellow), which indicated that its period diverged spending most of the time near the unstable stationary pulse $\mathrm{[I]}^\mathrm{S}_{-2}$ as $\varepsilon$ approached the critical height.   
    }
    \label{REB-STA.eps}
  \end{center}
\end{figure}

Interestingly, the behavior of $W^{l,s}_{P_{1}}$ (gray-dotted line in Fig.\ref{PEN-OSC.eps} (b)) was observed in the time-reversal direction. For small values of $\varepsilon$, it surrounded the critical point $P_0$ ((b)-1). For slightly larger values of $\varepsilon$, the destination became $P_{-3}$ ((b)-3), and it coincided with the $\mathcal{O}_\mathrm{pulse}$ at the critical height ((b)-4). Eventually, it surpassed beyond $\mathcal{O}_\mathrm{pulse}$ as $\varepsilon$ exceeded it ((b)-5). The \textit{infinitely many reconnections of heteroclinic orbits} occurred during this process, which connected the $P_1$ and all the remaining critical points $P_n$ located on the left side of $P_1$. In other words, $W^{l,s}_{P_{1}}$ coincided with the unstable manifolds $W^{r,u}_{P_{n}}$ (n=-1, -2, ...) as $\varepsilon$ is increased, and it attained $\mathcal{O}_\mathrm{pulse}$ at the critical height. In context, a magnified snapshot of such a reconnection process is displayed in Fig.\ref{e+0.000557127_PEN.eps}, wherein a heteroclinic connection between the $P_{1}$ and $P_{-5}$ was realized at $\varepsilon = 0.000557127$. The transition PEN--OSC occurred after the reconnections were complete, and it was characterized by the exchange of order between $W^{l,s}_{P_{1}}$ and $\mathcal{O}_\mathrm{pulse}$. The heteroclinic connections to saddle points occurred for special discrete values of $\varepsilon$, whereas the connections to nonsaddle critical points such as the unstable spiral were present for the remaining $\varepsilon$.


Each heteroclinic orbit connecting $P_n$ to $P_1$ (or equivalently $W^{l,s}_{P_{1}}$) is a \textit{basin boundary} (separator) between the PEN and OSC in the phase space. Specifically, if the initial data is above or below it, then the orbit is asymptotically defined as PEN or OSC. Such a basin boundary could be defined in the PDE setting, especially, the boundary denotes a hypermanifold including the connecting orbit between two unstable stationary pulses associated with the $P_{1}$ and $P_{n}$. This slightly influences the initial condition for the yielding PDE dynamics in a practical setting. In particular, the basin boundary rapidly shifted leftward as $\varepsilon$ increased, and thus, the selection of the initial condition was not proven under a priori condition, i.e., any stationary pulse-like profile located away from the bump belonged to which side of the basin boundary. Moreover, in case the $P_n$ was an unstable spiral, the two basins were interwoven in a rotational manner to predict the outcome for initial profiles similar to the stationary pulse associated with $P_n$. We believe that the subtle behaviors presented in Fig.\ref{spiral.eps} is rooted in this fact. Thus, this aspect is further detailed in Section 6. In contrast, such a sensitivity does not occur for the \textit{monotone} tail case, as remarked in and after Proposition 5.


Notably, the transition from PEN to OSC can be characterized based on the global behavior of the branches of HIOP in the PDE setting, as depicted in Fig.\ref{PEN-OSC.eps}(c). There are four solution branches: PEN (ocher) and OSC (cyan) are time-periodic ones, and $\mathrm{[I]}^\mathrm{S}_0$ (red) and $\mathrm{[I]}^\mathrm{S}_{1}$ (black) are stationary ones. Recall that $\mathrm{[I]}^\mathrm{S}_n$ stands for the n-shifted solution as in Fig.\ref{all.eps}. Both PEN and OSC branches contacted $\mathrm{[I]}^\mathrm{S}_{1}$ at B and C, respectively, and the OSC branch was initiated at the Hopf bifurcation point F from $\mathrm{[I]}^\mathrm{S}_0$ and persisted as a stable branch up to a saddle-node point around $\varepsilon =0.000567$. Global bifurcation is responsible for contacting each stationary branch, for instance, the period of PEN branch diverged as $\varepsilon$ approached the critical height and became homoclinic to $\mathrm{[I]}^\mathrm{S}_{1}$ at B. Recall that we earlier solved the PDE on a circle.

Similarly, the period of oscillating motion around $P_0$ diverged at C as well, and it emerged as a heteroclinic orbit connecting $P_1$ to $P_{-1}$. Note that this is one of the reconnections among the heteroclinic orbits that connected $P_1$ to the remaining critical points. The HIOP diagram clarified the transition occurred through the global bifurcation of PEN branch, where it ceases to exist, i.e., the point B in this case. Evidently, the PDE simulations, ODE dynamics, and the HIOP diagram were consistent each other. In summary, we have

\begin{proposition}
  \label{prop PEN2OSC}
  As $\varepsilon>0$ increased, the first transition occurred from PEN to OSC via the exchange of order between $\mathcal{O}_\mathrm{pulse}$ and $W^{l,s}_{P_{1}}$, i.e., the left stable manifold of $P_{1}$. The basin boundary between the two outputs for the general initial condition was given as $W^{l,s}_{P_{1}}$, which experienced infinitely many reconnections before coalescing into $\mathcal{O}_\mathrm{pulse}$. In particular, the destination of $W^{l,s}_{P_{1}}$ in time-reversal direction exchanged from $P_{-n}$ to $P_{-n-1}$ ($n$=1, 2, ...) with increasing $\varepsilon$. As $\varepsilon$ exceeded the critical height, $W^{l,s}_{P_{1}}$ was located above $\mathcal{O}_\mathrm{pulse}$. Refer to Fig.\ref{PEN-OSC.eps} for further details.  
\end{proposition}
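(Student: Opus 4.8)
The plan is to work entirely inside the two-dimensional reduced system (\ref{eq3.50}) and to track how the single separatrix $W^{l,s}_{P_{1}}$ and the distinguished orbit $\mathcal{O}_\mathrm{pulse}$ move relative to each other as $\varepsilon$ grows. First I would fix the phase-plane skeleton: by Proposition \ref{prop6.00}, for $\tau_{c}<\tau<2\tau_{c}$ all critical points lie on $\alpha=0$ at the zeros of $f(p,d)$, the odd-indexed $P_{2n-1}$ are saddles for $\varepsilon>0$ with positive saddle quantity $\sigma=b>0$, and $P_{0}$ is a nonsaddle at the bump center. Since the saddle $P_{1}$ has a one-dimensional stable manifold, its left branch $W^{l,s}_{P_{1}}$ locally divides the plane; I would then show that, globally, this branch is exactly the boundary between initial data whose forward orbit clears $P_{1}$ and escapes to $T^{+}_{\infty}$ (PEN) and those that fall back toward $P_{0}$ (OSC), invoking the odd symmetry $(p,\alpha)\mapsto(-p,-\alpha)$ to identify the mirror branch $W^{r,s}_{P_{-1}}$ that blocks escape to the left. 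This yields the basin-boundary claim for general initial data.

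The next step establishes the crossing that produces the transition. By Proposition \ref{prop6.05}, for sufficiently small $\varepsilon$ the orbit $\mathcal{O}_\mathrm{pulse}$ stays strictly above every saddle manifold in the upper half plane, hence above $W^{l,s}_{P_{1}}$, and PEN occurs. I would then monitor the two curves along a fixed vertical transversal placed just left of $P_{0}$ and argue that the signed gap between $\mathcal{O}_\mathrm{pulse}$ and $W^{l,s}_{P_{1}}$ is a continuous, eventually sign-changing function of $\varepsilon$, so that an intermediate critical value $\varepsilon^{*}$ exists at which $\mathcal{O}_\mathrm{pulse}=W^{l,s}_{P_{1}}$. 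At $\varepsilon^{*}$ the orbit $\mathcal{O}_\mathrm{pulse}$ becomes a double-heteroclinic chain $T^{+}_{-\infty}\to P_{1}\to T^{+}_{\infty}$, and for $\varepsilon>\varepsilon^{*}$ it is trapped by the OSC limit cycle surrounding $P_{0}$, so that $W^{l,s}_{P_{1}}$ sits above $\mathcal{O}_\mathrm{pulse}$, which is the asserted ordering exchange.

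To capture the reconnections I would follow $W^{l,s}_{P_{1}}$ in reverse time and track its $\alpha$-limit set as $\varepsilon$ varies. Because $f(p,d)$ oscillates with exponentially decaying amplitude, the backward trajectory sweeps leftward across the chain of critical points: for a discrete increasing sequence of heights it lands exactly on a saddle, giving the heteroclinic identity $W^{l,s}_{P_{1}}=W^{r,u}_{P_{-n}}$, and for the intervening heights it is absorbed by a nonsaddle (unstable spiral or node). Combining the monotone leftward drift of the landing point with the alternating saddle/nonsaddle pattern of Proposition \ref{prop6.00}, I would show the destination saddle advances leftward from $P_{-n}$ to $P_{-n-1}$, so that infinitely many such reconnections accumulate as $\varepsilon\uparrow\varepsilon^{*}$.

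The hard part will be making the monotone leftward drift of the $\alpha$-limit of $W^{l,s}_{P_{1}}$ rigorous. The $\varepsilon$-dependence of the vector field in (\ref{eq3.50}) is sign-indefinite because $f$ changes sign, and at $\varepsilon=0$ the heterogeneity-free system (\ref{eq6.01}) is degenerate, possessing an entire line of equilibria, so that neither a naive comparison principle nor a standard Melnikov expansion about $\varepsilon=0$ applies directly. I would instead track the successive transversal crossings of the backward orbit with the axis $\alpha=0$ and estimate, lobe by lobe, the decaying sinusoidal kicks delivered by $f(p,d)$, using the exponential decay to ensure the landing point moves by a controlled, monotone amount through each lobe. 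This is exactly where a fully rigorous argument is delicate; consistent with the remark preceding the statement, I would therefore buttress the analytic skeleton with the numerical continuation evidence of Fig.\ref{PEN-OSC.eps} and Fig.\ref{e+0.000557127_PEN.eps}, which confirm both the reconnection cascade and the final ordering exchange.
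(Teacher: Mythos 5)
Your proposal takes essentially the same route as the paper: the same phase-plane skeleton built on Propositions \ref{prop6.00} and \ref{prop6.05}, the same odd-symmetry and order-exchange characterization of the critical height at which $\mathcal{O}_\mathrm{pulse}$ coalesces with $W^{l,s}_{P_{1}}$ into a double-heteroclinic chain, and the same time-reversed tracking of $W^{l,s}_{P_{1}}$ through the cascade of reconnections (saddle connections at discrete heights, nonsaddle spiral destinations in between), with the delicate leftward-drift step delegated to the numerical continuation evidence of Figs.\ref{PEN-OSC.eps} and \ref{e+0.000557127_PEN.eps} exactly as the paper does, which itself states that the arguments are partially numerical and a fully rigorous scenario remains open. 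Your intermediate-value framing of $\varepsilon^{*}$ and your explicit flagging of the degeneracy at $\varepsilon=0$ are faithful elaborations of that same argument rather than a different method.
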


\textbf{REMARK -monotone tail case-}
For the monotone tail case, no critical points existed on the $\alpha=0$, except that in the vicinity of the bump region, because the inhomogeneous term $f(p,d)$ has a constant sign away from the bump owing to the monotonicity. The basin boundary for the transition PEN--OSC can be characterized by the stable manifold of $P_1$ (if existed) in a similar manner. However, infinitely many reconnections did not occur for the monotone case, simply because no critical points existed far from bump region. In particular, all the orbits below the $\mathcal{O}_\mathrm{pulse}$ monotonically converged to $(p,\alpha) = (-\infty,0)$ in time-reversal direction ($t \rightarrow -\infty$) before the critical height, including the stable manifold of the saddle point $P_1$. This implied that any orbit starting from the horizontal line $\alpha=0$ was below the basin boundary at all instances, so it was unable to penetrate the bump region. Therefore, no ambiguity existed regarding the destination for the orbits starting from the horizontal line in the neighborhood of the transition point. They were all trapped by the bump heterogeneity and there was no sensitive dependence on the initial conditions.\\

\textbf{PEN$\rightarrow$REB}\\
For negative $\varepsilon$, $P_{0}$ became a saddle point and the first transition occurred from PEN to REB. The basic mechanism of this transition originated in the exchange of the order between $\mathcal{O}_\mathrm{pulse}$ and the stable manifold $W^{l,s}_{P_{0}}$ and a similar discussion was possible as in the PEN--OSC case of Proposition \ref{prop PEN2OSC}. The details were excluded for brevity to the reader. We will further discuss this case in Section 6, as it is relevant to the sensitive dependence observed in Fig.\ref{spiral.eps}. \\

Progressively, we study the cyclic transition REB--OSC--STA, considering the case in the negative $\varepsilon$, as the same argument is valid for positive $\varepsilon$. The cycle comprises the pinning process REB--OSC and a depinning process of STA-REB. Recall that the OSC--STA process was a local Hopf bifurcation, and thus, it was excluded from the current scope of discussion. We primarily focused on the pinning transition from REB to OSC that accounted for the most subtle case of the cycle.\\

\textbf{REB$\rightarrow$OSC}\\
The transition from REB to OSC occurred around $\varepsilon \approx -0.00313$ (see the point E in Fig.\ref{OSC-REB.eps}).
The behaviors of the traveling pulses of PDE are presented in (a) of Fig.\ref{OSC-REB.eps}, ODE flows (b), and the global structure of HIOP (c). The panel (a)-2 reflects that TPO remained in the vicinity of the unstable stationary pulse $\mathrm{[I]}^\mathrm{S}_{-2}$ for a long period, before travelling into the regime of OSC ((a)-1) as $\varepsilon$ decreased. This suggested that the relationship between the TPO and the stable manifold of $\mathrm{[I]}^\mathrm{S}_{-2}$ is vital for understanding the transition. Consequently, the transition occurred exactly at the value where $\mathcal{O}_\mathrm{pulse}$ coincided with $W^{r,s}_{P_{-2}}$ in the reduced ODE system. 
Prior to explaining it, we closely observed the HIOP diagram presented in Fig.\ref{OSC-REB.eps} (c), wherein two time-periodic branches and two stationary ones appeared as REB (purple), OSC (cyan) branches, and $\mathrm{[I]}^\mathrm{S}_{-1}$ (black), $\mathrm{[I]}^\mathrm{S}_{-2}$ (yellow) branches. The REB branch terminated at E: the stationary branch $\mathrm{[I]}^\mathrm{S}_{-2}$. As the branch approached E, the associated PDE solution spends most of the time near $\mathrm{[I]}^\mathrm{S}_{-2}$ as in (a)-2, and then rebounded and remained near $\mathrm{[I]}^\mathrm{S}_{2}$ (mirror image of $\mathrm{[I]}^\mathrm{S}_{-2}$) for a long period. Furthermore, this process was repeated, because TPO was progressing on a circle. Therefore, the REB branch eventually approached a heteroclinic orbit connecting $\mathrm{[I]}^\mathrm{S}_{-2}$
to $\mathrm{[I]}^\mathrm{S}_{2}$ as $\varepsilon$ tended to E. In contrast, the OSC branch emerged at A of $\mathrm{[I]}^\mathrm{S}_{-1}$ via Hopf bifurcation and underwent a saddle-node bifurcation before terminating at D of $\mathrm{[I]}^\mathrm{S}_{2}$ via homoclinic bifurcation. The associated homoclinic behavior cannot be observed in the PDE simulations, because it occurred for the unstable portion of the oscillatory motion arising at the saddle-node bifurcation.
As the Hopf bifurcation became supercritical, a bistable regime from B to the saddle-node point in which both the REB and OSC were prominently observable. The detailed mechanism of transition from REB to OSC will be clarified in the framework of the reduced ODE system as below.


The transition from REB to OSC can be characterized based on the alteration of the order from $\mathcal{O}_\mathrm{pulse}>W^{r,u}_{P_{-2}}>W^{r,s}_{P_{-2}}$ (REB) to $W^{r,s}_{P_{-2}}>\mathcal{O}_\mathrm{pulse}>W^{r,u}_{P_{-2}}$ (OSC) in which the inequalities were measured using their $\alpha$-coordinates at the location of the critical point $P_{-1}$. More importantly, in the REB regime ($\varepsilon \approx -0.001962$), the $\mathcal{O}_\mathrm{pulse}$ was greater than the stable and unstable manifolds of $P_{-2}$ located at the critical point $P_{-1}$ (refer to the magnified view presented in Fig.\ref{e-0.00196208903100908_REB.eps}). In contrast, the order varied in OSC regime immediately after the transition point $\varepsilon \approx -0.00313$ ( (b)-1 in Fig.\ref{OSC-REB.eps} and its magnification Fig.\ref{e-0.0021_OSC.eps}). 
According to the two inequalities, two steps were required for this alteration: $W^{r,s}_{P_{-2}}$ coincides with $W^{r,u}_{P_{-2}}$, and thereafter, $W^{r,s}_{P_{-2}}$ increased above $\mathcal{O}_\mathrm{pulse}$ as $\varepsilon$ decreased.
The first case is a homoclinic bifurcation at $P_{-2}$ (refer to Fig.\ref{e-0.00166147754_HOMO_REB.eps}) and the second case is the exchange of the order between $\mathcal{O}_\mathrm{pulse}$ and $W^{r,s}_{P_{-2}}$ (refer to Fig.\ref{e-0.0021_OSC.eps}). 
Homoclinic bifurcation consisting of $W^{r,s}_{P_{-2}}$ and $W^{r,u}_{P_{-2}}$ is already detected in HIOP diagram (refer to Fig.\ref{OSC-REB.eps} (c) at D). Note that the intersecting manner of these two manifolds can be explicitly computed using the saddle quantity $\sigma$ of $P_{-2}$ (refer to Proposition 3), which implied that $W^{r,s}_{P_{-2}}$ crossed the $W^{r,u}_{P_{-2}}$ transversally upward as $\varepsilon$ decreased (refer to, for instance, Section 6 of \cite{Kuznetsov} and \cite{Guckenheimer1983}). The homoclinic bifurcation is essential for opening the gate for $\mathcal{O}_\mathrm{pulse}$ to enter the basin of limit cycle. In particular, $W^{r,u}_{P_{-2}}$ blocked the $\mathcal{O}_\mathrm{pulse}$ from entering it before the homoclinic bifurcation. Immediately after the homoclinic bifurcation, the order among the three objects altered to $\mathcal{O}_\mathrm{pulse}>W^{r,s}_{P_{-2}}>W^{r,u}_{P_{-2n}}$. Thus, $\mathcal{O}_\mathrm{pulse}$ should be under $W^{r,s}_{P_{-2}}$ (green orbit) for $\mathcal{O}_\mathrm{pulse}$ to surround the limit cycle, as depicted in Fig.\ref{e-0.0021_OSC.eps}, because $W^{r,s}_{P_{-2}}$ denotes the basin boundary separating REB and OSC. Therefore, the transition point can be characterized by the $\varepsilon$-value, where $\mathcal{O}_\mathrm{pulse}$ coincided with the $W^{r,s}_{P_{-2}}$, wherein $\mathcal{O}_\mathrm{pulse}$ is a double-heteroclinic orbit connecting three critical points: $T^{+}_{-\infty}$, $P_{-2}$, and $T^{-}_{-\infty}$. Remarkably, the basin boundary $W^{r,s}_{P_{-2}}$ experienced infinitely many reconnections of heteroclinic orbits by exchanging the destinations among the critical points located on the left-hand side of $P_{-2}$. As such, an image of heteroclinic orbit connecting $P_{-2}$ and $P_{-6}$ is illustrate din Fig.\ref{e-0.00196208903100908_REB.eps}. 
The $\mathcal{O}_\mathrm{pulse}$ could enter the basin of the limit cycle around $P_{-1}$ after this occurrence. Therefore, we can conclude the following.

\begin{proposition}
  \label{prop REB2PIN}
  The transition occurred from REB to OSC in the vicinity of ${P_{-2n+1}} (n=1, ...)$, as $\varepsilon$ negatively increased, and it is characterized by the exchange of the order between $\mathcal{O}_\mathrm{pulse}$ and $W^{r,s}_{P_{-2n}}$. Namely, $\mathcal{O}_\mathrm{pulse}$ was above the $W^{r,s}_{P_{-2n}}$ in the upper $\alpha$-plane of the REB regime, but the order was reversed in the OSC regime. However, they coincided with each other at the critical height, and the corresponding basin boundary can be expressed as  $W^{r,s}_{P_{-2n}}$. To fulfill this exchange, a homoclinic bifurcation at $P_{-2n}$ must occur prior to the transition. More precisely, the exchange of order among the three objects, namely, $\mathcal{O}_\mathrm{pulse}>W^{r,u}_{P_{-2n}}>W^{r,s}_{P_{-2n}}$ (REB) altered into $W^{r,s}_{P_{-2n}}>\mathcal{O}_\mathrm{pulse}>W^{r,u}_{P_{-2n}}$ (OSC). There are two steps for this alteration: first $W^{r,s}_{P_{-2n}}$ coinciding with $W^{r,u}_{P_{-2n}}$ (homoclinic bifurcation as in Fig.\ref{e-0.00166147754_HOMO_REB.eps}), second, it passed transversely across $\mathcal{O}_\mathrm{pulse}$ (transition to OSC as in Fig.\ref{e-0.0021_OSC.eps}) as $\varepsilon$ negatively increased. An unstable time-periodic motion emerging at the saddle-node bifurcation, blocked the remaining orbits from entering the basin of the stable limit cycle, and it disappeared at the homoclinic bifurcation. Thereafter, $\mathcal{O}_\mathrm{pulse}$ could approach $W^{r,s}_{P_{-2n}}$ and transversely crossed it at the transition height. For the PDE setting, exactly the same mechanism controlled the transition by replacing the saddle $P_{-2n}$ using the unstable stationary pulse $\mathrm{[I]}^\mathrm{S}_{-2n}$, as depicted in Fig.\ref{OSC-REB.eps} (c). In particular, note that the associated time-periodic PDE branches, i.e., the REB (purple) and OSC (cyan) branches, contacted the $\mathrm{[I]}^\mathrm{S}_{-2}$ branch (yellow) in a double-heteroclinic and homoclinic manner, respectively, which corresponded to the above-mentioned two steps of order alteration in the ODE setting. 
\end{proposition}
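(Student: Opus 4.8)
The plan is to treat this as a one-parameter unfolding (in $\varepsilon$, with $\tau$ fixed in $(\tau_c,2\tau_c)$) of the planar flow (\ref{eq3.50}) and to track three one-dimensional objects as $\varepsilon$ is negatively increased: the orbit $\mathcal{O}_\mathrm{pulse}$ emanating from $T^{+}_{-\infty}=(-\infty,\overline{\alpha}_{+})$, and the right stable and unstable manifolds $W^{r,s}_{P_{-2n}}$, $W^{r,u}_{P_{-2n}}$ of the relevant saddle. First I would fix a vertical section $\Sigma=\{p=P_{-2n+1}\}$ through the neighbouring nonsaddle critical point and record the $\alpha$-coordinates at which each of the three objects first meets $\Sigma$; the entire proposition is then the assertion that the ordering of these three numbers changes in a prescribed way. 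Proposition \ref{prop6.00} supplies the local data I need: for $\varepsilon<0$ the even-indexed $P_{-2n}$ is a saddle (it is a zero of $f$ where $f$ is increasing, so $\varepsilon D(P_{-2n},d)<0$), and its saddle quantity $\sigma=b>0$ is strictly positive, while $P_{-2n+1}$ passes through the spiral/node sequence of that proposition and carries the stable limit cycle.

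The heart of the argument is the homoclinic bifurcation at $P_{-2n}$. Since the flow is planar and the saddle quantity is positive, I would invoke the Andronov--Leontovich theorem (as in \cite{Guckenheimer1983} and Section 6 of \cite{Kuznetsov}): at the height where $W^{r,s}_{P_{-2n}}$ and $W^{r,u}_{P_{-2n}}$ coincide in a homoclinic loop, an \emph{unstable} cycle is destroyed on one side of the loop and the two branches cross transversally in $\varepsilon$, the sign of $\sigma$ fixing both the stability of the cycle and the direction of the crossing. I would then establish the two stated steps: (i) as $\varepsilon$ decreases, $W^{r,s}_{P_{-2n}}$ sweeps upward through $W^{r,u}_{P_{-2n}}$ (the homoclinic event, Fig.\ref{e-0.00166147754_HOMO_REB.eps}), removing the unstable cycle that had blocked $\mathcal{O}_\mathrm{pulse}$ from the basin of the stable limit cycle around $P_{-2n+1}$; and (ii) once this barrier is gone, $W^{r,s}_{P_{-2n}}$ continues upward and crosses $\mathcal{O}_\mathrm{pulse}$ (the transition proper, Fig.\ref{e-0.0021_OSC.eps}), at which height $\mathcal{O}_\mathrm{pulse}$ becomes the double-heteroclinic orbit $T^{+}_{-\infty}\to P_{-2n}\to T^{-}_{-\infty}$ and $W^{r,s}_{P_{-2n}}$ is precisely the basin boundary between REB and OSC.

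To establish the reconnection cascade I would exploit the oscillatory structure of $f(p,d)$: since $f$ is odd and decays in a sinusoidal fashion, it has countably many simple zeros, hence countably many saddles $P_{-2m}$ to the left of $P_{-2n}$. Tracking the backward-time destination of $W^{r,s}_{P_{-2n}}$, I would argue that as $\varepsilon$ tends to its critical value this destination moves monotonically leftward and, at a discrete sequence of $\varepsilon$-values, coincides with successive $W^{r,u}_{P_{-2m}}$ (Fig.\ref{e-0.00196208903100908_REB.eps}), with unstable-spiral connections filling the intervening parameter intervals. Finally, for the PDE/HIOP consistency I would match each ODE event to a period divergence on the HIOP branches of Fig.\ref{OSC-REB.eps}(c): the REB branch approaches a double-heteroclinic loop between $\mathrm{[I]}^\mathrm{S}_{-2n}$ and its mirror image (period $\to+\infty$ at E), while the OSC branch terminates at a genuine homoclinic to $\mathrm{[I]}^\mathrm{S}_{-2n}$ (point D), reproducing on $\mathbb{S}^1$ the two events (i) and (ii) identified on $\mathbb{R}^1$.

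The hard part will be the global rather than local claims. The local homoclinic bifurcation is standard once Proposition \ref{prop6.00} furnishes $\sigma>0$, but proving that the $\alpha$-coordinates of $W^{r,s}_{P_{-2n}}$ and $\mathcal{O}_\mathrm{pulse}$ on $\Sigma$ vary \emph{monotonically} in $\varepsilon$, that the homoclinic event strictly precedes the $\mathcal{O}_\mathrm{pulse}$-crossing, and that the backward reconnection cascade is exhausted in the stated order, all require global control of the invariant manifolds for a non-integrable planar field with the transcendental forcing $f$. I expect these monotonicity and ordering statements to be reachable only through a combination of careful estimates along the saddle chain and numerical continuation, which is why I would present the cascade and the PDE correspondence as numerically supported, leaving a fully rigorous proof as open.
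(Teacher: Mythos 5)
Your proposal is correct and follows essentially the same route as the paper: Proposition \ref{prop6.00} supplies the saddle location and positive saddle quantity $\sigma=b>0$, the homoclinic event with its unstable cycle is handled by the standard planar theory the paper itself cites (Section 6 of \cite{Kuznetsov} and \cite{Guckenheimer1983}), the two-step order exchange among $\mathcal{O}_\mathrm{pulse}$, $W^{r,u}_{P_{-2n}}$, and $W^{r,s}_{P_{-2n}}$ is tracked numerically at the $\alpha$-coordinates over the neighbouring nonsaddle point, and the HIOP correspondence is read off from period divergence of the REB and OSC branches at their endpoints (double-heteroclinic at E, homoclinic at D). Your concluding caveat that the global monotonicity, the ordering of the homoclinic event before the $\mathcal{O}_\mathrm{pulse}$-crossing, and the reconnection cascade are only numerically supported matches the paper's own admission that these arguments are partially based on numerical results with the complete rigorous scenario left open.
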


\textbf{STA$\rightarrow$REB}\\
In a similar manner, the depinning process STA--REB can be understood by studying the interaction between $\mathcal{O}_\mathrm{pulse}$ and the stable manifold $W^{l,s}_{P_{-2n}}$ of $P_{-2n}$ upon the occurrence of REB around $P_{-2n}$. As observed in Fig.\ref{REB-STA.eps} (n=1), we could refer that the exchange between the $\mathcal{O}_\mathrm{pulse}$ and the stable manifold $W^{l,s}_{P_{-2}}$ (gray-dotted line) caused the transition. Furthermore, we can observe the infinitely many reconnections occurring between the saddle $P_{-2}$ and other critical points immediately before the transition, for instance, the ODE flow (b)-3 of Fig.\ref{REB-STA.eps} displayed a heteroclinic connection from $P_{-6}$ to $P_{-2}$. The HIOP structure of Fig.\ref{REB-STA.eps} (c) supported the above-mentioned mechanism, for instance, the REB branch (purple) converged on the $\mathrm{[I]}^\mathrm{S}_{-2}$ branch (yellow) at E, signifying that its period diverged and spent most of the time in the neighborhood of unstable stationary pulse $\mathrm{[I]}^\mathrm{S}_{-2}$ as $\varepsilon$ approached the critical height. Thus, the following proposition was entailed in this discussion.






\begin{proposition}
  \label{prop Pin2REB}
  For the transition from STA around ${P_{-2n+1}} (n=1, ...)$ to REB as $\varepsilon$ negatively increased, the exchange of the order occurred between $\mathcal{O}_\mathrm{pulse}$ and $W^{l,s}_{P_{-2n}}$. In particular, $\mathcal{O}_\mathrm{pulse}$ was above the $W^{l,s}_{P_{-2n}}$ in the upper $\alpha$-plane for the STA regime, but the order switched in the REB regime. They coincided each other at the critical height and $W^{l,s}_{P_{-2n}}$ acted as the basin boundary between the STA and REB. For the PDE setting, the associated HIOP diagram presented in Fig.\ref{REB-STA.eps} (bottom) depicted a consistent scenario, as detailed earlier. For further details, refer to the caption of Fig.\ref{REB-STA.eps}. 
\end{proposition}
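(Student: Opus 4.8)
The plan is to mirror the argument developed for the REB--OSC transition in Proposition~\ref{prop REB2PIN}, reading the depinning STA--REB as a change in the relative vertical order of two invariant objects of the reduced flow (\ref{eq3.50}): the pulse orbit $\mathcal{O}_\mathrm{pulse}$ issuing from $T^{+}_{-\infty}$, and the left stable manifold $W^{l,s}_{P_{-2n}}$ of the saddle $P_{-2n}$. First I would fix the local picture. By Proposition~\ref{prop6.00}, for $\tau_{c}<\tau<2\tau_{c}$ and $\varepsilon<0$ the even-indexed point $P_{-2n}$ is a genuine saddle (since $\varepsilon D(P_{-2n},d)<0$) with positive saddle quantity $\sigma=b>0$, while the odd-indexed point $P_{-2n+1}$ is a stable node/spiral throughout the relevant STA window. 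Hence $P_{-2n+1}$ has a nonempty basin, and for orbits entering from $p=-\infty$ its effective left boundary is the barrier $W^{l,s}_{P_{-2n}}$: an orbit passing above this manifold clears the saddle and is captured by $P_{-2n+1}$ (STA), whereas one passing below it is reflected back toward $T^{-}_{-\infty}$ (REB).

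Next I would establish monotone dependence on the parameter. Fixing a cross-section $\Sigma$ transverse to the flow just upstream (to the left) of $P_{-2n}$, let $\alpha_{\mathrm{pulse}}(\varepsilon)$ and $\alpha_{W}(\varepsilon)$ denote the $\alpha$-heights at which $\mathcal{O}_\mathrm{pulse}$ and $W^{l,s}_{P_{-2n}}$ meet $\Sigma$. Since the heterogeneous forcing enters (\ref{eq3.50}) only through $-\tfrac{\varepsilon}{C_{1}}f(p,d)$, with $f$ odd and exponentially--sinusoidally decaying (Fig.~\ref{properties_ode.eps}), a comparison/continuous-dependence argument shows that as $\varepsilon$ is negatively increased the effective barrier at $P_{-2n}$ rises relative to the incoming pulse, so that $\alpha_{\mathrm{pulse}}(\varepsilon)-\alpha_{W}(\varepsilon)$ decreases. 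On the STA side of the transition $\mathcal{O}_\mathrm{pulse}$ lies above $W^{l,s}_{P_{-2n}}$ (for the first cycle this follows from Proposition~\ref{prop6.05}, which places $\mathcal{O}_\mathrm{pulse}$ above every saddle manifold at small $|\varepsilon|$); the monotone decrease then forces, by the intermediate value theorem, a critical height $\varepsilon^{*}$ with $\alpha_{\mathrm{pulse}}(\varepsilon^{*})=\alpha_{W}(\varepsilon^{*})$, beyond which the order is reversed (REB). This simultaneously shows $W^{l,s}_{P_{-2n}}$ is the basin boundary and that it coincides with $\mathcal{O}_\mathrm{pulse}$ exactly at the transition.

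Finally I would record the reconnection structure and the PDE correspondence. In backward time the destination of $W^{l,s}_{P_{-2n}}$ must accumulate on some critical point to the left of $P_{-2n}$; as $\varepsilon\to\varepsilon^{*}$ the oscillatory tail of $f$ forces this destination to jump successively through $P_{-2n-2},P_{-2n-4},\dots$ (nonsaddle points being realized for the intervening $\varepsilon$), which is exactly the claimed family of heteroclinic reconnections, the connection $P_{-6}\to P_{-2}$ of Fig.~\ref{REB-STA.eps}(b)-3 being one instance. The PDE/HIOP consistency then follows by identifying $P_{-2n}$ with the unstable stationary pulse $\mathrm{[I]}^\mathrm{S}_{-2n}$ and observing, as in Fig.~\ref{REB-STA.eps}(c), that the REB time-periodic branch terminates on the $\mathrm{[I]}^\mathrm{S}_{-2n}$ branch with diverging period---the circle unfolding of the ODE heteroclinic.

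The hard part will be making the monotonicity and transversality rigorous: controlling the global manifold $W^{l,s}_{P_{-2n}}$ across the oscillatory forcing and proving that $\alpha_{\mathrm{pulse}}(\varepsilon)-\alpha_{W}(\varepsilon)$ crosses zero monotonically, so that the exchange is transversal and occurs exactly once near $\varepsilon^{*}$ rather than merely that some crossing exists. As with the companion transitions, I expect to close this step by combining the positive saddle quantity from Proposition~\ref{prop6.00} with numerical continuation, leaving a fully rigorous global argument as the authors themselves flag for future work.
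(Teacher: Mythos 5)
Your proposal follows essentially the same route as the paper: the paper's own justification for Proposition~\ref{prop Pin2REB} is precisely the numerically supported geometric argument you give---the STA--REB depinning is caused by the exchange of order between $\mathcal{O}_\mathrm{pulse}$ and $W^{l,s}_{P_{-2n}}$, with $W^{l,s}_{P_{-2n}}$ acting as the basin boundary, infinitely many heteroclinic reconnections (e.g., the $P_{-6}\to P_{-2}$ connection in Fig.~\ref{REB-STA.eps}~(b)-3) occurring just before the critical height, and the PDE/HIOP consistency read off from the REB branch terminating on the $\mathrm{[I]}^\mathrm{S}_{-2n}$ branch with diverging period; unlike the REB--OSC case, no homoclinic-bifurcation step is involved, which you correctly omit. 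Your additional monotonicity/intermediate-value scaffolding (and the appeal to Proposition~\ref{prop6.05}, which strictly establishes only the PEN regime at small $|\varepsilon|$, not the ordering on the STA side after several transitions) goes slightly beyond what the paper proves, but since you explicitly flag that step as numerically supported and open, this matches the paper's own standard, which states that the arguments are partially numerical and the rigorous scenario is left for future research.
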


We summarize the discussion for the cyclic transition REB--OSC--STA as follows. \\
\begin{proposition}
  \label{prop_summary}
  (1) As the bump height $\varepsilon>0$
  ($\varepsilon<0$) increased (decreased), the traveling pulse orbit $\mathcal{O}_\mathrm{pulse}$ of Eq.(\ref{eq3.50}) experienced infinitely many repeated cycles of outputs (REB--OSC--STA), except at the vicinity of the PEN regime (small height). \\
  (2) The basin boundary separating the two regimes can be expressed as the stable manifold of the relevant saddle point located proximate to the event.
  The exchange of order between $\mathcal{O}_\mathrm{pulse}$ and the basin boundary controlled the transition among the cycle.\\ 
  (3) The basin boundary relevant to the transition underwent infinitely many reconnections via switching heteroclinic orbits among the critical points as $\varepsilon$ approached the critical height. \\
  (4) At the transition from REB to OSC, the exchange of order among the three objects occurred, i.e, $\mathcal{O}_\mathrm{pulse}>W^{r,u}_{P_{-2n}}>W^{r,s}_{P_{-2n}}$ (REB) altered into $W^{r,s}_{P_{-2n}}>\mathcal{O}_\mathrm{pulse}>W^{r,u}_{P_{-2n}}$ (OSC). This alteration comprised two steps: homoclinic bifurcation to the saddle point $P_{-2n}$, which was vital for opening a gate for $\mathcal{O}_\mathrm{pulse}$ to enter the basin of the limit cycle (OSC), and the exchange of order between the $\mathcal{O}_\mathrm{pulse}$ and $W^{r,s}_{P_{-2n}}$.\\
  (5) The transition from OSC to STA emerged from a local Hopf bifurcation of the relevant nonsaddle critical point.\\
  (6) The global structure of HIOP for the PDE system and the response of traveling pulse against the bump were consistent with all the above results. \\
\end{proposition}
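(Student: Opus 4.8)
The plan is to treat this proposition as a synthesis of the preceding Propositions \ref{prop6.00}--\ref{prop Pin2REB} together with the discrete, sinusoidally-decaying structure of the heterogeneity term $f(p,d)$, rather than as an independent statement requiring fresh machinery. The skeleton of the argument is: (i) Proposition \ref{prop6.00} supplies the phase-plane anatomy --- countably many critical points $(P_i,0)$ at the zeros of $f$, alternating saddle/nonsaddle according to the sign of $\varepsilon D(P_i,d)$, with positive saddle quantity $\sigma=b>0$; (ii) Propositions \ref{prop PEN2OSC}, \ref{prop REB2PIN}, and \ref{prop Pin2REB} each characterize one elementary transition (PEN$\to$OSC, REB$\to$OSC, STA$\to$REB) in terms of an order exchange between $\mathcal{O}_\mathrm{pulse}$ and the stable manifold of the relevant saddle; and (iii) a translation-type self-similarity of the local picture near successive saddles $P_{-2n}$ promotes these single transitions into an infinite cycle.

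First I would dispose of the ``local'' parts (2), (4), and (5). Part (2) is immediate in a planar flow: the stable manifold $W^{s}$ of a saddle is a one-dimensional invariant curve whose complement locally has two components, so it is precisely the separatrix between the two asymptotic outputs; this is exactly the content already proved in Propositions \ref{prop REB2PIN} and \ref{prop Pin2REB}, which I would simply quote. Part (4) is Proposition \ref{prop REB2PIN} verbatim; the only substantive input is the direction in which $W^{r,s}_{P_{-2n}}$ crosses $W^{r,u}_{P_{-2n}}$ at the homoclinic bifurcation, which is governed by the sign of the saddle quantity $\sigma>0$ from Proposition \ref{prop6.00} via standard homoclinic-bifurcation theory (cf.\ \cite{Kuznetsov,Guckenheimer1983}); this forces the unstable periodic orbit born at the saddle-node to be destroyed at the homoclinic event, opening the gate for $\mathcal{O}_\mathrm{pulse}$. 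Part (5) follows because, by Proposition \ref{prop6.00}, the nonsaddle point passes through unstable spiral $\to$ stable spiral as $|\varepsilon|$ grows, and the crossing of the real part of the complex pair through zero is exactly a local Hopf bifurcation whose bifurcating branch is the OSC limit cycle.

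Next I would assemble the ``global/periodic'' parts (1), (3), and (6). For part (3), the key observation is that in time-reversal the backward destination of the relevant stable manifold sweeps \emph{monotonically} leftward through the discrete set $\{P_{-n}\}$ as $|\varepsilon|$ increases toward the critical height: each time this destination switches from $P_{-n}$ to $P_{-n-1}$, the intermediate parameter value realizes a genuine heteroclinic connection $W^{s}=W^{r,u}_{P_{-n}}$ (a reconnection), and since $f$ has countably many zeros to the left, infinitely many such reconnections accumulate before $\mathcal{O}_\mathrm{pulse}$ is reached. For part (1), I would exploit that $f(p,d)$ is odd and decays sinusoidally, so the phase portrait in a window centered on each saddle $P_{-2n}$ is qualitatively the same (up to the slowly-varying amplitude of $f$); consequently the three-step mechanism REB$\to$OSC$\to$STA established near $P_{-2}$ repeats near every $P_{-2n}$, giving infinitely many cycles as $|\varepsilon|$ grows, the sole exception being the small-$\varepsilon$ PEN window isolated by Proposition \ref{prop6.05}. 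Part (6) is a dictionary: each $P_n$ is identified with the shifted stationary pulse $\mathrm{[I]}^\mathrm{S}_n$, each ODE hetero/homoclinic event with a period-divergence of the corresponding PEN/REB branch on $\mathbb{S}^1$, and each ODE Hopf with the HIOP Hopf point, as exhibited in Figs.\ref{PEN-OSC.eps}, \ref{OSC-REB.eps}, and \ref{REB-STA.eps}.

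The main obstacle is the rigor of the ``infinitely many'' claims in (1) and (3). Locally, invariant-manifold theory and the homoclinic analysis are clean, but both the monotone leftward sweep of the backward destination and the exact self-similar repetition of the cycle require global control of $W^{s}$ far from its saddle, in a region where the present justification is only numerical; in particular one must rule out tangencies or non-transverse returns that could interrupt the monotone reconnection sequence. I would therefore present (1) and (3) as consequences of the numerically observed monotonicity together with the local theory, and flag the fully rigorous verification --- essentially a global Melnikov-type estimate for $W^{s}$ as a function of $\varepsilon$ across the whole chain of critical points --- as the open point already acknowledged in the text.
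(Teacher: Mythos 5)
Your proposal is correct and takes essentially the same route as the paper: Proposition \ref{prop_summary} is presented there as a summary whose justification is exactly the synthesis you describe, namely Propositions \ref{prop6.00}--\ref{prop Pin2REB} (order exchanges between $\mathcal{O}_\mathrm{pulse}$ and the relevant stable manifolds, the positive saddle quantity $\sigma=b>0$ governing the homoclinic ``gate'' via standard theory, and the local Hopf bifurcation for OSC--STA) combined with the dictionary $P_n \leftrightarrow \mathrm{[I]}^\mathrm{S}_n$ and period-divergence of the PEN/REB branches for the HIOP consistency in part (6). Your flagging of the global ``infinitely many'' claims in (1) and (3) as resting on numerically observed monotone reconnection sequences matches the paper's own explicit caveat that the arguments are partially based on numerical results and that the complete rigorous scenario is open.
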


\section{Sensitive dependence on initial conditions and structure of basin boundary}

\begin{figure}
  \centering
  \includegraphics[width=.8\hsize]{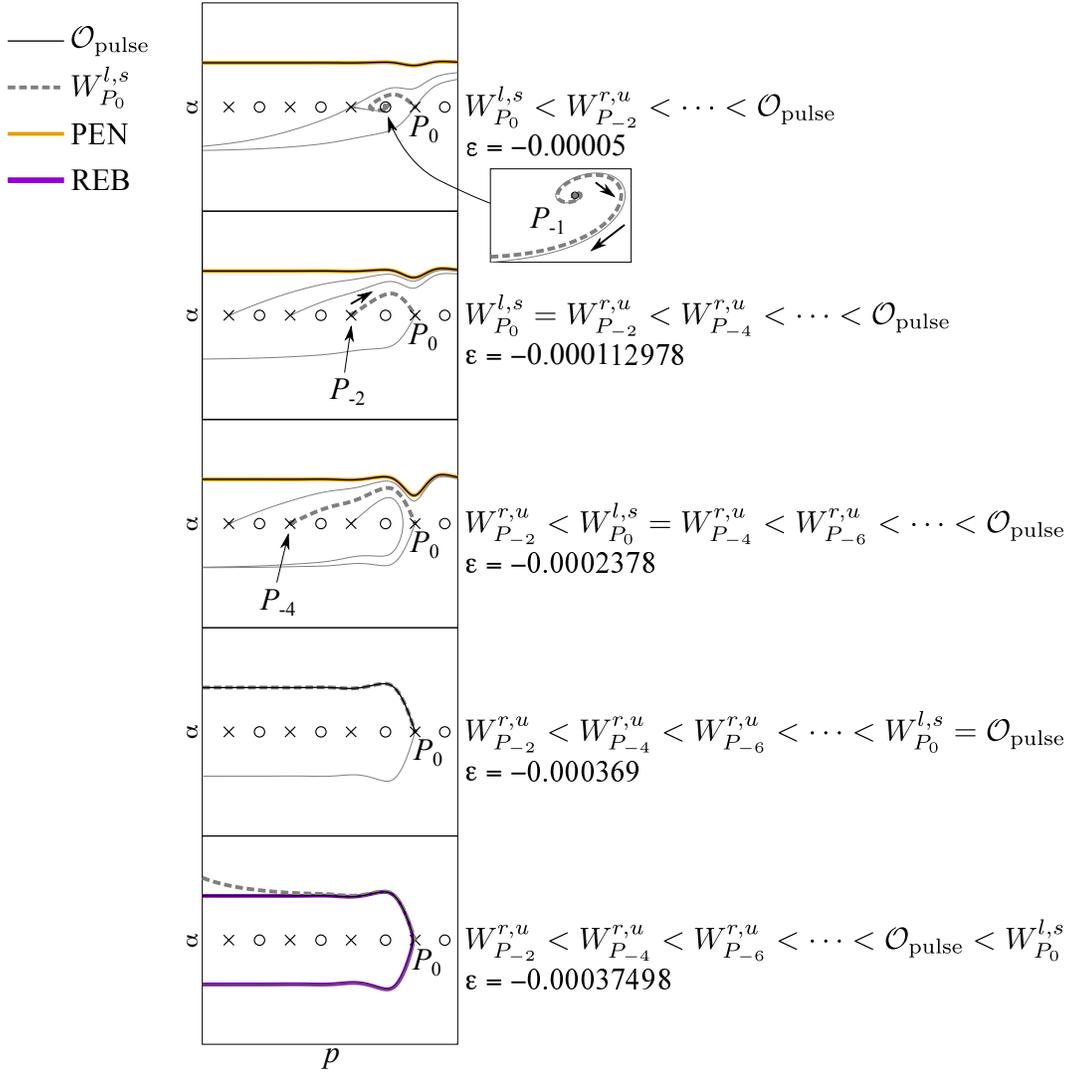}
  \caption{
    Transition process from PEN to REB: sensitive dependence on
    initial condition and structure of basin boundary $W^{l,s}_{P_{0}}$.
    The left-hand side stable manifold of $P_{0}$ is vital as a separator between PEN and REB.  
    As $\varepsilon$ decreased, the $W^{l,s}_{P_{0}}$ experienced infinitely many reconnections among the critical points. Additionally, the destination of $W^{l,s}_{P_{0}}$ in time-reversal direction at the top panel corresponded to the unstable spiral point adjacent to $P_{0}$, and thereafter, it altered to the following saddle point at $\varepsilon = -0.000112978$, and so on. As the destination was a critical point of the spiral type, the basin between the PEN and REB was categorized into two parts in a spiral manner, so we observed a subtle sensitivity to the initial condition, as depicted in Fig.\ref{spiral.eps}. Note that the connection to such an unstable critical point exhibited an open property, namely, it persisted even for slightly smaller $\varepsilon$ up to $\varepsilon=-0.000112978$.   }
  \label{PEN-REB_scenario.eps}
\end{figure}

In this section, we return to the issue of sensitive dependence on the initial conditions, as demonstrated in Fig.\ref{spiral.eps}. As the bump height belonged inside the PEN regime in the phase diagram of Fig.\ref{phase_PDE.eps}, the TPO started from $T^{+}_{-\infty}$ and travelled across the bump even if a small perturbation was added to it. Nevertheless, the two orbits depicted in Fig.\ref{spiral.eps} were extremely close initially, but they behaved distinctly: one (blue) travelled across the bump and the other (red) eventually rebounded. Moreover, both the initial profiles were extremely proximate to an unstable stationary pulse pinned at the bump heterogeneity, namely, $\mathrm{[I]}^\mathrm{S}_{-1}$ of Fig.\ref{S1.eps}. This strongly suggested that $\mathrm{[I]}^\mathrm{S}_{-1}$ belongs to the basin boundary separating PEN and REB, and its unstable manifold behaved in a spiral manner around it. Thus, we clarified the details in the framework of reduced ODE system as follows.

The basin boundary for general initial data is vital for understanding the present situation. Herein, we focus on the transition from PEN to REB for negative $\varepsilon$, which is relevant to Fig.\ref{spiral.eps}. The meaning of sensitive dependence around a specific initial point $P_{-1}$ in the reduced ODE setting can be described as follows (see Fig.\ref{PEN-REB_scenario.eps}): for any small neighborhood of $P_{-1}$, at least two initial values can be determined on the half-ray in any direction originating at $P_{-1}$, which may yield various outputs. Notably, we added the extra ``half-ray'' condition to the usual ``any small'' neighborhood condition. This is partially because we aim to avoid the case in which the basin boundary crosses an arbitrarily selected half-ray originating at $P_{-1}$ only finitely many times, such as in the case connecting it to a real hyperbolic saddle point with a well-defined tangent space along the unstable manifold. As discussed earlier, the height represented in Fig.\ref{spiral.eps} belonged inside of the PEN regime ($\varepsilon = -0.00005$) to ensure that the TPO and nearby orbits do not exhibit any sensitivity, regardless of the variation in its height. Moreover, A detailed reconnection process of the basin boundary between PEN and REB for negative $\varepsilon$ is presented in Fig.\ref{PEN-REB_scenario.eps}. The stable manifold $W^{l,s}_{P_{0}}$ of $P_0$ influenced the basin boundary, and it travelled leftward as $\varepsilon$ becomes negatively larger and experienced infinitely many reconnections among the critical points. Eventually, it coincided with $\mathcal{O}_\mathrm{pulse}$ at the critical transition height ($\varepsilon = -0.000369$). Furthermore, the sensitive dependence associated with the results presented in Fig.\ref{spiral.eps} occurred when the destination of $W^{l,s}_{P_{0}}$ in the time-reversal direction is a critical point of unstable spiral type, such as that in the top panel of Fig.\ref{PEN-REB_scenario.eps} (refer to the inlet of its magnification). Although the connection to a real saddle point occurred infinitely many times during the reconnection process, the half-ray condition was not satisfied therein, because the basin was classified into two parts along the right unstable direction in a small neighborhood of the saddle point. Note that the connection to unstable spiral exhibtied an open property, i.e., it persisted in an open set of $\varepsilon$, and there existed an open interval of $\varepsilon$ between the top and second panels in Fig.\ref{PEN-REB_scenario.eps}. In contrast, the connection to the real saddle point generically occurred only for discrete values of $\varepsilon$.

The subtleness observed in Fig.\ref{transition.eps} is essentially different from the above-mentioned case, wherein the height is extremely proximate to a critical transition, e.g., from PEN to REB (leftmost). More importantly, the TPO coincided with the associated basin boundary at the transition height, so we can easily predict the fate of the orbit, whether it was above or below the $\mathcal{O}_\mathrm{pulse}$ at least in the ODE setting provided the exact initial point is provided in advance. In a real simulation of PDE on a circle, as depicted in Fig.\ref{transition.eps}, the outcome after collision could not be conveniently predicted, because it relied on the length of the circle and an approximation of the initial profile. Moreover, infinitely many times reconnection of basin boundary occurred as the height approached the transition in a subtle manner to locate the approximated initial condition on a particular side of the basin boundary. Recall that $\mathrm{[I]}^\mathrm{T}$ resembles $\mathrm{[I]}^\mathrm{S}$  as in Fig.\ref{snakes_and_ladders.eps}; therefore, $\mathrm{[I]}^\mathrm{S}_{-n}$ for large $n$ was proximate to $\mathrm{[I]}^\mathrm{T}$ in case the $\tau$ is in the vicinity of the drift bifurcation point $\tau_c$.

The reduction to ODE system immensely aided the understanding of the subtle aspects related to the behaviors of TPO of the original three-component reaction diffusion system, which originated from the interaction between the oscillating tails and heterogeneity.
As such, the insightful observations obtained for the 1D system can enlighten more complex behaviors of the traveling spots in 2D space.\\

\section{Conclusion and outlook}

In this study, we characterized the interplay between the TPO and the heterogeneity of bump type for the generalized three-component FitzHugh--Nagumo Eqs.(\ref{eq2.01}). First, we numerically investigated the existence and stability properties of the SPO and TPO in a homogeneous space. Consequently, we observed that the SPOs formed a ``snaky'' structure, as depicted in Fig.\ref{snakes_and_ladders.eps}. In contrast, the TPOs constituted a ``figure-eight-like stack of isolas'' that was located adjacent to the snaky structure of the SPO, as portrayed in Fig.\ref{isola.eps}. Thus, we adopted the parameter $\kappa_1$ (voltage-difference in gas-discharge model) as a bifurcation parameter, and observed a drift bifurcation from the SPO to TPO by introducing another parameter $\tau$, and these two sheets of solutions formed a gutter-like-structure in $(\kappa_1, \tau)$-parameter space  (refer to Fig.\ref{bif3.eps}). The edge of the isola structure determined the admissible interval on which the stable TPO is observed. Outside this interval, the destruction or replication of the pulses was typically observed as reported in \cite{NU_PhysicaD_1999,Burke_thesis,doi:10.1146/annurev-conmatphys-031214-014514}.

\begin{figure}
  \centering
  \includegraphics[width=.8\hsize]{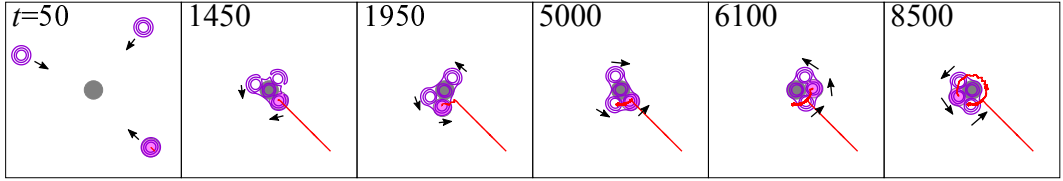}
  \caption{
    Interaction of three traveling spots with disk heterogeneity. They collided with the disk and adhered to the surface. Initially, they were separated, but subsequently, they adhered together on the outer surface of the disk.
  }
  \label{spot.eps}
\end{figure}

For the heterogeneous problem, a nonlocal interaction appeared between the tails of TPO and the heterogeneity of bump type. As the translation invariance was lost, new nonconstant background states emerged, and their continuation with respect to the height $\varepsilon$ of the bump formed a variety of HIOP solution branches, including countably many stationary pulse solutions attached to the bump at appropriate distance. Most of these structure were unstable, but a certain number recovered their stabilities with increasing bump height and resulted in a stationary pinning at that location. In contrast to the monotone tail case, the TPO exhibited a variety of dynamic response, including pinning and depinning processes in addition to PEN and REB. Recall that TPO can be regarded as a solution located at $-\infty$ and traveling toward the heterogeneity. Moreover, stable/unstable manifolds of HIOP interacted with TPO in a complex manner, which caused a subtle dependence on the initial condition and created challenges in predicting the behavior after collision, even in one-dimensional space. Additionally, an initial condition could be set in vicinity to one of the (unstable) stationary pulse, and generally, it was extremely subtle to judge the resulting orbit, as illustrated in Fig.\ref{spiral.eps}. Nevertheless, for the 1D case, a systematic global exploration of the solution branches was induced by heterogeneities, and the reduction method to finite-dimensional ODEs (\ref{eq3.50}) allowed us to clarify such a subtle dependence on the initial conditions along with the detailed mechanism of the transitions from PEN to pinning and pinning to REB based on a dynamical system perspective. In terms of the reduced ODE system, the basin boundary between the two distinct outputs is characterized by a special heteroclinic orbit connecting two critical points, which underwent infinitely many successive reconnection of the heteroclinic orbits among the critical points, as the strength of heterogeneity varied. The subtle dependence of the initial condition was rooted in this fact, and the basin boundary between PEN and REB can be expressed using the heteroclinic orbit (gray-dotted line) surrounding the critical point of the spiral type (in time-reversal direction), as depicted in Fig.\ref{PEN-REB_scenario.eps}. Although the two initial profiles around the unstable stationary pulse associated with the critical point in Fig.\ref{spiral.eps} were almost indistinguishable, they separated into two distinct orbits after a long period.

Remarkably, all the asymptotic states of the TPO after collisions were contained in the HIOP to enable the prediction of the outcome of TPO after colliding with the bump in case we carefully analyzed the HIOP for a given parameter, unless the initial condition was proximate to the basin boundary of the outputs. Note that HIOP structure can be extended outside the admissible interval, which indicated several useful facts for understanding the complex dynamics produced by the interaction between the TPO and the heterogeneity. A simple example entails the emergence of the stable two-peak HIOP, 
such as that presented in Fig.\ref{m.eps}-(j), which already existed within the admissible interval and could be extended further to larger $\varepsilon$ (refer to Fig.\ref{S2.eps}). Such a multi-peak pulse is beyond the scope of ODE approach and reflects the strong interaction between the TPO and heterogeneity. Ultimately, we listed the several open questions for the localized moving patterns with oscillatory tails.\\

\begin{itemize}
\item{Interaction between TPOs in homogeneous space: We primarily focused on the interaction between TPO and heterogeneity; however, the interaction between TPOs in homogeneous space is an interesting concern. In context, various types of localized multipeak stationary and traveling pulses may appear through the collision process of TPO with single peak. A subtle aspect regarding this is that several types of multipeak solutions existed even if the number of peaks was fixed owing to the nature of the oscillatory tail. For instance, suppose we assume a head-on collision of the two traveling pulses and formed a bound-state with two peaks, then the distance between the two peaks cannot be uniquely determined and may depend on the velocity of the pulse. Note that we have already referred several distinct types of two-peak solutions in Fig.\ref{S2.eps}--Fig.\ref{S4.eps}. Possibly, there may appear countably-many two-peak solutions with vavrying peak widths, but the stable ones are limited subject to an unknown selection rule.}

\item{Multipeak pulses in heterogeneous space: As we referred to the snaky-structure and figure-eight-like stack of isolas of traveling pulses, we referred to a family of multipeak SPOs and TPOs as well. However, the exploration of a systematic analysis for the dynamics of multipeak traveling pulses remains a topic of inquisitivity, including their outputs after collision with bump. In context, certain aspects of the global behavior of stationary multipeak solutions were already uncovered in Section 4. We believe that the global structure of the associated HIOP for multipeak pulses is vital for understanding the behavior of multipeak traveling pulses encountering heterogeneities.
}

\item{Higher-dimensional case: We employ the three-component system (\ref{eq2.01}), because it supported the coexistence of stable traveling spots or lumps in 2D or 3D space, which was not favorable for the class of the two-component system. Moreover, the study of interaction between these objects and their dynamics in heterogeneous media is still a research hotspot. Numerically, we observed a variety of interesting patterns such as rotating-bound state and crystal-like structure \cite{Liehr2013} including traveling patterns of various molecular shapes. Additionally, the interaction between the traveling spots and heterogeneity displayed enriched and complex dynamics that became more complicated than the 1D case, e.g., an interaction between the three traveling spots and disk-shaped heterogeneity is presented in Fig.\ref{spot.eps}. They collided with the disk, and upon adjusting the distance between them, they eventually settled to a rotating motion surrounding the disk. Thus, the reduction to ODEs near the drift bifurcation point similar to the 1D case can aid in understanding the dynamics. }

\item{ODE reduction for higher-dimensional space: An ODE system similar to (\ref{eq3.50}) can be derived for traveling spots or lumps in higher dimensional heterogeneous space, as already reported in \cite{Liehr2013}. However, the challenge lies in the complexity of behaviors of stable/unstable manifolds for the infinitely many equilibria distributed across the entire phase space as the parameters contained in the heterogeneity varied. A simple geometry representing a rod or disk is an existing challenge for completing the task for heterogeneity.}

\item{Wave-particle duality: As discussed in Section 1, the duality of the wave and particle properties were reflected in the macroscopic objects in case they possessed oscillatory tails. More importantly, we were strongly motivated by the seminal papers of Couder et al. \cite{PhysRevLett.97.154101} studying bouncing droplets on a liquid surface, which elaborately demonstrated this duality through both experiments and theoretical considerations. Therefore, we believe that the system developed in this study inherits such a property to a certain extent, and defining the wave--particle duality still remains as a challenge in the framework of reaction diffusion systems.}

\end{itemize}

\section*{Acknowlwdgements}
Yasumasa Nishiura gratefully acknowledges the support by JSPS KAKENHI Grant
number JP20K20341.

\appendix
\section{Appendix}
\label{appendixA}
\subsection{Derivation of reduced ODE}
We derive formally the ODE system (25) from the original system (6).
Recalling the multiple time-scale expansion Eqs.(\ref{eq3.19}) and substituting the propagator mode $\mathcal{P}_{x}$ defined by Eq.(\ref{eq3.10}) into Eq.(\ref{eq3.19}), we have
\begin{equation}
  \label{eq3.21}
  \left\{\begin{array}{rcl}
  \tilde u(x,t)&=&\overline{u}(x-p(T_1,T_2,T_3))\\
  &&+\delta^2r_u(x-p(T_1,T_2,T_3))\\
  &&+\delta^3R_u(x)\\
  \tilde v(x,t)&=&\overline{u}(x-p(T_1,T_2,T_3))\\
  &&+\delta\alpha(T_1,T_2)\frac{\partial\overline{u}}{\partial x}
  (x-p(T_1,T_2,T_3))\\
  &&+\delta^2r_v(x-p(T_1,T_2,T_3))\\
  &&+\delta^3R_v(x)
  \end{array}\right.
\end{equation}
Note that the stationary solution satisfies $\overline{u}(x) = \overline{v}(x)$ and the corrections terms $\vec r$ and $\vec R$ belong to the orthogonal
subspace of the principal part spanned by $\{\mathcal G_x,\mathcal P_x\}$.
Thus $%
\left<\mathcal{G}_x^*,\vec r\right>=\left<\mathcal{P}_x^*,\vec r\right>=%
\left<\mathcal{G}_x^*,\vec R\right>=\left<\mathcal{P}_x^*,\vec R\right>=0$.

Introducing a critical parameter $\widetilde{\tau}$ by
\begin{equation}
  \label{eq3.22}
  \delta^{2}\widetilde{\tau}=\tau-\tau_{c}
\end{equation}
and Taylor expansion to the nonlinearity $N(u)$ up to the first order
is conducted as
\begin{equation}
  \label{eq3.23}
  N(\overline{u}+\delta^{2}r_{u}+\delta^{3}R_{u})=N(\overline{u})
  +N'(\overline{u})(\delta^{2}r_{u}+\delta^{3}R_{u})
\end{equation}
and rescale $\widehat{\kappa}_{1}(x)$ as
\begin{equation}
  \label{eq3.24}
  \widehat{\kappa}_{1}(x)=\delta^{2}\widetilde{\kappa}_{1}(x)
\end{equation}
Substituting $T_{i}=\delta^{i}t$ and Eqs.(\ref{eq3.21})--(\ref{eq3.24}) into
Eq.(\ref{eq3.06}),
we have
\begin{equation}
  \label{eq3.25}
  \begin{array}{l}
    -\frac{\partial}{\partial x}\overline{u}\left\{
    \delta\frac{\partial}{\partial T_{1}} p
    +\delta^{2}\frac{\partial}{\partial T_{2}} p
    +\delta^{3}\frac{\partial}{\partial T_{3}} p\right\}
    -\delta^{3}\frac{\partial r_u}{\partial x}
    \frac{\partial p}{\partial T_1}\\
    =\mathcal{L}\overline{u}+N(\overline{u})-\kappa_{3}\overline{u}
    -\delta\kappa_{3}\alpha\frac{\partial}{\partial x}\overline{u}\\
    +\delta^{2}\mathcal{L}r_{u}+\delta^{2}N'(\overline{u})r_{u}
    -\delta^{2}\kappa_{3}r_{v}+\delta^{2}\widetilde{\kappa}_{1}\\
    +\delta^{3}\mathcal{L}R_{u}+\delta^{3}N'(\overline{u})R_{u}
    -\delta^{3}\kappa_{3}R_{v}
  \end{array}
\end{equation}
\begin{equation}
  \label{eq3.26}
  \begin{array}{l}
    (\delta^{2}\widetilde{\tau}/\tau_c+1)[
      -\frac{\partial}{\partial x}\overline{u}\{
      \delta\frac{\partial}{\partial T_{1}}p
      +\delta^{2}\frac{\partial}{\partial T_{2}}p
      +\delta^{3}\frac{\partial}{\partial T_{3}}p\\
      -\delta^{2}\frac{\partial}{\partial T_{1}}\alpha
      -\delta^{3}\frac{\partial}{\partial T_{2}}\alpha
      \}\\
      -\frac{\partial^{2}}{\partial x^{2}}\overline{u}\{
      \delta^{2}\alpha\frac{\partial}{\partial T_{1}}p
      +\delta^{3}\alpha\frac{\partial}{\partial T_{2}}p
      +\delta^{4}\alpha\frac{\partial}{\partial T_{3}}p\}\\
      +\delta^{3}\frac{\partial r_v}{\partial x}
      \frac{\partial p}{\partial T_1}]\\
    =\delta^{2}\frac{r_{u}}{\tau_c}+\delta^{3}\frac{R_{u}}{\tau_c}
    -\delta\frac{\alpha}{\tau_c}\frac{\partial}{\partial x}\overline{u}\\
    -\delta^{2}\frac{r_{v}}{\tau_c}-\delta^{3}\frac{R_{v}}{\tau_c}
  \end{array}
\end{equation}
Because these equations have to be fulfilled for all time-scales
simultaneously, we can sort them by orders of $\delta$:

\noindent$O(1)$
\begin{equation}
  \label{eq3.27}
  \vec 0=\left(\begin{array}{c}
    \mathcal{L}\overline{u}+N(\overline{u})-\kappa_{3}\overline{u}\\
    0
  \end{array}\right)
\end{equation}
\noindent$O(\delta)$
\begin{equation}
  \label{eq3.28}
  \left(\begin{array}{c}
    -\frac{\partial}{\partial T_{1}}p\\
    -\frac{\partial}{\partial T_{1}}p
  \end{array}\right)\frac{\partial}{\partial x} \overline{u}
  =\left(\begin{array}{c}
    -\kappa_{3}\alpha\\
    -\frac{\alpha}{\tau_c}
  \end{array} \right)\frac{\partial}{\partial x}\overline{u}
\end{equation}
\noindent$O(\delta^2)$
\begin{equation}
  \label{eq3.29}
  \begin{array}{l}
    -\left(\begin{array}{c}
      \frac{\partial}{\partial T_{2}} p\\
      \frac{\partial}{\partial T_{2}} p-\frac{\partial}{\partial T_{1}}\alpha
    \end{array}\right)\frac{\partial}{\partial x}\overline{u}
    -\left(\begin{array}{c}
      0\\
      \alpha\frac{\partial}{\partial T_{1}}p
    \end{array}\right)\frac{\partial^{2}}{\partial x^{2}}\overline{u}\\
    =\mathcal{D}\left(\begin{array}{c}
      r_{u}\\
      r_{v}
    \end{array}\right)+\left(\begin{array}{c}
      \widetilde{\kappa}_{1}\\
      0
    \end{array}\right)
  \end{array}
\end{equation}
\noindent$O(\delta^3)$
\begin{equation}
  \label{eq3.30}
  \begin{array}{l}
    \left(\begin{array}{c}
      -\frac{\partial}{\partial T_{3}}p\\
      -\frac{\widetilde{\tau}}{\tau_{c}}\frac{\partial}{\partial T_{1}}p
      -\frac{\partial}{\partial T_{3}}p
      +\frac{\partial}{\partial T_{2}}\alpha
    \end{array}\right)\frac{\partial}{\partial x}\overline{u}\\
    -\left(\begin{array}{c}
      0\\
      \alpha\frac{\partial}{\partial T_{2}}p
    \end{array}\right)\frac{\partial^{2}}{\partial x^{2}}\overline{u}
    +\left(\begin{array}{c}
      \frac{\partial r_u}{\partial x}\frac{\partial p}{\partial T_1}\\
      \frac{\partial r_v}{\partial x}\frac{\partial p}{\partial T_1}
    \end{array}\right)
    =\mathcal{D}\left(\begin{array}{c}
      R_{u}\\
      R_{v}
    \end{array}\right)
  \end{array}
\end{equation}
with $\mathcal{D}$ denoting the linear operator
(\ref{eq3.07}) at the drift bifurcation point.

Equation of $O(1)$ is automatically fulfilled.
Projection of $\mathcal{O}(\delta)$ onto $\kappa_3\mathcal{P}_{x}^{*}$ yields
\[
-\frac{\partial p}{\partial T_1}
\left<\left(\frac{\partial\overline{u}}{\partial x}\right)^2\right>
=-\kappa_3\alpha
\left<\left(\frac{\partial\overline{u}}{\partial x}\right)^2\right>
\]
and thus
\begin{equation}
  \label{eq3.31}
  \frac{\partial p}{\partial T_1}
  =\kappa_3\alpha
\end{equation}
Projection of $\mathcal{O}(\delta)$ onto $\mathcal{G}_{x}^{*}$ yields
\[
0=\left(-\kappa_3\alpha+\frac{\alpha}{\tau_c}\right)
\left<\left(\frac{\partial\overline{u}}{\partial x}\right)^2\right>
\]
and hence
\begin{equation}
  \label{eq3.33}
  \tau_{c}=\frac{1}{\kappa_{3}}
\end{equation}
Projection of $\mathcal{O}(\delta^{2})$ onto $\kappa_3\mathcal{P}_{x}^{*}$
yields
\[
\displaystyle
-\frac{\partial p}{\partial T_2}
\left<\left(\frac{\partial\overline{u}}{\partial x}\right)^2\right>
=\left<\frac{\partial\overline{u}}{\partial x},\tilde\kappa_1\right>
+\kappa_3\left<\mathcal P_x^*,\mathcal D\vec r\right>
\]
and thus
\begin{equation}
  \label{eq3.34}
  \frac{\partial p}{\partial T_2}
  =-\frac{
    \left<\frac{\partial}{
      \partial x}\overline{u},\widetilde{\kappa}_{1}\right>}{
    \left<
    (\frac{\partial}{\partial x} \overline{u})^{2}\right>}
\end{equation}
because
$%
\left<\mathcal P_x^*,\mathcal D\vec r\right>=%
\left<\mathcal D^*\mathcal P_x^*,\vec r\right>=%
\left<\mathcal G_x^*,\vec r\right>=0$.
Projection of $\mathcal{O}(\delta^{2})$ onto $\mathcal{G}_{x}^{*}$
yields
\[
-\frac{\partial\alpha}{\partial T_1}
\left<\left(\frac{\partial\overline{u}}{\partial x}\right)^2\right>
+\left<
\frac{\partial  \overline{u}}{\partial x  },
\frac{\partial^2\overline{u}}{\partial x^2}
\right>
=\left<\mathcal G_x^*,\mathcal D\vec r\right>
+\left<\frac{\partial\overline{u}}{\partial x},\tilde\kappa_1\right>
\]
and thus
\begin{equation}
  \label{eq3.35}
  \frac{\partial}{\partial T_{1}}\alpha
  =-\frac{
    \left<
    \frac{\partial}{\partial x}\overline{u},\widetilde{\kappa}_{1}\right>}{
    \left<
    (\frac{\partial}{\partial x} \overline{u})^{2}\right>}
\end{equation}
because
\begin{equation}
  \label{eq3.51}
  \left<
  \frac{\partial  \overline{u}}{\partial x  },
  \frac{\partial^2\overline{u}}{\partial x^2}
  \right>
  =0
\end{equation}
and $%
\left<\mathcal G_x^*,\mathcal D\vec r\right>=%
\left<\mathcal D^*\mathcal G_x^*,\vec r\right>=%
\left<\vec 0,\vec r\right>=0%
$.
Projecting $\mathcal{O}(\delta^3)$ onto $\kappa_3\mathcal{P}_{x}^{*}$
and substituting Eq.(\ref{eq3.31}), we have
\[
-\frac{\partial p}{\partial T_3}
\left<\left(\frac{\partial\overline{u}}{\partial x}\right)^2\right>
+\kappa_3\alpha\left<
\frac{\partial\overline{u}}{\partial x},
\frac{\partial r_u        }{\partial x}
\right>
=\kappa_3\left<\mathcal P_x^*,\mathcal D\vec R\right>
\]
and thus
\begin{equation}
  \label{eq3.32}
  \frac{\partial p}{\partial T_3}
  \left<\left(\frac{\partial\overline{u}}{\partial x}\right)^2\right>
  =C_3\alpha
\end{equation}
where
$%
C_3=\kappa_3\left<%
\frac{\partial\overline{u}}{\partial x},%
\frac{\partial r_u        }{\partial x}%
\right>%
$.
Note that
$%
\left<\mathcal P_x^*,\mathcal D\vec R\right>=%
\left<\mathcal D^*\mathcal P_x^*,\vec R\right>=%
\left<\mathcal G_x^*,\vec R\right>=0$.
Projecting $\mathcal{O}(\delta^{3})$ onto $\mathcal{G}_{x}^{*}$ and
substituting Eqs.(\ref{eq3.31}), (\ref{eq3.33}), and (\ref{eq3.51}), we have
\begin{equation}
  \label{eq3.36}
  \begin{array}{l}
    \displaystyle
    \left(-\tilde\tau\kappa_3^2\alpha+\frac{\partial\alpha}{\partial T_2}\right)
    \left<\left(\frac{\partial\overline{u}}{\partial x}\right)^2\right>
    +\kappa_3\alpha\left<
    \mathcal G_x^*,\frac{\partial\vec r}{\partial x}
    \right>\\
    \displaystyle
    =\left<\mathcal G_x^*,\mathcal D\vec R\right>
  \end{array}
\end{equation}
Note that the right-hand side is zero because $%
\left<\mathcal G_x^*,\mathcal D\vec R\right>=%
\left<\mathcal D^*\mathcal G_x^*,\vec R\right>=%
\left<\vec 0,\vec R\right>=0%
$.
To estimate the second term of the left-hand side,
$T_1$-derivative of $O(\delta^2)$ equations are considered as
\begin{equation}
  \label{eq3.37}
  \begin{array}{l}
    -\left(\begin{array}{c}
      \kappa_3\frac{\partial\alpha}{\partial T_{2}}\\
      0
    \end{array}\right)\frac{\partial}{\partial x}\overline{u}
    +\left(\begin{array}{c}
      0\\
      \kappa_3^2\alpha^3
    \end{array}\right)\frac{\partial^{3}}{\partial x^{3}}\overline{u}\\
    =\kappa_3\alpha\left(\begin{array}{c}
      N''\frac{\partial\overline{u}}{\partial x}r_u\\0
    \end{array}\right)
    -\kappa_3\alpha\mathcal{D}\frac{\partial\vec r}{\partial x}
  \end{array}
\end{equation}
Note that terms which contain $\frac{\partial^2\overline{u}}{\partial x^2}$
is ignored because of Eq.(\ref{eq3.51}).
Projection of Eq.(\ref{eq3.37}) onto $\kappa_3\mathcal P_x^*$ yields
\[
\begin{array}{l}
  \displaystyle
  -\kappa_3\frac{\partial\alpha}{\partial T_2}
  \left<\left(\frac{\partial\overline{u}}{\partial x}\right)^2\right>\\
  \displaystyle
  =\kappa_3\alpha\left<
  \frac{\partial\overline{u}}{\partial x},
  N''\frac{\partial\overline{u}}{\partial x}r_u
  \right>
  -\kappa_3^2\alpha\left<
  \mathcal G_x^*,
  \frac{\partial\vec r}{\partial x}
  \right>
\end{array}
\]
and projection of Eq.(\ref{eq3.37}) onto $\mathcal G_x^*$ yields
\[
\begin{array}{l}
  \displaystyle
  -\kappa_3\frac{\partial\alpha}{\partial T_2}
  \left<\left(\frac{\partial\overline{u}}{\partial x}\right)^2\right>
  -\kappa_3^2\alpha^3\left<
  \frac{\partial  \overline{u}}{\partial x  },
  \frac{\partial^3\overline{u}}{\partial x^3}
  \right>\\
  \displaystyle
  =\kappa_3\alpha\left<
  \frac{\partial\overline{u}}{\partial x},
  N''\frac{\partial\overline{u}}{\partial x}r_u
  \right>
\end{array}
\]
and therefore
\begin{equation}
  \label{eq3.38}
  \kappa_3\alpha\left<
  \mathcal G_x^*,
  \frac{\partial\vec r}{\partial x}
  \right>
  =-\kappa_3\alpha^3\left<
  \frac{\partial  \overline{u}}{\partial x  },
  \frac{\partial^3\overline{u}}{\partial x^3}
  \right>
\end{equation}
Substituting Eq.(\ref{eq3.38}) into Eq.(\ref{eq3.36}), we obtain
\begin{equation}
  \label{eq3.41}
  \frac{\partial}{\partial T_{2}}\alpha=\widetilde{\tau}\kappa_{3}^{2}\alpha
  +\frac{
    \left<\frac{\partial}{\partial x}\overline{u},
    \frac{\partial^{3}}{\partial x^{3}}\overline{u}\right>}{
    \left<(\frac{\partial}{\partial x}\overline{u})^{2}\right>
  }\kappa_{3}\alpha^{3}
\end{equation}

Now we are ready to compute the time derivatives of $p$ and $\alpha$ by using the relations
(\ref{eq3.31}), (\ref{eq3.34}), (\ref{eq3.35}), (\ref{eq3.32}), and
(\ref{eq3.41}).
\begin{equation}
  \label{eq3.42}
  \left\{\begin{array}{lcl}
  \frac{\partial p}{\partial T_{1}}=\kappa_3\alpha\\
  \frac{\partial p}{\partial T_{2}}=\frac{\partial a}{\partial T_{1}}=
  -\frac{\left<
    \frac{\partial}{\partial x}\overline{u},\widetilde{\kappa}_{1}\right>}{
    \left<(\frac{\partial}{\partial x} \overline{u})^{2}\right>}\\
  \frac{\partial\alpha}{\partial T_{2}}=
  \widetilde{\tau}\kappa_{3}^{2}\alpha
  +\frac{\left<\frac{\partial}{\partial x}\overline{u}
    \frac{\partial^{3}}{\partial x^{3}}\overline{u}\right>}{
    \left<\left(\frac{\partial\overline{u}}{\partial x}\right)^{2}\right>}
  \kappa_{3}\alpha^{3}\\
  \frac{\partial p}{\partial T_3}=C_3\alpha
  \end{array}\right.
\end{equation}
Note that, using integration by parts, we can prove that
\begin{equation}
  \label{eq3.43}
  \begin{array}{lcl}
    \left<\frac{\partial}{\partial x}\overline{u},
    \frac{\partial^{3}}{\partial x^{3}}\overline{u}\right>
    =-\left<\left(
    \frac{\partial^{2}}{\partial x^{2}} \overline{u}\right)^{2}\right>
  \end{array}
\end{equation}


\noindent
Using the relationship for the time derivatives,
\begin{equation}
  \label{eq3.45}
  \left\{ \begin{array}{lcl}
    \frac{dp}{dt}=\dot{p}
    =\delta\frac{\partial}{\partial T_{1}} p
    +\delta^{2}\frac{\partial}{\partial T_{2}} p
    +\delta^{3}\frac{\partial}{\partial T_{3}} p\\
    \frac{d\alpha}{dt}=\dot{\alpha}
    =\delta\frac{\partial}{\partial T_{1}} \alpha
    +\delta^{2}\frac{\partial}{\partial T_{2}} \alpha
  \end{array}\right.
\end{equation}
we have
\begin{equation}
  \label{eq3.46}
  \left\{ \begin{array}{lcl}
    \dot{p}=\delta\kappa_{3}\alpha+\delta^3C_3\alpha
    -\delta^{2}\frac{\left<\frac{\partial}{\partial x} \overline{u},
      \widetilde{\kappa}_{1}\right>}{
      \left<(\frac{\partial}{\partial x} \overline{u})^{2}\right>}\\
    \dot{\alpha}=\delta^{2}\widetilde{\tau}\kappa_{3}^{2}\alpha
    -\delta^{2}\kappa_{3}\alpha^{3}\frac{\left<(\frac{\partial^{2}}{
        \partial x^{2}} \overline{u})^{2}\right>}{\left<(\frac{\partial}{
        \partial x} \overline{u})^{2}\right>}-\delta\frac{\left
      <\frac{\partial}{
        \partial x} \overline{u}, \widetilde{\kappa}_{1}\right>}{
      \left<(\frac{\partial}{\partial x} \overline{u})^{2}\right>}
  \end{array}\right.
\end{equation}

The final step is replace $\alpha\delta$ by $\alpha$ in Eq.
(\ref{eq3.46}), and recovering original variables, 
$\widetilde{\tau}=\frac{\tau-1/\kappa_{3}}{\delta^{2}}$,
$\widehat{\kappa}_{1}(x)=\delta^{2}\widetilde{\kappa}_{1}(x)$,
and ignoring the term $\delta^2C_3$ because it is small,
then we can get the reduced 2-dim ODEs.

\subsection{Drift bifurcation point $\tau_c$}
We prove the existence of drift bifurcation point for the original three-component system (1), namely for the case $D_{v}\neq0$ and $\theta\neq0$, which leads to the conclusion after taking the limit of $D_{v}\rightarrow0$ and $\theta\rightarrow0$. Linearize the three component reaction diffusion system around
stationary pulse solution
$\overline{U}=(\overline{u}(x), \overline{v}(x), \overline{w}(x))$,
then we have the following linearized eigenvalue problem for $\hat U=(\hat u,\hat v, \hat w)$.

\begin{equation}
  \label{linearized3comp}
  \lambda\hat U=\mathcal D(\overline U;\tau, \theta)\hat U
\end{equation}
where $D(\overline U;\tau, \theta)$ can be decomposed into
\begin{equation}
  \mathcal{D}(\overline{U}; \tau, \theta)=\mathcal{M}(\tau, \theta)\mathcal{L}(\overline{U})
\end{equation}
where 

\begin{equation}
  \label{eq1.01appen}
  \mathcal{M}(\tau, \theta)=\left( \begin{array}{ccc} 1 & 0 & 0 \\
    0 & -\frac{1}{\kappa_{3}\tau} & 0 \\
    0 & 0 & -\frac{1}{\kappa_{4}\theta} \end{array} \right)
\end{equation}
and

\begin{equation}
  \label{3comp.linear.operator}
  \mathcal{L}(\overline{U})=\left(\begin{array}{ccc} D_u\Delta+\kappa_2-3\overline u^2 & -\kappa_3 & -\kappa_4 \\
    -\kappa_3 & -\kappa_{3}D_v\Delta+\kappa_3 & 0 \\
    -\kappa_4 & 0 & -\kappa_{4}D_w\Delta+\kappa_4 \end{array} \right)
\end{equation}
Translation invariance implies that there always exists a zero eigenvalue and the associated Goldstone mode $\mathcal{G}_{x}$ is given by
\begin{equation}
  \label{eq1.02appen}
  \mathcal{G}_{x}=\left( \begin{array}{c} \frac{\partial}{\partial x}
    \overline{u}(x) \\ \frac{\partial}{\partial x} \overline{v}(x) \\
    \frac{\partial}{\partial x} \overline{w}(x) \end{array} \right)
\end{equation}
Then the adjoined operator $\mathcal{D}^{*}(\overline{U}; \tau)$ admits the following Goldstone mode $\mathcal{G}^{*}_{x}$ in explicit form.
\begin{equation}
  \label{eq1.03appen}
  \mathcal{G}^{*}_{x}=\mathcal{M}^{-1}\mathcal{G}_{x}=\left( \begin{array}{c}
    \frac{\partial}{\partial x} \overline{u}(x) \\
    -(\kappa_{3}\tau)\frac{\partial}{\partial x} \overline{v}(x) \\
    -(\kappa_{4}\theta)\frac{\partial}{\partial x} \overline{w}(x)
  \end{array} \right)
\end{equation}
At the drift-bifurcation point $\tau=\tau_{c}$, the operator $\mathcal{D}$
has degenerate double zero eigenvalue similar to the discussion of
Section \ref{S_Dynamics_of_reduced_ODEs_reduction}:
$\mathcal{D}\mathcal{G}_{x}=0$, and
$\mathcal{D}\mathcal{P}_{x}=\mathcal{G}_{x}$.
The solvability condition at $\tau=\tau_{c}$ is given by
\begin{equation}
  \label{eq1.04appen}
  <\mathcal{G}^{*}_{x}, \mathcal{G}_{x}>=0
\end{equation}
Using Eqs.(\ref{eq1.02appen}) and (\ref{eq1.03appen}),
Eq.(\ref{eq1.04appen}) is equivalent to
\begin{equation*}
  \left<\left( \begin{array}{c} \frac{\partial}{\partial x}
    \overline{u}(x) \\ -(\kappa_{3}\tau)\frac{\partial}{\partial x}
    \overline{v}(x) \\ -(\kappa_{4}\theta)\frac{\partial}{\partial x}
    \overline{w}(x) \end{array} \right), \left( \begin{array}{c}
    \frac{\partial}{\partial x} \overline{u}(x) \\
    \frac{\partial}{\partial x} \overline{v}(x) \\
    \frac{\partial}{\partial x} \overline{w}(x) \end{array} \right)\right>=0.
  \quad\quad\quad\quad\quad\quad\quad\quad\quad
\end{equation*}
This yields
\begin{equation}
  \label{eq1.05appen}
  \tau_{c}=\frac{\left<
    (\frac{\partial}{\partial x}\overline{u}(x))^{2}\right>
    -\kappa_{4}\theta\left<
    (\frac{\partial}{\partial x}\overline{w}(x))^{2}\right>}{
    \kappa_{3}\left<
    (\frac{\partial}{\partial x}\overline{v}(x))^{2}\right>}.
\end{equation}
Now, we consider $D_{v}\rightarrow0$ and $\theta\rightarrow0$.
Noting that $\overline{v}(x)\equiv\overline{u}(x)$, Eq.(\ref{eq1.05appen})
becomes
\begin{equation}
  \label{eq1.06appen}
  \tau_{c}=\frac{1}{\kappa_{3}}
\end{equation}

\subsection{Proof of Proposition 3}
The proof is a direct consequence of the following three lemmas.
\begin{lemma}
  \label{prop6.01}
  Let $\tau_{c}<\tau<2\tau_{c}$, then the solution of Eq.(\ref{eq6.03})
  must fit $\alpha=0$, $f(p,d)=0$
\end{lemma}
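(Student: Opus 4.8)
The plan is to eliminate the heterogeneity term between the two equations of (\ref{eq6.03}) and reduce the critical-point condition to a single factorable equation in $\alpha$. First I would read off from the first equation of (\ref{eq6.03}) that at any critical point $\frac{\varepsilon}{C_{1}}f(p,d)=\kappa_{3}\alpha$. Substituting this identity into the second equation cancels the inhomogeneous term entirely and leaves
\[
\kappa_{3}^{2}\bigl(\tau-1/\kappa_{3}\bigr)\alpha-\kappa_{3}\frac{C_{2}}{C_{1}}\alpha^{3}-\kappa_{3}\alpha=0,
\]
which factors as
\[
\kappa_{3}\alpha\left[\kappa_{3}\bigl(\tau-1/\kappa_{3}\bigr)-1-\frac{C_{2}}{C_{1}}\alpha^{2}\right]=0.
\]
Since $\kappa_{3}>0$, the critical set therefore splits into the axis $\alpha=0$ and the ``traveling'' branch on which the bracket vanishes, namely $\frac{C_{2}}{C_{1}}\alpha^{2}=\kappa_{3}\tau-2$, using $\kappa_{3}(\tau-1/\kappa_{3})-1=\kappa_{3}\tau-2$.

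Next I would invoke the hypothesis $\tau_{c}<\tau<2\tau_{c}$. Because $\tau_{c}=1/\kappa_{3}$, this is exactly $1<\kappa_{3}\tau<2$, so $\kappa_{3}\tau-2<0$. As $C_{1},C_{2}>0$ (established below Eq.(\ref{eq3.50})), the right-hand side $\frac{C_{1}}{C_{2}}(\kappa_{3}\tau-2)$ is strictly negative, and hence $\alpha^{2}$ cannot equal it for any real $\alpha$; the bracket-vanishing branch is empty. Consequently every critical point must satisfy $\alpha=0$. Feeding $\alpha=0$ back into the first equation of (\ref{eq6.03}) and recalling that $\varepsilon\neq0$ for the genuinely heterogeneous problem then forces $f(p,d)=0$, which is the claimed characterization: the critical points are precisely the zeros of $f(p,d)$ lying on the axis $\alpha=0$.

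The computation is elementary, so there is no deep obstacle; the only step requiring care is the constant bookkeeping $\kappa_{3}(\tau-1/\kappa_{3})-1=\kappa_{3}\tau-2$ together with the sign argument. The entire role of the upper bound $\tau<2\tau_{c}$ is precisely to render this quantity negative and thereby kill the nonzero-$\alpha$ (traveling) critical branch, while the lower bound $\tau>\tau_{c}$ guarantees existence of the post-drift critical point relevant to the TPO; the positivity of $C_{1}$ and $C_{2}$ is the complementary ingredient. I would flag that if the bound were relaxed to $\tau>2\tau_{c}$ the factorization would admit a genuine pair $\alpha=\pm\sqrt{\frac{C_{1}}{C_{2}}(\kappa_{3}\tau-2)}$ of off-axis critical points, which is exactly why the restriction appears in the hypothesis.
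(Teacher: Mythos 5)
Your proof is correct and takes essentially the same approach as the paper's: both use the first equation of (\ref{eq6.03}) to eliminate the heterogeneity term from the second, and then rule out the nonzero-$\alpha$ branch by the sign argument $\kappa_{3}\tau-2<0$ under $\tau<2\tau_{c}$, with $C_{1},C_{2}>0$. The only cosmetic difference is that the paper writes the impossible condition as an equation for $f(p,d)^{2}$ (Eq.(\ref{eq6.05})) rather than your equivalent $\alpha^{2}=\tfrac{C_{1}}{C_{2}}(\kappa_{3}\tau-2)$.
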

\begin{proof}
  If $\alpha\neq0$, then
  \begin{equation}
    \label{eq6.04}
    \alpha=\frac{\varepsilon}{C_{1}\kappa_{3}}f(p,d)
  \end{equation}

  Substituting Eq.(\ref{eq6.04}) into the second equation of Eq.(\ref{eq6.03}),
  we have
  \begin{equation}
    \label{eq6.05}
    f(p,d)^{2}=(\kappa_{3}\tau-2)\frac{C_{1}^{3}
      \kappa_{3}^{2}}{C_{2}\varepsilon^{2}}
  \end{equation}

  By definition, we know that $C_{1}>0$, $C_{2}>0$, $\kappa_{3}^{2}>0$,
  $\varepsilon^{2}>0$.
  Furthermore, we assume that $\tau_{c}<\tau<2\tau_{c}$,
  then $\kappa_{3}\tau-2<0$.
  That means Eq.(\ref{eq6.05}) doesn't have solution.

  Then, the solution of Eq.(\ref{eq6.03}) must fit $\alpha=0$, $f(p,d)=0$

\end{proof}

\begin{lemma}
  \label{prop6.02}
  There exist countably many critical points for the reduced ODEs
  (\ref{eq3.50}).
\end{lemma}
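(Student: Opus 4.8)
The plan is to reduce the statement to a counting problem for the zeros of the heterogeneity function $f(p,d)$ and then exploit the oscillatory tail of the stationary pulse. By Lemma~\ref{prop6.01}, under the standing assumption $\tau_{c}<\tau<2\tau_{c}$ every critical point of Eq.(\ref{eq3.50}) lies on the axis $\alpha=0$, and its $p$-coordinate is a zero of $f(p,d)=\overline{u}(d/2-p)-\overline{u}(-d/2-p)$ from Eq.(\ref{eq3.49}). Hence it suffices to prove that, for each fixed width $d$, the map $p\mapsto f(p,d)$ possesses countably infinitely many zeros. I would split this into two claims: (i) there are infinitely many zeros, and (ii) the zero set is at most countable.

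For claim (i) I would use the defining feature of the TPO, namely that the background state is a spatial focus of the steady problem. Linearizing Eq.(\ref{eq_bif_2}) at the background state and seeking spatial modes $\overline{u}\sim e^{\rho x}$, the relevant spatial eigenvalues form a complex quadruple $\rho=\pm\mu\pm i\nu$ with $\mu>0$ and $\nu\neq0$; this is exactly the oscillatory-tail (spatial-focus) condition underlying the Turing/wave structure described in Section~3.1. Consequently the stationary pulse admits, as $x\to-\infty$, the expansion $\overline{u}(x)=Ae^{\mu x}\cos(\nu x+\phi)+o(e^{\mu x})$ with $A\neq0$. Substituting the two shifted arguments into $f$ (both tend to $-\infty$ as $p\to+\infty$) and factoring out $e^{-\mu p}$ yields
\begin{equation}
  f(p,d)=e^{-\mu p}\bigl[B\cos(\nu p+\psi)+o(1)\bigr],\qquad p\to+\infty,
\end{equation}
where $B,\psi$ are fixed by $A,\mu,\nu,\phi,d$, and $B\neq0$ for the relevant $d$ (e.g. $d=0.05$). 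The bracket changes sign on every interval of length $\pi/\nu$, so by the intermediate value theorem $f(\cdot,d)$ has a zero in each such interval for $p$ large, producing infinitely many zeros as $p\to+\infty$. Since $f$ is odd in $p$ (as noted after Eq.(\ref{eq3.49})), the same holds as $p\to-\infty$, which establishes infinitude; this is precisely the exponentially damped sinusoid shown in Fig.\ref{properties_ode.eps}.

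For claim (ii) I would note that the smoothing nonlocal operator $(1-D_{w}\Delta)^{-1}$ gives elliptic regularity, so $\overline{u}$, and hence $f(\cdot,d)$, is real-analytic; being not identically zero by (i), it has only isolated zeros, so the zero set is at most countable. Combining (i) and (ii) yields exactly countably many critical points, each labelled $(P_{i},0)$, $i\in\mathbb{Z}$, consistent with the enumeration used afterwards.

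The main obstacle is the rigorous justification of the oscillatory tail expansion together with the nonvanishing of the leading amplitude: one must confirm that the spatial eigenvalues are genuinely complex ($\nu\neq0$) and that the projection of $\overline{u}$ onto the oscillatory tail mode does not vanish ($A\neq0$, hence $B\neq0$), which requires a stable/unstable-manifold analysis of the spatial-dynamics formulation of Eq.(\ref{eq_bif_2}) complicated by the nonlocal term. For the purpose of Proposition~\ref{prop6.00}, however, the sign-change argument above already forces infinitely many zeros, and the real-analyticity of $f$ rules out accumulation, so the countability conclusion is secure.
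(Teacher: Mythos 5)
Your proof takes essentially the same route as the paper: Lemma~\ref{prop6.01} reduces the statement to counting zeros of $f(p,d)$, and the oscillatory-tail asymptotics $\overline{u}(x)\to e^{a|x-x_{0}|}\cos(b|x-x_{0}|)+\underline{u}$ turn $f$ into an exponentially damped sinusoid, which the paper writes explicitly as $f(p,d)=-e^{a|p|}\sqrt{A^{2}+B^{2}}\cos(b|p|-\phi)$ with $A^{2}+B^{2}>0$ automatic since $\sinh(ad/2)\neq 0$ for $a<0$, $d>0$ (so your worry about the combined amplitude $B$ vanishing is moot, though the nonvanishing of the tail projection of $\overline{u}$ itself is indeed assumed rather than proved in the paper too). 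Your intermediate-value sign-change count and the real-analyticity argument for isolation of zeros merely make explicit what the paper leaves implicit, so the proposal is correct and matches the paper's proof.
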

\begin{proof}
  In view of Prop. (\ref{prop6.01}), it suffices to study the zero points of $f(p,d)=0$. Recall that $f(p,d)=\overline{u}(d/2-p)-\overline{u}(-d/2-p)$ and $\overline{u}$ is a stationary pulse solution with oscillatory tails with the asymptotic behavior:
  \begin{equation}
    \label{eq6.06}
    \overline{u}(x)\rightarrow \ope^{a|x-x_{0}|}cos(b|x-x_{0}|)+\underline{u},
    as \ \ x \rightarrow \pm \infty
  \end{equation}
  where $\lambda=a+bi, a < 0$ is the eigenvalue determined by the background state,
  $x_{0}$ is the appropriate phase for the stationary pulse and $\underline{u}$ is the background state. Then $f(p,d)$ should behave asymptotically for large $p$

  \[
  f(p,d) \approx
  \ope^{a|p-d/2|}\cos(b|p-d/2|)
  -\ope^{a|p+d/2|}\cos(b|p+d/2|),     \]
  and is transformed into
  \[
  f(p,d)=
  -\ope^{a|p|}\sqrt{A^2+B^2}\cos(b|p|-\phi),
  \]
  where
  \[
  A=2\cosh\frac{ad}{2}\sin{bd}{2},
  B=2\sinh\frac{ad}{2}\cos{bd}{2},
  \]
  and
  \[
  \sin\phi=\frac{A}{\sqrt{A^2+B^2}},\;\cos\phi=\frac{B}{\sqrt{A^2+B^2}},
  \]
  which implies that there exist countably many critical points for the reduced
  ODEs (\ref{eq3.50}).
\end{proof}

\begin{lemma}
  \label{prop6.03}
  For a fixed $\tau$ ($\tau_{c}<\tau<2\tau_{c}$) and a width $d$,  the critical point $(P_{i}, 0)$, $i\in \mathbb{Z}$ satisfies the following properties:
  
  \begin{itemize}
  \item $(P_{i}, 0)$ is saddle iff $\varepsilon D(P_{i},d)<0$.
    The saddle quantity (i.e., sum of the real eigenvalues) is always positive there:   
    $\sigma=\lambda_{1}+\lambda_{2}=b>0$.
    See Eq.(\ref{eq6.12}) for the definition of $b$.
  \item $(P_{i}, 0)$ is non-saddle if $\varepsilon D(P_{i},d)>0$.
    As $|\varepsilon|>0$ is increased, $(P_{i}, 0)$ undergoes the following sequential
    change: unstable node $\rightarrow$ unstable spiral $\rightarrow$
    stable spiral $\rightarrow$ stable node.
  \item For a fixed $\varepsilon$, there are only finitely many critical points that are stable.   
  \end{itemize}
\end{lemma}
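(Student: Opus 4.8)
The plan is to read off all three assertions from the Jacobian of Eq.(\ref{eq3.50}) linearized at a critical point $(P_i,0)$, exploiting the special structure created by the fact that the heterogeneity term $-\frac{\varepsilon}{C_1}f(p,d)$ enters \emph{both} components identically. By Lemma \ref{prop6.01} every critical point satisfies $\alpha=0$ and $f(P_i,d)=0$, so differentiating the right-hand side of Eq.(\ref{eq3.50}) and using $\alpha=0$ to kill the $3\kappa_3\alpha^2 C_2/C_1$ term gives
\begin{equation*}
  J=\begin{pmatrix} a & \kappa_3 \\ a & e \end{pmatrix},\qquad
  a:=-\frac{\varepsilon}{C_1}D(P_i,d),\quad e:=\kappa_3^2(\tau-1/\kappa_3).
\end{equation*}
First I would record the trace $T=a+e$ and the determinant $\det J=a(e-\kappa_3)$, and then extract the two sign facts that drive everything: since $\tau_c<\tau<2\tau_c$ one has $e>0$ and $\kappa_3\tau-2<0$, whence $e-\kappa_3=\kappa_3(\kappa_3\tau-2)<0$ and, as needed later, $\kappa_3-e>0$ and $e-2\kappa_3=\kappa_3(\kappa_3\tau-3)<0$.

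For the saddle dichotomy, note that $\det J=a(e-\kappa_3)$ has sign opposite to $a$, so $\det J<0$ exactly when $a>0$, i.e. $\varepsilon D(P_i,d)<0$; this is the saddle case. There the two real eigenvalues have opposite signs, and their sum is $T=a+e>0$ because both summands are positive, which is precisely the positivity of the saddle quantity $\sigma=b$ claimed in the first bullet (identifying $b$ in Eq.(\ref{eq6.12}) with the trace). When $\varepsilon D(P_i,d)>0$ one has $a<0$, hence $\det J>0$, so $(P_i,0)$ is a node or focus, and the second bullet becomes a one-parameter study along the segment $a\in(-\infty,0)$, which is traversed monotonically as $|\varepsilon|$ increases (for fixed $P_i$, $a$ is linear in $\varepsilon$). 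Stability is governed by $T=a+e$, which changes sign at the Hopf value $a=-e$, while the node/focus distinction is governed by the discriminant
\begin{equation*}
  \Delta(a)=T^2-4\det J=(a-e)^2+4\kappa_3 a=a^2+2a(2\kappa_3-e)+e^2.
\end{equation*}
I would show that $\Delta$ is an upward parabola in $a$ with two \emph{negative} roots $a_-<a_+<0$ (this uses $\kappa_3-e>0$ and $e-2\kappa_3<0$), and that the Hopf threshold lies strictly between them because $\Delta(-e)=4e(e-\kappa_3)<0$. The ordering $a_-<-e<a_+<0$ then yields, as $a$ decreases from $0$, exactly the chain unstable node $\to$ unstable focus $\to$ stable focus $\to$ stable node.

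Finally, for the finiteness assertion I would observe that stability of a non-saddle requires $a<-e$, i.e. $\varepsilon D(P_i,d)>C_1 e>0$. By the asymptotics of $f$ established in Lemma \ref{prop6.02} (Eq.(\ref{eq6.06})), the slopes $D(P_i,d)=\partial_p f(P_i,d)$ at the zeros of $f$ have modulus comparable to $b\sqrt{A^2+B^2}\,\mathrm{e}^{a|P_i|}$ with $a<0$, so they decay exponentially as $|P_i|\to\infty$. Hence for fixed $\varepsilon$ the inequality $|D(P_i,d)|>C_1 e/|\varepsilon|$ can hold for at most finitely many $i$, proving the third bullet. The main obstacle I anticipate is the second bullet: pinning down the precise ordering $a_-<-e<a_+<0$ so that the four regimes occur \emph{in the stated order and with no spurious extra transitions} is exactly where the hypothesis $\tau<2\tau_c$ is genuinely needed. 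A secondary technical point is the finiteness step, where the approximate relation in Eq.(\ref{eq6.06}) must be upgraded to a genuine lower/upper bound on the slopes at the zeros, with the correction to the leading exponential controlled uniformly in $i$.
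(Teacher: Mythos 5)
Your proposal is correct and takes essentially the same route as the paper: linearize Eq.(\ref{eq3.50}) at $(P_i,0)$ using Lemma \ref{prop6.01}, and read the saddle dichotomy, the positive saddle quantity, and the node--spiral--spiral--node sequence off the trace, determinant, and discriminant of the Jacobian (your $a$, $e$, and $T=a+e$ are exactly the paper's $-\widehat{\varepsilon}/C_1$, $\kappa_3^2\widehat{\tau}$, and $b$, and your finiteness argument from the exponential decay of $D(P_i,d)$ is the paper's as well). The only presentational difference is that you establish the ordering $a_-<-e<a_+<0$ by explicit root computation in the single variable $a$ for fixed $\tau$, whereas the paper obtains the same sequence geometrically from the curves $b=0$ and $\Delta=0$ in the $(\widehat{\varepsilon},\widehat{\tau})$-plane.
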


\begin{proof}
  For simplicity, we only consider the case $\varepsilon >0$. 
  The Jacobian of Eq.(\ref{eq3.50}) at $(P_{i}, 0)$, $i\in \mathbb{Z}$ is 
  \begin{equation}
    \label{eq6.10}
    A=\left( \begin{array}{cc} -\frac{\varepsilon}{C_{1}}D(P_{i},d)
      & \kappa_{3} \\
      -\frac{\varepsilon}{C_{1}}D(P_{i},d) & \kappa_{3}^{2}\widehat{\tau}
    \end{array} \right).
  \end{equation}
  where $\widehat{\tau}=\tau-1/\kappa_{3}$ and
  \begin{equation}
    \label{eq6.09}
    \left.D(P_{i},d)=\frac{\partial f(p,d)}{\partial p}\right|_{P_{i}}.
  \end{equation}

  Then, the characteristic equation is,
  \begin{equation}
    \label{eq6.11}
    \lambda^{2}+(\frac{\varepsilon}{C_{1}}D(P_{i},d)
    -\kappa_{3}^{2}\widehat{\tau})\lambda+\frac{\varepsilon\kappa_{3}
      D(P_{i},d)}{C_{1}}(1-\kappa_{3}\widehat{\tau})=0.
  \end{equation}

  Let,
  \begin{equation}
    \label{eq6.12}
    \left\{ \begin{array}{lcl}
      b=\kappa_{3}^{2}\widehat{\tau}-\frac{1}{C_{1}}\widehat{\varepsilon}\\
      c=\frac{\kappa_{3}\widehat{\varepsilon}}{C_{1}}(1-\kappa_{3}\widehat{\tau})\\
    \end{array}\right.
  \end{equation}
  where $\widehat{\varepsilon} := \varepsilon D(P_{i},d)$.

  Then the determinant $\Delta=b^{2}-4c$ becomes
  \begin{equation}
    \label{eq6.13}
    \Delta=\frac{\widehat{\varepsilon}^{2}}{C_{1}^{2}}
    +\kappa_{3}^{4}\widehat{\tau}^{2}-\frac{2\kappa_{3}
      \widehat{\varepsilon}}{C_{1}}(2-\kappa_{3}\widehat{\tau})
  \end{equation}

  Note that $b$ and $\Delta$ depend implicitly on $i$ through $\widehat{\varepsilon}$. The eigenvalues of Eq.(\ref{eq6.11}) are completely characterized by the signs of $b$ and $\Delta$. Since $(1-\kappa_{3}\widehat{\tau}) > 0$, the sign of c is determined by that of $\widehat{\varepsilon}$ so that $(P_i, 0)$ is saddle iff $\widehat{\varepsilon} < 0$ for positive $\varepsilon$. For non-saddle case $\widehat{\varepsilon} > 0$, the straight line $b=0$ determines the stability of the critical point and the parabolic-like curve $\Delta=0$ separates the complex and real eigenvalues as depicted in Fig.\ref{stability_zeros.eps}. 
  It is easy to see that the curve $\Delta=0$ 
  takes a unique maximum point at $\widehat{\varepsilon}=C_{1}\kappa_{3}$ with value $\widehat{\tau}= 1/\kappa_{3}$ and crosses the zero level at $\widehat{\varepsilon}=4C_{1}\kappa_{3}$. Note that $b=0$ also passes the maximum point of $\Delta=0$ as in Fig.\ref{stability_zeros.eps}. It is easy to see from Fig.\ref{stability_zeros.eps} that any non-saddle point $(P_{i}, 0)$ experiences a sequential change for a fixed $\tau$: unstable node $\rightarrow$ unstable spiral $\rightarrow$ stable spiral $\rightarrow$ stable node when $\widehat{\varepsilon}$ crosses $\widehat{\varepsilon}=\widehat{\varepsilon}_{i} (i=1,2,3)$. Finally the finiteness of stable critical points comes from the exponential decaying property of $D(P_{i},d)$, namely $\widehat{\varepsilon} (= \varepsilon D(P_{i},d))$ crosses the line $b=0$ to the leftward for some finite $i$ when $i$ becomes large.
\end{proof}

\subsection{Proof of Proposition \ref{prop6.05}}
\begin{proof}
  We assume $\varepsilon > 0$ and $\alpha>0$ without loss of generality.
  When $\varepsilon=0$, it is clear that $\overline{\alpha}_{\pm}=\pm\sqrt{\frac{\kappa_{3}
      \widehat{\tau}C_{1}}{C_{2}}}$ are two attractive invariant manifolds as in Fig.\ref{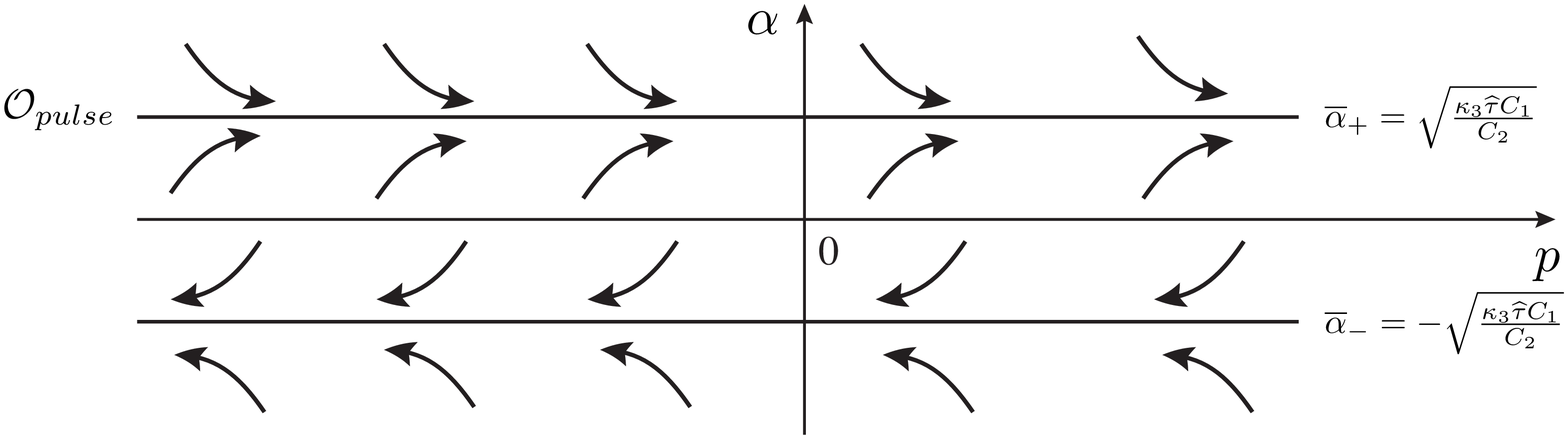}. Recall that $\mathcal{O}_\mathrm{pulse}$ is $\overline{\alpha}_{+}$. What we have to show is that there is an invariant region sandwiched $\mathcal{O}_\mathrm{pulse}$ satisfying $\dot{p}>0$ in the closure of this region and the vector field transversally goes inside of the region at the boundary as shown in Fig.\ref{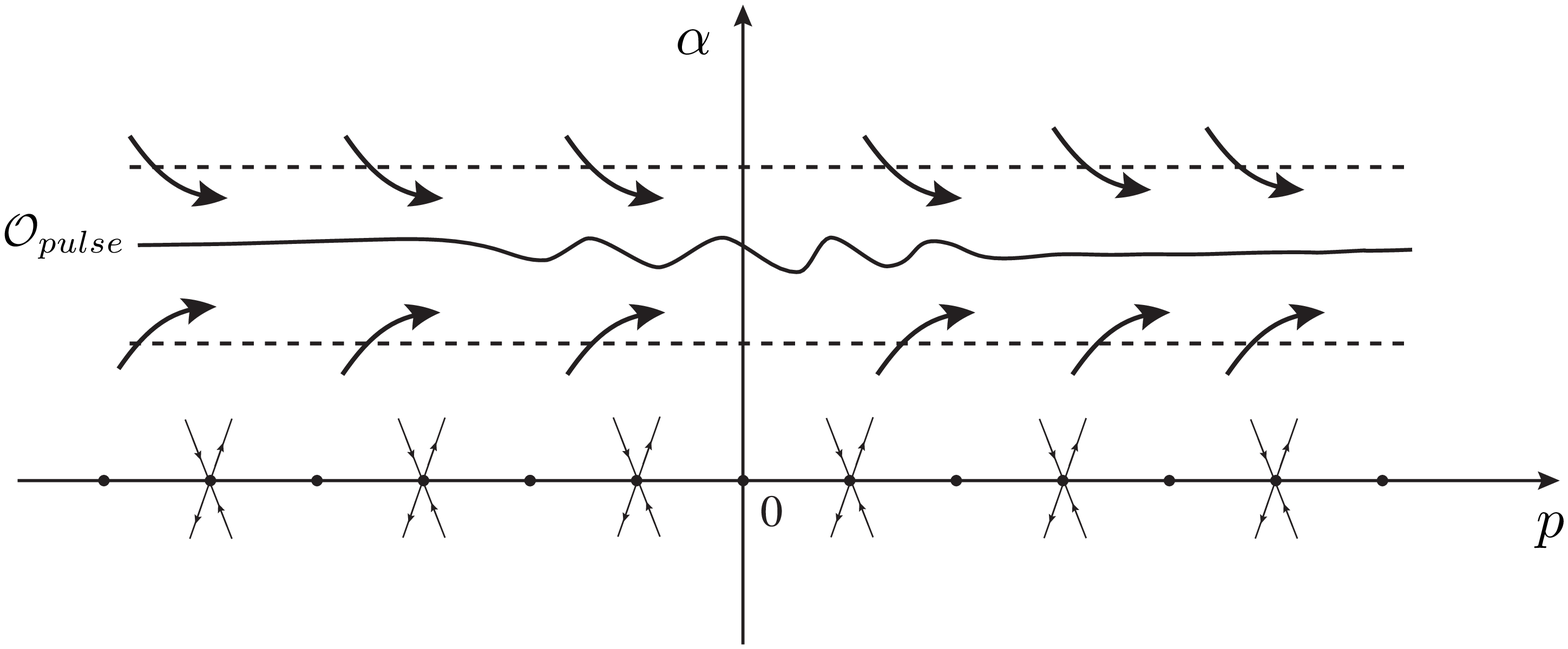}. Let us estimate the effect of the heterogeneous term to the vector field. Stationary pulse solution $\overline{u}(x)$ is bounded so that $|\overline{u}(x)|<M_{0}$ holds for some positive constant $M_{0}$.  Then, by definition, we have
  \begin{equation}
    \label{eq6.150}
    |f(p,d)|<2M_{0}
  \end{equation}
  Therefore the signs of $\dot{\alpha}$ and $\dot{p}>0$ can be estimated as 

  \begin{equation}
    \label{eq6.170}
    \kappa_{3}^{2}\widehat{\tau}\alpha-\kappa_{3}\alpha^{3}\frac{C_{2}}{C_{1}}
    -\frac{\varepsilon}{C_{1}}2M_{0}<\dot{\alpha}<\kappa_{3}^{2}
    \widehat{\tau}\alpha-\kappa_{3}\alpha^{3}\frac{C_{2}}{C_{1}}
    +\frac{\varepsilon}{C_{1}}2M_{0}
  \end{equation}
  
  \begin{equation}
    \dot{p}=\kappa_{3}\alpha-\frac{\varepsilon}{C_{1}}f(p,d)>
    \kappa_{3}\alpha-\frac{\varepsilon}{C_{1}}2M_{0}
  \end{equation}
  
  \begin{figure}
    \centering
    \includegraphics[width=8cm]{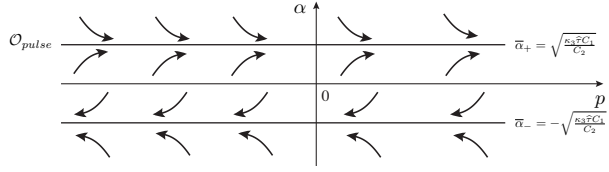}
    \caption{Flows in reduced ODEs without heterogeneity, Eq.(\ref{eq6.01}).
      $\overline{\alpha}_{\pm}=\pm\sqrt{\frac{\kappa_{3}
          \widehat{\tau}C_{1}}{C_{2}}}$ are two attractive invariant manifold.
      Pulse orbit $\mathcal{O}_\mathrm{pulse}$ is $\overline{\alpha}_{+}$.}
    \label{illu010b.eps}
  \end{figure}

  \begin{figure}
    \centering
    \includegraphics[width=8cm]{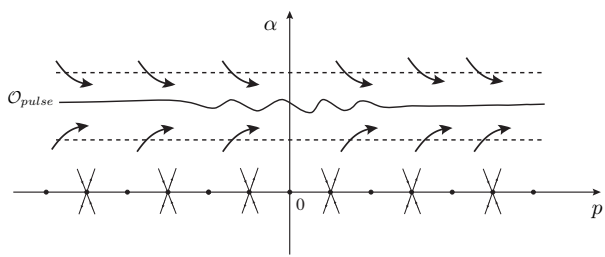}
    \caption{Flows in reduced ODEs with heterogeneity, Eq.(\ref{eq3.50}).
      Pulse orbit $\mathcal{O}_\mathrm{pulse}$ is distorted by heterogeneity term
      $\frac{\varepsilon}{C_{1}}f(p,d)$.
      Critical points are on $\alpha=0$.
      Saddle points and non-saddle points exist in pair on  $\alpha=0$ line.
      Only flows in $\alpha>0$ area is plotted.}
    \label{illu009.eps}
  \end{figure}

  We easily see that there is a positive constant $z_{0}$ independent of small $\varepsilon$ such that the band region
  $[\overline{\alpha}_{+}-z_{0}, \overline{\alpha}_{+}+z_{0}]$ 
  satisfies the transversality at the boundary and $\dot{p}>0$ holds, since the cubic nullcline of $\alpha$ is perturbed by $\frac{\varepsilon}{C_{1}}2M_{0}$. It follows from $\dot{p}>0$ that $\varepsilon$ should satisfy the inequality    
  \begin{equation}
    \label{eq6.162}
    0<\varepsilon<\frac{\kappa_{3}C_{1}(\overline{\alpha}_{+}-z_{0})}{2M_{0}}
  \end{equation}
  
  This concludes the proof.
\end{proof}

\bibliography{osc}{}
\end{document}